
 \documentclass{amsart}

\usepackage{amsmath,multicol}
\usepackage{amssymb}
\usepackage{amsthm}
\usepackage[all]{xypic}
\usepackage{amsfonts}
\usepackage{fullpage}
\usepackage{verbatim,color}
\usepackage{url}
\usepackage{hyperref}



\numberwithin{equation}{section}
 \setcounter{tocdepth}{1}
\theoremstyle{plain}
\newtheorem{theorem}[equation]{Theorem}
\newtheorem{proposition}[equation]{Proposition}
\newtheorem{corollary}[equation]{Corollary}
\newtheorem{lemma}[equation]{Lemma}
\newtheorem{question}[equation]{Question}

\newtheorem{assumption}[equation]{Assumption}

\theoremstyle{definition}
\newtheorem{definition}[equation]{Definition} 
\newtheorem{definitions}[equation]{Definitions} 
\newtheorem{notation}[equation]{Notation}

\newtheorem{example}[equation]{Example}
\newtheorem{examples}[equation]{Examples}

\newtheorem{artin}[equation]{Artin's Conjectures}

\theoremstyle{remark}
\newtheorem{remark}[equation]{Remark}
\newtheorem{remarks}[equation]{Remarks}


\newcommand{\beq}{\begin{equation}}
\newcommand{\eeq}{\end{equation}}

\newcommand{\ang}[1]{\langle #1 \rangle}


\newcommand{\st}{\left\vert\right.}

\newcommand{\bbar}[1]{\overline{#1}}




\DeclareMathOperator{\Hom}{{Hom}}

\DeclareMathOperator{\End}{{End}}

\DeclareMathOperator{\Ext}{{Ext}}

\DeclareMathOperator{\Proj}{Proj}
\DeclareMathOperator{\Spec}{Spec}

\DeclareMathOperator{\GK}{GKdim}
\DeclareMathOperator{\gr}{gr}
\DeclareMathOperator{\Div}{Div}

\DeclareMathOperator{\lann}{{\ell}-ann}
\DeclareMathOperator{\rann}{r-ann}
\DeclareMathOperator{\sat}{sat}


\newcommand{\mc}{\mathcal}


\newcommand{\kk}{{\Bbbk}}


\newcommand{\ZZ}{{\mathbb Z}}

\newcommand{\NN}{{\mathbb N}}

\newcommand{\mb}{\mathbb}
\newcommand{\wt}{\widetilde}

\newcommand{\sA}{\mc{A}}
\newcommand{\sB}{\mc{B}}

\newcommand{\sF}{\mc{F}}
\newcommand{\sG}{\mc{G}}
\newcommand{\sH}{\mc{H}}

\newcommand{\sL}{\mc{L}}
\newcommand{\sM}{\mc{M}}
\newcommand{\sN}{\mc{N}}
\newcommand{\sO}{\mc{O}}

\newcommand{\sY}{\mc{Y}}

\newcommand{\divd}{\mathbf{d}}

\newcommand{\divc}{\mathbf{c}}
\newcommand{\divp}{\mathbf{p}}
\newcommand{\divq}{\mathbf{q}}
\newcommand{\divx}{\mathbf{x}}
\newcommand{\divy}{\mathbf{y}}

\newcommand{\divu}{\mathbf{u}}
\newcommand{\divv}{\mathbf{v}}

\DeclareMathOperator{\lfd}{\!-fd}
\DeclareMathOperator{\fd}{fd-\!}
\DeclareMathOperator{\rgr}{gr-\!}
\DeclareMathOperator{\lgr}{\!-gr}

\DeclareMathOperator{\rGr}{Gr-\!}

\DeclareMathOperator{\lqgr}{\!-qgr}

\DeclareMathOperator{\rQgr}{Qgr-\!}
\DeclareMathOperator{\rqgr}{qgr-\!}

\DeclareMathOperator{\GKdim}{GKdim}

\newcommand{\ver}[1]{^{(#1)}}

\newcommand{\Rbar}{\overline{R}}

\newcommand{\cog}{sporadic ideal}
\newcommand{\cogs}{sporadic ideals}
\newcommand{\Cogs}{Sporadic ideals}
  \newcommand{\ppe}{{\    \buildrel\scriptscriptstyle\bullet\over =  \  }}
\newcommand{\wh}{\widehat}
\newcommand{\too}{\longrightarrow}
\newcommand{\idealiser}{\mathbb{I}}     

\DeclareMathOperator{\tors}{tors}
\newcommand{\tg}{\tors_g}
\DeclareMathOperator{\coh}{coh}

\begin{document}
\title[Orders in the Sklyanin algebra]{Classifying orders in the Sklyanin   algebra}
\author{D. Rogalski,  S. J. Sierra, and J. T. Stafford}
\address{(Rogalski)
Department of Mathematics, UCSD, La Jolla, CA 92093-0112, USA. }
\email{drogalsk@math.ucsd.edu}
 \address{(Sierra) School of Mathematics,
University of Edinburgh, Edinburgh EH9 3JZ, U.K.}
\email{s.sierra@ed.ac.uk}
\address{(Stafford) School of Mathematics,  The University of Manchester,   Manchester M13 9PL,
U.K.}
\email{Toby.Stafford@manchester.ac.uk}

\thanks{The first author was partially supported by NSF grants DMS-0900981 and DMS-1201572.}
 \thanks{The second author was  partially   supported by an NSF Postdoctoral Research
Fellowship, grant DMS-0802935.}
\thanks{The third
   author is a Royal Society Wolfson Research Merit Award Holder.}

 \keywords{Noncommutative projective geometry,  noncommutative surfaces, Sklyanin algebras,
noetherian  graded rings,
noncommutative  blowing~up}
  \subjclass[2000]{14A22,  14H52,  16E65, 16P40,  16S38, 16W50, 18E15}

  \begin{abstract} 
 One of the major open problems in noncommutative algebraic geometry is the classification of 
noncommutative surfaces, and this paper resolves a significant case of this problem.
  
Specifically,  let  $S$ denote the 3-dimensional Sklyanin algebra  over an algebraically closed
 field $\kk$ and assume that $S$  is not a finite module over its centre. (This algebra corresponds to a
  generic noncommutative~$\mathbb{P}^2$.) Let $A=\bigoplus_{i\geq 0}A_i$ be any  connected graded
  $\kk$-algebra    that is contained in and has the same quotient ring as a Veronese ring
$S^{(3n)}$.
Then we give a reasonably complete description of the structure of $A$. This  is most satisfactory
when 
$A$ is a maximal order, in which case we prove, subject to a minor technical condition, that   $A$   is a noncommutative blowup of
$S^{(3n)}$ at a (possibly 
 non-effective) divisor on  the associated elliptic curve  $E$. 
 It follows that  $A$ has surprisingly pleasant properties; for example it is automatically
noetherian, indeed strongly noetherian,  
 and has  a   dualizing complex.
\end{abstract} 

   \maketitle 
 \tableofcontents
 
\clearpage

 \section{Introduction}

Noncommutative (projective) algebraic geometry has been very successful in using   techniques and intuition 
from algebraic geometry  to study  noncommutative graded algebras and many classes of algebras have been classified using these ideas. 
In particular, noncommutative irreducible curves (or connected graded  domains  of Gelfand-Kirillov dimension 
$2$) have been classified \cite{AS} as have large classes of noncommutative irreducible surfaces (or 
connected graded 
noetherian domains $A$ with $\GKdim A=3$).   

Indeed, the starting point of this subject was really    the classification by 
Artin, Tate, and Van den Bergh \cite{ATV, ATV2} of noncommutative projective planes (noncommutative analogues of a polynomial ring $\kk[x,y,z]$).
The generic example here is the Sklyanin algebra
$$S=Skl(a,b,c)=\kk\{x_1,x_2,x_3\}/(ax_ix_{i+1}+bx_{i+1}x_i+cx_{i+2}^2 : i\in \ZZ_3),$$
where $(a,b,c)\in \mathbb{P}^2\smallsetminus \mathcal{S}$ for a (known) set $\mathcal S$.  The geometric methods of \cite{ATV} were necessary to understand  this algebra.   See \cite{SV} for a survey of many of these results.

 In the other direction, one would like to  classify \emph{all} noncommutative surfaces and a programme for this has been suggested by Artin \cite{Ar}.  This paper completes a significant case of this programme by classifying   the 
 graded noetherian orders  contained in the Sklyanin algebra.
 In this  introduction we will first describe  our main results and then discuss the historical background and give an idea of the proofs.


\subsection*{The main  results}  Fix a Sklyanin algebra $S=Skl(a,b,c)$ defined over an algebraically closed base field $\kk$. 
For technical reasons we mostly work inside the 3-Veronese ring $T=S^{(3)}$; thus $T=\bigoplus T_n$ with 
 $T_n=S_{3n}$ for each $n$, under the natural graded structure of $S$. The difference between these algebras  is not particularly significant; for example, the quotient category $\rqgr T$  
 of graded noetherian right $T$-modules modulo those of finite length, is equivalent to $\rqgr S$.
Then $T$ contains a canonical central 
element $g\in T_1=S_3$ such that the factor 
$B=T/gT$ is a \emph{TCR or twisted homogeneous coordinate ring} $B=B(E,\sM,\tau)$ of an elliptic curve $E$.
Here  $\sM$ is a 
line bundle  of degree 9 and $\tau\in \text{Aut}_{\kk}(E)$ (see  Section~\ref{HATTRICK} for the   definition).
We assume throughout the paper that $|\tau|=\infty$; equivalently that  $T$    is not a finite module over is centre. 

  Our main results are phrased in terms of certain 
  \emph{blowups} $T(\divd)\subset T$.  These are discussed  in more detail later in this  introduction.
  Here we will just note that, when $\divp$ is  a closed point of $E$, then  $T(\divp)$ is   the subring of $T$ 
  generated by those elements $x\in T_1$ whose images in $T/gT$ vanish at $\divp$.  
For effective divisors $\divd$ (always of degree  at most 8)
  these blowups $T(\divd)$ are very like their commutative analogues. However,  we  also need algebras that should be considered as
  blowups $T(\divd')$ of $T$ at  \emph{non-effective divisors} 
 of the form $\divd' =\divx-\divy+\tau^{-1}(\divy)$, where  $\divx$ and $\divy$ are effective divisors on $E$, 
 $0 \leq \deg \divd' \leq 8$  and    certain combinatorial conditions hold (see Definition~\ref{virtual-defn} for the details). Such a divisor will be called \emph{virtually effective}.
    
 Given   domains $U,U'$ with the same Goldie quotient ring $Q(U)=Q(U')=Q$, we say that $U$ and $U'$ 
 are  \emph{equivalent orders} \label{equiv-defn1}
 if  $aUb\subseteq U'$ and $a'U'b'\subseteq U$ for some 
   $a,b,a',b'\in Q\smallsetminus\{0\}$.
If $Q(U)=Q(V) $ for some   ring $V\supseteq U
$ then $U$ is called  
a \emph{maximal $V$-order} if there exists no ring $U\subsetneq U'\subseteq V$  equivalent to $U$.   
When $V=Q(U)$ then $U$ is simply termed a \emph{maximal order}.  
These  can be regarded as the appropriate noncommutative analogues of integrally closed domains. 
 The algebra $T$ is a maximal order.  When $Q_{gr}(U)=Q_{gr}(T)$ the concepts of maximal orders and maximal $T$-orders
 are essentially the only cases that will concern us and, as the next result shows,  they are closely connected.

  In this result, an $\mathbb{N}$-graded
$\kk$-algebra $A=\bigoplus_{n\geq 0} A_n$ is called  \emph{connected graded}
(\emph{cg})\label{cg-defn}   if  
$A_0=\kk$ and  $\dim_{\kk}A_n<\infty$ for all $n$. 
Also, for a cg algebra $U\subseteq T$, we write $\overline{U}=(U+gT)/gT$.

\begin{proposition}\label{correct-intro} {\rm (Combine Theorem~\ref{max32} with 
Proposition~\ref{correct25}.)}
 Let $U$ be a cg  maximal $T$-order,  such that $\overline{U}\not=\kk$.
 Then there exists a unique maximal order $F=F(U)\supseteq U$ equivalent to $U$. 
 Moreover, $F$ is a finitely generated $U$-module with 
   $\GKdim_U(F/U)\leq 1.$
\end{proposition}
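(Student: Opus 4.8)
The plan is to build $F$ by enlarging $U$ as far as possible inside its equivalence class, and then to identify what one gets via the structure theory of cg maximal $T$-orders. Throughout write $Q=Q_{\mathrm{gr}}(U)=Q_{\mathrm{gr}}(T)$; recall that $T$ is a maximal order, $g\in T_1$ is central, and $T/gT=B=B(E,\mathcal M,\tau)$.

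\emph{Step 1: rigidity from maximality.} The first point is that if $U\subseteq V\subseteq Q$ is any cg order equivalent to $U$, then $V\cap T=U$. Indeed $V\cap T$ is a cg ring with $U\subseteq V\cap T\subseteq T$ and $Q_{\mathrm{gr}}(V\cap T)=Q_{\mathrm{gr}}(T)$, and it is equivalent to $U$ (from $aVb\subseteq U$ we get $a(V\cap T)b\subseteq U$), so maximality of $U$ as a $T$-order forces equality. Consequently $V/U\hookrightarrow Q/T$ as $\ZZ$-graded $U$-bimodules: every enlargement of $U$ within its equivalence class is ``transverse to $T$'', and is therefore governed by the central element $g$ and the reduction $B$. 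Note also that such a $V$, being a sub-bimodule of the noetherian $U$-bimodule $a^{-1}Ub^{-1}\cong U$, is automatically a finitely generated $U$-module.

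\emph{Step 2: existence of a largest equivalent order (the crux).} I would now invoke Theorem~\ref{max32} for the structure of $U$: it is a noetherian domain of GK-dimension $3$, it contains a nonzero power of $g$, its image $\overline U$ is an explicitly understood subalgebra of $B$, and it carries a divisor theory (inherited from $T$ and from $B$) well-behaved enough that the family of cg orders equivalent to $U$ and containing $U$ is directed with a largest member $F$ --- equivalently, that the usual construction of the maximal order (iterating $W\mapsto\mathcal{O}_{\ell}(W)\cap\mathcal{O}_{r}(W)$, or passing to $\bigcup_n\mathcal{O}_{\ell}(J^{\,n})$ for the conductor ideal $J$) terminates. \emph{This is the main obstacle}: for a general noetherian order, ascending chains of equivalent orders need not stabilise and a maximal order need not exist, so one must genuinely use the structure theory of Theorem~\ref{max32} --- the embedding $U\subseteq T$, the central $g$, and the control on $\overline U\subseteq B$ --- to see that termination happens here. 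Given such an $F$: it is a cg order equivalent to $U$, it contains every cg order equivalent to $U$ that contains $U$, hence it is a maximal order (an equivalent ring containing $F$ would lie in the family and so equal $F$); and by Step 1 it is a finitely generated $U$-module.

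\emph{Step 3: the growth estimate and uniqueness.} For $\GKdim_U(F/U)\le 1$, reduce modulo $g$: by Step 1, $F/U$ embeds in $Q/T$, and using that $\overline F$ is the saturation of $\overline U$ inside $B$ (part of the structure theory, cf.\ Proposition~\ref{correct25}) one finds that $F$ and $U$ differ only in codimension one --- concretely, $\overline F/\overline U$ is supported on a finite subset of $E$ and $F/U$ is annihilated by a power of $g$ --- so a $g$-adic filtration of $F/U$ reduces the estimate to a one-dimensional count; alternatively, once Proposition~\ref{correct25} identifies $F$ with a noncommutative blow-up $T(\divd)$ at a virtually effective divisor $\divd$, the bound $\GKdim_U(F/U)\le 1$ is a known feature of these blow-ups. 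Finally, uniqueness: any maximal order $F'$ equivalent to $U$ has the same reduction modulo $g$, namely the saturation of $\overline U$ in $B$, and is reconstructed from $U$ in the same way, so $F'=F$; this identification of $F$ (and of $\overline F$, together with its description as a blow-up at a virtually effective divisor) is exactly the content of Proposition~\ref{correct25}.
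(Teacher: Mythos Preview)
Your proposal misses the paper's central construction and has genuine gaps. The paper does not build $F$ by an ascending-chain or saturation argument. Instead: Theorem~\ref{max32}(1) first proves that $U$ is $g$-divisible (via Propositions~\ref{max31} and~\ref{max2}, which show $U$ is equivalent to its $g$-divisible hull, hence equal to it by maximality); then Corollary~\ref{correct3} realises $U=\End_R(M)$ for $R=T(\divd)$ and a finitely generated $g$-divisible right $R$-module $M\subseteq T$ with $MT=T$; finally one \emph{defines} $F=\End_R(M^{**})$. Existence and uniqueness of $F$ as the maximal order containing and equivalent to $U$ are then immediate from Cozzens' theorem (Lemma~\ref{cozzens}): for a noetherian domain $A$ and a fractional ideal $N$, $\End_A(N^{**})$ is the unique maximal order containing and equivalent to $\End_A(N)$. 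The $\GKdim$ bound in Proposition~\ref{correct25}(3) comes from the CM property of $R$: one has $\GKdim_R(M^{**}/M)\le 1$ by Lemma~\ref{thou11}, and since $M^{**}/M$ is $g$-torsionfree it is finitely generated over $\kk[g]$ (Lemma~\ref{lem:constant}), after which annihilator arguments produce a common ideal $K\subseteq U$ of $F$ with $\GKdim(F/K)\le 1$.

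Several of your specific steps fail. In Step~1 you implicitly assume $U$ is noetherian (to deduce that $V$, as a sub-bimodule of $a^{-1}Ub^{-1}$, is finitely generated), but noetherianity of $U$ is itself a nontrivial conclusion of Theorem~\ref{max32}, proved via $g$-divisibility and Proposition~\ref{div-noeth}. In Step~3 the claim that $F/U$ is annihilated by a power of $g$ is false: once $U$ and $F$ are known to be $g$-divisible in $T_{(g)}$, the quotient $F/U$ is $g$-torsionfree; the correct statement is rather that $\overline F\ppe\overline U$ (Theorem~\ref{thought9}), from which eventual constancy of the Hilbert series of $F/U$ follows. Your uniqueness argument (``same reduction modulo $g$'') is not how the paper proceeds and is not obviously sufficient, since two $g$-divisible subrings of $T_{(g)}$ with the same image modulo $g$ need not coincide; uniqueness in the paper is precisely Lemma~\ref{cozzens}. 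Finally, you misattribute the blow-up identification to Proposition~\ref{correct25}; that identification is the content of Theorem~\ref{thought9} and Theorem~\ref{thm:converse}.
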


We remark that there do exist graded maximal $T$-orders $U$ with   $U\not=F(U)$ (see Proposition~\ref{Tmax-eg}).

 Our results are most satisfactory for maximal $T$-orders, and our main result is the following complete classification of such algebras.
   
 \begin{theorem}\label{mainthm-intro1} {\rm (Theorem~\ref{max32})}
 Let $U$ be a cg   maximal $T$-order   with  $\overline{U}\not=\kk$.
 Then there exists a virtually effective divisor 
 $\divd'=\divd-\divy+\tau^{-1}(\divy)$ with $\deg(\divd')\leq 8$ such that   the  associated maximal order  
  $F(U)$    is a blowup  $F(U)=T(\divd')$ of $T$ at $\divd'$. \end{theorem}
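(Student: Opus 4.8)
The plan is to transfer the problem to the elliptic curve $E$ by reducing modulo the central element $g$, read off a divisor there, build the corresponding blowup, and lift the identification back to $T$. Concretely, I would produce $\divd'$ directly from the reduction $\overline{U}\subseteq B=B(E,\sM,\tau)$ and then establish $F(U)=T(\divd')$ by comparing the two algebras modulo $g$. The first observation is that $\overline{U}$ is a connected graded domain, being a subalgebra of the domain $B$, and that the hypothesis $\overline{U}\ne\kk$ forces $\GKdim\overline{U}=2$; this rests on the structure theory of connected graded maximal $T$-orders developed in the earlier sections, which shows in particular that $U$ is commensurable with $T$ and that $\overline{U}\ne\kk$ rules out $\GKdim\overline{U}\le 1$.

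The next and principal step is to analyse $\overline{U}$ inside the twisted homogeneous coordinate ring $B$. By the classification of connected graded subalgebras of a twisted homogeneous coordinate ring on a curve --- in the refined form developed earlier in the paper, in the spirit of \cite{AS} --- the algebra $\overline{U}$ is governed, in all large degrees, by a sequence of effective divisors $\divd_n$ with $\overline{U}_n=H^0\!\big(E,\sM_n(-\divd_n)\big)$, where $\sM_n=\sM\otimes\sM^{\tau}\otimes\cdots\otimes\sM^{\tau^{n-1}}$, and where the ring structure imposes the subadditivity $\divd_{m+n}\le\divd_m+\tau^{-m}(\divd_n)$. From this sequence one extracts the stable ``limit'' divisor $\divd'$ controlling the large-degree behaviour of $\overline{U}$, and the real content is that the combinatorial constraints on $(\divd_n)$ are exactly what force $\divd'$ to have the form $\divd-\divy+\tau^{-1}(\divy)$ with $\divd,\divy$ effective and to meet the conditions of Definition~\ref{virtual-defn}, i.e.\ to be virtually effective. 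Since $\GKdim\overline{U}=2$, the Hilbert function $\dim_\kk\overline{U}_n$ grows linearly of slope $\lim_n\deg\sM_n(-\divd_n)/n=9-\deg\divd'$; this slope is positive, and being an integer it is at least $1$, so $\deg\divd'=\deg\divd\le 8$.

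With $\divd'$ in hand, I would invoke the theory of blowups from the earlier sections: $T(\divd')$ is a connected graded maximal order with $Q_{\gr}(T(\divd'))=Q_{\gr}(T)$ whose reduction $\overline{T(\divd')}$ is precisely the saturated subalgebra of $B$ attached to the limit divisor $\divd'$, so that $\overline{T(\divd')}$ and $\overline{U}$ coincide in all sufficiently large degrees. Finally I would apply the lifting results for $g$-divisible connected graded $T$-orders (again from the earlier sections): such an order is determined up to equivalence, and its maximal-order closure determined outright, by its reduction modulo $g$ in large degrees. Applied to $U$ and $T(\divd')$, and using that $T(\divd')$ is already a maximal order, this yields $F(U)=T(\divd')$.

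The main obstacle is the analysis on $E$: showing that the divisor sequence of $\overline{U}$ genuinely stabilises to a single divisor $\divd'$, and that the ring axioms force exactly the shape $\divd-\divy+\tau^{-1}(\divy)$ --- rather than some wilder $\tau$-combination --- while keeping the degree bookkeeping tight enough to get $\deg\divd'\le 8$; this is where the combinatorics underlying Definition~\ref{virtual-defn} is indispensable. A second delicate point is the lifting step, where one must control $U$ and $T(\divd')$ in the finitely many low degrees where their reductions modulo $g$ may disagree, so as to obtain the honest equality $F(U)=T(\divd')$ rather than a mere equivalence of orders. By contrast, the formal properties of $T(\divd')$ needed here --- that it is a maximal order with the expected reduction modulo $g$ --- and the passage from $U$ to $\overline{U}$ should be routine applications of machinery already in place.
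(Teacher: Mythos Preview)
Your outline shares the paper's overall philosophy --- reduce modulo $g$, work on the curve, recover the divisor --- but it skips the two mechanisms that actually make the argument go through, and your ``lifting'' step rests on a principle that is false.

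\textbf{First gap: $g$-divisibility.} You assume from the outset that passing to $\overline{U}$ is harmless and that $U$ is commensurable with $T$. But a priori a maximal $T$-order need not be $g$-divisible; proving that it \emph{is} (under the hypothesis $\overline{U}\ne\kk$) is precisely the content of the proof of Theorem~\ref{max32}(1), which relies on the sporadic-ideal machinery of Propositions~\ref{max31} and~\ref{max2}. You cannot cite ``earlier sections'' for this --- it is the new ingredient of Section~\ref{MAX1}.

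\textbf{Second gap: the curve analysis does not give virtually effective.} Theorem~\ref{thm:projcurve} and Corollary~\ref{cor:geomdata} give $\overline{U}_n=H^0(E,\sM_n(-\divy-\divx_n))$ for $n\gg0$, with a possibly nonzero ``idealizer'' divisor $\divy$. The combinatorics in \eqref{positivity} yields $\sum_{i\ge j}x_i\ge 0$, but only $y_j+\sum_{i\le j}x_i\ge 0$; without $\divy=0$ you do \emph{not} get both halves of \eqref{thought811}, so $\divx$ need not be virtually effective. Nothing in the ring axioms of $\overline{U}$ alone forces $\divy=0$ --- idealizers in TCRs are perfectly good subalgebras. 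What kills the idealizer part is maximality of $U$, and the paper uses it in an essential, indirect way: via Theorem~\ref{thm-correct} and Proposition~\ref{prop:UinR} (the intricate induction with the right ideals $Q(i,r,e,p)$) one first embeds $U$ in $T_{\le k}*T(\divd)$ for an \emph{effective} $\divd$, realises $U=\End_{T(\divd)}(M)$ (Corollary~\ref{correct3}), and only then --- using the Auslander--Gorenstein/CM properties of $T(\divd)$ and the ``bar commutes with star'' result (Proposition~\ref{thou7}) --- concludes in Theorem~\ref{thought9} that $\overline{F}\ppe B(E,\sM(-\divx),\tau)$ with $\divx$ virtually effective.

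\textbf{Third gap: there is no lifting principle.} Your claim that a $g$-divisible order's ``maximal-order closure is determined outright by its reduction modulo $g$ in large degrees'' is false: see Remarks~\ref{thought91}(i) and~\ref{non-AG4}(2), which note that distinct maximal orders can have the same $\overline{F}$ in large degree. The paper never lifts from $\overline{F}$; it \emph{constructs} $F$ directly as $\End_{T(\divd)}(M^{**})$ and then computes its reduction. The equality $F(U)=T(\divd')$ in the introduction is using $T(\divd')$ as notation for ``a blowup at $\divd'$'' in the sense of Definition~\ref{def:blowup}, not asserting uniqueness.
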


\begin{remarks}\label{remark1}
   (1) Although in this introduction we are restricting our 
attention to the Sklyanin algebra $S=Skl$, this theorem and indeed  all  the results of this paper are  proved
simultaneously  for certain related algebras; see Assumptions~\ref{ass:T} and Examples~\ref{Skl-ex} 
for the details. 
   
(2) A   result analogous to Theorem~\ref{mainthm-intro1} also holds for  graded   maximal $T^{(n)}$-orders.

(3)
The assumption that $\overline{U}\not=\kk$ in  the theorem    is annoying but necessary (see 
Example~\ref{max6}).  It can     be bypassed at the expense of passing to a   Veronese ring 
and then regrading the 
algebra. 
 However, the resulting theorems  are not as strong as Theorem~\ref{mainthm-intro1}
   (see Section~\ref{ARBITRARY} for the details).
\end{remarks}

 One consequence of  Theorem~\ref{mainthm-intro1}  is that maximal $T$-orders have very pleasant properties.
The undefined terms in the next result are standard concepts and are defined in the body of the paper.

 \begin{corollary}\label{intro-maincor}  
 Let $U$ and $F=F(U)=T(\divd')$  be as in Theorem~\ref{mainthm-intro1}.
  \begin{enumerate}
 \item {\rm (Proposition~\ref{div-noeth} and Theorem~\ref{max32}(1))}
Both $U$ and $F$ are finitely generated $\kk$-algebras and are strongly noetherian: in other words, $U\otimes_{\kk}C$ 
and $F \otimes_{\kk} C$ are noetherian for any commutative, noetherian $\kk$-algebra $C$.\label{strong-defn}
\item  {\rm (Corollary~\ref{cohom})}  Both  $U$ and $F$  satisfy the Artin-Zhang $\chi$ conditions, 
have  finite cohomological dimension  and possess  balanced dualizing complexes.
\item  {\rm (Proposition~\ref{8-special22} and Example~\ref{non-AG})}
If $F$ is the blowup at an effective divisor then $U=F$. In this case $F$ also satisfies the Auslander-Gorenstein and
 Cohen-Macaulay conditions.   These conditions do not necessarily hold  when $\divd'$ is
 virtually effective. 
\end{enumerate}
 \end{corollary}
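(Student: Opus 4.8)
The plan is to deduce all three parts from Theorem~\ref{mainthm-intro1}, which identifies the maximal order $F=F(U)$ with a blowup $T(\divd')$ at a virtually effective divisor: one first establishes the asserted properties for $F$ itself, and then transfers them to $U$ along the module-finite extension $U\subseteq F$ supplied by Proposition~\ref{correct-intro}.

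\emph{Step 1: properties of the blowup $F=T(\divd')$.} The main tool is reduction modulo the central nonzerodivisor $g$. I would show that the factor ring $F/gF$ agrees in all sufficiently large degrees with a twisted homogeneous coordinate ring $B(E,\sN,\tau)$ on the smooth curve $E$, where $\sN$ has degree $9-\deg\divd'$; the combinatorial conditions built into the notion of a virtually effective divisor are precisely what is needed to make this geometric data noetherian, and $\tau$-ample in large degree, on $E$. From the structure theory of coordinate rings of a smooth curve one then reads off that $F/gF$ is a finitely generated $\kk$-algebra, is strongly noetherian, satisfies the Artin--Zhang $\chi$-conditions, has finite cohomological dimension, and --- because $E$ is a smooth curve --- is Auslander--Gorenstein and Cohen--Macaulay. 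Each of these properties should then lift from $F/gF$ to $F$ by the standard ``central regular element'' arguments (the Artin--Zhang machinery for noetherianity, strong noetherianity, the $\chi$-conditions and cohomological dimension; results of Levasseur for the Auslander--Gorenstein and Cohen--Macaulay conditions). Finally, a connected graded noetherian algebra that satisfies $\chi$ and has finite cohomological dimension has a balanced dualizing complex by Van den Bergh's existence theorem, which gives the dualizing complex for $F$.

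\emph{Step 2: transfer to $U$.} By Proposition~\ref{correct-intro}, $F$ is a finitely generated module over $U$ on both sides, with $\GKdim_U(F/U)\le 1$. Finite generation of $F$ as a $\kk$-algebra together with module-finiteness over $U$ will force $U$ to be a finitely generated $\kk$-algebra, by the connected graded analogue of the Artin--Tate lemma. Noetherianity and strong noetherianity descend along the module-finite extension: for any commutative noetherian $\kk$-algebra $C$ the ring $F\otimes_\kk C$ is noetherian and finite over $U\otimes_\kk C$, hence $U\otimes_\kk C$ is noetherian. The $\chi$-conditions and finiteness of cohomological dimension should likewise pass from $F$ down to $U$, by comparing the local cohomology of a graded $U$-module $M$ with that of $M\otimes_U F$, the bound $\GKdim_U(F/U)\le 1$ controlling the error terms. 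A balanced dualizing complex for $U$ can then be produced from that of $F$ by applying $\operatorname{RHom}_F(-,\omega_F)$ to $F$ viewed as a $U$--$F$-bimodule.

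\emph{Step 3: the effective case and the negative statement.} When $\divd'$ is effective, the blowup $T(\divd')$ is by construction a subring of $T$, so $U\subseteq F=T(\divd')\subseteq T$ with $U$ and $F$ equivalent orders; since $U$ is a maximal $T$-order this immediately forces $U=F$. (For virtually effective $\divd'$ the algebra $T(\divd')$ is not contained in $T$, so this argument breaks down, in accordance with the fact, noted after Theorem~\ref{mainthm-intro1}, that there exist maximal $T$-orders $U\neq F(U)$.) In the effective case the Auslander--Gorenstein and Cohen--Macaulay properties of $F$ are exactly those already obtained in Step~1, since there $F/gF$ genuinely is large-degree equivalent to a coordinate ring of the smooth curve $E$; that these conditions can fail for virtually effective $\divd'$ is witnessed by Example~\ref{non-AG}. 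The genuinely hard part of the argument is Step~1, and within it the precise identification of $F/gF$ on $E$ --- especially checking, in the virtually effective case, that the combinatorial hypotheses in the definition really do yield a noetherian, $\tau$-ample situation on $E$ so that the coordinate-ring machinery applies; Steps~2 and~3 are then essentially formal manipulations with central regular elements, module-finite extensions, and maximal orders.
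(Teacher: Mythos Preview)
Your overall strategy --- reduce modulo $g$, use the TCR structure of $\overline{F}$, lift, then descend to $U$ along the module-finite extension --- is exactly the paper's approach for parts (1) and (2), and your argument that $U=F$ in the effective case (because then $T(\divd')\subseteq T$ and $U$ is a maximal $T$-order) is correct.

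There is, however, a genuine error in your treatment of the Auslander--Gorenstein and Cohen--Macaulay conditions. In Step~1 you assert that because $F/gF$ agrees with a TCR $B(E,\sN,\tau)$ in all sufficiently large degrees, $F/gF$ itself is Auslander--Gorenstein and CM. This is false: equality up to finite dimension (the relation $\ppe$) preserves $\rqgr$-invariants such as $\chi$ and cohomological dimension, but AG and CM are sensitive to the entire ring, not just its tail. Indeed, Example~\ref{non-AG} shows exactly this failure: there $\overline{U}\ppe B$ but $\overline{U}\neq B$, and the finite-dimensional discrepancy $B/\overline{U}$ produces a nonzero $\Ext^1_{\overline{U}}(\kk,\overline{U})$, destroying CM (and, with more work, AG). Your Step~3 then compounds the confusion by saying the effective case works ``since there $F/gF$ genuinely is large-degree equivalent'' to a TCR --- but large-degree equivalence holds in \emph{both} cases. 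What actually distinguishes the effective case is that $T(\divd)/gT(\divd)$ is \emph{exactly} equal to $B(E,\sM(-\divd),\tau)$, with no finite-dimensional correction (Proposition~\ref{8-special22}(1)); only then do Levasseur's lifting results apply to give AG and CM for $F$.

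A minor point: the virtually-effective combinatorics are not what make $\sN$ ample or the TCR noetherian (that follows simply from $\deg\sN=\mu-\deg\divd'>0$); rather, they are the condition guaranteeing that $\sN_n\subseteq\sM_n$ for $n\gg0$, so that $\overline{F}$ actually sits inside $\overline{T}$ (see Proposition~\ref{thought81}).
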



In the other direction, we prove

\begin{theorem}\label{mainthm-intro2} {\rm (Theorem~\ref{thm:converse}(3))}
For any virtually effective divisor $\divd'$ there exists a blowup   of $T$ at $\divd'$ in the sense
described above.
\end{theorem}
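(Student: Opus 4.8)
The plan is to produce the ring $R=T(\divd')$ inside $Q_{gr}(T)$ as the natural analogue of the effective blowups, and then to prove that it deserves the name: that $R$ is a cg algebra on which $g$ acts as a central nonzerodivisor with $R$ $g$-divisible (i.e.\ $R\cap g\,Q_{gr}(T)=gR$) and $R/gR\cong B(E,\sM(-\divd'),\tau)$. The input that lets the construction begin is sheaf-theoretic: since $0\le\deg\divd'\le 8$, the line bundle $\sM(-\divd')$ on $E$ has degree in $\{1,\dots,9\}$, so this twisted homogeneous coordinate ring $B':=B(E,\sM(-\divd'),\tau)$ is defined and noetherian. To build $R$ I would start from the effective blowup $V:=T(\divx)$, where $\divx$ is the effective divisor in the decomposition $\divd'=\divx-\divy+\tau^{-1}(\divy)$ (so $\deg\divx=\deg\divd'\le 8$ and $\overline V=B(E,\sM(-\divx),\tau)$, with line bundle of the same degree $9-\deg\divd'$), and then install the non-effective correction $-\divy+\tau^{-1}(\divy)$ by enlarging $V$ inside $Q_{gr}(T)$: use $g$-divisibility of $V$ to lift the relevant rational sections of $\sM(-\divd')$ into degree one, and take the subalgebra of $Q_{gr}(T)$ they generate, together with $g$ and (in the boundary case $\deg\divd'=8$) one extra degree-two generator, exactly as for effective divisors. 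Granting that this $R$ is $g$-divisible with $R/gR\cong B'$, it follows formally that $R$ is a domain, and the general structure theory of $g$-divisible cg algebras whose factor by $g$ is a twisted homogeneous coordinate ring of an elliptic curve --- the same theory used for effective blowups --- shows that $R$ is finitely generated, strongly noetherian, a maximal order, satisfies the $\chi$ conditions, and has finite cohomological dimension and a balanced dualizing complex; in other words $R$ is a blowup of $T$ at $\divd'$ enjoying all of Corollary~\ref{intro-maincor}.

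The identification $R/gR\cong B'$ is the heart of the argument and the place where the hypotheses of Definition~\ref{virtual-defn} are used. I would prove it by an induction that switches on the correction $-\divy+\tau^{-1}(\divy)$ one point at a time. Writing $\divy=\divp_1+\cdots+\divp_m$, one passes from $V=T(\divx)$ through the chain of rings $T\big(\divx-(\divp_1+\cdots+\divp_k)+\tau^{-1}(\divp_1+\cdots+\divp_k)\big)$, $k=0,1,\dots,m$, where each step is an ``elementary transformation'' that combines a noncommutative blowup at $\tau^{-1}(\divp_k)$ with a noncommutative blowdown at $\divp_k$; such a step preserves $\deg$ and so, at the level of $g$-factors, replaces $B(E,\sN,\tau)$ by $B(E,\sN(\divp_k-\tau^{-1}(\divp_k)),\tau)$. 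Iterating, the divisorial corrections telescope over the graded pieces and produce $B(E,\sM(-\divx)(\divy-\tau^{-1}(\divy)),\tau)=B'$. The combinatorial conditions of Definition~\ref{virtual-defn}, together with $\deg\divd'\le 8$, say precisely that every partial divisor $\divx-\sum_{i\le k}(\divp_i-\tau^{-1}(\divp_i))$ occurring along the way is again virtually effective of degree at most $8$ and that the $\tau$-orbits of $\divx$ and of the $\divp_i$ meet only in the permitted patterns --- exactly what is needed for the blowdown at $\divp_k$ to be defined on the current ring and for the subalgebra of $Q_{gr}(T)$ it generates to have the predicted Hilbert series rather than a strictly smaller one.

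The step I expect to be the main obstacle is the control of the blowdown (equivalently, the non-effective twist) at the noncommutative level. Blowing up has a transparent generators-in-degree-one description and is already understood; blowing down is its formal inverse, and one must show that, whenever the appropriate ``$(-1)$-curve''-type condition from Definition~\ref{virtual-defn} holds, the overring of the current ring generated by the relevant extra degree-one elements is again cg and $g$-divisible, that its factor by $g$ is exactly the expected twisted homogeneous coordinate ring (i.e.\ reduction modulo $g$ is compatible with the blowdown operation), and that its Hilbert series is as predicted, so that the induction can be continued. Verifying this compatibility, and checking that the non-collision conditions of Definition~\ref{virtual-defn} are both necessary and sufficient for each elementary transformation to be legitimate, is where essentially all the work lies; once it is carried out, $T(\divd')$ is obtained after the $m$ elementary transformations, which proves Theorem~\ref{mainthm-intro2}.
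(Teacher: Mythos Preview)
Your approach is genuinely different from the paper's, and in its current form it has real gaps.

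The paper does not attempt to build $T(\divd')$ as an explicit subalgebra via elementary transformations. Instead, it uses Proposition~\ref{thought81}(2) to write $\divd'=\divu-\divv+\tau^{-1}(\divv)$ with $\divu$ effective supported on distinct orbits and $0\le\divv\le\divu_k$; then invokes \cite[Lemma~5.10]{RSS2} to produce a $g$-divisible right $T(\divu)$-module $M$ with $T(\divu)\subseteq M\subseteq T$ and $\bbar{M}\ppe\bigoplus H^0(E,\sM_n(-\divu_n+\divv))$; and finally sets $F=\End_{T(\divu)}(M^{**})$. Theorem~\ref{thought9} and Lemma~\ref{thought8} give $\bbar{F}\ppe B(E,\sM(-\divd'),\tau)$, and Proposition~\ref{correct25} gives that $(F\cap T,F)$ is a maximal order pair. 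The maximal-order property comes essentially for free from Cozzens' Lemma~\ref{cozzens}: the double dual makes $\End$ a maximal order. No blowdown construction is ever needed.

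Your proposal has three concrete problems. First, the ``blowdown'' step is the entire proof, and you yourself identify it as undefined and unproven; nothing in the paper or in \cite{RSS2} supplies such an operation, so this is not a detail to be filled in but the whole argument. Second, you claim $R/gR\cong B'$ on the nose. This is false in general: Example~\ref{non-AG} shows that for a blowup $U$ at a virtually effective divisor one has only $\bbar{U}\ppe B(E,\sM(-\divx),\tau)$, with $\bbar{U}\ne B$ because $\sM(-\divx)\not\subseteq\sM$. Third, you assert that ``general structure theory'' yields the maximal-order property and all of Corollary~\ref{intro-maincor}. But the Auslander-Gorenstein and CM conditions can fail for virtual blowups (Example~\ref{non-AG} again), and there is no blanket theorem saying a $g$-divisible algebra with TCR factor is a maximal order---indeed Proposition~\ref{Tmax-eg} shows maximal $T$-orders need not be maximal orders. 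The paper obtains maximality precisely by passing through $\End_R(M^{**})$.
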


  The  fact that $U$ is automatically noetherian in 
  Theorem~\ref{mainthm-intro1}  is   one of the result's most striking features. 
In general, non-noetherian graded subalgebras of $T$ can be rather unpleasant  
and so, in order to   classify reasonable classes of  non-maximal orders in $T$, we make a noetherian hypothesis. 
  Given  a connected graded noetherian algebra $U$, one can easily obtain further noetherian rings by  taking Veronese rings,
   idealiser subrings 
  $\idealiser(J)=\{\theta\in U : \theta J \subseteq J\}$ for a right ideal $J$ of $U$,  or
  equivalent orders $U'\subseteq U$ containing an ideal $K$ of $U$.  
 We show that this suffices:

  \begin{corollary}\label{main-intro}
{\rm (Corollary~\ref{main})} 
   Let $U$ be a cg noetherian subalgebra of $T$ with $Q_{gr}(U)=Q_{gr}(T^{(n)})$ for some $n$.
  Assume that $\overline{U}\not=\kk$ (as in Remark~\ref{remark1} this can be assumed at the expense of taking a Veronese ring and regrading).
 
 Then $U$ can be obtained from some virtual blowup $R=T(\divd')$ by a combination of Veronese rings, idealisers
  and equivalent orders $K\subseteq U\subseteq V$, where $K$ is an ideal of $V$ with $\GKdim(V/K)\leq 1$.
  \end{corollary}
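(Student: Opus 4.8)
\emph{Strategy.} I would derive Corollary~\ref{main-intro} from the classification of maximal $T$-orders (Theorem~\ref{mainthm-intro1}), the comparison Proposition~\ref{correct-intro}, and the structural description of general cg noetherian orders proved earlier in the paper, by producing the three permitted operations explicitly. First, a Veronese reduction: because $U\subseteq T$ and $Q_{gr}(U)=Q_{gr}(T^{(n)})$, any homogeneous component $U_m$ with $n\nmid m$ lies in $T_m\cap Q_{gr}(T^{(n)})=0$, so $U$ is supported in degrees divisible by $n$ and may be regraded. By Remark~\ref{remark1}(2) and Assumptions~\ref{ass:T}, $T^{(n)}$ (suitably regraded) is again one of the algebras to which our results apply, any blowup $T(\divd')^{(n)}$ is a blowup of it, and the passage from $T(\divd')$ to $T(\divd')^{(n)}$ is an instance of the Veronese operation. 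So we may assume $Q_{gr}(U)=Q_{gr}(T)$ and $U\subseteq T$, and the task is to exhibit $U$ as a subring of some blowup $R=T(\divd')$ reached by idealisers and equivalent-order operations.

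\emph{The ingredients.} Since $T$ is itself a maximal $T$-order containing $U$, such overorders exist; the structural input I would quote from the body of the paper (the analysis accompanying Theorem~\ref{max32}, reflecting the divisorial behaviour of $\overline U$ inside $B=T/gT$) is a maximal $T$-order $V$ with $U\subseteq V\subseteq T$ from which $U$ is obtained by a finite chain of idealiser subrings followed by a single equivalent-order operation: there are right ideals $J_i$ and rings $V=V_0\supseteq V_1\supseteq\cdots\supseteq V_k\supseteq U$ with $V_{i+1}=\{v\in V_i:vJ_{i+1}\subseteq J_{i+1}\}$ and with $V_k$ an equivalent order to $U$. Since $U\subseteq V$ forces $\kk\ne\overline U\subseteq\overline V$, Theorem~\ref{mainthm-intro1} applies to $V$ and produces a virtually effective divisor $\divd'$ with $\deg\divd'\le8$ and $F(V)=T(\divd')=:R$; by Proposition~\ref{correct-intro}, $R=F(V)$ contains $V$, is equivalent to $V$ as an order, is finitely generated as a $V$-module, and satisfies $\GKdim_V(R/V)\le1$.

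\emph{Assembling the operations.} The passage from $R$ to $V$ is an equivalent-order operation: from $\GKdim_V(R/V)\le1$, standard conductor considerations (as in the proof of Proposition~\ref{correct-intro}) give a two-sided ideal $K$ of $R$ with $K\subseteq V$ and $\GKdim(R/K)\le1$. Each passage from $V_i$ to $V_{i+1}$ is an idealiser operation by construction. Finally, the passage from $V_k$ to $U$ is again an equivalent-order operation: $V_k$ and $U$ are equivalent noetherian orders with $U\subseteq V_k$, so $V_k$ is finitely generated as a $U$-module and the conductor is a nonzero two-sided ideal $K'$ of $V_k$ with $K'\subseteq U$ and $\GKdim(V_k/K')\le1$. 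Inserting Veronese operations on the intermediate rings where needed (exactly as in the first step) and concatenating then exhibits $U$ from $R=T(\divd')$ in the asserted form; the hypothesis $\overline U\ne\kk$ is used precisely to guarantee $\overline V\ne\kk$, so that Theorem~\ref{mainthm-intro1} applies to $V$.

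\emph{Main obstacle.} Once the structural input of the second step is granted, the rest is formal bookkeeping with conductors, finitely generated modules, and GK-dimension, so I expect the real difficulty to be exactly that input: showing that an arbitrary cg noetherian order $U$ differs from a suitable maximal $T$-order $V\supseteq U$ only by a finite chain of idealiser subrings together with one codimension-one (equivalent-order) correction --- equivalently, that the ``codimension-two'' discrepancy between $U$ and $V$ is always of idealiser type. This is where the geometry of the elliptic curve $E$ and the module theory over the twisted homogeneous coordinate ring $B=T/gT=B(E,\sM,\tau)$ must be used essentially, and it is the substantive part that would need to be carried out (presumably in the body of the paper) before the bookkeeping above goes through.
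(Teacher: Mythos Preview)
Your broad outline---Veronese reduction, then idealiser steps, then equivalent-order steps, ending at a virtual blowup---matches the paper's, but the proposed route and the identification of the main obstacle are both off in a substantive way.

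The central technical tool you never mention is \emph{$g$-divisibility}. The paper's idealiser-chain result (Corollary~\ref{main-ish}) is proved only for $g$-divisible subalgebras of $T$, not for arbitrary noetherian $U$. The actual proof of Corollary~\ref{main} therefore proceeds in a different order from yours: first pass from $U$ to $C=U\langle g\rangle$ and then to the $g$-divisible hull $Z=\wh{C}$, showing via Proposition~\ref{max31}, Proposition~\ref{max2} and Corollary~\ref{max33} that $Z$ is an equivalent order finitely generated over $U$ on both sides; \emph{then} apply Corollary~\ref{main-ish} to the $g$-divisible ring $Z$ to obtain the idealiser chain up to a maximal $T$-order $V$ and the associated virtual blowup $F$. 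So the ``structural input'' you flag as the main obstacle is not ``$U$ is directly an iterated subidealiser inside $V$''; rather, the two pieces are (i) controlling the passage $U\rightsquigarrow \wh{U\langle g\rangle}$ (this is the minimal-sporadic-ideal machinery of Section~\ref{MAX1}, which genuinely uses Assumption~\ref{ass:T2}), and (ii) the idealiser description for $g$-divisible rings. Your proposal collapses these and thereby hides where the work is.

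Two smaller problems. First, your Veronese reduction asserts that $T^{(n)}$, regraded, is ``again one of the algebras to which our results apply.'' This is not how the paper handles it and is not obviously true: the relevant central element in $(T^{(n)})_1$ would be $g^n$, but $T^{(n)}/g^nT^{(n)}$ is not a twisted homogeneous coordinate ring, so Assumption~\ref{ass:T} fails for $T^{(n)}$ as stated. The paper instead stays inside $T$ and shows (Theorem~\ref{max32}(2)) that a maximal $T^{(n)}$-order $U$ with $\overline{U}\neq\kk$ is already $V^{(n)}$ for a $g$-divisible maximal $T$-order $V\subseteq T$. Second, in your final equivalent-order step you assert that the conductor $K'$ satisfies $\GKdim(V_k/K')\le 1$; for arbitrary equivalent orders this need not hold, and in the paper it is precisely the $g$-divisibility of the intermediate rings (via Lemma~\ref{GK-results}) that forces such bounds.
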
 
  

 \subsection*{History}\label{subsection-history}
  We briefly explain the history behind these results and their wider relevance.
As we mentioned earlier, noncommutative curves and noncommutative analogues of the polynomial ring 
$\kk[x,y,z]$ have been classified. Motivated by these results, Artin suggested a program for classifying all 
noncommutative surfaces, but 
in order to outline this program we need some notation.
   
  Given  a   cg
domain $A$ of finite Gelfand-Kirillov dimension,   one can invert the nonzero homogeneous elements to
obtain the \emph{graded quotient ring} $Q_{gr}(A)\cong D[t,t^{-1};\alpha]$,\label{gqr-def}
 for some automorphism  $\alpha$ of the division
ring $D = Q(A)_0=D_{gr}(A)$. This division ring will be called   the \emph{function skewfield of~$A$}. 

Let $A$ be a noetherian, cg $\kk$-algebra.  A useful intuition is to regard $\rqgr A$ as the coherent sheaves over the (nonexistent)
noncommutative projective scheme $\Proj (A)$, although we will slightly abuse notation by regarding $\rqgr A$
itself  as that  scheme.  Under this intuition,   a noncommutative surface is $\rqgr A$ for a noetherian cg domain $A$
with   $\GKdim A =3$. (In fact, one should probably weaken this last condition to the assumption that  
  $D_{gr}(A)$ has lower  transcendence degree two in the sense of \cite{Zh}, but that is not  really relevant here.)
There are strong arguments for saying that noncommutative projective planes 
  are the categories $\rqgr A$, as $A$ ranges over the Artin-Schelter regular rings of dimension 3 with 
  the Hilbert series $(1-t)^{-3}$ of a polynomial ring in 3 variables (see \cite[Section~11.2]{SV} for more details). These
  are the algebras classified in \cite{ATV} and  for which the Sklyanin algebra $S=S(a,b,c)$ is the generic example.
In \cite{VdB2, VdB3} Van den Bergh has similarly classified  noncommutative analogues of quadrics and related surfaces.


\begin{artin} 
  Artin    conjectured that the only  function skewfields of noncommutative surfaces 
   are the following:
\begin{enumerate} 
\item[(i)] division rings $D$  finite dimensional over their centres $F=Z(D)$, which are then fields of transcendence degree two;
\item[(ii)]  division rings of fractions of Ore extensions $\kk(X)[z; \sigma,\delta]$ for some curve $X$;
\item[(iii)]  the function skewfield $D=D_{gr}(S)$ of a Sklyanin algebra $S=S(a,b,c)$. In this case we assume  that  
$S$  is not a finite module over its centre.
\end{enumerate}

Artin  then asked for a classification of the   noncommutative surfaces $\rqgr A$ within each birational class;
that is,  the  cg noetherian algebras $A$ with  $D_{gr}(A)$ being a fixed division ring from this list.   
\end{artin}

The case of Artin's programme when $D=\kk(Y)$ is the function field of a surface and $\GKdim A=3$ 
has been completed in \cite{RS,Si} (if one strays from
algebras of Gelfand-Kirillov dimension 3, then things become more complicated,  as   \cite{RSi} shows). 
As explained earlier, in this paper we are interested 
in the other extreme,  that of case (iii)  from Artin's list.

The first main results in this direction come from \cite{Rog09}, of which this paper is a continuation. In particular 
\cite[Theorem~1.2]{Rog09} shows that the 
 maximal orders  $U\subseteq T=S^{(3)}$ that have $Q_{gr}(U)=Q_{gr}(T)$ and are generated in degree one
 are just the  blowups $T(\divd)$,  for an effective divisor $\divd$  on $E$ with    $\deg(\divd)\leq 7$. 
 We remark that in this case $T(\divd)$ is simply the subalgebra of $T$ generated by 
 those elements of $T_1$ whose images in $T/gT$ vanish on $\divd$.  
As such, $T(\divd)$ is quite similar to a commutative blowup and 
 $\rqgr T(\divd)$ also coincides with Van den Bergh's more categorical version of a blow-up in \cite{VdB}. In this paper we will also need 
 $T(\divd)$ when $\deg(\divd) = 8$, and this is harder to describe as it is not generated in degree one. Its construction and 
 basic properties are  described in the companion paper \cite{RSS2}.


  \subsection*{The proofs}\label{subsection-proofs}    For simplicity we assume here that $U$ is a cg subalgebra of 
  $T$ with $Q_{gr}(U)=Q_{gr}(T)$.

A key strategy in the description  of the Sklyanin algebra $S$, and  in  the classification of 
noncommutative projective planes in \cite{ATV}, was  to understand the factor ring $S/gS$, where $g\in S_3=T_1$ is the 
the central element   mentioned earlier. Indeed, one of the main steps  in that paper  was to show that 
$S/gS\cong B(E,\sL,\sigma)$ for the appropriate $\sL$ and $\sigma$.
We apply a similar strategy.  The nicest case is when $U\subseteq T$ is \emph{$g$-divisible} in the sense that $g\in U$ 
and $U\cap gT=gU$. In particular, $\overline{U}=U/gU$ is then a subalgebra of $\overline{T}$ with $\GKdim(\overline{U})=2$.
As such \emph{$\overline{U}$ and hence $U$ are automatically noetherian} (see Proposition~\ref{div-noeth}). Much of this paper 
concerns the classification of $g$-divisible algebras $U$ and the starting point    is the following result.

\begin{theorem}\label{intro4} {\rm (Theorem~\ref{thm-correct})  }
Let $U$ be a $g$-divisible subalgebra of $T$ with 
$Q_{gr}(U)=Q_{gr}(T)$. Then $U$ is an equivalent order to some blow-up $T(\divd)$ at an effective divisor $\divd$ on $E$
with $\deg \divd\leq 8$.
\end{theorem}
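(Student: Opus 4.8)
\textbf{Proof proposal for Theorem~\ref{intro4}.}
The plan is to reduce the classification of $g$-divisible subalgebras $U$ with $Q_{gr}(U)=Q_{gr}(T)$ to a statement about the reduction $\overline{U}=U/gU$ inside $\overline{T}=B(E,\sM,\tau)$, exploit the fact that subalgebras of a TCR of an elliptic curve are essentially governed by their associated data on $E$, and then lift this back up to $T$. First I would record the basic structural consequences of $g$-divisibility: since $g$ is central, normal, and $U\cap gT=gU$, the ring $\overline{U}$ embeds in $\overline{T}$, it is a cg domain, and (using Proposition~\ref{div-noeth}) both $\overline{U}$ and $U$ are noetherian with $\GKdim\overline{U}=2$. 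Moreover $g$-divisibility gives a good grip on how $U$ sits inside $T$: any homogeneous element of $T$ of the same degree as an element of $U$ that agrees with it modulo $g$ differs from it by $g$ times a lower element, so one can build elements of $U$ by successive approximation from $\overline{U}$.

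The heart of the argument is to analyze $\overline{U}\subseteq\overline{T}$. Since $\overline{T}=B(E,\sM,\tau)$ with $|\tau|=\infty$, a cg subalgebra $D$ of $\overline{T}$ with $\GKdim D=2$ and the same graded quotient ring is, up to finite-dimensional discrepancy, of the form $B(E,\sN,\tau)$ for some subsheaf $\sN\subseteq\sM$, i.e.\ $\sN=\sM\otimes\struct_E(-\divd)$ for an effective divisor $\divd$ on $E$; one extracts $\divd$ essentially as the ``base locus'' of $D_1\subseteq \overline{T}_1=H^0(E,\sM)$, intersected over shifts. (This is the elliptic-curve input: because $\tau$ has infinite order, the relevant cohomology vanishing and ampleness statements hold, and the classification of such subalgebras is well behaved.) The degree bound $\deg\divd\le 8$ should come from the numerics: $\deg\sM=9$, and if $\deg\divd\ge 9$ then $B(E,\sN,\tau)$ would have $\GKdim<2$ or fail to have the right quotient ring, contradicting $Q_{gr}(\overline U)=Q_{gr}(\overline T)$. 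I would then identify $T(\divd)$ on the nose as the $g$-divisible subalgebra of $T$ whose reduction modulo $g$ is $B(E,\sM(-\divd),\tau)$ (for $\deg\divd\le 7$ it is generated in degree one by the elements of $T_1$ vanishing on $\divd$, and for $\deg\divd=8$ its construction and $g$-divisibility are imported from \cite{RSS2}).

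Finally I would establish that $U$ and $T(\divd)$ are equivalent orders. Having matched $\overline{U}$ and $\overline{T(\divd)}$ up to a finite-dimensional vector space, one shows that in large degree $U_n$ and $T(\divd)_n$ coincide, or at least that $U_{\ge n_0}$ and $T(\divd)_{\ge n_0}$ are each contained in a common two-sided shift of the other; concretely one produces nonzero homogeneous $a,b$ with $a\,T(\divd)\,b\subseteq U$ and symmetrically, by clearing the bounded discrepancy between the two algebras using multiplication by a sufficiently high power of any fixed element (for instance a power of $g$ together with generic elements of $T_1$) and then lifting the containment $\overline{U}_{\ge n_0}=\overline{T(\divd)}_{\ge n_0}$ through the $g$-adic filtration via the approximation remark above. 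The main obstacle, I expect, is precisely this last lifting step in the case $\deg\divd=8$: there $T(\divd)$ is not generated in degree one, so the comparison of $U$ and $T(\divd)$ is no longer controlled by degree-one data alone, and one must invoke the finer structural results of \cite{RSS2} (its Hilbert series, $g$-divisibility, and the precise description of its graded pieces) to carry the equivalence-of-orders argument through; the infinite-order hypothesis on $\tau$ is used throughout to guarantee the necessary cohomology vanishing on $E$ and the absence of finite-dimensional pathologies.
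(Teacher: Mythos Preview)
Your proposal has a genuine gap at the core of the argument: the claim that a cg subalgebra $D\subseteq\overline{T}$ with $Q_{gr}(D)=Q_{gr}(\overline{T})$ is, up to finite dimension, of the form $B(E,\sM(-\divd),\tau)$ for an \emph{effective} divisor $\divd$ is false. What Theorem~\ref{thm:projcurve} actually gives is $D\ppe\bigoplus_nH^0(E,\sA\sH_n)$ for an ideal sheaf $\sA$ and an ample invertible sheaf $\sH$, and in the notation of Corollary~\ref{cor:geomdata} this reads $D_n=H^0(E,\sM_n(-\divy-\divx_n))$ with $\divy$ possibly nonzero and $\divx$ possibly non-effective. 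Thus $\overline{U}$ is only an idealiser inside a TCR, and the TCR in question need not be $\overline{T(\divd)}$ for any effective $\divd$; in fact the paper only proves that $\overline{U}$ and $\overline{T(\divd)}$ are \emph{equivalent orders} (Proposition~\ref{prop:find D}), not equal in high degree. Consequently your lifting step, which is predicated on $\overline{U}_{\geq n_0}=\overline{T(\divd)}_{\geq n_0}$, cannot get started.

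The paper's actual argument addresses exactly this obstruction and looks quite different from your sketch. Having produced $\divd$ with $\overline{U}$ and $\overline{R}=\overline{T(\divd)}$ equivalent orders, the hard step is Proposition~\ref{prop:UinR}, which proves $U\subseteq T_{\leq k}*T(\divd)$ directly in $T$ (not merely modulo $g$). This is done by a delicate filtration argument using the right ideals $Q(i,r,e,p_j)$ and left ideals $Q'(i,r,e,p_h)$ of Section~\ref{RIGHT IDEALS}: assuming the containment fails, one locates a minimal ``divisor triangle'' where it first fails, computes the resulting $1$-critical factor as a point module via Lemma~\ref{step 1} and Corollary~\ref{step 2}, compares the right- and left-sided computations (Claims~1--3 in the proof), and reaches a contradiction with the precise vanishing recorded in \eqref{combinatorics}. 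With $U\subseteq T_{\leq k}*T(\divd)$ in hand, the bimodule $M=\wh{UT(\divd)}$ is finitely generated over $T(\divd)$, $W=\End_{T(\divd)}(M)$ sits between $U$ and $T$, and one transfers the equivalence $\overline{U}\sim\overline{R}$ up to $U\sim R$ via Proposition~\ref{thm:equivord}. None of this is visible from a naive $g$-adic approximation, and the case $\deg\divd=8$ is not the bottleneck --- the real issue is the shape of $\overline{U}$.
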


It follows easily from this result that  
\emph{a $g$-divisible maximal $T$-order $U $ equals $\End_{T(\divd)}(M)$
 for some finitely generated  right $T$-module $M$} (see Corollary~\ref{correct3}).  
When $U$ is  $g$-divisible, the rest of the proof of Theorem~\ref{mainthm-intro1} amounts to 
showing that,  
\emph{up to a finite dimensional vector space, 
$\overline{U}=B(E, \sM(-\divd'),\tau)$, for some virtually effective divisor 
$\divd'=\divd-\divy+\tau^{-1}(\divy)$} (see Theorem~\ref{thought9}).  
 This is also  the key property in the definition of  a 
blowup at such a divisor (see Definitions~\ref{def:blowup} and ~\ref{virtual-defn} for 
more details).

  Now suppose that $U$ is  not necessarily $g$-divisible and set $C=U\langle g\rangle$ with \emph{$g$-divisible hull}
  $$\wh{C}=\{\theta\in T : g^m\theta\in C  \text{ for some } m\geq 0\}.$$  The remaining step in the proof of 
  Theorem~\ref{mainthm-intro1} is to show that $U$, $C$ and $\wh{C}$ are equivalent orders. This in turn follows from the 
  following fact.
  \emph{Let $V$ be a   graded subalgebra of  $ T$ with $g\in V$ and $Q_{gr}(V)=Q_{gr}(T)$. Then $V$
  has a minimal \cog\ in the sense that $V$  has a unique ideal $I$ minimal 
with respect to $\GKdim(V/I)\leq 1$ and $V/I$ being $g$-torsionfree} (see Corollary~\ref{max30}).

  \subsection*{Further results} 
  The $g$-divisible subalgebras of $T$ are closely related to 
   subalgebras of the (ungraded) localised ring $T^\circ = T[g^{-1}]_0$. 
  The algebra $T^\circ$ is a hereditary noetherian domain of GK-dimension 2 
  and can be thought of as a noncommutative coordinate ring of the affine space $\mathbb{P}^2\smallsetminus E$.
  By \cite{RSS}, any subalgebra of $T^\circ$ is noetherian and so the algebras $U^\circ =U[g^{-1}]_0 \subseteq T^\circ$ 
  give a plentiful supply of noetherian domains of GK-dimension 2.
  All the above results   have parallel versions for orders in $T^\circ$.  For example:
  
  \begin{corollary}\label{affine-intro1}
 {\rm (Corollary~\ref{localthm} and  Corollary~\ref{A-cog})}   Let $A$ be a subalgebra of $T^\circ $ with   $Q(A)=Q(T^\circ).$

  \begin{enumerate}
  \item  The algebra  $A$ has finitely many prime ideals and  DCC on ideals.  
  \item  If $A$ is a maximal $T^\circ$-order then $A=T(\divd')^\circ$  for   some   
  virtually effective divisor $\divd'$. 
  \end{enumerate}
  \end{corollary}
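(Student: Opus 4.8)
The plan is to pull the whole statement back to the $g$-divisible graded subalgebras of $T$, which have been analysed in the body of the paper, and then localise. First I would set up the basic correspondence. Given $A\subseteq T^\circ$ with $Q(A)=Q(T^\circ)$, regard $T[g^{-1}]$ as $\ZZ$-graded with $g$ in degree $1$ and $T[g^{-1}]_0=T^\circ$, and put
\[
U=U(A):=\bigoplus_{n\ge 0}(Ag^n\cap T_n)\subseteq T .
\]
One checks (routinely, using that $g$ is central) that $U$ is connected graded, that $g\in U$ and $U$ is $g$-divisible, that $U^\circ=U[g^{-1}]_0=A$, and that $Q_{gr}(U)=Q_{gr}(T)$; moreover $A\mapsto U(A)$ is an inclusion-preserving bijection between the subalgebras of $T^\circ$ with quotient ring $Q(T^\circ)$ and the $g$-divisible connected graded subalgebras of $T$ with graded quotient ring $Q_{gr}(T)$, with inverse $U\mapsto U^\circ$. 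Since $Q(T^\circ)$ has GK-dimension $2$ we have $A\ne\kk$, hence $\overline{U(A)}\ne\kk$ (indeed $\overline U=\kk$ forces $U=\kk[g]$, so $A=\kk$). Finally I would record the induced dictionary on ideals: two-sided ideals of $A$ correspond to the $g$-divisible graded ideals of $U$ (those $J$ with $U/J$ $g$-torsionfree), the prime spectrum of $A$ corresponds to the graded primes of $U$ avoiding $g$, and the correspondence carries equivalent orders to equivalent orders. The one point here needing an argument is that $A$ is a maximal $T^\circ$-order if and only if $U(A)$ is a maximal $T$-order: if $A$ is maximal but $U(A)\subsetneq U'\subseteq T$ is an equivalent order, then $C=U'\langle g\rangle$ and its $g$-divisible hull $\wh C$ satisfy $U(A)\subsetneq \wh C\subseteq T$ with $\wh C$ $g$-divisible and, by Corollary~\ref{max30}, equivalent to $U'$ and hence to $U(A)$; localising yields an equivalent order $A\subsetneq \wh C^\circ\subseteq T^\circ$, a contradiction.

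Granting this, part~(2) is immediate. If $A$ is a maximal $T^\circ$-order then $U=U(A)$ is a $g$-divisible connected graded maximal $T$-order with $\overline U\ne\kk$, so Theorem~\ref{max32} applies and produces a virtually effective divisor $\divd'$ with $\deg\divd'\le 8$ such that the associated maximal order is $F(U)=T(\divd')$; by Proposition~\ref{correct-intro}, $U\subseteq T(\divd')\subseteq T$ is an equivalent order. Localising at $g$ gives an equivalent order $A=U^\circ\subseteq T(\divd')^\circ\subseteq T^\circ$, and maximality of $A$ forces $A=T(\divd')^\circ$.

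For part~(1) I would again set $U=U(A)$ and translate through the dictionary. By Corollary~\ref{max30}, $U$ has a unique minimal \cog, and chasing this through the $g$-divisible correspondence says exactly that $A$ has a unique ideal $I$ minimal with respect to $\GKdim(A/I)\le 1$. More precisely, applying the (ungraded shadow of the) structure result Corollary~\ref{main-intro}, $A$ is built from some $T(\divd')^\circ$ by a finite chain of idealiser subrings $\idealiser(J)$ and passages to equivalent suborders $K\subseteq A'\subseteq V$ with $K$ an ideal of $V$ and $\GKdim(V/K)\le 1$. It then suffices to know that ``finitely many prime ideals and DCC on ideals'' holds for each $T(\divd')^\circ$ — which I would take from the analysis of blowups in \cite{RSS2}, where the ideal lattice of $T(\divd')^\circ$ is pinned down (it is controlled by the finitely many ``exceptional'' primes, over and above the zero ideal) — and that both properties are inherited by idealiser subrings (via Robson's idealiser theory, relating $\Spec\idealiser(J)$ to $\Spec V$) and by the extensions $K\subseteq A'\subseteq V$ above (primes avoiding $K$ are in bijection with those of $V$, while those containing $K$ are the primes of the small ring $A'/K$, and descending chains of ideals are controlled the same way).

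The main obstacle is not the localisation step itself but the bookkeeping behind it: proving that the $g$-divisible dictionary genuinely respects equivalence of orders, maximal orders and the (two-sided, $g$-torsionfree-graded) ideal structure, and then checking that the finiteness conditions in part~(1) survive the constructions — idealisers and equivalent suborders — appearing in Corollary~\ref{main-intro}. Unlike the noetherian property, which is automatic here by \cite{RSS}, finiteness of $\Spec$ and DCC on ideals are not preserved under arbitrary idealisers or equivalent suborders, so each step of the structural decomposition has to be checked individually; this, together with making the ideal lattice of $T(\divd')^\circ$ explicit, is where the real work lies.
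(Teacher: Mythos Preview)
Your overall strategy---pull back to $\Omega A$, apply the $g$-divisible theory, then localise---is exactly the paper's route (Lemma~\ref{lem:local}, Corollary~\ref{local1}, Corollaries~\ref{localthm} and~\ref{A-cog}). But each part contains a concrete error.

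For part~(2), the claim ``$U \subseteq T(\divd') \subseteq T$'' is false: the virtual blowup $F = T(\divd')$ at a non-effective $\divd'$ need not lie in $T$---Proposition~\ref{Tmax-eg} gives an explicit example with $F \not\subseteq T$---so you cannot localise and invoke maximality of $A$ inside $T^\circ$ to force $A = F^\circ$. The paper's Corollary~\ref{localthm} works instead with $V = F \cap T$: since $\Omega A$ is a maximal $T$-order one gets $\Omega A = V$, whence $A = V^\circ$.

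For part~(1), you mistranslate the sporadic-ideal condition and then take an unnecessary detour. Under the bijection between $g$-divisible ideals of $U = \Omega A$ and ideals of $A$, a sporadic ideal $J$ (that is, $\GKdim U/J \le 1$) corresponds to an ideal $I$ with $\dim_\kk A/I < \infty$, not merely $\GKdim A/I \le 1$: since $U/J$ is $g$-torsionfree of GK-dimension $\le 1$, Lemma~\ref{lem:constant} makes it a finitely generated $\kk[g]$-module, so $A/I = (U/J)[g^{-1}]_0$ is finite-dimensional. Moreover, \emph{every} nonzero $g$-divisible ideal of $U$ satisfies $\GKdim U/J \le 1$ by Lemma~\ref{GK-results}(1), so the minimal sporadic ideal of $U$ (Proposition~\ref{thou100}) corresponds to the minimal \emph{nonzero} ideal $I$ of $A$, and $A/I$ is artinian. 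Finiteness of $\Spec A$ and DCC on ideals are then immediate. Your proposed route through the structural decomposition of Corollary~\ref{main-intro}---analysing the ideal lattice of $T(\divd')^\circ$ and checking that finiteness of $\Spec$ and DCC survive idealisers and equivalent suborders---is therefore entirely unnecessary, and as you yourself note, you have not actually verified those preservation steps.
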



   \subsection*{Organisation of the paper} 
   In Section~\ref{HATTRICK} we prove basic technical results, including the important, though easy, fact  that any $g$-divisible
    subalgebra of $T$ is strongly noetherian (see Proposition~\ref{div-noeth}).
   Section~\ref{CURVES} is devoted to studying finitely generated graded orders in $\kk(E)[t;\tau]$.
   The main result (Theorem~\ref{thm:projcurve})  shows that any such order  is (up to finite dimension) an idealiser in a twisted homogeneous coordinate ring. This 
   improves on  one of the main results from  \cite{AS} and has useful applications to the study of  point modules over such an algebra.  
   Section~\ref{RIGHT IDEALS} incorporates needed results from \cite{RSS2} about right ideals of~$T$ and the blowups $T(\divd)$ at effective divisors.  
   
   Sections~\ref{equiv2-sec}--\ref{EXISTENCE} are devoted to $g$-divisible algebras in $T$.
   The main result of Section~\ref{equiv2-sec} is Theorem~\ref{intro4} from above.  
   Section~\ref{MAX-SECT} is concerned with  the structure of $V=\End_{T(\divd)}(M)$, where $M\subset T$ is a
    reflexive $T(\divd)$-module and $\divd$ is effective. Most importantly, Theorem~\ref{thought9} describes 
    the factor $V/gV$.
   Section~\ref{EXISTENCE}  pulls these results together,  proves Theorem~\ref{mainthm-intro2}
   for $g$-divisible algebras and draws various conclusions. 
   
       In Section~\ref{MAX1} we show that various algebras have 
   minimal \cogs. This is then used to complete the proof of Theorem~\ref{mainthm-intro1}.
   Section~\ref{ARBITRARY} studies subalgebras of the Veronese rings  $T^{(m)}$ and algebras $U$ with $\overline{U}=\kk$.  We    apply this to prove Corollary~\ref{main-intro}. Finally, Section~\ref{EXAMPLES} is devoted to examples.
   At the end of the paper we also provide an index of notation.
   
   \subsection*{Acknowledgements}
   Part of this material is based upon work supported by the National Science Foundation under Grant No. 0932078 000, while the authors were in residence at the Mathematical Science Research Institute (MSRI) in Berkeley, California, during the Spring 
   semester of 2013.   During this trip, Sierra was also partially supported by the Edinburgh Research Partnership in Engineering and Mathematics, and Stafford was partially supported by the Clay Mathematics Institute
   and  Simons Foundation.    The authors gratefully acknowledge the   support of all these organisations.


\section{Basic results}
\label{HATTRICK}

In this section we collect the basic definitions and results that will be used throughout the paper.

Throughout the paper $\kk$ is an algebraically closed field and all rings will be $\kk$-algebras.
If $X$ is a projective $\kk$-scheme, $\mc{L}$ is an invertible sheaf on $X$, and $\sigma: X \to X$ 
is an automorphism, then there is a  \emph{TCR}  or \emph{twisted homogeneous coordinate ring}    
\label{TCR-defn}
$B = B(X, \mc{L}, \sigma)$ associated to this data and defined as follows.  Write  
$\mc{F}^{\sigma} = \sigma^*(\mc{F})$  for a pullback of a sheaf  $\mc{F}$ on $X$ and set
$\mc{F}_n = \mc{F} \otimes \mc{F}^{\sigma} \otimes \dots \otimes \mc{F}^{\sigma^{n-1}}$ for $n\geq 1$.  Then 
\[
B = \bigoplus_{n \geq 0} H^0(X, \mc{L}_n), \ \ \  \text{with product}\  \ \ x * y = x \otimes (\sigma^m)^*(y), \ \ \text{for}\  \ x \in B_m, y \in B_n.
\]
In this paper  $X = E$ will usually be a smooth elliptic curve, 
and a review of some of the important properties of   $B(E, \mc{L}, \sigma)$ in this case 
can be found in \cite{Rog09}.    It is well-known, going back to \cite{ATV},  that much of the structure of the Sklyanin 
algebra $S$ is controlled by the factor ring $S/gS\cong B(E, \mathcal{L},\sigma)$, and this in turn can
 be analysed  geometrically.  
 
In fact, there are several different families of Sklyanin algebras, and we first  set up a framework which will allow our results  to apply to subalgebras of  any of these   (and, indeed, more generally). 
Recall that for an $\mb{N}$-graded ring $R = \bigoplus_{n \geq 0} R_n$  
the \emph{$d^{th}$ Veronese ring},  for $d\geq 1$, is $R^{(d)} = \bigoplus_{n \geq 0} R_{nd}$.  Usually this is   graded 
by setting $R^{(d)}_n=R_{nd}$.  However,  we will sometimes want to regard $R^{(d)}$ as a graded subring of $R$, 
in which case each $R_{nd}$ maintains its degree $nd$; we will call this the \emph{unregraded} Veronese ring.
\label{unregraded-defn}
In this paper it will be easier to work  with the 3-Veronese ring of the Sklyanin $T=S^{(3)} =\bigoplus_{n\in \mathbb{Z}} T_n$, largely because this ensures that  the canonical central element  $g\in T_1$.
Similar comments will apply to the other families, and so in the body of the paper we will work with algebras satisfying the following hypotheses.   

\begin{assumption}
\label{ass:T}
Let $T$ be a cg $\kk$-algebra which is a domain with a central element $g \in T_1$, such that there is a graded isomorphism
$T/gT \cong B = B(E, \mc{M}, \tau)$ for a smooth elliptic curve $E$, invertible sheaf $\mc{M}$ 
with $\mu = \deg \mc{M} \geq 2$, and infinite order automorphism $\tau$. 
Such a $T$ is called an {\em elliptic algebra of degree $\mu$}. 
\end{assumption}
\noindent
We will assume that Assumption~\ref{ass:T} holds  throughout the paper.
In the language of \cite{VdB}, the assumption can be interpreted geometrically 
to say that the surface $\rqgr T$ contains the commutative elliptic curve $\rqgr B \simeq \coh E$ as a divisor.
We will need  stronger conditions on $T$ in the main results of Section~\ref{MAX1} (see Assumption~\ref{ass:T2}).

\begin{examples}\label{Skl-ex}
The hypotheses of  Assumption~\ref{ass:T} are satisfied in a  number of examples, in particular for 
 Veronese rings of the following types of Sklyanin  algebras.

\begin{enumerate}
\item Let $S$ be the quadratic Sklyanin algebra
\[
S(a,b,c) = \kk\{x_0,x_1,x_2\}/(ax_ix_{i+1}+bx_{i+1}x_{i}+cx^2_{i+2} : i\in \mathbb{Z}_3),
\]
for appropriate $[a, b, c] \in \mathbb{P}^2_k$, and let $T = S^{(d)}$ for $d=3$.

\item Let $S$ be the cubic Sklyanin algebra
\[
S(a, b, c) = \kk \{x_0, x_1 \}/(ax_{i+1}^2x_i + b x_{i+1}x_ix_{i+1} + a x_ix_{i+1}^2 + c x_i^3 : i \in \mathbb{Z}_2),
\]
for appropriate $[a, b, c] \in \mathbb{P}^2_k$ and let $T = S^{(d)}$, for $d=4$. 

\item Let $x$ have degree $1$ and $y$ degree $2$, and  set
\[
S=S(a, b, c) = \kk \{ x, y \}/(ay^2x + cyxy + axy^2 + bx^5, \, ax^2y + cxyx + ayx^2 + by^2),
\] 
for appropriate $[a, b, c] \in \mathbb{P}^2_k$, and let $T = S^{(d)}$, for $d=6$.  

 \item There are other   examples satisfying these hypotheses; for example take $T=B(E,\sM,\tau)[g]$,
 where $\sM$ is an invertible sheaf on the elliptic curve $E$ with $\deg \sM\geq 2$ and $|\tau|=\infty$.
  
\end{enumerate}
The restrictions on the  parameters $\{a,b,c\}$ in (1--3) are determined as follows.
In each case, there exists a central element $g \in S_d$ such that 
$S/gS \cong B=B(E, \mc{L}, \sigma)$,\label{sigma-defn}  for some $\mathcal{L}$ and $\sigma$.
This factor ring also determines the Sklyanin algebra, since 
$g$ is the unique relation for $B$ of degree $d$. The requirements on $\{a,b,c\}$ are 
  that $E$ is an elliptic curve and that 
 $|\sigma|=\infty$. Explicit criteria on the parameters are known for $E$ to be an elliptic curve but not for $|\sigma|=\infty$; nevertheless this will be the case when the parameters are generic.  
 In these examples 
$\deg \mc{L} = 3, 2, 1$, respectively and hence  $T/gT \cong B(E, \mc{M}, \sigma^d)$, where $\mc{M} = \mc{L}_d $ 
has degree $\mu=d\cdot (\deg \mc{L}) =9,8,6$, respectively.  
The details for these comments can be found in \cite{ATV,ATV2,Ste}.
\end{examples}

\medskip 
 
\begin{notation}\label{quot-not}
All  algebras $A$ considered in this paper are domains of finite Gelfand-Kirillov dimension,
written $\GKdim(A)$. If   $A$ is graded, 
then  the set $\mathcal{C}$  of non-zero homogeneous elements therefore forms an Ore set
 (see \cite[Corollary~8.1.21]{MR}
and \cite[C.I.1.6]{NV}). By  \cite[A.14.3]{NV}   the localisation 
$Q_{gr}(A)=A\mc{C}^{-1}$ is a \emph{graded division ring} in the sense that $Q_{gr}(A)$ is an 
Ore extension $Q_{gr}(A)=D[z,z^{-1};\alpha]$  of a division ring $D$ by an
 automorphism $\alpha$; thus $zd=d^\alpha z$  for all $d\in D$. 
The algebra $D$ will be denoted $D=D_{gr}(A)$ and called the \emph{function skewfield of $A$},  while $Q_{gr}(A)$ will be called the \emph{graded quotient ring of $A$}.  
\end{notation}

\begin{notation}\label{homomorphism-notation}  For the most part the algebras $A$ considered in this paper 
will be connected graded, in which case we usually work in the category 
  $\rGr A$   of $\mb{Z}$-graded right $A$-modules, with homomorphisms $\Hom_{\rGr A}(M,N)$
   being graded of degree zero. In particular an isomorphism of graded modules or rings will be assumed to be graded of degree zero, unless otherwise stated.
   The category of  noetherian graded right $A$-modules will be written $\rgr A$, \label{rgr-defn}
   while the category of ungraded
    modules will be written $\text{Mod-}A$, and we reserve the term $\Hom(M,N)=\Hom_A(M,N)$ \label{hom-defn}
    for homomorphisms in the ungraded category.  
 For $M,N\in \rGr A$,  
  the \emph{shift} $M[n]$ \label{shift-defn}  is defined by 
 $M[n]=\bigoplus M[n]_i$ for $M[n]_i=M_{n+i}$. 
Similar comments apply to $\Ext_{\rGr A}$ and $\Ext_{A}$ as well as to $\End_A(M)=\Hom_A(M,M)$.  
If $\fd A$ denotes the category of finite dimensional (right) $A$-modules,
then we write $\rqgr A$  for the quotient category $\rgr A/\fd A$. Similarly, $A \lqgr = A \lgr/ A \lfd$
is the quotient category of noetherian graded left modules modulo finite-dimensional modules.
\label{rqgr-defn}
The basic properties 
of this construction can be found in \cite{AZ}.
\end{notation}

 \medskip
\begin{notation}\label{startup}  Write $T_{(g)}$ for the homogeneous  localisation of $T$ at the completely prime ideal $gT$; 
thus $T_{(g)}=T\mathcal{C}^{-1} $ for   $\mathcal C$  the set of  homogeneous elements in $T\smallsetminus gT$.
Note that $T_{(g)}/gT_{(g)} \cong Q_{gr}(B) = \kk(E)[t, t^{-1}; \tau]$, a ring of twisted Laurent polynomials over the function field 
of $E$.   In particular, $T_{(g)}/gT_{(g)}$  is a graded division ring and by  \cite[Exercise~1Q]{GW1989} it is also simple as an ungraded ring. 
Also, as will be used frequently in the body of the paper, 
 \begin{equation}\label{startup1}
 \text{\it the only graded right or left ideals 
 of $T_{(g)} $ are the $g^nT_{(g)}$.}
 \end{equation}

For any graded vector subspace $X \subseteq T_{(g)}$,  set  
$$\widehat{X}=\{t\in T_{(g)} | tg^n\in X \text{ for some } n\in\mathbb{N}\}.$$
We say that $X$ is {\em $g$-divisible}\label{g-div}
 if $X \cap gT_{(g)} = gX$.   
Note that 
if $X$ is $g$-divisible and $1 \in X$ (as happens when $X$ is a subring of $T_{(g)}$), then $g \in X$.  
For any $\kk$-subspace $Y$ of $T_{(g)}$, write $\overline{Y}=(Y+gT_{(g)})/ gT_{(g)}$
 for the image of $Y$  in
$ T_{(g)}/gT_{(g)}$.

If $R \subseteq T_{(g)}$ is a subalgebra with $g \in R$, then the \emph{$g$-torsion} submodule \label{g-tors}
of  a right $R$-module $M$
  is $\tg(M) = \{m \in M | g^n m = 0\ \text{for some}\ n \geq 1 \}$.  We say that 
  $M$ is \emph{$g$-torsionfree} if $\tg(M) = 0$ and 
\emph{$g$-torsion} if $\tg(M) = M$. \end{notation}

We notice that the rings $T$ automatically satisfy some useful additional properties.
An algebra $C$ is called  \emph{just infinite} \label{ji-defn}
if every  every nonzero ideal $I$ of $C$  satisfies $\dim_{\kk}C/I<\infty$.

\begin{lemma}
\label{lem:T-prop}
Let $T$ satisfy Assumption~\ref{ass:T}.  Then
\begin{enumerate}
\item $T$ is generated as an algebra in degree $1$.
\item Any finitely generated, cg subalgebra  of $Q_{gr}(T/gT)=\kk(E)[z,z^{-1};\tau]$, in particular $T/gT$ itself, is 
just infinite. 
\end{enumerate}
\end{lemma}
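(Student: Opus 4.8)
The plan is to prove the two statements using standard facts about the twisted homogeneous coordinate ring $B = B(E,\mc{M},\tau)$ and its graded quotient ring $\kk(E)[z,z^{-1};\tau]$. For part (1), the point is that $T$ is generated in degree $1$ if and only if the multiplication maps $T_1 \otimes T_{n} \to T_{n+1}$ are surjective for all $n \geq 1$, and I would deduce this from the corresponding (known) fact for $B$ together with the central element $g \in T_1$. Explicitly, since $T/gT \cong B$ and $B = B(E,\mc{M},\tau)$ with $\deg\mc{M} = \mu \geq 2$, the ring $B$ is generated in degree $1$: this is the classical result that $B(E,\mc{L},\sigma)$ is generated in degree one when $\deg \mc{L} \geq 3$, and more generally when $\deg \mc{L} \geq 2$ one still has surjectivity $H^0(\mc{L}_1)\otimes H^0(\mc{L}_n) \to H^0(\mc{L}_{n+1})$ for $n \geq 1$ (one only loses generation in degree $1$ when $\deg\mc{L} = 1$, and even then not for $\mu \geq 2$). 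Then I would lift: given $t \in T_{n+1}$, its image $\bar t \in B_{n+1}$ is a sum of products $\bar a_i \bar b_i$ with $\bar a_i \in B_1$, $\bar b_i \in B_n$; choosing lifts $a_i \in T_1$, $b_i \in T_n$ we get $t - \sum a_i b_i \in gT_n = g\cdot T_n$, and since $g \in T_1$ and (by induction on $n$) $T_n$ is generated in degree one, we are done; the base case $n=1$ is the hypothesis that $T$ is cg with the stated presentation, or rather follows since $T_1$ generates $T_2$ already from the $B$-statement plus $g T_1 \subseteq T_1 \cdot T_1$.

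For part (2), let $C \subseteq Q_{gr}(B) = \kk(E)[z,z^{-1};\tau]$ be a finitely generated cg subalgebra and let $I$ be a nonzero ideal of $C$; I must show $\dim_\kk C/I < \infty$. The key observation is that $C$ is a cg domain of GK-dimension $\leq 2$ sitting inside the graded division ring $\kk(E)[z,z^{-1};\tau]$, whose degree-$n$ piece is the one-dimensional $\kk(E)$-vector space $\kk(E)z^n$. Pick a nonzero homogeneous element $0 \neq f z^d \in I$ (nonzero ideals of a cg algebra contain nonzero homogeneous elements, since $I$ is graded: note any ideal of a cg algebra is automatically graded only if... actually $I$ need not be graded, so instead take a nonzero $c \in I$, write it as a sum of homogeneous components and use that $C$ is a domain — more carefully, I would first reduce to homogeneous ideals by noting that if $c = c_{n_1} + \dots + c_{n_k} \in I$ with $c_{n_1} \neq 0$ of lowest degree, then for any homogeneous $h \in C_m$ of positive degree with $h c_{n_1} \neq 0$... this is getting delicate). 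The cleaner route: since $C$ is a graded-domain subalgebra of a graded division ring, $Q_{gr}(C) = D_{gr}(C)[w,w^{-1};\alpha]$, and I claim $C$ has Krull dimension $\leq 1$ as a graded ring — in fact, the crucial input is that any nonzero graded ideal $J$ of $C$ has $\dim_\kk C/J < \infty$, which I would establish by showing that $J$ contains a homogeneous element $f z^d$ with $f \in \kk(E)^\times$, and then using that $C_n \cdot (fz^d) \subseteq J$ forces $\dim_\kk C_n/J_n$ bounded via the valuations/divisor of $f$ on $E$ together with the fact that $\tau$ has infinite order. For a general (not necessarily graded) nonzero ideal $I$, reduce to the graded case: let $J = $ the largest graded ideal contained in... no — rather, show directly that $I$ contains a nonzero homogeneous element by a standard argument (take $0 \neq c \in I$ of minimal number of homogeneous components; multiplying on one side by suitable homogeneous elements of $C$ and using that $C$ is an Ore domain inside a graded division ring, the components can be separated).

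\textbf{Main obstacle.} The substantive point in (2) is controlling $\dim_\kk C/J$ for a nonzero \emph{graded} ideal $J$. Here I expect to argue as follows. Write $\ell$ for the degree of a nonzero homogeneous $\phi = fz^\ell \in J$ with $f \in \kk(E)^\times$ (such an element exists since $Q_{gr}(B)_\ell = \kk(E)z^\ell$ and a nonzero graded ideal meets infinitely many graded pieces nontrivially once it meets one — multiply by $z^{\pm 1}$-like elements of $C$; if $C$ itself is missing those one passes to a Veronese or uses $gz$-type elements, but in the cg setting $C_{>0}$ is nonzero and one composes). Then for $n \gg 0$, $C_n \phi \subseteq J_{n+\ell}$, and since $C_n \subseteq \kk(E) z^n$ has $\dim_\kk C_n$ bounded (GK-dimension $2$ gives $\dim_\kk C_n$ growing at most linearly; but actually inside $\kk(E)z^n$ one needs more), the quotient $C_{n+\ell}/J_{n+\ell}$ embeds into $\kk(E)z^{n+\ell}/C_n \cdot f z^{n+\ell}$... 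The finiteness ultimately rests on: $\kk(E)$ is a field of transcendence degree $1$, $C$ has GK-dimension $2$ so $C_n$ grows linearly, and the "poles and zeros" of the various generators of $C$ together with the infinite order of $\tau$ prevent $C/J$ from having infinitely many nonzero graded pieces of positive dimension — precisely, $C/J$ is a finitely generated graded module over $C$ that is torsion over $C$ (killed by $\phi$ up to the grading twist), hence has GK-dimension $\leq 1$; combined with $C/J$ being a cyclic $C$-algebra and the just-infiniteness-type behavior of $1$-critical graded modules over $C$, one gets $\dim_\kk C/J < \infty$. This last reduction — from "GK-dimension $\leq 1$ module" to "finite-dimensional" — is exactly where one uses that a $1$-dimensional graded domain contained in $\kk(E)[z,z^{-1};\tau]$ has no proper infinite-dimensional graded quotients, i.e. the Hilbert function of $C/J$ is eventually zero, which I would extract from the structure of graded ideals in such rings (this is essentially the content of the classification of cg domains of GK-dimension one, or can be done by hand using valuations on $E$). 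I would cite \cite{AS} or \cite{artinstafford} for the GK-dimension-$2$ structure if a direct argument proves too long.
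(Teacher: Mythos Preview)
Your argument for part~(1) is correct and is exactly the paper's: $B = B(E,\mc{M},\tau)$ is generated in degree $1$ since $\deg\mc{M} \geq 2$ (the paper cites \cite[Lemma~3.1]{Rog09}), and then $T_{n+1} = T_1 T_n + g T_n = T_1 T_n$ by induction.

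For part~(2) the paper gives no argument at all---it simply cites \cite[Corollary~2.10 and Section~3]{RSS}---so there is nothing to compare against except your direct attempt, which has a genuine gap at precisely the point you label the ``main obstacle''. You correctly reach the statement that for a nonzero graded ideal $J$ of $C$ one has $\GKdim(C/J) \leq 1$: pick $0 \neq \phi \in J_\ell$ and use that $C\phi \subseteq J$ with $C/C\phi$ of GK-dimension $\leq 1$. But the step from $\GKdim(C/J) \leq 1$ to $\dim_\kk C/J < \infty$ is the entire content of the result, and nothing you write closes it. Your appeal to ``a $1$-dimensional graded domain \dots\ has no proper infinite-dimensional graded quotients'' is a non-sequitur: $C/J$ is not a domain, and in general a cg domain of GK-dimension~$2$ can certainly have factor rings of GK-dimension~$1$ (e.g.\ $\kk[x,y]/(y)$). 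What rules this out here is specific to $C$ sitting inside $\kk(E)[z,z^{-1};\tau]$ with $|\tau|=\infty$, and it requires real input: either the Artin--Stafford structure theory showing $C$ is (in large degree) an idealiser in a TCR $B(E,\mc{N},\tau)$, combined with the classification of graded primes of such a TCR for infinite-order $\tau$, or the direct arguments of \cite{RSS}. Your closing sentence, that you would cite \cite{AS}, is the honest endpoint---but then you are invoking the same nontrivial external input the paper does, not giving an independent proof.

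A minor point: your reduction from arbitrary ideals to graded ones can be made clean. For any ideal $I$ of the cg domain $C$, filter by total degree and pass to the associated graded; the leading-term ideal $\gr(I)$ is a nonzero \emph{graded} two-sided ideal (nonzero since $C$ is a domain), and $\dim_\kk C/I = \dim_\kk C/\gr(I)$ because the two have the same Hilbert series. This avoids the meandering in your second paragraph.
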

\begin{proof}
(1)  Since $ \mu\geq 2$,
the ring $B = T/gT \cong B(E, \mc{M}, \tau)$ is generated in degree 1 \cite[Lemma~3.1]{Rog09}. 
  Thus $T_2 = (T_1)^2 + gT_1 = (T_1)^2$ and, by induction,  $(T_1)^n = T_n$ for all $n \geq 1$.

(2) This follows from \cite[Corollary~2.10 and Section~3]{RSS}.
\end{proof}

\noindent

 As the next few results show, $g$-divisible algebras and modules have  pleasant properties. 
 The first   gives a useful, albeit easy, alternative characterisation of $\wh{X}$ that will be used without particular reference.

\begin{lemma}\label{lem-triv}
Let $R \subseteq T_{(g)}$ be a cg subalgebra with $g \in R$, and let 
$X \subseteq T_{(g)}$ be a graded right $R$-module.  Then $X \subseteq \wh{X}$, and $\wh{X}$ is also 
a right $R$-module.    Moreover:
$$\qquad \text{$X$ is $g$-divisible\ \ 
$\iff$ \  $X = \wh{X}$\ \ 
$\iff$ \  $T_{(g)}/X$ is a $g$-torsionfree $R$-module.} \quad \qed$$
\end{lemma}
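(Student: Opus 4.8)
The plan is to prove the three conditions equivalent by establishing $X$ $g$-divisible $\iff$ $X = \wh X$ first, and then $X = \wh X \iff T_{(g)}/X$ is $g$-torsionfree as a right $R$-module. Before doing either, I would quickly dispatch the two assertions that $X \subseteq \wh X$ and that $\wh X$ is a right $R$-module: the containment is immediate by taking $n = 0$ in the definition $\wh X = \{t \in T_{(g)} \mid tg^n \in X \text{ for some } n\}$, and for the module structure, if $t \in \wh X$ with $tg^n \in X$ and $r \in R$, then since $g$ is central we have $(tr)g^n = (tg^n)r \in XR \subseteq X$, so $tr \in \wh X$; closure under addition follows by choosing a common power of $g$.

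For the first equivalence, suppose $X$ is $g$-divisible, i.e. $X \cap gT_{(g)} = gX$. I would show $\wh X \subseteq X$ by induction on the power $n$ needed: if $tg^n \in X$ with $n \geq 1$, then $tg^n \in X \cap gT_{(g)} = gX$ (using that $g$ is central and a non-zero-divisor, so $tg^n \in gT_{(g)}$), hence $tg^n = gx = xg$ for some $x \in X$; cancelling $g$ (valid since $T_{(g)} \subseteq Q_{gr}(T)$ is a domain) gives $tg^{n-1} = x \in X$, and we descend. The base case $n = 0$ is trivial. Conversely, if $X = \wh X$, then for any $y \in X \cap gT_{(g)}$, write $y = gt = tg$ with $t \in T_{(g)}$; then $tg = y \in X$ shows $t \in \wh X = X$, so $y = gt \in gX$, giving $X \cap gT_{(g)} \subseteq gX$; the reverse inclusion $gX \subseteq X \cap gT_{(g)}$ is clear since $X$ is an $R$-module and $g \in R$.

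For the second equivalence, I would work in the quotient module $T_{(g)}/X$, noting that $g$-torsion of an element $\bar t \in T_{(g)}/X$ means $tg^n \in X$ for some $n \geq 1$, which is exactly the condition $t \in \wh X$ (modulo the case $n=0$, which just says $t \in X$, i.e. $\bar t = 0$). So $\tg(T_{(g)}/X) = 0$ says precisely that $tg^n \in X$ for some $n \geq 0$ forces $t \in X$, i.e. $\wh X \subseteq X$; combined with the always-true $X \subseteq \wh X$, this is $X = \wh X$. Conversely $X = \wh X$ immediately gives $\tg(T_{(g)}/X) = 0$ by the same identification. The only mild subtlety is keeping track of whether we allow $n = 0$, and the argument should just fold that degenerate case into "$\bar t = 0$"; this bookkeeping is the closest thing to an obstacle, but it is entirely routine. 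No deep input is needed beyond $T_{(g)}$ being a domain with $g$ a central nonunit.
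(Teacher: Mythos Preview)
Your proof is correct. The paper itself gives no proof of this lemma (note the $\qed$ at the end of the statement), treating it as an immediate consequence of the definitions; your argument spells out exactly the routine verification the authors had in mind.
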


\begin{proposition}\label{div-noeth}
{\rm (1)}  If  $R$ is any  $g$-divisible  cg   subalgebra of $T$, then $R$  is  finitely generated as a $\kk$-algebra.

\begin{enumerate}
\item[{\rm (2)}]  Let $R$ be a  finitely generated  $g$-divisible  cg   subalgebra of $T_{(g)}$.  Then $R$ is   strongly noetherian. \end{enumerate}
\end{proposition}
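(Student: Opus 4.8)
The plan is to deduce both statements from the structure of $\overline{R} = (R + gT_{(g)})/gT_{(g)}$ together with the $g$-divisibility of $R$.

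\textbf{Setup and the key reduction.} First observe that since $R$ is $g$-divisible with $g \in R$, the associated graded ring of $R$ with respect to the $g$-adic filtration is $\gr_g R = \overline{R}[g]$ (a polynomial ring in one variable over $\overline{R}$), because $g$-divisibility forces $R \cap g^n T_{(g)} = g^n R$ for all $n$, so the $g$-adic filtration on $R$ has associated graded pieces $g^n R / g^{n+1} R \cong \overline{R}$. Since $\overline{R}$ embeds in $T_{(g)}/gT_{(g)} \cong \kk(E)[z, z^{-1}; \tau]$, it is a cg subalgebra of a twisted Laurent polynomial ring over a function field, of Gelfand-Kirillov dimension at most $2$. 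In fact, when $\overline{R} \neq \kk$, it has GK-dimension exactly $2$ (a nontrivial cg subdomain of $\kk(E)[z,z^{-1};\tau]$ cannot have GK-dimension $1$ since that would force it to be commutative inside a function field of a curve extended by $\tau$, contradicting infinite order of $\tau$ unless it lies in a single degree — one handles the $\overline{R} = \kk$ case trivially since then $R = \kk[g]$). The crucial input is Lemma~\ref{lem:T-prop}(2): any finitely generated cg subalgebra of $\kk(E)[z,z^{-1};\tau]$ is just infinite, hence noetherian (a just infinite cg domain of GK-dimension $2$ is noetherian, e.g. by Artin--Stafford or by the results of \cite{RSS} cited in the statement).

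\textbf{Proof of (1): finite generation.} This is the delicate part and is where I expect the main obstacle. One cannot immediately lift generators from $\overline{R}$ because $\overline{R}$ need not be finitely generated unless we first know it sits inside a \emph{finitely generated} subalgebra of $\kk(E)[z,z^{-1};\tau]$, which is exactly the content of Theorem~\ref{thm:projcurve} / Section~\ref{CURVES} (promised but not yet available at this point in the paper) — so I would instead argue directly. Take a finite set of homogeneous elements $r_1, \dots, r_k \in R$ whose images generate a large enough subalgebra of $\overline{R}$; more precisely, since $\overline{R}$ has GK-dimension $\leq 2$ and is a domain, the normalisation argument (or a Noether normalisation in the graded sense) shows $\overline{R}$ is a finite module over a polynomial subring $\kk[\bar{a}, \bar{b}]$ for suitable homogeneous $\bar a, \bar b$. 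Lift $a, b$ and the finitely many module generators to $R$; call the subalgebra they generate (together with $g$) $R_0 \subseteq R$. Then $\overline{R}_0 \supseteq$ a finite-index subalgebra of $\overline{R}$, and a filtered-ring comparison using $\gr_g$ shows $R_0 \subseteq R$ has the property that $R$ is a finitely generated $R_0$-module; since $R_0$ is an affine (hence noetherian, by the just-infinite remark applied to $\overline{R_0}$) algebra, $R$ is affine. The technical care needed is precisely in making the "finite index in $\overline{R}$ lifts to finite module over $R_0$" step rigorous via the $g$-adic filtration; this is the heart of the matter.

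\textbf{Proof of (2): strong noetherianity.} Granted (1), $R$ is affine and $g$-divisible, so $\gr_g R = \overline{R}[g]$ with $\overline{R}$ affine. By Lemma~\ref{lem:T-prop}(2), $\overline{R}$ is just infinite of GK-dimension $\leq 2$, and by \cite[Corollary~2.10, Section~3]{RSS} such an algebra is \emph{strongly} noetherian (finitely generated cg subalgebras of $\kk(E)[z,z^{-1};\tau]$ are strongly noetherian — this is exactly what is cited there). Now for any commutative noetherian $\kk$-algebra $C$, the $g$-adic filtration on $R \otimes_\kk C$ has associated graded ring $(\gr_g R) \otimes_\kk C = (\overline{R} \otimes_\kk C)[g]$. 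Since $\overline{R} \otimes_\kk C$ is noetherian (strong noetherianity of $\overline{R}$), the polynomial ring $(\overline{R} \otimes_\kk C)[g]$ is noetherian by the Hilbert basis theorem, hence $R \otimes_\kk C$ is noetherian by the standard filtered-to-graded lifting lemma (a filtered ring whose associated graded ring is noetherian is itself noetherian, provided the filtration is exhaustive and separated, which holds here since $\bigcap g^n(R \otimes C) = 0$ as $R \otimes C \subseteq T_{(g)} \otimes C$ and $g$ is a regular normal element). This completes the argument, and the only genuine subtlety — beyond the affineness in part (1) — is verifying separatedness of the $g$-adic filtration on $R \otimes_\kk C$, which follows because $g$ remains a nonzerodivisor after the (flat) base change and $T$ itself is $g$-divisible.
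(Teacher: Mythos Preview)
Your argument for (2) is essentially the paper's: show $\overline{R}$ is strongly noetherian and lift via the $g$-adic filtration. The paper packages the lifting as a citation to \cite[Lemma~8.2]{ATV}, which is precisely your filtered-to-graded argument; and for strong noetherianity of $\overline{R}$ the paper cites \cite[Corollary~2.10]{RSS} for noetherianity and then \cite[Theorem~4.24]{ASZ} (noetherian cg of GK-dimension $\leq 2$ implies strongly noetherian), rather than pulling strong noetherianity directly from \cite{RSS}, but either route works. Your worry about separatedness is misplaced: since $g$ has positive degree and everything is $\mathbb{N}$-graded, $\bigcap_n g^n(R\otimes_\kk C)=0$ for trivial degree reasons, with no need to invoke flatness or regularity of $g$.

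Your argument for (1), however, has a real gap and also inverts the natural order. You try to prove finite generation first, via a ``Noether normalisation'' of $\overline{R}$ over a polynomial subring $\kk[\bar a,\bar b]$; but $\overline{R}$ is noncommutative and no such normalisation theorem is available, nor have you established that $\overline{R}$ is noetherian or affine (Lemma~\ref{lem:T-prop}(2) has finite generation as a \emph{hypothesis}, so appealing to it is circular). The paper's fix is much simpler and is exactly the input you say is unavailable: because in (1) we have $R\subseteq T$ (not just $T_{(g)}$), we get $\overline{R}\subseteq \overline{T}=B(E,\sM,\tau)$, and \cite[Theorem~2.9]{RSS} says that \emph{every} cg subalgebra of $B(E,\sM,\tau)$ is noetherian --- no finite-generation assumption required. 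Then \cite[Lemma~8.2]{ATV} lifts noetherianity from $\overline{R}$ to $R$, and finite generation follows at once since for a cg noetherian ring the ideal $R_{\geq 1}$ is finitely generated and any generating set for it generates $R$ as a $\kk$-algebra. In short: noetherian first, affine second.
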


\begin{proof}  (1)   We have $\overline{R} \cong  (R+gT)/ gT \subseteq \overline{T}\cong B(E,\sM,\tau)$  and so
\cite[Theorem~2.9]{RSS} implies that $\overline{R}$ is noetherian.
 By \cite[Lemma~8.2]{ATV}, $R$ is noetherian. Since 
the generators of $R_{\geq 1}$ as an $R$-module also 
generate $R$ as a $\kk$-algebra, $R$ is  
finitely generated as a $\kk$-algebra.

(2)  In this case,  $\bbar{R} =R/gR \cong  (R+gT_{(g)})/ gT_{(g)} \subseteq Q_{gr}(B) = \kk(E)[t, t^{-1}; \tau]$.
By \cite[Corollary 2.10]{RSS}  $\overline{R}$ is noetherian.   Also
$\GKdim \overline{R} \leq 2$, for instance by \cite[Theorem 0.1]{AS}, and so 
$\overline{R}$ is strongly noetherian by \cite[Theorem 4.24]{ASZ}.
Thus $R$ is strongly noetherian by \cite[Lemma~8.2]{ATV}. 
  \end{proof}

 \begin{lemma}
\label{lem:expectQ} 
Let $R$ be a $g$-divisible cg subalgebra of $T_{(g)}$ with $D_{gr}(R) = D_{gr}(T)$.  Then
\begin{enumerate}
\item $Q_{gr}(R)=Q_{gr}(T)$ and 
\item  $Q_{ gr}(\overline{R}) = Q_{gr}(\overline{T})$.
\end{enumerate}
\end{lemma}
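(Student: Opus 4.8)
The plan is to prove the two statements together, bootstrapping from the hypothesis $D_{gr}(R)=D_{gr}(T)$ and the $g$-divisibility of $R$. First I would observe that since $R\subseteq T_{(g)}$ with $R$ a cg domain, the graded quotient ring $Q_{gr}(R)$ is a graded division ring $D_{gr}(R)[z,z^{\pm 1};\alpha]$ sitting inside $Q_{gr}(T_{(g)})=Q_{gr}(T)=D_{gr}(T)[z,z^{\pm 1};\alpha]$ (same $\alpha$, since the grading is inherited). The inclusion $Q_{gr}(R)\subseteq Q_{gr}(T)$ is automatic; for the reverse inclusion, since the function skewfields agree by hypothesis, it suffices to produce \emph{one} nonzero homogeneous element of $Q_{gr}(R)$ in each degree that $T$ occupies — equivalently, to show $Q_{gr}(R)$ and $Q_{gr}(T)$ have the same "support" in $\ZZ$. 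Because $g\in R$ has degree $1$ and is a unit in $Q_{gr}(R)$, the degrees occurring in $Q_{gr}(R)$ form all of $\ZZ$ once $R_{\geq 1}\neq 0$, which holds as $R$ is a cg subalgebra with $D_{gr}(R)=D_{gr}(T)\neq\kk$. So $Q_{gr}(R)=Q_{gr}(T)$, giving (1).

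For (2), the key point is that $g$-divisibility makes reduction mod $g$ behave well with respect to localisation. Since $R$ is $g$-divisible, $R\cap gT_{(g)}=gR$, so $\overline{R}=R/gR$ embeds in $T_{(g)}/gT_{(g)}=Q_{gr}(B)=\kk(E)[t,t^{\pm 1};\tau]$. Given (1), I would take a nonzero homogeneous $q\in Q_{gr}(\overline{T})=Q_{gr}(B)$ of degree $n$; lift it to a homogeneous element of $T_{(g)}$ of degree $n$ not in $gT_{(g)}$ — possible since the only graded ideals of $T_{(g)}$ are the $g^mT_{(g)}$, so every homogeneous element of $T_{(g)}/gT_{(g)}$ lifts. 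Now by (1), write this lift as $a b^{-1}$ with $a,b\in R$ homogeneous nonzero. The obstacle is that $\bar b$ could vanish in $\overline{R}$, i.e. $b\in gT_{(g)}$; but here $g$-divisibility of $R$ saves us: if $b\in R\cap gT_{(g)}=gR$ we may cancel a power of $g$ (which is a unit in both quotient rings, and $\bar g = $ the degree-one unit $t\cdot(\text{unit})$ in $Q_{gr}(B)$) from $ab^{-1}$ without changing the element, so after finitely many cancellations we may assume $b\notin gR$, hence $\bar b\neq 0$ in $\overline R$. Then $\bar q = \bar a\,\bar b^{-1}\in Q_{gr}(\overline R)$, giving the nontrivial inclusion $Q_{gr}(\overline T)\subseteq Q_{gr}(\overline R)$; the reverse inclusion is immediate from $\overline R\subseteq\overline T$.

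\textbf{Main obstacle.} I expect the only subtlety to be the bookkeeping in (2): ensuring that when we represent a lifted element as $ab^{-1}$ with $a,b\in R$, we can arrange $b\notin gR$ so that its image in $\overline R$ is a nonzerodivisor (indeed a unit in $Q_{gr}(\overline R)$). This is exactly where $g$-divisibility of $R$ is used — without it the argument breaks, since $b$ might lie in $gT_{(g)}\cap R$ without lying in $gR$. Once that cancellation step is set up correctly, both parts are short. A clean way to organise it is: first prove (1); then note $g$ and $\bar g$ are homogeneous units of degree $1$ in the respective graded quotient rings, so one may always normalise denominators to avoid $g$; then reduce mod $g$ to transport a presentation over $R$ to a presentation over $\overline R$. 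I would also remark that (2) could alternatively be deduced from the fact (used in Proposition~\ref{div-noeth}) that $\GKdim\overline R=2=\GKdim\overline T$ together with both being orders in the same graded division ring $Q_{gr}(B)$, but the direct lifting argument above is more elementary and self-contained.
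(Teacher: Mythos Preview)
Your proof of (1) is essentially identical to the paper's: since $R$ is $g$-divisible it contains $g\in R_1$, and then $Q_{gr}(T)=D_{gr}(T)[g,g^{-1}]=D_{gr}(R)[g,g^{-1}]=Q_{gr}(R)$.

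For (2) your approach is correct but genuinely different from the paper's, with one slip to flag. The parenthetical claim that ``$\bar g$ is the degree-one unit $t\cdot(\text{unit})$ in $Q_{gr}(B)$'' is wrong: $\bar g=0$ in $T_{(g)}/gT_{(g)}$. Fortunately your argument does not actually use this. What you really do is cancel powers of $g$ \emph{upstairs} in $Q_{gr}(T)$ (where $g$ is indeed a unit): if $b=g^k b'$ with $b'\in R\smallsetminus gR$, then $\tilde q=ab^{-1}\in T_{(g)}$ forces $a=\tilde q\, b\in R\cap g^kT_{(g)}=g^kR$ by iterated $g$-divisibility, so $a=g^ka'$ with $a'\in R$ and $\tilde q=a'(b')^{-1}$; since $gT_{(g)}$ is completely prime and $\tilde q,b'\notin gT_{(g)}$, also $a'\notin gR$, and reducing mod $g$ gives $q=\bar{a'}\,\bar{b'}^{-1}\in Q_{gr}(\overline R)$. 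So the mechanism is sound once the erroneous justification is discarded.

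The paper instead argues via a single conductor: from $Q_{gr}(R)=Q_{gr}(T)$ it finds $0\neq x\in R_d$ with $xT_1\subseteq R$; if $\bar x=0$ then $x=g^iy$ with $y\in R\smallsetminus gR$ by $g$-divisibility, and $g^iyT_1\subseteq R\cap g^iT_{(g)}=g^iR$ gives $yT_1\subseteq R$; either way $\bar x\,\overline T_1\subseteq\overline R$ (or $\bar y\,\overline T_1\subseteq\overline R$) with a nonzero left factor, and since $\overline T$ is generated in degree $1$ this places all of $\overline T$ inside $Q_{gr}(\overline R)$. Your element-by-element lifting is slightly longer but has the advantage of not invoking generation of $\overline T$ in degree $1$; the paper's conductor argument is shorter but leans on Lemma~\ref{lem:T-prop}(1).
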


\begin{proof}  (1) As $g\in R_1$ we have $Q_{gr}(T)=D_{gr}(T)[g,g^{-1}]
=D_{gr}(R)[g,g^{-1}] = Q_{gr}(R).$

(2)
Since $Q_{gr}(R)=Q_{gr}(T)$, there exists $0 \neq x \in R_d$ such that $x T_1 \subseteq R_{d+1}$.
Then $\overline{x} \overline{T}_1 \subseteq \overline{R}$.  As long as $\overline{x} \neq 0$, this shows that the graded quotient ring of $\overline{R}$ contains
a generating set for $\overline{T}$ and we are done.  On the other hand,  if $\overline{x} = 0$, then write 
 $x = g^iy$ with $y \in  T_{(g)} \smallsetminus g T_{(g)}$; equivalently $y \in R\smallsetminus gR $ by $g$-divisibility.
  Then $g^iy T_1 \subseteq R \cap g^iT_{(g)} = g^iR$, and so $y T_1 \subseteq R$. Thus  we are again done.
\end{proof}
 
If $A$ is a cg domain with graded quotient ring $Q=Q_{gr}(A)$ and 
  $M\subseteq Q$ is  a  finitely generated graded right $A$-submodule, we can and always will identify 
\begin{equation}\label{endo-defn}
\End_A(M)\ = \ \{q\in Q : qM\subseteq M\} \qquad\text{and}\qquad M^*=\Hom_A(M,A)=\{q\in Q : qM\subseteq A\}.
\end{equation}
Clearly  both $\End_A(M)$ and $M^*$ are  graded subspaces of $Q$.
 
\begin{lemma}\label{Sky2-1}  
Let $R$ be any $g$-divisible subring of $T_{(g)}$ with $Q_{gr}(R) = Q_{gr}(T_{(g)})$, and let $M, M' \subseteq T_{(g)}$ be finitely generated nonzero right $R$-modules. 
\begin{enumerate}
\item  If $M \nsubseteq g T_{(g)}$, then we can identify 
$ \Hom_R(M,M') = \{x \in T_{(g)} | x M \subseteq M' \} \subseteq T_{(g)}$.
\item If $M'$ is $g$-divisible, and $M \not \subseteq g T_{(g)} $ (in particular if $M$ is $g$-divisible) then $\Hom_R(M,M') \subseteq T_{(g)}$ is also $g$-divisible.
 
\item If $M$ is $g$-divisible, then $U = \End_R(M) \subseteq T_{(g)}$ is $g$-divisible, and $M$ is a finitely generated left $U$-module.  
Moreover, $\overline{U} \subseteq \End_{\overline{R}}(\overline{M})$.
\end{enumerate}
\end{lemma}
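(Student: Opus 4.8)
The plan is to prove the three parts in order, deducing (2) and (3) from (1).

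\emph{Part (1).} First I would record a preliminary: writing $Q=Q_{gr}(R)$, which by hypothesis equals $Q_{gr}(T_{(g)})$ (a graded division ring), for any nonzero graded right $R$-submodule $N\subseteq Q$ the multiplication map $N\otimes_R Q\too Q$ is an isomorphism --- injective because $Q$ is flat over $R$ (an Ore localisation), and surjective because $N$ contains a nonzero homogeneous element, which is a unit of $Q$. Applying this to $N=M$ and to $N=M'$, a given $\phi\in\Hom_R(M,M')$ becomes, after $-\otimes_R Q$, a right $Q$-linear endomorphism of $Q$, hence is left multiplication by $q_0:=(\phi\otimes_R Q)(1)\in Q$; picking a homogeneous $m\in M$ with $m\not\in gT_{(g)}$ (possible since $M$ is graded and $M\not\subseteq gT_{(g)}$) gives $q_0=\phi(m)m^{-1}$. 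The essential point is then that $q_0$ lies in $T_{(g)}$ itself: by \eqref{startup1} every homogeneous element of $T_{(g)}$ not in $gT_{(g)}$ is a unit of $T_{(g)}$, so $m^{-1}\in T_{(g)}$ and $q_0=\phi(m)m^{-1}\in T_{(g)}$. Thus $\phi$ is left multiplication by an element of $\{x\in T_{(g)}:xM\subseteq M'\}$, and conversely every such $x$ defines a homomorphism; the identification is injective since $T_{(g)}$ is a domain.

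\emph{Part (2).} I would first note that a nonzero $g$-divisible module $M$ satisfies $M\not\subseteq gT_{(g)}$: otherwise $M=M\cap gT_{(g)}=gM$, hence $M=g^nM$ for all $n$ and $M\subseteq\bigcap_n g^nT_{(g)}=0$, the last equality again coming from \eqref{startup1}. So in both cases of the hypothesis $M\not\subseteq gT_{(g)}$, and by (1) we may write $N:=\Hom_R(M,M')=\{x\in T_{(g)}:xM\subseteq M'\}$. Here $gN\subseteq N\cap gT_{(g)}$ is immediate (using $g\in R$, so $gM'\subseteq M'$). Conversely, if $x=gy\in N$ with $y\in T_{(g)}$, then for each $m\in M$ one has $g(ym)=(gy)m\in M'\cap gT_{(g)}=gM'$, whence $ym\in M'$; thus $y\in N$ and $x\in gN$, so $N$ is $g$-divisible.

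\emph{Part (3).} Since $g$ is central and $g\in R$, $gM=Mg\subseteq M$, so $g\in U:=\End_R(M)$, and $U$ is $g$-divisible by (2) with $M'=M$. Consequently $U$ is noetherian: $\bbar{U}=U/gU$ embeds in $T_{(g)}/gT_{(g)}=Q_{gr}(B)=\kk(E)[t,t^{-1};\tau]$ and so is noetherian by \cite[Corollary~2.10]{RSS}, and then $U$ is noetherian by \cite[Lemma~8.2]{ATV} (as $g$ is a central nonzerodivisor with $\bigcap_n g^nU=0$) --- this is the argument of Proposition~\ref{div-noeth}. Next, $M^*:=\Hom_R(M,R)=\{x\in T_{(g)}:xM\subseteq R\}$ is nonzero: clearing denominators of a finite homogeneous generating set of the right $R$-module $M\subseteq Q$ over the Ore domain $R$ produces a nonzero homogeneous $c\in R$ with $cM\subseteq R$, i.e.\ $0\neq c\in M^*$. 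Fix a homogeneous $0\neq d\in M^*$. For every $m\in M$ one has $(md)M=m(dM)\subseteq mR\subseteq M$, so $md\in U$; hence $Md\subseteq U$ and $M=(Md)d^{-1}\subseteq Ud^{-1}$. Since $Ud^{-1}\cong U$ as left $U$-modules it is a noetherian left $U$-module, and $M$ is a left $U$-submodule of it (because $UM\subseteq M$), so $M$ is finitely generated over $U$. Finally, for $u\in U$ the relation $uM\subseteq M$ descends to $\bbar{u}\,\bbar{M}\subseteq\bbar{M}$ in $Q_{gr}(B)$ --- this is well defined because $U$ is $g$-divisible, so $\bbar{u}=0$ forces $u\in gU$ and $uM\subseteq gM$ --- and, identifying $\End_{\bbar{R}}(\bbar{M})$ with $\{q\in Q_{gr}(B):q\bbar{M}\subseteq\bbar{M}\}$ by the (easier) argument of (1) over the graded division ring $Q_{gr}(B)$, we conclude $\bbar{U}\subseteq\End_{\bbar{R}}(\bbar{M})$.

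I expect the main obstacle to be Part (1): realising a module homomorphism as left multiplication by an element of $T_{(g)}$ \emph{itself} --- not merely of the ambient graded quotient ring $Q$ --- rests squarely on \eqref{startup1}, that $T_{(g)}$ has only the graded ideals $g^nT_{(g)}$ and hence is a graded local domain. Once (1) is in hand, Parts (2) and (3) are essentially formal, the one further external input being the noetherianity of $U$, supplied by the $g$-adic machinery of \cite{RSS} and \cite{ATV} exactly as in Proposition~\ref{div-noeth}.
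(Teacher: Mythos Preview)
Your proof is correct and follows essentially the same route as the paper's. The only cosmetic difference is in Part~(1): the paper argues more abstractly that $MT_{(g)}=T_{(g)}$ (from \eqref{startup1}) and hence $\Hom_R(M,M')\hookrightarrow\Hom_{T_{(g)}}(T_{(g)},M'T_{(g)})\subseteq T_{(g)}$, whereas you pick an explicit homogeneous $m\in M\smallsetminus gT_{(g)}$ and compute $q_0=\phi(m)m^{-1}\in T_{(g)}$; both rest on the same fact that homogeneous elements outside $gT_{(g)}$ are units. Parts~(2) and~(3) are essentially identical to the paper's arguments, including the appeal to Proposition~\ref{div-noeth} for noetherianity and the trick $Mx\subseteq U$ (your $Md\subseteq U$) for finite generation.
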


\begin{proof} 
(1)  
  Since  $M\not\subseteq gT_{(g)}$, it follows from \eqref{startup1}  that $MT_{(g)}=T_{(g)}$.  
In particular, $N = \Hom_R(M, M') \subseteq \Hom_{T_{(g)}}(MT_{(g)}, M'T_{(g)}) \subseteq T_{(g)}$.

(2)  Part~(1) applies 
and so $N= \Hom_R(M,M')  \subseteq T_{(g)}$.   
Next, let $\theta\in N \cap gT_{(g)}$; say $\theta=gs$ for some $s\in T_{(g)}$. Then $sgM = \theta M \in  M' \cap gT_{(g)}=M'g$  
since $M'$ is $g$-divisible.  Hence $sM\subseteq M'$ and $s\in N$.  Thus $N \cap g T_{(g)} = gN$.

(3)  By Part (2),  $U$ is $g$-divisible, and hence is  noetherian by Proposition~\ref{div-noeth}.  As  $Q_{gr}(R)=Q_{gr}(T_{(g)})$,  there exists  $x \in T_{(g)}\smallsetminus\{0\}$ so that $x M \subseteq R$.  
 Then $MxM \subseteq MR = M$. Hence (up to a shift)
 $M\cong Mx \subseteq U$  is finitely generated as a left $U$-module.

Now $\overline{U} =(U + gT_{(g)})/gT_{(g)} \subseteq \overline{T_{(g)}} = \kk(E)[t, t^{-1}; \tau]$.  Since $Q_{gr}(\overline{U}) = Q_{gr}(\overline{T_{(g)}})$ by 
Lemma~\ref{lem:expectQ}, 
as in \eqref{endo-defn} we identify $\End_{\overline{R}}(\overline{M})$ with 
$\{x \in \overline{T_{(g)}} | x \overline{M} \subseteq \overline{M} \}$.  But since 
$U M \subseteq M$, clearly $(\overline{U})( \overline{M}) \subseteq \overline{M}$.
\end{proof}
 
\begin{lemma}\label{lem:finehat}
Let $R$ be a graded subalgebra of $T_{(g)}$ with $Q_{gr}(R)=Q_{gr}(T_{(g)})$  and let  $M \subseteq T_{(g)}$ be a graded  right $R$-submodule of $T_{(g)}$ such that $M\not\subseteq gT_{(g)}$.  Then  
\begin{enumerate} 
\item  For any $x\in T_{(g)}\smallsetminus gT_{(g)}$, we have  $\wh{xM} = x \wh{M}$.
\item  If $R$ is $g$-divisible and $M$ is a finitely generated  $R$-module, then  so is  $\wh{M}$.

\item If $R$ is $g$-divisible,   then $T_{(g)}\supseteq M^*=\wh{M^*}$ and $M^*\not\subseteq gT_{(g)}$. Hence 
 $T_{(g)}\supseteq M^{**}=\wh{M^{**}}$.  Moreover, we have $(\wh{M}\,)^*=M^*$ and
 $(\wh{M}\,)^{**}=M^{**}.$  
\end{enumerate}
\end{lemma}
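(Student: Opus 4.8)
The plan is to prove the three parts in order, using the easy characterisation of $\wh{(-)}$ from Lemma~\ref{lem-triv} (namely $\wh{X}=\{t\in T_{(g)} : tg^n\in X\text{ for some }n\}$, and $\wh{X}$ is a right $R$-module whenever $X$ is) together with the fact \eqref{startup1} that the only graded one-sided ideals of $T_{(g)}$ are the $g^nT_{(g)}$.

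For part~(1), I would argue by double inclusion. Since $x\notin gT_{(g)}$ we have $xT_{(g)}=T_{(g)}$ by \eqref{startup1}, so multiplication by $x$ is injective on $T_{(g)}$ and $x\wh M\subseteq T_{(g)}$ makes sense. If $t\in\wh M$, pick $n$ with $tg^n\in M$; then $(xt)g^n=x(tg^n)\in xM$ (using that $g$ is central), so $xt\in\wh{xM}$, giving $x\wh M\subseteq\wh{xM}$. Conversely if $s\in\wh{xM}$, say $sg^n\in xM\subseteq xT_{(g)}=T_{(g)}$, write $s=xs'$ for a (unique) $s'\in T_{(g)}$; then $x(s'g^n)=sg^n=xm$ for some $m\in M$, and cancelling $x$ gives $s'g^n=m\in M$, so $s'\in\wh M$ and $s=xs'\in x\wh M$. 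Here I should also note that the hypothesis $M\not\subseteq gT_{(g)}$ guarantees $xM\not\subseteq gT_{(g)}$ (again by \eqref{startup1}, since $x$ is a unit on $T_{(g)}$), so $\wh{xM}$ behaves as expected; this is the only place $M\not\subseteq gT_{(g)}$ is really needed in (1).

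For part~(2), assume $R$ is $g$-divisible and $M$ is finitely generated over $R$. I would observe that $\wh M/M$ is a $g$-torsion graded right $R$-module, and that its image $\overline{\wh M}=(\wh M+gT_{(g)})/gT_{(g)}$ sits inside $\overline{T_{(g)}}=\kk(E)[t,t^{-1};\tau]$. Since $R$ is $g$-divisible, $\overline R\subseteq\overline{T_{(g)}}$ and $\overline R$ is noetherian (Proposition~\ref{div-noeth}), and $\overline{\wh M}$ is a finitely generated $\overline R$-module because $\overline M$ is and, I claim, $\overline{\wh M}=\overline M$: if $t\in\wh M$ with $tg^n\in M$ and $n$ minimal $\geq 1$, then $tg^{n-1}\in\wh M$ maps to something whose product with $g$ lies in $M$, so by $g$-divisibility of $M$ (which follows once we know $\wh M$ is $g$-divisible — better to bootstrap directly)... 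Rather than circularity, I would instead argue: $\wh M\cap gT_{(g)}=g\wh M$ directly from the definition (if $tg\in\wh M$ then $tg\cdot g^n\in M$ so $t\in\wh M$), hence $\wh M$ is $g$-divisible, hence $\overline{\wh M}$ embeds in $\overline{T_{(g)}}$; then $\overline{\wh M}\supseteq\overline M$ and I bound $\overline{\wh M}$ above using that every element of $\wh M$ can be multiplied into $M$ by a power of $g$ which becomes a unit in $\overline{T_{(g)}}$, giving $\overline{\wh M}\subseteq\overline M\otimes_R Q_{gr}(\overline R)\cap\text{(fin.\ gen.\ piece)}$. Cleanly: $\wh M\subseteq M[g^{-1}]$, and $M[g^{-1}]$ is finitely generated over $R[g^{-1}]$; noetherianity of $R[g^{-1}]$ (or of $\overline R$ plus a standard lifting argument as in \cite[Lemma~8.2]{ATV}) then forces $\wh M$ to be finitely generated over $R$. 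I expect this to be the main obstacle — making the finite-generation argument airtight — and I would resolve it via the $g$-divisible noetherian quotient $\overline R$ and the lifting lemma \cite[Lemma~8.2]{ATV}, exactly as in the proof of Proposition~\ref{div-noeth}.

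For part~(3), first $M^*=\{q\in T_{(g)}:qM\subseteq R\}$ by Lemma~\ref{Sky2-1}(1) (applicable since $M\not\subseteq gT_{(g)}$). It is $g$-divisible by Lemma~\ref{Sky2-1}(2) with $M'=R$, which is $g$-divisible by hypothesis; equivalently $M^*=\wh{M^*}$ by Lemma~\ref{lem-triv}. To see $M^*\not\subseteq gT_{(g)}$: since $Q_{gr}(R)=Q_{gr}(T_{(g)})$ there is $x\in T_{(g)}\smallsetminus\{0\}$ with $xM\subseteq R$, so $x\in M^*$; if $x=g^iy$ with $y\notin gT_{(g)}$ then $g^iyM\subseteq R\cap g^iT_{(g)}=g^iR$ by $g$-divisibility of $R$, so $yM\subseteq R$, i.e.\ $y\in M^*\smallsetminus gT_{(g)}$. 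Now $M^{**}=(M^*)^*$, and $M^*$ is $g$-divisible with $M^*\not\subseteq gT_{(g)}$, so the same two statements apply to give $M^{**}=\wh{M^{**}}$ and $M^{**}\not\subseteq gT_{(g)}$. Finally, for $(\wh M)^*=M^*$: the inclusion $(\wh M)^*\subseteq M^*$ is immediate from $M\subseteq\wh M$. Conversely if $q\in M^*$, i.e.\ $qM\subseteq R$, and $t\in\wh M$ with $tg^n\in M$, then $q(tg^n)=(qt)g^n\in R$, so $qt\in\wh R=R$ since $R$ is $g$-divisible; hence $q\wh M\subseteq R$ and $q\in(\wh M)^*$. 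Applying this identity twice (to $M$ and then to $M^*$, noting $\wh{M^*}=M^*$) yields $(\wh M)^{**}=((\wh M)^*)^*=(M^*)^*=M^{**}$, completing the proof. $\qed$
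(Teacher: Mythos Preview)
Your arguments for parts~(1) and~(3) are correct and essentially match the paper's proof; in (3) your explicit verification that $M^*\not\subseteq gT_{(g)}$ is a little more detailed than the paper's ``clearly'', but the logic is the same.

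The gap is in part~(2). Your attempts do not close: the claim $\overline{\wh M}=\overline M$ is actually false in general (take $R=T$ and $M=gT+T_{\geq 1}$, where $\wh M=T$ but $\overline M$ misses degree~$0$), so that route is dead; and the observation $\wh M\subseteq M[g^{-1}]$ together with noetherianity of $R[g^{-1}]$ gives finite generation over $R[g^{-1}]$, not over $R$. The lifting lemma \cite[Lemma~8.2]{ATV} would help only once you know $\wh M/g\wh M$ is finitely generated over $\overline R$, which is exactly what you have not established. The paper's proof bypasses all of this by \emph{using part~(1)}: choose $x\in T_{(g)}\smallsetminus gT_{(g)}$ with $xM\subseteq R$ (possible since $Q_{gr}(R)=Q_{gr}(T_{(g)})$, and one may strip powers of $g$ from $x$ using $g$-divisibility of $R$); then $\wh{xM}\subseteq\wh R=R$, so $\wh{xM}$ is a right ideal of the noetherian ring $R$ and hence finitely generated; finally part~(1) gives $\wh{xM}=x\wh M\cong\wh M$ up to a shift. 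You proved part~(1) but never exploited it in part~(2), which is precisely the missing idea.
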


\begin{proof}  
(1) Let   $r \in \wh{M}$.   For some $n$
 we have $rg^n  \in M$, so $xrg^n  \in xM$. Since $xr\in T_{(g)}$ it follows that  $xr \in \wh{xM}$.  Conversely, if $r \in T_{(g)}$ with $rg^n  \in xM$, then 
 $rg^n=g^nr \in g^n T_{(g)} \cap xT_{(g)} $. As $gT_{(g)}$ is a completely prime ideal and $x\not\in gT_{(g)}$,
clearly $  g^n T_{(g)} \cap xT_{(g)} = g^n x T_{(g)}$. 
Thus  $r = xs$ for some $s\in T_{(g)}$ and
$xM\ni  rg^n  =xs g^n $. Therefore  $sg^n  \in M$ whence $s \in \wh{M}$ and $r\in x\wh{M}$.  Thus $\wh{xM} = x \wh{M}$, as claimed.  

(2) As in the proof of Lemma~\ref{Sky2-1}, there exists  $x \in T_{(g)}\smallsetminus\{0\}$ so that $x M \subseteq R$.  
If $x=gy $ for some $y\in T_{(g)}$, then 
$g(yM)\subseteq R$ and so $yM\subseteq R$ since $R$ is $g$-divisible.  Thus we can assume that $x\in T_{(g)}\smallsetminus gT_{(g)}$.
Again by $g$-divisibility,   $\wh{xM}  \subseteq \wh{R} = R$.  By Proposition~\ref{div-noeth} $\wh{xM}$  is a finitely generated right ideal of $R$.  Up to a shift   $\wh{M}\cong  x \wh{M}=\wh{xM} $ by  (1).  This  is    finitely generated as an $R$-module. 

(3)  By Lemma~\ref{Sky2-1}(2), $M^* = \Hom_R(M, R) \subseteq T_{(g)}$ and is $g$-divisible, i.e. 
$M^*=\wh{M^*}$.   Clearly then $M^* \nsubseteq gT_{(g)}$, and so by the left-handed analog of the same 
argument, $M^{**}=\wh{M^{**}} \subseteq T_{(g)}$ also.
 
Now as $M\subseteq \wh{M}$, certainly $(\wh{M}\,)^*\subseteq M^*$. On the other hand, if 
 $\theta\in M^*$ and $x\in \wh{M}$, say with $xg^n\in M$, then $(\theta x)g^n=\theta(xg^n)\in R= \wh{R}$. 
Hence  $\theta x  \in R.$ 
Thus  $\theta\in (\wh{M}\,)^*$ and $(\wh{M}\,)^*=M^*$. Taking a second dual gives  $(\wh{M}\,)^{**}=M^{**}$.
\end{proof}

We note next some special properties of modules of GK-dimension 1. 

\begin{lemma}
\label{lem:constant}
Let $R$ be a cg $g$-divisible subalgebra of $T_{(g)}$ and suppose that  $M$ is a finitely generated, $g$-torsionfree
 $R$-module with $\GKdim(M) \leq 1$. 
Then the Hilbert series of $M$ is eventually constant; that is, $\dim_{\kk} M_n = \dim_{\kk} M_{n+1}$ for all 
$n \gg 0$.  Moreover, $M$ is a finitely generated $\kk [g]$-module.
\end{lemma}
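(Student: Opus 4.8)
The plan is to bootstrap from the factor module $\bbar{M}=M/gM$, whose geometry we understand, to the structure of $M$ itself as a $\kk[g]$-module. First I would record the exact sequence $0\to M\xrightarrow{\,g\,}M\to\bbar M\to 0$ (injectivity of multiplication by $g$ is exactly $g$-torsionfreeness), which gives $\dim_\kk \bbar M_n=\dim_\kk M_n-\dim_\kk M_{n-1}$ for all $n$. So the claim that the Hilbert series of $M$ is eventually constant is equivalent to the claim that $\bbar M$ is finite-dimensional. Now $\bbar M$ is a finitely generated graded module over $\bbar R=R/gR$, which by Proposition~\ref{div-noeth} (or rather its proof, via \cite[Theorem~2.9]{RSS}) is a noetherian cg subalgebra of $\bbar T=B(E,\sM,\tau)$, hence of $\GKdim$ at most $2$. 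The key numerical point is that $\GKdim(\bbar M)\leq \GKdim(M)\leq 1$ (since the partial sums of the Hilbert function of $\bbar M$ are bounded by those of $M$), while on the other hand, if $\bbar M$ were infinite-dimensional, a finitely generated graded module over a cg algebra has eventually nonzero Hilbert function only in a way that forces $\GKdim(\bbar M)\geq 1$ — that alone is not yet a contradiction, so the real content is to show $\bbar M$ is \emph{torsion} as a $\bbar B$-module, equivalently has $\GKdim 0$.

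The mechanism I would use is that $R$ (hence $\bbar R$) and $M$ (hence $\bbar M$) live inside $T_{(g)}$, and $\overline{T_{(g)}}=\kk(E)[t,t^{-1};\tau]$ is a graded \emph{division} ring. Concretely: $\bbar M\subseteq \overline{T_{(g)}}$ is a graded right $\bbar R$-submodule. Either $\bbar M=0$, in which case we are done, or $\bbar M\neq 0$, in which case I claim $\GKdim(\bbar M)=2$. Indeed, pick $0\neq \bbar m\in \bbar M$; then $\bbar m\,\overline{T_{(g)}}=\overline{T_{(g)}}$ by \eqref{startup1}, so as in the proof of Lemma~\ref{lem:expectQ}(2) or Lemma~\ref{Sky2-1}(3) one finds $x\in T_{(g)}\smallsetminus gT_{(g)}$ with $\bbar m\,\bbar x\,\bbar B\subseteq \bbar M$ after a suitable shift (using $Q_{gr}(\bbar R)=Q_{gr}(\bbar T)$, which holds by Lemma~\ref{lem:expectQ}(2) since $R$ is $g$-divisible with $D_{gr}(R)=D_{gr}(T)$ — this uses $M\not\subseteq gT_{(g)}$ to guarantee the relevant elements are nonzero). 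Thus $\bbar M$ contains a shifted copy of a nonzero right ideal of $\bbar B=B(E,\sM,\tau)$, and such an ideal has $\GKdim 2$ since $\bbar B$ is a domain of $\GKdim 2$ and the ideal is essential. This contradicts $\GKdim(\bbar M)\leq 1$. Hence $\bbar M=0$ entirely — wait, that's too strong; the correct conclusion is rather that $\bbar M$, being a nonzero f.g.\ graded $\bbar R$-submodule of $\overline{T_{(g)}}$ of $\GKdim\leq 1$, cannot exist unless it is finite-dimensional. So I should phrase it as: if $\dim_\kk\bbar M=\infty$ then $\bbar M$ contains such an ideal-copy and $\GKdim\bbar M=2$, a contradiction; therefore $\dim_\kk\bbar M<\infty$.

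With $\dim_\kk\bbar M<\infty$ in hand, the Hilbert series of $M$ is eventually constant immediately from the exact sequence above. For the final sentence, consider the submodule $M_{\geq N}$ for $N\gg 0$; since $\dim_\kk M_n$ is constant $=c$ for $n\geq N$, and multiplication by $g$ from $M_n$ to $M_{n+1}$ is injective between spaces of equal dimension $c$, it is an isomorphism for $n\geq N$. Hence $M_{\geq N}=\bigoplus_{i\geq 0} g^i M_N$ is a free $\kk[g]$-module of rank $c$, so $M_{\geq N}$ is a finitely generated $\kk[g]$-module, and $M/M_{\geq N}$ is finite-dimensional, whence $M$ is a finitely generated $\kk[g]$-module. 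The step I expect to be the main obstacle is making the ``$\bbar M$ contains a shifted nonzero right ideal of $\bbar B$'' argument clean and genuinely using only $M\not\subseteq gT_{(g)}$ rather than $g$-divisibility of $M$ (which is not assumed) — the subtlety is that $M$ itself need not equal $\wh M$, so one must work with $\bbar M\subseteq\overline{T_{(g)}}$ directly and not confuse it with $M/gM$ versus $(M+gT_{(g)})/gT_{(g)}$; these coincide here because $g$-torsionfreeness of $M$ forces $M\cap gT_{(g)}$-issues to be controlled, but I would double-check that $\bbar M:=(M+gT_{(g)})/gT_{(g)}\cong M/(M\cap gT_{(g)})$ and that $M\cap gT_{(g)}=gM$ follows from $M$ being $g$-torsionfree together with... actually it does \emph{not} follow in general, so I would instead just run the Hilbert-function bookkeeping with $\bbar M$ meaning $M/gM$ and the exact sequence $0\to M[-1]\xrightarrow{g}M\to M/gM\to 0$, and separately argue $\GKdim(M/gM)\leq 1$ and that $M/gM$ maps onto the submodule $(M+gT_{(g)})/gT_{(g)}$ of $\overline{T_{(g)}}$, to which the division-ring argument applies.
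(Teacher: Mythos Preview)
Your overall architecture is the same as the paper's: reduce to showing $\dim_\kk(M/gM)<\infty$, and then deduce both conclusions from the exact sequence $0\to M[-1]\xrightarrow{g}M\to M/gM\to 0$. Your final paragraph (eventually constant $\Rightarrow$ $M_{\geq N}$ is free over $\kk[g]$, hence $M$ is finitely generated over $\kk[g]$) is correct and matches the paper.

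The gap is in your argument for $\dim_\kk(M/gM)<\infty$. You repeatedly assume $M\subseteq T_{(g)}$, but the lemma is about an \emph{abstract} finitely generated $R$-module; nothing in the hypotheses places $M$ inside $T_{(g)}$. Your division-ring argument (``$\bbar M$ contains a shifted nonzero right ideal of $\bbar B$, hence has $\GKdim 2$'') simply does not apply. You also invoke Lemma~\ref{lem:expectQ}(2), which requires $D_{gr}(R)=D_{gr}(T)$, another hypothesis not present here. And you correctly notice at the end that even if $M\subseteq T_{(g)}$, the surjection $M/gM\twoheadrightarrow (M+gT_{(g)})/gT_{(g)}$ need not be an isomorphism, so controlling the image does not control $M/gM$. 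None of this is repairable along the lines you sketch.

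The paper's fix is a one-line citation: since $g\in R$ is central and acts regularly on $M$, \cite[Proposition~5.1(e)]{KL} gives $\GKdim(M/Mg)\leq \GKdim(M)-1\leq 0$, so $M/Mg$ is finite-dimensional. Equivalently, you could argue directly from Hilbert functions: $g$-torsionfreeness makes $n\mapsto \dim_\kk M_n$ nondecreasing, while $\GKdim(M)\leq 1$ forces $\sum_{i\leq n}\dim_\kk M_i=O(n)$; a nondecreasing sequence with linearly bounded partial sums is bounded, hence eventually constant. Either route avoids any embedding of $M$ into $T_{(g)}$.
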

\begin{proof}
By \cite[Proposition~5.1(e)]{KL}, $\GKdim(M/Mg) \leq   0$ and so $\dim_{\kk}M/Mg<\infty$.  Thus $M_rg=M_{r+1}$  for all $r\gg0$; say for  $r \geq n_0$.   
In particular,  $M=M_{\leq n_0}\kk[g]$.
Moreover, since multiplication by $g$ is an injective  map from $M_r$ to $M_{r+1}$, 
it follows that  $\dim_{\kk} M_r = \dim_{\kk} M_{r+1}$ for all  for $r \geq n_0$.\end{proof}

 A graded  ideal $I$ in a cg algebra $R$ is called a \emph{\cog} \label{cog-defn}  if 
 $\GKdim(R/I)  =  1$ (these are called special ideals in \cite{Rog09}). 
The name is justified since, as will be shown in Section~\ref{MAX1}, 
orders in $T$ have very few such ideals.  
The next lemma will be useful in  understanding them.

 \begin{lemma}\label{GK-results}
Let $R$ be a $g$-divisible finitely generated cg subring of $T_{(g)}$ with $Q_{\gr}(R) = Q_{\gr}(T)$. 
 Then:
 \begin{enumerate}
 \item If $J$ is a non-zero $g$-divisible graded ideal of $R$, then $\GKdim(R/J)\leq 1$.
 \item Conversely, if $J$ is a graded ideal of $R$ such that $\GKdim(R/J)\leq 1$ then $\wh{J}/J$ is finite dimensional.
  \item If $K$ is any ideal of $R$ then $K=g^nI$, for some $n\geq 1$ and ideal $I$ satisfying $\GKdim(R/I)\leq 1.$
\item Suppose that $L, M$ are graded subspaces of $T_{(g)}$ with $L \nsubseteq gT_{(g)}$ and $M \nsubseteq g T_{(g)}$ 
and assume that $I = LM$ is an ideal of $R$. Then $\GKdim(R/I) \leq 1$.
 \end{enumerate}
 \end{lemma}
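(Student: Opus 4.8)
The plan is to prove the four parts in a dependency order that exploits the reduction modulo $g$, since $\overline{R} \subseteq Q_{gr}(\overline{T}) = \kk(E)[t,t^{-1};\tau]$ is noetherian of GK-dimension $2$ by Proposition~\ref{div-noeth} and Lemma~\ref{lem:expectQ}, and its ideals are well understood. First I would treat (1): if $J$ is a nonzero $g$-divisible graded ideal of $R$, then $g \in R$ gives $gR \subseteq J$ is impossible to use directly, but $g$-divisibility means $R/J$ is $g$-torsionfree, so $\overline{J} = (J+gT_{(g)})/gT_{(g)}$ is a nonzero ideal of $\overline{R}$ (nonzero because $J \nsubseteq gT_{(g)}$, as otherwise $J = gJ$ by $g$-divisibility forces $J = 0$ by a degree/Nakayama argument in the cg setting). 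Since $\overline{R}$ is just infinite by Lemma~\ref{lem:T-prop}(2), $\dim_\kk \overline{R}/\overline{J} < \infty$. Now $R/(J + gR)$ is a quotient of $\overline{R}/\overline{J}$, hence finite-dimensional, and since $R/J$ is generated over $R/J$ by the image of $g$ in a way controlled by $R/(J+gR)$ — more precisely $(R/J)/g(R/J)$ is finite-dimensional and $R/J$ is $g$-torsionfree — a standard argument (as in Lemma~\ref{lem:constant}, via \cite[Proposition~5.1(e)]{KL}) gives that $R/J$ has eventually constant Hilbert series, so $\GKdim(R/J) \leq 1$.

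For (4) I would argue similarly but more carefully since $L, M$ need not be ideals: $\overline{L}$ and $\overline{M}$ are nonzero graded subspaces of $\kk(E)[t,t^{-1};\tau]$, and $\overline{I} = \overline{L}\,\overline{M}$ is a nonzero ideal of $\overline{R}$ (nonzero because $gT_{(g)}$ is completely prime, so $LM \subseteq gT_{(g)}$ would force $L$ or $M$ into $gT_{(g)}$). Again $\overline{R}$ just infinite gives $\dim_\kk \overline{R}/\overline{I} < \infty$, hence $R/(I + gR)$ is finite-dimensional. Here $I$ is assumed to be an ideal of $R$, but it need not be $g$-divisible; still, $g^n \cdot (R/I)$ eventually stabilizes the Hilbert series since $\dim_\kk (R/I)/g(R/I) \leq \dim_\kk R/(I+gR) < \infty$, and multiplication by $g$ on the $g$-torsionfree part is injective, giving $\GKdim(R/I) \leq 1$ — though one must also bound the $g$-torsion of $R/I$, which has GK-dimension at most that of $R/I$ and is a quotient of something finite-dimensional over $\kk[g]$; I'd invoke \cite[Proposition~5.1(e)]{KL} once more. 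Part (4) immediately re-proves (1) in the special case, but I'd keep (1) separate for the cleaner $g$-divisible statement.

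For (2), given $\GKdim(R/J) \leq 1$ I want $\wh{J}/J$ finite-dimensional. By Lemma~\ref{lem:finehat}(2), $\wh{J}$ is a finitely generated $R$-module, and $\wh{J}/J \subseteq T_{(g)}/J$ is the $g$-torsion submodule of $R/J$ (by Lemma~\ref{lem-triv}, noting $\wh{J}$ is the preimage of $\tg(T_{(g)}/J)$, but since $\wh{R} = R$ we actually get $\wh{J}/J = \tg(R/J)$). This is a finitely generated $g$-torsion module of GK-dimension $\leq \GKdim(R/J) - $ well, of GK-dimension $\leq 1$; but a finitely generated $g$-torsion module that is also finitely generated has bounded $g$-exponent, so it is a finitely generated module over $R/g^mR$ for some $m$, and $\overline{R} = R/gR$ has GK-dimension $2$ — this alone doesn't force finite-dimensionality. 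Instead I'd argue: $\wh{J}/J$ has GK-dimension $\leq 1$ and is $g$-torsion of bounded exponent $m$, so it has a finite filtration with subquotients that are $g$-torsionfree $R/gR$-modules of GK-dimension $\leq 0$... this needs the stronger input that $\GKdim(R/J) \leq 1$ combined with $g \in J + (\text{finite codim})$. Actually the clean route: $J \supseteq$ some $g^N \wh{J}$ is false in general, so instead note $\overline{\wh{J}} \supseteq \overline{J}$ are ideals of $\overline{R}$, both of finite codimension (by (1) applied to $\wh{J}$, which is $g$-divisible, plus the hypothesis on $J$), hence $\overline{\wh{J}}/\overline{J}$ is finite-dimensional; combined with bounded $g$-exponent on $\wh{J}/J$ and induction on that exponent, $\wh{J}/J$ is finite-dimensional. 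Finally (3): for any ideal $K$, let $n \geq 0$ be maximal with $K \subseteq g^n T_{(g)}$ (finite since $\bigcap g^n T_{(g)} = 0$), write $K = g^n I$ with $I = g^{-n}K \subseteq T_{(g)}$; then $I$ is an ideal of $R$ (as $g$ is central and $g^n$ is a nonzerodivisor), $I \nsubseteq gT_{(g)}$ by maximality, and $\GKdim(R/I) \leq 1$ follows from $\overline{I} \neq 0$ being an ideal of the just-infinite $\overline{R}$, exactly as in (1)/(4); if $n = 0$ one still gets the conclusion, and if $K \neq 0$ is not proper the statement is vacuous or trivial.

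The main obstacle I anticipate is the finite-dimensionality in part (2): controlling $\wh{J}/J$ requires simultaneously handling that it is $g$-torsion (so "lives on the curve" in a thickened sense) and has GK-dimension $\leq 1$, and the cleanest argument routes through reducing to ideals of $\overline{R}$ of finite codimension and an induction on the $g$-exponent — I'd want to make sure the filtration by powers of $g$ has all subquotients finite-dimensional, using the just-infinite property of $\overline{R}$ at each stage rather than trying to bound GK-dimensions directly.
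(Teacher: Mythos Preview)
Your approach to parts (1), (3), and (4) is essentially the paper's, except that for (4) the paper simply reduces to (3): since $gT_{(g)}$ is completely prime, $I = LM \nsubseteq gT_{(g)}$, hence $I \nsubseteq gR$, and one invokes (3) with $n = 0$. Your direct Hilbert-series argument for (4) also works, and your worry about the $g$-torsion of $R/I$ is a red herring: from $\dim_\kk R/(I+gR) < \infty$ one gets $\dim_\kk (R/I)_m \leq \dim_\kk (R/I)_{m-1} + c$ for $m \gg 0$ regardless of whether multiplication by $g$ is injective, so $\GKdim(R/I) \leq 1$ immediately.

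For (2) your route is genuinely different. The paper argues by contradiction: after enlarging $J$ to absorb any finite-dimensional part of $\wh{J}/J$, if still $J \neq \wh{J}$ one picks $x \in \wh{J} \smallsetminus J$ with $xg \in J$; then $x(gR + J) \subseteq J$, so left multiplication by $x$ gives a surjection $R/(gR+J) \twoheadrightarrow (xR+J)/J$, and the source is finite-dimensional (since $\overline{J} \neq 0$, as $J \subseteq gR$ would force $\GKdim(R/J) \geq 2$) while the target is not. Your filtration-by-powers-of-$g$ argument is also correct but you should make the key step explicit: writing $X = \wh{J}/J$, the top layer $X/gX = \wh{J}/(J + g\wh{J})$ \emph{injects} into $\overline{\wh{J}}/\overline{J}$ (if $x \in \wh{J}$ with $x = j + gr$ for $j \in J$, $r \in R$, then $gr \in \wh{J}$ and $g$-divisibility of $\wh{J}$ gives $r \in \wh{J}$, so $x \in J + g\wh{J}$); then multiplication by $g$ gives surjections down the filtration, and bounded exponent finishes. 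One sloppiness to fix: you correctly establish bounded $g$-exponent (so $g^m \wh{J} \subseteq J$ for some $m$) and then, a few lines later, assert ``$J \supseteq g^N \wh{J}$ is false in general'' --- it is true here, for exactly the reason you already gave.
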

 
 \begin{proof}
 
 (1)  By Lemma~\ref{lem:expectQ}, $\overline{R} \subseteq k(E)[t, t^{-1}; \tau] = Q_{gr}(\overline{R})$ and,
 by Lemma~\ref{lem:T-prop}(2), 
$\overline{R}$ is  just infinite.
Since $J$ is $g$-divisible, $J \nsubseteq gR$ and so $\overline{J} \neq 0$; thus $\dim_\kk \overline{R}/\overline{J} < \infty$.   Equivalently, if 
 $R'=R/J$ then   $\dim_{\kk}R'/gR'<\infty$. It follows that $R'_m=gR'_{m-1}$ 
 for all $m\gg0$ and hence that   $\GKdim R'\leq 1$.

 (2)  Once again,   $\overline{R}$ is just infinite. Thus,  since $J \subseteq gR$ would lead to the contradiction 
$\GK(R/J) \geq 2$, we must have $\dim_\kk R/(gR + J) = \dim_\kk \overline{R}/\overline{J} < \infty$. 
   Since $\wh{J}$ is noetherian, $g^n\wh{J}\subseteq J$ for some $n$.
If $J'  $ is the largest right ideal inside $\wh{J}$ such that $J'/J$ is finite dimensional, then $J'$ is an ideal and we can replace 
$J$ by $J' $ without loss. If we still have $J\not=\wh{J}$, then  there exists  $x\in\wh{J}\smallsetminus J$ such that $xg\in J$. Thus   
$x(gR+J)\subseteq J$,   
and left multiplication by $x$ defines a surjection $R/(gR+J) \twoheadrightarrow (xR+J)/J$.  We have $\dim_\kk(xR+J)/J = \infty$ and $\dim_\kk R/(gR+J) < \infty$, a contradiction.  Thus $\wh{J} = J$.

  (3) Write $K=g^nJ$ with $n$ as large as possible and $J$ an ideal of $R$. 
 Then $J\not\subseteq gR$ and so Lemma~\ref{lem:T-prop}(2) again 
 implies that $\dim_{\kk}\overline{R}/\overline{J}<\infty$ and hence $\GKdim R/J \leq 1$.

(4)  
 Since $g T_{(g)}$ is completely prime, $I  =LM \nsubseteq gT_{(g)}$ and hence  $I \nsubseteq gR$.
Now apply  Part~(3).
 \end{proof}

Next, we want to prove some general results about equivalent orders that will be useful elsewhere.
We recall that two cg domains $A$ and $B$  with a common (graded)   quotient ring $Q=Q_{gr}(A)=Q_{gr}(B)$ are \emph{equivalent orders}\label{equiv-defn}
 if $aAb\subseteq B$ and $cBd\subseteq A$ for some   $a,b,c,d\in Q\smallsetminus\{0\}$. Clearing denominators on the appropriate sides, one  can always assume that $a,b,c,d\in B$.  One can also  assume that $a,b,c,d$ are homogeneous;
 indeed,  if $a$ and $b$ have leading terms $a_n$ and $b_m$ then $a_nAb_m\subseteq B$.

\begin{proposition}\label{thm:equivord}
Suppose that $U\subseteq R$ are $g$-divisible cg  finitely generated subalgebras of $T_{(g)}$  such that  $Q_{gr}(U)=Q_{gr}(R) = Q_{gr}(T_{(g)})$. 
Then the following are equivalent:
\begin{enumerate}
\item $U $ and $R $ are equivalent orders in $Q_{gr}(U)=Q_{gr}(T)$;
 \item $ U/gU  $ and $ R/gR  $ are equivalent orders
in $Q_{gr}(  U/gU)$.
\end{enumerate}
\end{proposition}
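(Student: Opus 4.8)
The plan is to prove the two implications separately, using $g$-divisibility throughout to pass cleanly between $T_{(g)}$ and its factor $\overline{T_{(g)}} = \kk(E)[t,t^{-1};\tau]$. For the direction (1) $\Rightarrow$ (2), suppose $aUb \subseteq R$ and $cRd \subseteq U$ for nonzero elements of $T_{(g)}$. As noted in the discussion of equivalent orders just before the statement, I may assume $a,b,c,d$ are homogeneous; and writing each as $g^i$ times an element outside $gT_{(g)}$, I may — using that $U$ and $R$ are $g$-divisible, so that e.g. $g^i(\text{stuff})\subseteq R$ forces $(\text{stuff})\subseteq R$ — further assume that $a,b,c,d \notin gT_{(g)}$. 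Then $\overline{a},\overline{b},\overline{c},\overline{d}$ are all nonzero in $\overline{T_{(g)}}$, and reducing the containments mod $g$ gives $\overline{a}\,\overline{U}\,\overline{b} \subseteq \overline{R}$ and $\overline{c}\,\overline{R}\,\overline{d} \subseteq \overline{U}$. By Lemma~\ref{lem:expectQ}, $Q_{gr}(\overline{U}) = Q_{gr}(\overline{R}) = Q_{gr}(\overline{T_{(g)}})$, so these containments exhibit $\overline{U}$ and $\overline{R}$ as equivalent orders in that common quotient ring. This direction is essentially formal.

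The substantive direction is (2) $\Rightarrow$ (1). Here I am given homogeneous $\overline{a},\overline{b},\overline{c},\overline{d} \in \overline{R}$, nonzero, with $\overline{a}\,\overline{U}\,\overline{b} \subseteq \overline{R}$ and $\overline{c}\,\overline{R}\,\overline{d} \subseteq \overline{U}$, and I must lift this to a containment $a' U b' \subseteq R$ (and symmetrically) inside $T_{(g)}$. The first step is to lift the elements: choose homogeneous $a, b, c, d \in R$ with $\overline{a},\overline{b},\overline{c},\overline{d}$ as prescribed; since $\overline{a} \ne 0$ etc., we have $a,b,c,d \notin gR$, hence $a,b,c,d \ne 0$ in the domain $T_{(g)}$. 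Now $\overline{a U b} \subseteq \overline{R}$ says only that $a U b \subseteq R + gT_{(g)}$, i.e. (by $g$-divisibility of $R$) that $aUb \subseteq R + g T_{(g)}$ and more precisely $\widehat{aUb} \subseteq R$... but what I actually get is $aUb \subseteq R + gT_{(g)}$. The idea is to iterate: I want to show $aUb \subseteq R$ after multiplying by a suitable power of $g$, or after further shrinking. Concretely, set $X = aUb$, a graded right-and-left-bounded subspace (indeed a finitely generated object since $R$ is noetherian by Proposition~\ref{div-noeth} and $U$ is a finitely generated $R$-bimodule... one should check $aUb$ is finitely generated as, say, a right $R$-module). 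The relation $\overline{X} \subseteq \overline{R}$ gives $X \subseteq R + gT_{(g)}$; intersecting with higher powers, $X \cap g^n T_{(g)}$ maps into $g\overline{R}$-type data. The cleanest approach: consider $Y = (aUb + R)/R \subseteq T_{(g)}/R$. Since $T_{(g)}/R$ is $g$-torsionfree (Lemma~\ref{lem-triv}, as $R$ is $g$-divisible) and $\overline{aUb} \subseteq \overline{R}$ means $aUb \subseteq R + gT_{(g)}$, one gets $gY \subseteq \ldots$ hmm — rather, $Y$ is $g$-torsionfree and $Y = g\cdot(\text{something})$... Let me instead argue: $aUb \subseteq R + gT_{(g)}$, so $g^{-1}(aUb \cap gT_{(g)}) $ together with the part landing in $R$. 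The right framework is that $\widehat{aUb}$, the $g$-divisible hull, satisfies $\overline{\widehat{aUb}} = \overline{aUb} \subseteq \overline{R}$, and since $\widehat{aUb}$ is $g$-divisible with $Q_{gr} \subseteq Q_{gr}(T)$, and $\overline{\widehat{aUb}} \subseteq \overline{R} = \widehat{\overline{R}}$, one deduces $\widehat{aUb} \subseteq R$ — because a $g$-divisible subspace $Z$ of $T_{(g)}$ with $\overline Z \subseteq \overline R$ and $R$ $g$-divisible must satisfy $Z \subseteq R$ (inductively: $Z \subseteq R + gT_{(g)}$; if $z \in Z$, write $z = r + gt$, then $gt = z - r \in Z + R$, and $gt \in gT_{(g)}$; by $g$-divisibility of $Z$ and $R$... this needs $z - r \in$ a $g$-divisible space, so pass to $Z + R$, which is $g$-divisible if both are — it is, being a sum of $g$-divisible submodules of the torsionfree $T_{(g)}$ — then $gt \in (Z+R) \cap gT_{(g)} = g(Z+R)$, so $t \in Z + R$; recursing on $t$ and using that $Z+R$ is graded with bounded-below grading, $z \in Z + R$ gives nothing new, but $z \in R + g^n(Z+R)$ for all $n$, and by the grading $z \in R$). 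So $aUb \subseteq \widehat{aUb} \subseteq R$, giving one containment; the symmetric argument with $c,d$ gives $cRd \subseteq U$, and we conclude $U$ and $R$ are equivalent orders in $Q_{gr}(T)$ (noting $Q_{gr}(U) = Q_{gr}(R) = Q_{gr}(T)$ is already given).

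The main obstacle is the lifting argument in (2) $\Rightarrow$ (1): the naive reduction mod $g$ only yields $aUb \subseteq R + gT_{(g)}$, and one must leverage $g$-divisibility — both the fact that $R$ is $g$-divisible and the behaviour of the $g$-divisible hull $\widehat{(\cdot)}$ established in Lemmas~\ref{lem-triv}, \ref{lem:finehat}, together with the noetherian property from Proposition~\ref{div-noeth} to keep everything finitely generated — to close the gap. The key sublemma to isolate and prove first is: \emph{if $Z \subseteq T_{(g)}$ is a $g$-divisible graded subspace and $\overline{Z} \subseteq \overline{R}$ for a $g$-divisible cg subalgebra $R$, then $Z \subseteq R$}; this follows from the observation that $Z + R$ is again $g$-divisible (a sum of $g$-divisible submodules of the $g$-torsionfree module $T_{(g)}$), so $(Z+R)/R$ embeds $g$-torsionfree-ly into $T_{(g)}/R$, is killed by reduction mod $g$, hence is $g$-divisible and $g$-torsion, hence zero. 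Once this sublemma is in hand, both directions are short, and the only remaining care is the routine reduction of $a,b,c,d$ to elements outside $gT_{(g)}$, which uses $g$-divisibility of $U$ and $R$ exactly as in the paragraph preceding the statement.
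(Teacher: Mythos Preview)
Your $(1)\Rightarrow(2)$ is correct and is essentially the paper's argument.

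Your $(2)\Rightarrow(1)$ has a genuine gap. The key sublemma --- that a $g$-divisible graded subspace $Z\subseteq T_{(g)}$ with $\bar Z\subseteq \bar W$ (for $g$-divisible $W$) must satisfy $Z\subseteq W$ --- is false, and the specific step that fails is your claim that a sum of $g$-divisible subspaces of $T_{(g)}$ is again $g$-divisible. For a toy counterexample take a commutative $T=\kk[x,y,g]$ with $|x|=2$, $|y|=|g|=1$, and set $Z=\kk[x+gy,\,g]$, $W=\kk[x,\,g]$. Both are $g$-divisible graded subalgebras with $\bar Z=\bar W=\kk[\bar x]$; yet $gy=(x+gy)-x\in (Z+W)\cap gT$ while $y\notin (Z+W)_1=\kk g$, so $Z+W$ is not $g$-divisible, and indeed $Z\not\subseteq W$. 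In your actual application $Z=cRd$ \emph{is} $g$-divisible (that part is fine, via two applications of the argument in Lemma~\ref{lem:finehat}(1)) and lies inside $R$; but your proof of the sublemma does not use this extra structure. Concretely, from $\overline{cRd}\subseteq\bar U$ you only get $cRd\subseteq U+gR$, so each $crd=u+gr'$ with $r'\in R$; the remainder $r'$ bears no relation to $cRd$, and the induction on degree does not close.

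The paper's proof of $(2)\Rightarrow(1)$ takes a genuinely different route and does \emph{not} attempt to lift the elements $\bar c,\bar d$. Instead it builds an intermediate ring: choose a finite-dimensional $F\subseteq R$ whose image $\bar F\,\bar U$ is a carefully chosen subalgebra $S$ with $\bar U\subseteq S\subseteq\bar R$, set $M=\widehat{FU}$ and $V=\End_U(M)$. One checks $U\subseteq V\subseteq R$, that $M$ is a finitely generated $(V,U)$-bimodule (so $U$ and $V$ are equivalent), and finally that $\bar R$ is a finitely generated left $\bar V$-module; graded Nakayama (using $g\in V_+\subseteq R_+$) then lifts this to $R$ being finitely generated over $V$, whence $V$ and $R$ are equivalent. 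Equivalence of $U$ and $R$ is thus obtained through module-finiteness and an endomorphism-ring construction, not by producing explicit sandwiching elements.
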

     
\begin{proof}   $ (1) \Rightarrow (2)$ 
Choose non-zero homogeneous elements $a, b \in U$ such that $aRb \subseteq U$. Write $a = g^n a'$ where $a' \in U\smallsetminus gU$.
Then $g^na' R b \subseteq U$ and so $a' Rb \subseteq U$ since $U$ is $g$-divisible.  Replacing $a$ by $a'$, 
we can assume that $a \not \in gU$ and, similarly,  that $b \not \in gU$.  Then
$\overline{a} \, \overline{R} \, \overline{b} \subseteq \overline{U}$, with $\overline{a}, \overline{b} \neq 0$, 
as required. 

$(2) \Rightarrow (1)$ Set $\overline{U}=U/gU  \subseteq\overline{R}=R/gR.$ 
    We first note that there is a subalgebra   $\bbar{U} \subseteq S \subseteq \bbar{R}$ so that 
    $S$ is a noetherian right $\overline{U}$-module 
    and   $\overline{R}$ is a noetherian left $S$-module.  Indeed,  write $a\overline{R}b\subseteq
 \overline{U}$ for some  nonzero $a,b\in \overline{U}$ and set   $S=\overline{U}+\overline{R}b\overline{U}$.
  Clearly     $aS\subseteq \overline{U}$ and $\overline{R}b\subseteq S$.
    As in the proof of Proposition~\ref{div-noeth}, all subalgebras of $\overline{R}$ are noetherian.  
    In particular, $S$ and $ \overline{U} $ are noetherian
 and so  these inclusions ensure that   $S_{\bbar{U}}$ and ${}_{S}\bbar{R}$ are finitely generated, as claimed.

 Let $F \subseteq R$ be a finite-dimensional vector space, containing 1, such that $\bbar{F}\bbar{U} = S$.    Set $M=\wh{FU}$  and $V=\End_U(M)$.   Clearly $Q_{gr}(V)=Q_{gr}(U)=Q_{gr}(T)$. Since   $1 \in M$ and hence $M \nsubseteq gT_{(g)}$, 
 we can and will use Lemma~\ref{Sky2-1}(1) to identify 
 $V=\{q\in Q_{gr}(U) : qM\subseteq M\} \subseteq T_{(g)}$.  By  Lemma~\ref{Sky2-1}(3), $V=\wh{V}$ and 
 ${}_V M$ is finitely generated, while by Lemma~\ref{lem:finehat}, $M_U$ is finitely generated.
As $R$ is $g$-divisible  and $FU \subseteq R$, we have $M \subseteq R$. 
Since $1 \in M$ this implies that  $MR = R$. Hence   $ V R = VMR = MR = R$ and   $V \subseteq   R$.

Let $G,H \subseteq R$ be finite-dimensional vector spaces with $VG = M$ and $S \bbar{H} = \bbar{R}$.  Then 
\[ \bbar{R} \supseteq \bbar{VGH}  \supseteq \overline{FUH} = S \bbar{H} = \bbar{R}.\]
Thus $\bbar{R} = \bbar{MH} = \bbar{VGH}$ is finitely generated as a left $\bbar{V}$-module. 
Since $g\in V_+=\bigoplus_{i>0}V_i \subseteq R_+$, this implies that $R/(V_+)R$ is a finitely generated left module over 
$V/V_{+}$. By the graded analogue of Nakayama's lemma, this implies that $R$ is finitely generated as a left $V$-module. 
  Thus $R$ and $V$ are equivalent orders.   As $V$ and $U$ are equivalent orders (via the bimodule $M$), it follows that  
$U$ and $R$ are equivalent.
\end{proof}
  

\section{Curves}\label{CURVES}

The main result of \cite{AS} shows that any cg domain $A$ of Gelfand-Kirillov dimension two has a Veronese ring that is an idealiser inside 
a TCR. In this section we   strengthen this result for elliptic curves by proving   that, for subalgebras of a TCR over such 
a  curve corresponding to an automorphism of infinite order, the 
result holds without taking a Veronese ring, although at the cost of a finite dimensional fudge factor. 
 
Given graded modules $M,N\subseteq P$ over a cg algebra $A$, we write $M\ppe N$\label{ppe-defn} if $M$ and $N$ agree 
up to a finite dimensional 
 vector space. If $M,N\in \rgr A$,  this 
 is equivalent to $M_{\geq n}=N_{\geq n}$ for some $n\geq 0$.

\begin{theorem}\label{thm:projcurve}
    Let $A$ be a cg ring such that $Q_{gr}(A)=\kk(E)[z,z^{-1};\tau]$ for some infinite order automorphism $\tau $ of a smooth elliptic curve $E$ and 
    $z\in Q_{gr}(A)_1$.  Then there are an ideal sheaf $\sA$  and an
ample invertible sheaf $\sH$   on $E$ so that 
\[ A \ppe \bigoplus_{n \geq 0} H^0(E, \sA \sH_n).\]
\end{theorem}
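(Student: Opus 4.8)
The plan is to analyze $A$ inside its graded quotient ring $Q = \kk(E)[z,z^{-1};\tau]$ by extracting, from each homogeneous piece $A_n$, the geometric data it carries. First I would write $A_n = V_n z^n$, where $V_n \subseteq \kk(E)$ is a finite-dimensional $\kk$-subspace; the multiplication in $A$ then reads $V_m z^m \cdot V_n z^n = V_m \cdot \tau^m(V_n)\, z^{m+n}$, so the subalgebra condition becomes $V_m \cdot \tau^m(V_n) \subseteq V_{m+n}$ for all $m,n \geq 0$, with $V_0 = \kk$. Each nonzero $V_n$ spans, as an $\kk(E)$-space, a fractional ideal, hence determines a divisor: choose $0\neq f_1 \in V_n$ and set $D_n = \operatorname{div}(f_1) + E_n$ where $E_n$ is the largest effective divisor with $V_n \subseteq H^0(E, \struct(-\operatorname{div}(f_1)+E_n))$... more cleanly, $V_n$ is contained in a unique minimal invertible sheaf's sections, i.e. there is an invertible sheaf $\sL_n$ (determined by the divisor of common zeros/poles) with $V_n \subseteq H^0(E,\sL_n)$ and $V_n$ generating $\sL_n$ generically. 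The submultiplicativity $V_m\tau^m(V_n)\subseteq V_{m+n}$ forces a corresponding relation $\sL_m \otimes (\sL_n)^{\tau^m} \hookrightarrow \sL_{m+n}$ on the level of these sheaves, i.e. $\deg \sL_{m+n} \geq \deg\sL_m + \deg \sL_n$ and compatible linear equivalences.

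Next I would exploit that $Q_{gr}(A) = Q_{gr}(B(E,\sH,\tau))$ for any ample $\sH$ of degree one on $E$ with the same $\tau$: since $A$ is a graded order in $Q$, there is a nonzero homogeneous $x = f z^d \in A$ with $x \cdot B(E,\sH,\tau)_1 \subseteq A$ in $Q$, which translates, after comparing the sheaf data, into a lower bound $\deg \sL_n \geq n\cdot \delta - c$ for constants $\delta \geq 1$, $c$; and conversely, since $A$ embeds in a large enough Veronese-type TCR, one gets an upper bound $\deg \sL_n \leq n\delta + c'$. The point is that $\deg \sL_n$ grows linearly with a well-defined slope $\delta = \lim_n (\deg \sL_n)/n \in \ZZ_{\geq 1}$, and for $n \gg 0$ the degrees are exactly affine, $\deg \sL_n = n\delta + e$ for a fixed $e$. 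This uses the just-infinite property of TCR's over $E$ (Lemma~\ref{lem:T-prop}(2)) and the fact that $\tau$ has infinite order, exactly as in the proof of Theorem~\ref{thm:projcurve}'s commutative predecessor in \cite{AS}; the infinite order of $\tau$ is what prevents periodic pathologies and forces the $\sL_n$ to stabilize into a genuine "$\sH_n$" pattern.

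Then I would pin down the isomorphism type. Having linear degree, I claim the submultiplicative system $\{V_n\}$ is, up to finite dimension, saturated: for $n \gg 0$, $V_n = H^0(E,\sL_n)$ on the nose (any proper inclusion would, via the multiplication map and Riemann–Roch on the elliptic curve, contradict either the slope computation or the just-infiniteness of the ambient TCR). Now write $\sH = \sL_{n_0}\otimes \text{(correction)}$ so that, with $\sH_n = \sH\otimes\sH^\tau\otimes\cdots\otimes\sH^{\tau^{n-1}}$, one has $\sL_n \cong \sA\sH_n$ for a fixed ideal sheaf $\sA$ and all $n\gg 0$: the "defect" between the multiplicative $\sL_n$ and the tautological $\sH_n$ is measured by a single sheaf which, being the intersection of the stable pattern, is an ideal sheaf $\sA \subseteq \struct_E$ after twisting $\sH$ appropriately. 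Here ampleness of $\sH$ is automatic once $\delta\geq 1$. Matching the products $V_m \tau^m(V_n)\subseteq V_{m+n}$ with the TCR multiplication $H^0(\sA\sH_m)\otimes H^0(\sA\sH_n)^{\tau^m}\to H^0(\sA\sH_{m+n})$ shows the algebra structures agree in large degree, giving $A \ppe \bigoplus_{n\geq 0} H^0(E,\sA\sH_n)$.

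I expect the main obstacle to be the \emph{saturation/stabilization step}: showing that for $n \gg 0$ the spaces $V_n$ are exactly the full sections $H^0(E,\sL_n)$ and that the sheaves $\sL_n$ settle into the rigid pattern $\sA\sH_n$ (rather than merely having the right degree). This is where the infinite order of $\tau$, Riemann–Roch on $E$, and the just-infinite property of twisted homogeneous coordinate rings must be combined carefully — it is the genuine strengthening over \cite{AS}, where one only gets the conclusion after passing to a Veronese. The bookkeeping that turns an abstract linearly-growing invertible system into the concrete form $\sA\sH_n$, and the verification that $\sA$ can be taken to be an honest ideal sheaf (not just a fractional one) after adjusting $\sH$, will be the technical heart of the argument.
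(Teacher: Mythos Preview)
Your approach is genuinely different from the paper's, and the step you flag as the ``main obstacle'' is indeed a real gap that you have not closed.

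The paper does \emph{not} attempt a direct sheaf-by-sheaf analysis of the $V_n$'s. Instead it bootstraps from the Artin--Stafford theorem \cite[Theorem~5.11]{AS} applied to \emph{two coprime Veronese subrings} $A^{(p)}$ and $A^{(p+1)}$ (chosen so that each has a nonzero element in degree~$1$, so that the hypotheses of \cite{AS} are met; the absence of $\tau$-fixed points on an elliptic curve handles the other hypothesis). This gives, for each Veronese, a description of the form $\sA\otimes\sF_{p,n}$, respectively $\sB\otimes\sG_{q,n}$. A combinatorial uniqueness argument---carried out orbit by orbit, solving a triangular linear system in the divisor coefficients---forces $\sA=\sB$ and identifies a single $\sH$ with $\sF=\sH_p$ and $\sG=\sH_{p+1}$. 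Finally, the residue classes $M(r)=\bigoplus A_{np+r}$ are treated as $A^{(p)}$-modules via \cite[Proposition~5.4]{AS}, and the $(p+1)$-Veronese pins down the remaining twist, yielding $A_n=H^0(E,\sA\sH_n)$ for $n\gg0$.

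Your plan, by contrast, tries to establish directly that $V_n=H^0(E,\sL_n)$ for $n\gg0$ and that the $\sL_n$ fit the pattern $\sA\sH_n$. The difficulty is that saturation ($V_n=H^0(E,\sL_n)$) and the exact multiplicative identity $\sL_m\otimes\sL_n^{\tau^m}=\sL_{m+n}$ are essentially the content of \cite[Theorem~5.11]{AS}, whose proof uses the degree-$1$ hypothesis in a substantive way. You invoke ``just-infiniteness and Riemann--Roch'' but give no mechanism for forcing saturation in every degree rather than along an arithmetic progression---which is precisely what the Veronese passage in \cite{AS} buys. Without a new idea here, your outline reproves \cite{AS} rather than strengthens it. The paper's coprime-Veronese trick is the missing ingredient: it converts two arithmetic progressions into all large degrees via a short uniqueness argument, avoiding any need to redo the saturation analysis from scratch.
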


\begin{remarks}   (1)  \label{idealiser-defn}
 The \emph{idealiser} $\idealiser(J)=\idealiser_U(J)$ of a right ideal $J$ in a ring $U$
 is the subring \[\idealiser(J)=\{u\in U : uJ\subseteq J\}.\]
In the notation of the theorem  $J=\bigoplus_{n \geq 0} H^0(E, \sA \sH_n) $ is a right ideal of the 
TCR $B(E, \sH,\tau)$; further, $\idealiser_U(J) \ppe \kk+J$.  So, an equivalent way of phrasing the theorem is to assert that
(up to a finite dimensional vector space) $A$ is equal to the idealiser $\idealiser(J)$ 
 inside $B(E,\sH,\tau)$.
 
 (2) The assertion that $z\in Q_{gr}(A)_1$ can be avoided at the expense of    re-grading $A$, although in the process one must replace  
 $\tau$ by some $\tau^m$ in the definition of the $\sH_n$.
 
 (3) The sheaf $\sH$ is ample if and only if it has positive degree \cite[Corollary 3.3]{Ha}, if and only if $\sH$ is {\em $\tau$-ample:} \label{tau-ample}  that is, for any coherent $\sF$ and for $n \gg 0$, $\sF \otimes \sH_n$ is globally generated with $H^1(E, \sF\otimes \sH_n) = 0$ \cite[Corollary 1.6]{AV}.
\end{remarks}

\begin{proof}    The hypothesis on $z$ ensures that $A_p\not=0\not= A_{p+1}$ for all $p\gg 0$. Fix some such $p$.

The conclusion of the theorem is, essentially, the same as that of  \cite[Theorem~5.11]{AS}, although that result has two hypotheses  we need to remove.  The first, \cite[Hypothesis~2.1]{AS}
requires that the ring in question has a non-zero element in degree one, so does at least hold for the Veronese rings $A^{(p)}$ and $A^{(q)}$,
 for $q=p+1$. The remaining hypothesis, \cite[Hypothesis~2.15]{AS}, concerns $\tau$-fixed points of $E$.  
In our situation,  this   automatically  holds as $E$ 
 has no such fixed points (see the discussion before \cite[(2.9)]{AS}).   

By the discussion above, \cite[Theorem~5.11 and Remark~5.12(2)]{AS} can be applied to the Veronese rings $A^{(p)}$ and 
$A^{(q)}$. This provides    invertible sheaves $\sA,\sB,\sF,\sG$ with $\sF, \sG$ ample such that 
$$A^{(p)} \ppe \bigoplus_{n\geq 0} H^0(E,\, \sA\otimes\sF_{p,n})
\qquad\text{and}\qquad
A^{(q)} \ppe \bigoplus_{n\geq 0} H^0(E,\, \sB\otimes\sG_{q,n}),$$
where in order to take account of the Veronese rings we have written  
$\sM_{r,n} =\sM\otimes \sM^{\tau^r}\otimes\cdots\otimes \sM^{\tau^{(n-1)r}}$ for       an invertible sheaf $\sM$.
For $n\gg 0$ the sheaves $\sA\otimes\sF_{p,nq}$ and $\sB\otimes\sG_{q,np}$
 are generated by their sections $A_{npq}$ and so  $\sA\otimes\sF_{p,nq} =\sB\otimes\sG_{q,np}$ for such $n$. 
Replacing $n$ by $n+m$ we obtain
$$\sA\otimes \sF_{p,nq}\otimes \sF_{p,mq}^{\tau^{npq}}
= \sA\otimes \sF_{p,(n+m)q} = \sB\otimes \sG_{q,(n+m)p} =\sB\otimes \sG_{q,np}\otimes \sG_{q,mp}^{\tau^{npq}} 
= \sA\otimes \sF_{p,nq} \otimes \sG_{q,mp}^{\tau^{npq}} $$
for all $n+m>n\gg 0$.
Cancelling the first two terms and   applying $\tau^{-npq}$  gives
$\sF_{p,mq}=\sG_{q,mp}$ for all $m\geq 1$. In particular it holds for $m=n$ and hence $\sA=\sB$. 

Next, set $\sH=\sG\otimes(\sF^\tau)^{-1}$; thus the equation $\sF_{p,q}=\sG_{q,p}$ gives 
\begin{equation}\label{curve1}
\sH_{q,p}=\sF_{p,q}\otimes (\sF^\tau)_{q,p}^{-1}.
\end{equation}
We claim that $\sF$ is the unique  invertible sheaf $\wt{\sF}$ satisfying 
$\sH_{q,p}=\wt{\sF}_{p,q}\otimes (\wt{\sF}^\tau)_{q,p}^{-1}$.   To see this, suppose that $\wt{\sF}$ is a second 
sheaf satisfying this property and consider
 associated divisors.  Pick a closed point $x\in E$ and write $ \mb{O}_x =\{x(i)=\tau^{-i}(x): i\in \mathbb{Z}\}$  
for the orbit of $x$ under $\tau$.   Writing $\sF = \mc{O}_E(F)$ and $\sH = \mc{O}_E(H)$ for some divisors $F$ and $H$ and  restricting to $\mb{O}=\mb{O}_x$,  gives  $F|_{\mb O} =\sum m_ix(i)$ 
and $H|_{\mb O} =\sum r_ix(i)$.   Now in terms of divisors, \eqref{curve1} gives:
\[\begin{array}{rl}
&\sum r_ix(i)+r_ix(i+q)+\cdots+r_ix(i+(p-1)q) \ = \\ \noalign{\vskip 6pt}
&\qquad \qquad =\  \sum m_ix(i)+m_ix(i+p)+\cdots+m_ix(i+(q-1)p)
\\ \noalign{\vskip 6pt}
& \qquad \qquad \qquad -  \sum m_ix(i+1)+m_ix(i+1+q)+\cdots+m_ix(i+1+(p-1)q).
\end{array}\]
Subsequently we will write $r(i)=r_i$ and $m(i)=m_i$ to avoid excessive subscripts. Equating coefficients of  $x(t)$ in the last displayed equation 
 gives  
\[\begin{array}{rl}
& m(t)+m(t-p)+\cdots + m(t-(q-1)p)
-m(t-1)-m(t-1-q)-\cdots - m(t-1-(p-1)q) = 
\\ \noalign{\vskip 6pt}
& \qquad \qquad
=r(t)+r(t-q)+\cdots + r(t-(p-1)q).
\end{array}\]
Recall that $m_i=r_i=0$ for $|i|\gg 0$. Therefore,
solving this system from $t\ll 0$ through to $t\gg 0$ gives a unique solution for the $m_i$ in terms of the $r_j$.  Finally, doing this for 
every orbit involved in the divisors $F$ and $H$ shows that $F$ is uniquely determined by $H$, and so $\sF$ is uniquely determined as claimed.

A direct calculation shows that if $\wt{\sF} = \sH_{1,p}=\sH_p$, then 
$\sH_{q,p}=\wt{\sF}_{p,q}\otimes (\wt{\sF}^\tau)_{q,p}^{-1}$.  Thus $\wt{\sF} = \sF$ and consequently 
$\sH_{1, mp} = \sF_{p,m}$ for all $m \geq 1$.    
It follows from the equation $\sH = \sG \otimes (\sF^{\tau})^{-1}$ that 
$\sG = \sH_{1,q}$, and thus  $\sH_{1, mq} = \sG_{q,m}$ for all $m \geq 1$ as well.  
To summarise, we have found sheaves $\mathcal{A}$ and $\mathcal{H}$  such that 
\begin{equation}\label{projcurve1}
A^{(s)}\ppe \bigoplus_n H^0(E,\, \mathcal{A}\mathcal{H}_{1,ns})
 =  \bigoplus_n H^0(E,\, \mathcal{A}\mathcal{H}_{ns})\quad\text{ for $s=p,p+1$. }\end{equation}
It follows that \eqref{projcurve1} holds for all $s\gg 0$, but this is not  quite  enough to prove the theorem
since, as $s$ increases, one has no control over 
the finitely many values of $n=n(s)$ for which $A^{(s)}_n\not=  H^0(E,\, \mathcal{A}\mathcal{H}_{1,ns})$.
So we take a slightly different tack.

For $0\leq r \leq p-1$, write $M(r)=\bigoplus_{n\geq 0} A_{np+r}$; thus $A=\bigoplus_{r=0}^{p-1}M(r).$
Fix some such $r$.  We can find  $0\not=x\in A_{2p-r}$, since $2p-r > p$.
Thus $xM(r)\subseteq A^{(p)}$ and so, by \cite[Proposition~5.4]{AS} and \eqref{projcurve1}, there exists  an  ideal sheaf
$\mathcal{I} \subseteq \mathcal{O}_E$ such that 
$xM(r)   
\ppe \bigoplus_{n\geq 0} H^0(E, \mathcal{I}\otimes \sH_{1,np}^{\tau^{2p}}) $
(in this formula,  the   twist by $\tau^{2p}$ is for convenience only but it will simplify the  computations). 
Since $A$ is a domain, $M(r)$ is isomorphic to the shift $ xM(r)[2p-r]$. Hence, for some  integer $n_0$ independent of $r$,
\cite[Lemma~5.5]{KRS} implies that 
\begin{equation}\label{projcurve2}
M(r)_{\geq n_0} =  \bigoplus_{n\geq n_0} H^0(E, \mathcal{I}' \otimes \sH_{1,np}^{\tau^{r}})
  =   \bigoplus_{n\geq n_0} H^0\bigl(E, \, \mathcal{J}(r) \otimes \sH_{r} \otimes \sH_{1,np}^{\tau^{r}}\bigr),
\end{equation}
for some invertible sheaves $
\mathcal{I}'$ and $\mathcal{J}(r)=\mathcal{I}'\otimes(\sH_r)^{-1}$.    Possibly after increasing $n_0$, we may also assume that 
the sheaves in \eqref{projcurve1} and \eqref{projcurve2} are generated by their sections for $n\geq n_0$. 
Now pick   $n\geq n_0$ such that $r+np=(p+1)m$ for some $m$. 
Then, comparing \eqref{projcurve1} and  \eqref{projcurve2} shows that $M(r)_{r+np}$ generates the sheaves
$$\mathcal{J}(r)\otimes \sH_{r+np}
= \mathcal{J}(r)\otimes \sH_r\otimes \sH_{1,np}^{\tau^{r}} 
=  \mathcal{A}\otimes\mathcal{H}_{1,(p+1)m}  =  \mathcal{A}\otimes\mathcal{H}_{r+np}.$$
Hence $\mathcal{J}(r)=\mathcal{A}$. Since this  holds for all $0\leq r\leq p-1$, it follows that   
$A_n=H^0(E, \mathcal{A}\mathcal{H}^{\tau^n})$, for all $n\geq n_0p$.
\end{proof}

\begin{remark}  We note that   \cite[Lemma~3.2(2)]{Rog09}   states a  
result similar to Theorem~\ref{thm:projcurve}, but the proof erroneously quotes the relevant theorems from \cite{AS} 
without removing the hypothesis that rings should have a non-zero element 
in degree one.  Thus the above  proof   also corrects this oversight.  In any case,  \cite[Lemma~3.2(2)]{Rog09} was 
only used   in \cite{Rog09} for rings generated in degree one.
\end{remark}

If $A$ is a cg algebra generated in degree one, then we define a \emph{point module}\label{pt-defn}  
to be a cyclic module $M=\bigoplus_{i\geq 0} M_i$,
with $\dim M_i=1$ for all $i\geq 0$. When $A$ is not generated in degree one, a point module has this 
asymptotic structure, but the precise definition can vary depending on circumstances,  and so
we will be careful to explain which definition we mean should the distinction be important.

 To end this section we give some applications of the previous theorem to the structure of point modules, for which we need a definition.  
If $M=\bigoplus_{n}M_n$ is a graded module over a cg algebra $A$,   
 we write $s^n(M) = (M_n A)[n].$
 The largest artinian submodule of a noetherian module $M$   is written $S(M)$.  
  
 \begin{corollary}\label{cor:curvept} Let $A$ satisfy the hypotheses of Theorem~\ref{thm:projcurve}.  
    Let $M$ and $M'$ be   1-critical graded right
 $A$-modules generated in degree zero.
Then: 
\begin{enumerate}
\item the isomorphism classes of such modules are in (1-1) correspondence with the closed points of $E$;
\item $\dim M_n \leq 1$ for all $n\ge0$, with $\dim M_n = 1$ for $n \gg 0$;
 \item for $n\geq 0$,  either $M_n=0$ or $s^n M$ is cyclic and  1-critical;
\item if $ s^n M \cong s^n M' \neq 0$ for some $n \in \NN$, then $M \cong M'$.
\end{enumerate}
\end{corollary}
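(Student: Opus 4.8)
The plan is to deduce Corollary~\ref{cor:curvept} from Theorem~\ref{thm:projcurve} together with the classical theory of point modules over twisted homogeneous coordinate rings of $E$. By Theorem~\ref{thm:projcurve} there are an ideal sheaf $\sA$ and a $\tau$-ample invertible sheaf $\sH$ on $E$ with $A\ppe J:=\bigoplus_{n\geq 0}H^0(E,\sA\sH_n)$, where by Remark~\ref{idealiser-defn} $J$ is a right ideal of $B:=B(E,\sH,\tau)$ and $A\ppe\idealiser_B(J)$; thus $A_{\geq n_0}=J_{\geq n_0}$ for some $n_0$, which I would enlarge so that every $\sA\sH_n$ with $n\geq n_0$ is globally generated with $H^1(E,\sA\sH_n)=0$. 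First I would observe that only the tail of $M$ matters: if $M_n\neq 0$ then $M_nA$ is a nonzero submodule of the $1$-critical module $M$, so $M/M_nA$ is finite dimensional and hence $s^nM=(M_nA)[n]\ppe M[n]$ (and $s^nM\neq 0$ precisely when $M_n\neq 0$). Consequently all four statements reduce to statements about $\pi(M)\in\rqgr A$.

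The heart of the matter is (2). I would invoke the dictionary between graded $A$-modules and coherent sheaves on $E$ afforded by $B$ and its idealiser $\idealiser_B(J)$ (compare \cite{AS,ATV,AZ}), under which $\pi(M)$ corresponds to a nonzero coherent sheaf $\sF$ on $E$. Since $M$ is $1$-critical, $\sF$ has finite length and possesses no nonzero proper subsheaf — a proper subsheaf would produce a graded submodule with infinite-dimensional quotient, contradicting $1$-criticality — so $\sF\cong\struct_p$ is the skyscraper at a single reduced closed point $p\in E$. Hence $\dim_\kk M_n=\dim_\kk H^0(E,\struct_p\otimes\sH_n)=1$ for $n\gg 0$. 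To obtain $\dim_\kk M_n\leq 1$ for \emph{all} $n$, I would use that $M$ is cyclic and generated in degree $0$, so $M_0=\kk$; being $1$-critical, $M$ has no nonzero finite-dimensional submodule, so the canonical map embeds $M$ into the saturation of $\pi(M)$, each graded piece of which is of the form $H^0(E,\struct_p\otimes\mathcal{I})$ for an invertible sheaf $\mathcal{I}$ and so has dimension at most $1$.

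Granting (2), the remaining parts become formal. For (3): if $M_n\neq 0$, then $\dim_\kk M_n=1$ by (2), so $s^nM=(M_nA)[n]$ is cyclic by its very construction, and being a nonzero submodule of the $1$-critical module $M[n]$ it is itself $1$-critical. For (1): the assignment $M\mapsto p$, where $\struct_p\cong\pi(M)$, is the asserted bijection. Surjectivity: for a closed point $p$ the saturation of $\struct_p$ has all graded pieces $1$-dimensional, and the $A$-submodule it generates in degree $0$ is — using the global-generation hypothesis above, after a finite truncation — a $1$-critical graded $A$-module generated in degree $0$ mapping to $p$. Injectivity: a $1$-critical $M$ generated in degree $0$ has no nonzero finite-dimensional submodule, hence embeds in the saturation of $\pi(M)$ and coincides with the $A$-submodule generated by the $1$-dimensional space $M_0$; thus $M$ is determined up to isomorphism by $\pi(M)$, hence by $p$. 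Finally (4): if $s^nM\cong s^nM'\neq 0$, then $M[n]\ppe s^nM\cong s^nM'\ppe M'[n]$, so $\pi(M)\cong\pi(M')$ and therefore $M\cong M'$ by (1).

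The step I expect to be the genuine obstacle is the module–sheaf dictionary used in (2): one must show that a $1$-critical graded $A$-module really corresponds to a length-one sheaf on $E$ and conversely, which amounts to controlling the category $\rqgr A$ through the idealiser $\idealiser_B(J)$ and the finite-colength subsheaf $\sA\subseteq\struct_E$ — and in particular handling correctly the finitely many points in the support of $\struct_E/\sA$, where restriction of scalars from $B$ to $A$ behaves exceptionally. Once this dictionary is in place, everything else is bookkeeping with tails and with the definition of $s^n$.
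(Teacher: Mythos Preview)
Your proposal is correct and takes a genuinely different route from the paper.

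The paper organises the proof as $(2)\Rightarrow(3)\Rightarrow(4)\Rightarrow(1)$, whereas you go $(2)\Rightarrow(3)$, then $(1)$, then $(4)$ deduced from $(1)$. The substantive differences lie in $(2)$ and in the $(1)/(4)$ pair. For $(2)$, you embed $M$ into its $A$-saturation $\omega_A\pi(M)$ and assert each graded piece has dimension at most $1$. This works, but the justification you sketch needs sharpening: one must check that the equivalence $\alpha=(-\otimes_A B):\rqgr A\to\rqgr B$ from \cite[Proposition~2.7]{SZ} commutes with shifts and sends $\pi(A)$ to $\pi(B)$, so that $(\omega_A\pi M)_n\cong(\omega_B\alpha\pi M)_n$ as vector spaces; since $\alpha\pi(M)$ corresponds to a skyscraper $\struct_p$, the right-hand side is $H^0(E,\struct_p\otimes\sH_n)\cong\kk$. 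The paper instead argues by contradiction: assuming $\dim M_0=2$, it passes to the $B$-module $W=\alpha(M)/S(\alpha(M))$ (still with $\dim W_0=2$), then to a Veronese $B^{(\ell)}$ generated in degree one where classical point-module theory applies, and derives a contradiction via the shift automorphism on point modules. Your route is cleaner and avoids the Veronese step entirely, at the cost of the categorical identification above.

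For $(1)$ and $(4)$, you prove $(1)$ directly---$M$ is recovered as the degree-zero-generated submodule of its saturation, which depends only on $\pi(M)$---and then $(4)$ is immediate since $s^nM\ppe M[n]$ forces $\pi(M)\cong\pi(M')$. The paper instead proves $(4)$ first by an amalgam construction: it forms $N=(M\oplus M')/Z$ where $Z$ is the graph of the given isomorphism extended maximally, shows $N$ is $1$-critical with $\dim N_0=2$, and invokes $(2)$ for a contradiction; part $(1)$ then follows from $(4)$. Your approach is more direct, while the paper's pushout argument is self-contained but more delicate. Your acknowledged ``genuine obstacle'' is exactly right: the saturation identification is the only point needing care, and once it is in place everything else is bookkeeping.
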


\begin{proof}
It is well-known that there is an equivalence of categories $\rqgr A\sim \coh (E)$, and much of 
the corollary follows   from this; thus  we first review the details of the equivalence. 
By \cite[Theorem~5.11]{AS} and the left-right analogue of Theorem~\ref{thm:projcurve}, 
we can write \[A \ \ppe \bigoplus_{n\geq 0} H^0(E, \sH_n\sA^{\tau^{n-1}}) \ \subseteq\  B\ = \ B(E,\sH,\tau)\]
for some ideal sheaf $\sA$ and invertible sheaf $\sH$.
For  $n_0\gg 0$, the ideal $J= A_{\geq n_0} = \bigoplus_{n\geq n_0} H^0(E, \sH_n\sA^{\tau^{n-1}}) $
is a left ideal of $B$.
By \cite[Proposition~2.7]{SZ}, and its proof, $\rqgr A\sim \rqgr B$ under the maps $\alpha: N\mapsto N\otimes_AB$ 
and $\beta: N'\mapsto N'\otimes_BJ$.  Moreover, by \cite[Theorem~1.3]{AV}, $\rqgr B\sim\coh(E)$. 
Under that equivalence,  for a closed point $p$ of $E$ the skyscraper sheaf $\kk(p) \in \coh(E)$ maps to the 
module $M_p'=\bigoplus_{n \geq 0} H^0(E, k(p) \otimes \sH_n) \in \rqgr B$; thus 
if $M_p=M_p'/S(M_p')$, then $M_p$ is a 1-critical $B$-module with $\dim (M_p)_n=1$ for $n\gg 0$.
By \cite[Lemma~2.6]{SZ} the same is true of the 1-critical $A$-module $N_p=\beta(M_p)/S(\beta(M_p))$.
Furthermore, the image in $\rqgr A$ of any $1$-critical graded $A$-module is a simple object, and
so  every 1-critical $A$-module   is equal in $\rqgr A$ to some $N_p$.

(2)  We will reduce to the case of a TCR generated in degree one, where the result is standard.
If the result fails, there exists a 1-critical $A$-module $M$ such that (possibly after shifting) 
 $\dim M_n\leq 1$ for all $n\geq 0$ but $\dim M_{0}>1$.  By replacing $M$ by any submodule 
  generated by a two-dimensional subspace of $M_0$ we may assume that
   $\dim(M_0)=2$. Write  $M=(A\oplus A)/F$.

Now consider   $W=\alpha(M)/S(\alpha(M))$. Since $W$ is equal in $\rqgr B$ to some $M_p$,
certainly  $\dim(W_n)\leq 1$ for all $n\gg 0$.
Moreover, the natural $A$-module map $M \to W$ must be injective since $M$ is $1$-critical, and so $\dim W_0 \geq 2$.  
As $\alpha(M)$ is a factor of $B \oplus B/FB$ it follows that $\dim W_0 = 2$.  Unfortunately,
$B$ need not be generated in degree 1. However, for $\ell \gg 0$ (indeed $\ell \geq 2$) the Veronese ring $C=B^{(\ell)}
=B(E,\sH_\ell,\tau^\ell)$ will be generated in degree one (see \cite[Lemma~3.1(2)]{Rog09}). We claim that  
$X=W^{(\ell)}$ will still be a critical $C$-module. If  not, then picking an element $0\not=x  \in X_m$ in the socle of $X$, 
we will have $x C_{\geq 1} = 0$, and so $x \in W_{m\ell}$ satisfies $x B_{i\ell} = 0$ for all $i \geq 1$.  Since 
$B_i B_j = B_{i+j}$ for all $i, j \gg 0$ \cite[Lemma~3.1(1)]{Rog09},  it follows  that $x B_m = 0$ for all $m \gg 0$, contradicting the $1$-criticality of $W$.
Thus $X$ is indeed a critical $C$-module, with  $\dim X_0=2$; say $X_0=a\kk\oplus b\kk$.

Finally,  given $X$, or any 1-critical $C$-module, then \cite{AV} again implies that
$\dim X_n=1$ for all $n\geq n_0\gg 0$. 
By \cite[Proposition~9.2]{KRS}  the map $N\mapsto N_{\geq 1}[1]$ is an automorphism on the set of 
isomorphism classes of $C$-point modules.
Applying the inverse of this map to the shift of $X_{\geq n_0}$ shows that the two point modules $aC$ and $bC$ 
must be equal to this image and hence be  isomorphic; say $bC=\phi(aC)$.  Set $n=n_0+1$.  As $\dim_{\kk}X_n = 1, $
we can write $C_{n-1}=c\kk+\text{ann}_C(a)_{n-1}$ for some $c\in C_{n-1}$.
 Since $\text{ann}_C(a)=\text{ann}_C(b)$, it follows that 
$ac=\lambda bc$ for 
some $\lambda\in\kk$. Hence $(a-\lambda b)c=0$ which implies that $(a-\lambda b)C_{n-1}=0$. 
As $C$ is generated in degree one, this forces $a-\lambda b\in S(X)$. This contradicts the criticality of $X$ and proves the result.

(3) This is immediate from Part (2).

(4) If not,  pick    1-critical modules $M\not\cong M'$ such that  there is an isomorphism
$\gamma: s^nM\cong s^nM'\not=0$ for some $n>0$.   Let $n$ be the smallest integer with this property
and then let $W\subset M$ be as large as possible a submodule  of $M$ for which $\gamma$ extends
to an isomorphism $\gamma: W\to W'\subset M'$.
Set
 \[N\ = \ \frac{M\oplus M'}{Z} \qquad \text{for}\qquad  Z=
 \left\{(a,\gamma(a)) : a\in W \right\}  .\]
 
 We claim that $N$ is 1-critical. If not, pick a homogeneous element $(u,u')\in M\oplus M'$ such that $[(u,u')+Z]$ is  a non-zero element 
 of the socle of $N$.
If $p\in \rann(u)$, then $(u,u')p\in (u,u')A_{\geq 1}\subseteq  Z.$  As $up=0$ this forces $(0,u')\in Z$ and hence 
$u'p=0$.  Similarly,  $u'p = 0$ forces $up = 0$ 
and hence $\rann(u) = \rann(u')$.  Thus there is an isomorphism $\gamma': uA \cong u'A$, which restricts to 
an isomorphism $\gamma'': uA \cap W \to  u'A \cap W'$.

We claim that any other isomorphism $\psi: uA \cap W \to u'A \cap W'$ must 
be a scalar multiple of $\gamma''$.  Put $P = uA \cap W$, which is $1$-critical.  As we have already proved, 
$\dim_\kk P_n \leq 1$ for all $n$ with equality for $n \gg 0$.  Choose $n$ such that $P_n \neq 0$ and
 fix $0 \neq x \in P_n$.  Then $xA$ is also $1$-critical and so $xA \ppe P$.  Now $\psi(x) = \lambda \gamma''(x)$ for some 
$\lambda \in \kk^{\times}$, and this forces $\psi$ to equal $\lambda \gamma''$
on all of $xA$.  Given homogeneous $y \in P$ with $y \not \in xA$, then $0 \neq yz \subseteq xA$ for some $z \in A_m$, $m \gg 0$, and so it is easy to see that this forces $\psi(y) = \lambda \gamma(y)$ also.   Thus $\psi = \lambda \gamma''$ and 
the claim follows.

Therefore, possibly after multiplying by a scalar, we can assume that 
$
\gamma'' = \gamma'|_{uA\cap W} = \gamma|_{uA\cap W}.
$
Thus, we can 
extend $\gamma$ to $W+uA$, contradicting the maximality of $W$. 
Hences $N$ is indeed critical.
Finally, as  $M\not\cong M'$ with  $\dim M_0=1=\dim M'_0$, certainly $W\subseteq M_{\geq 1}$, 
and so  $\dim N_0 = 2$, contradicting Part~(2).
 
 (1) Since the  tails $M_{\geq n_0}$ 
of  1-critical $A$-modules are in one-to-one correspondence with the points of $E$,  this  follows from  Part~(4). 
   \end{proof}

We end the section with a technical  consequence of these results for subalgebras of $T$.     

\begin{lemma}\label{lem:ann}
 Let $U$ be a noetherian cg algebra  and $M$   a finitely generated, graded 1-critical right $U$-module.  
 Then $ \rann_U(M)$ is prime and $\rann_U(M)=\rann_U(N)$ for every nonzero submodule $N\subseteq M$.
\end{lemma}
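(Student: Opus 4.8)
The plan is to prove the two assertions together, exploiting $1$-criticality at each step. First I would recall that a graded right module $M$ over a noetherian cg ring $U$ is \emph{$1$-critical} if $\GKdim(M) = 1$ but $\GKdim(M/N) = 0$, i.e. $\dim_\kk M/N < \infty$, for every nonzero graded submodule $N \subseteq M$. The key consequence I want to extract from this is that any two nonzero graded submodules $N, N' \subseteq M$ intersect nontrivially: if $N \cap N' = 0$ then $N \hookrightarrow M/N'$ forces $\GKdim(N) = 0$, contradicting the fact that $N$, being a nonzero submodule of a $1$-critical module, again has $\GKdim(N) = 1$ (a submodule of $M$ has GK-dimension equal to that of $M$ unless it is zero, by $1$-criticality). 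So $M$ is \emph{uniform} as a graded module.

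For the equality of annihilators: let $N \subseteq M$ be a nonzero graded submodule and set $P = \rann_U(M)$, $P_N = \rann_U(N)$. Clearly $P \subseteq P_N$. For the reverse, take $x \in P_N$ homogeneous; I must show $Mx = 0$. The submodule $\{m \in M : mx = 0\}$ contains $N$, hence is nonzero, hence (by $1$-criticality) has finite-codimensional, i.e. $Mx$ is finite-dimensional. But $Mx$ is a homomorphic image of $M$ twisted by a shift, so if it is nonzero it has $\GKdim \geq \GKdim(M/\rann_M(x)) $; more directly, $Mx \cong M/\{m : mx = 0\}$ up to shift, and the latter has GK-dimension $0$ by $1$-criticality, which is consistent — so this alone does not give $Mx = 0$. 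Instead I would argue: $Mx$ is a nonzero graded submodule of the $1$-critical module $M$ (after identifying $Mx$ with its image, noting $x$ homogeneous so $Mx$ is graded), hence has $\GKdim(Mx) = 1$; but $Mx$ is a quotient of $M/\{m: mx=0\}$ which has GK-dimension $0$ — contradiction. Hence $Mx = 0$, so $P_N \subseteq P$ and $P = P_N$. (One handles arbitrary, not-necessarily-graded submodules $N$ by passing to the graded submodule $NU$ they generate, or by noting $\rann_U(N) \supseteq \rann_U(M)$ forces $\rann_U(N) = \rann_U(M)$ once we know it for the graded submodule $NU \supseteq$ a nonzero graded piece.)

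For primeness of $P = \rann_U(M)$: work in $U/P$, which acts faithfully on $M$. Suppose $aUb \subseteq P$ with $a, b \notin P$, and I may take $a, b$ homogeneous since $P$ is graded and the graded pieces of a non-member must include a non-member. Then $Mb \neq 0$ is a nonzero graded submodule of $M$, and $(Mb)Ua = $ — wait, one wants $Ua$ to kill something. Better: $aUb \subseteq P$ means $M(aUb) = 0$, i.e. $(Ma)(Ub) \subseteq \{m : \text{?}\}$; more cleanly, for the right module $M$ the condition is $MaUb = 0$. Since $b \notin P$, the submodule $MaU$ must satisfy $(MaU)b = 0$; but $MaU$ is a graded submodule, and it is nonzero iff $Ma \neq 0$ iff $a \notin P$. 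So $MaU$ is a nonzero graded submodule of the $1$-critical $M$, hence $\GKdim(MaU) = 1$, in particular $MaU \not\subseteq$ the finite-dimensional module $\{m : mb = 0\}$ (which has GK-dimension $0$ since $b \notin P$ makes this a proper submodule, hence finite-dimensional by criticality) — contradiction. Therefore $P$ is prime.

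The main obstacle I anticipate is bookkeeping around the grading and the shifts: being careful that "nonzero submodule of a $1$-critical module again has GK-dimension $1$" is applied correctly (this is essentially the definition plus the fact that $\GKdim$ is monotone and that $\GKdim(M) = \max\{\GKdim(N), \GKdim(M/N)\}$ for graded noetherian modules), and that images like $Mx$ and $MaU$ are genuinely graded submodules so the criticality hypothesis applies to them. None of this requires the specific structure of $T$ — only that $U$ is noetherian and $M$ is finitely generated graded $1$-critical — so I would expect the proof to be short, perhaps half a page, with the only real care needed in the reduction from arbitrary submodules $N$ to graded ones and in making the GK-dimension comparisons airtight.
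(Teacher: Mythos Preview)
Your argument for the equality of annihilators is correct and clean: if $x \in \rann_U(N)$ is homogeneous then $K = \{m : mx = 0\}$ is a nonzero graded submodule, so $M/K$ is finite-dimensional by $1$-criticality; but $Mx$ is (a shift of) $M/K$, so if nonzero it would be a submodule of $M$ of GK-dimension $0$, contradicting the fact that nonzero submodules of a $1$-critical module have GK-dimension $1$.

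Your primeness argument, however, has a genuine slip. You claim that $\{m : mb = 0\}$ has GK-dimension $0$ ``since $b \notin P$ makes this a proper submodule, hence finite-dimensional by criticality.'' Criticality says proper \emph{quotients} of $M$ are finite-dimensional, not proper \emph{submodules}; in fact every nonzero submodule of $M$ has GK-dimension $1$, as you yourself noted earlier. So the inclusion $MaU \subseteq \{m : mb = 0\}$ gives no contradiction as stated. The repair is immediate and uses the part you have already proved: $MaU$ is a nonzero graded submodule and $(MaU)b = 0$, so $b \in \rann_U(MaU) = \rann_U(M) = P$, contradiction. (Alternatively, in the spirit of your attempt: $\{m : mb = 0\}$ is nonzero since it contains $MaU$, so $Mb \cong M/\{m : mb = 0\}$ is finite-dimensional by criticality; but $Mb$ is a nonzero submodule of $M$, hence has GK-dimension $1$ --- contradiction.)

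For comparison, the paper dispatches this lemma by citing ideal invariance of GK-dimension and a standard result on critical modules from McConnell--Robson. Your direct argument is more elementary (it only needs exactness of GK-dimension on short exact sequences, not ideal invariance) and entirely self-contained, which is a genuine advantage; once the slip above is corrected it is a perfectly good alternative proof.
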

\begin{proof}
This is a standard application of ideal invariance; use, for example, 
 \cite[Corollary~8.3.16]{MR} and the proof of (iii) $\Rightarrow$ (iv) of 
\cite[Theorem~6.8.26]{MR}.
\end{proof}

\begin{corollary}\label{prop:shiftpt}
Assume that  $T$   satisfies Assumption~\ref{ass:T} and  let $U$ be  a $g$-divisible  subalgebra of $T_{(g)}$ with  $Q_{gr}(U)=Q_{gr}(T)$.  Suppose that $M$ and $N$ are $1$-critical right $U$-modules which are cyclic, generated in degree $0$, with  $g\in \rann_U(M)$.  
  For some $n\geq 0$ with $M_n\not=0$ suppose that  there exists $m\geq 0$ such that  
$(\rann_U M_n)_{\geq m} = (\rann_U N_n)_{\geq m}$. Then $M \cong N$.
\end{corollary}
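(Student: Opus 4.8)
The plan is to pass to the curve $\overline{U} = U/gU$ and apply Corollary~\ref{cor:curvept}(4), the statement that a $1$-critical module over such a curve algebra is determined by any of its shifts $s^n(-)$. Since $g \in \rann_U(M)$, the module $M$ is naturally a module over $\overline{U} = U/gU$; by Lemma~\ref{lem:expectQ}(2) and Lemma~\ref{lem:T-prop}(2), $\overline{U}$ is a $g$-divisible-quotient algebra of the form studied in Section~\ref{CURVES}, i.e. a cg subalgebra of $\kk(E)[z,z^{-1};\tau]$ with $Q_{gr}(\overline{U}) = Q_{gr}(\overline{T})$, so Theorem~\ref{thm:projcurve} and Corollary~\ref{cor:curvept} apply to it. First I would observe that $M$ remains $1$-critical as an $\overline{U}$-module: its submodule lattice is unchanged, and $\GKdim$ is unaffected by which of the two algebras we regard it over. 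The first real task is to argue that $N$ is also $g$-torsion, hence also an $\overline{U}$-module. For this I would use Lemma~\ref{lem:ann}: since $N$ is $1$-critical over $U$, $\rann_U(N)$ is prime; the hypothesis $(\rann_U M_n)_{\geq m} = (\rann_U N_n)_{\geq m}$ together with $g \in \rann_U(M) \subseteq \rann_U(M_n)$ forces $g^{m'} \in \rann_U(N_n)$ for suitable $m'$ (since $g$ is central, $g$ times a high enough power lands in the agreeing tails), so $N_n$ is $g$-torsion, and then $1$-criticality of $N$ (a proper submodule would have strictly smaller GK-dimension, but $N_n U$ is nonzero of the same GK-dimension) forces $N$ itself to be $g$-torsion.

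Once both $M$ and $N$ are $\overline{U}$-modules, the strategy is to identify $s^n M$ and $s^n N$ over $\overline{U}$ and show they are isomorphic, then invoke Corollary~\ref{cor:curvept}(4). Recall $s^n(M) = (M_n U)[n] = (M_n \overline{U})[n]$. The key point is that $M_n \overline{U}$ is cyclic, generated by the $1$-dimensional space $M_n$ (using Corollary~\ref{cor:curvept}(2), $\dim M_n \leq 1$, and it is nonzero by hypothesis), so $M_n \overline{U} \cong \overline{U}/\rann_{\overline{U}}(M_n)$ as graded modules up to the appropriate shift; likewise $N_n \overline{U} \cong \overline{U}/\rann_{\overline{U}}(N_n)[?]$. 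Now $\rann_{\overline{U}}(M_n)$ is the image in $\overline{U}$ of $\rann_U(M_n)$, and similarly for $N$; the hypothesis says these two right ideals of $U$ agree in all degrees $\geq m$, hence their images in $\overline{U}$ agree in all degrees $\geq m$, i.e. $\rann_{\overline{U}}(M_n) \ppe \rann_{\overline{U}}(N_n)$. Therefore $s^n M$ and $s^n N$ are cyclic $\overline{U}$-modules whose defining ideals agree up to finite dimension. By Corollary~\ref{cor:curvept}(3) they are themselves $1$-critical (or zero, but they're nonzero since $M_n, N_n \neq 0$), and two $1$-critical cyclic modules with $\ppe$-equal annihilators have a common nonzero tail, hence represent the same point of $E$; by Corollary~\ref{cor:curvept}(4) applied to them (their own shifts $s^{(\cdot)}$ of the tail agree) they are isomorphic as $\overline{U}$-modules, possibly after adjusting by a degree shift which I need to check is trivial — since both $M_n \overline{U}$ and $N_n \overline{U}$ are generated in degree $n$, the isomorphism $s^n M \cong s^n N$ respects degree.

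Finally, applying Corollary~\ref{cor:curvept}(4) once more, this time to $M$ and $N$ themselves over $\overline{U}$ (whose $s^n$ we have just shown to be isomorphic), yields $M \cong N$ as $\overline{U}$-modules, and hence as $U$-modules since the $U$-action factors through $\overline{U}$ on both. The main obstacle I anticipate is the bookkeeping around Corollary~\ref{cor:curvept}: that corollary is stated for modules \emph{generated in degree zero}, whereas $s^n M$ is generated in degree $n$, so I will need either to re-shift everything down to degree $0$ and track that this shift is compatible on both sides, or to note that Corollary~\ref{cor:curvept}(4) as proved does not actually use the degree-zero normalisation in an essential way (the proof there works with $s^k$ for $k > 0$ and critical modules, which is exactly our situation). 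A secondary subtlety is making the passage from ``$\rann_U(M_n)$ and $\rann_U(N_n)$ agree in high degrees'' to ``$M$ and $N$ are honestly isomorphic'' airtight: one must be slightly careful that the cyclic module $M_n \overline{U}$ really is $\overline{U}/(\text{annihilator})$ and not a proper quotient thereof, but this is automatic since the annihilator is \emph{defined} as the kernel of $\overline{U} \to M_n\overline{U}$, $u \mapsto (\text{generator})\cdot u$.
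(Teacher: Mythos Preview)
Your proposal is correct and follows essentially the same route as the paper: reduce to $\overline{U}$ by showing $N$ is $g$-torsion via Lemma~\ref{lem:ann}, identify $s^nM$ and $s^nN$ as cyclic $\overline{U}$-modules with annihilators agreeing in high degree, and conclude via Corollary~\ref{cor:curvept}(4). The paper is slightly more efficient in two spots: it argues $g$-torsion of $N$ by observing $N_0 g^{n+m}\subseteq N_n g^m=0$ directly (which works even before knowing $N_n\neq 0$), and it deduces $I=J$ \emph{on the nose} rather than $I\ppe J$ by noting that $I/I_{\geq m}$ is the unique maximal finite-dimensional submodule of $A/I_{\geq m}=A/J_{\geq m}$, so only one application of Corollary~\ref{cor:curvept}(4) is needed; but your two-step use of that corollary is equally valid.
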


\begin{proof} By hypothesis,  $g^{m}\in (\rann_U M_n)_{\geq m} = (\rann_U N_n)_{\geq m}$.   Then 
$N_0 g^{n+m} \subseteq N_n g^m = 0$.  As $g$ is central and $N$ is generated by $N_0$, it follows that 
$g^{n+m}\in \rann_U N$ and hence $g\in \rann_U N$ by Lemma~\ref{lem:ann}. 
Thus both $M$ and $N$ are  modules over $A = \bbar{U}$  and to prove the lemma it suffices to consider modules over that ring.  By Lemma~\ref{lem:expectQ},
 $Q_{gr}(A) = Q_{gr}(\overline{T}) = k(E)[t, t^{-1}; \tau]$ and so 
$A$ satisfies the hypotheses of Theorem~\ref{thm:projcurve}.
 
  Clearly,  $N_n\not=0$. By 
    Corollary~\ref{cor:curvept}(2)    $\dim M_n=1=\dim N_n$
  and so  $ M_nA[n]\cong A/I$ and $N_nA[n]\cong A/J$ for some graded right ideals $I,J$.  
  By hypothesis,  $I_{\geq m}=J_{\geq m}$. However,  
 as $ I/I_{\geq m}$ is finite dimensional and $A/I$ is $1$-critical, $I/I_{\geq m}$
  is the unique largest finite dimensional submodule of $A/I_{\geq m}=A/J_{\geq m}$. Hence $I=J$ and $M_nA\cong N_nA$. 
 By Corollary~\ref{cor:curvept}(4),  $M\cong N$.
\end{proof}

\section{Right ideals of \texorpdfstring{$T$}{LG} and the rings \texorpdfstring{$T(\divd)$}{LG}}
\label{RIGHT IDEALS}

Throughout this section, let $T$ satisfy Assumption~\ref{ass:T}.  
Our first aim in this section is to describe certain graded right ideals $J$ of $T$ such that $T/J$ 
is filtered by shifted point modules.   In fact, the main method we use in the next section to understand a general subalgebra $U$ of $T$ is to compare its graded pieces with the graded pieces of these right ideals $J$ and their left-sided analogs.   The easiest way to construct the required right ideals $J$ is to use some machinery 
from \cite{VdB}.   The details will appear in a companion paper to this one \cite{RSS2}.

\begin{definitions}\label{saturated-defn} 
Given a right ideal $I$ of a cg   algebra $R$, the \emph{saturation} $I^{sat}$ of $I$ is the 
 sum of the right ideals $L\supseteq I$ with $\dim_{\kk} L/I<\infty.$ 
 If $I=I^{sat}$,  we say that $I$ is \emph{saturated}.
 
   Recall that $T/gT \cong B = B(E, \mc{M}, \tau)$, where $\deg \mc{M} = \mu$.  
 For divisors $\mathbf{b}, \mathbf{c}$ on $E$, we write $\mathbf{b} \geq \mathbf{c}$ if $\mathbf{b} - \mathbf{c}$ is effective.   
 A list of divisors $(\divd^0, \divd^1, \dots, \divd^{k-1})$ on $E$ is an \emph{allowable divisor layering}  \label{layering-defn}
  if   $\tau^{-1}(\divd^{i-1}) \geq \divd^i$ for all $1 \leq i \leq k-1$.   By convention, we define $\divd^i = 0$ for all $i \geq k$.
Given an allowable divisor layering $\divd^{\bullet}=(\divd^0, \divd^1, \dots, \divd^{k-1})$ on $E$, let $J(\divd^{\bullet})$ 
 be the saturated right ideal of $T$ defined in \cite[Definition~3.4]{RSS2}.   
\end{definitions}

We omit the precise  definition of $J(\divd^{\bullet})$ because it is technical, and not essential in this paper.  
Instead, what matters are the following  properties of this right ideal, which help explain the name ``divisor layering.''  
For any graded right $T$-module $M$, we think of the $B$-module  $Mg^j/Mg^{j+1}$ as the $j$th layer of $M$.  
Recall that we write $\pi(N)$ for the image of a finitely generated graded $B$-module $N$ in the quotient category $\rqgr B$. 
Recall also from the proof of Corollary~\ref{cor:curvept}(1)  that there is an equivalence of categories $\coh E \simeq \rqgr B$   given by 
$\mc{F} \mapsto \pi(\bigoplus_{n \geq 0} H^0(E, \mc{F} \otimes \mc{M}_n))$.  

\begin{lemma} {\rm (\cite[Lemma~3.5]{RSS2})}
\label{lem:layer}
Let $\divd^{\bullet}$ be an allowable divisor layering and let $J = J(\divd^{\bullet})$ and $M = T/J$.
\begin{enumerate}
\item If $M^j = Mg^j/Mg^{j+1}$, then as objects in $\rqgr B$ we have 
\[
\pi(M^j) \cong \pi \bigg(\bigoplus_{n \geq 0} H^0(E, (\mc{O}_E/\mc{O}_E(-\divd^j)) \otimes \mc{M}_n) \bigg).
\]
In particular, the divisor $\divd^j$ determines  
the point modules that occur in a filtration of $M^j$ by (tails of) point modules.
\item $(\overline{J})^{\sat} = \bigoplus_{n \geq 0} H^0(E, \mc{M}_n(-\divd^0)).$ 
\item If $\divd^{\bullet} = (\divd)$ has length $1$, then 
$J(\divd) = \bigoplus_{n \geq 0} \{ x \in T_n \st \bbar{x} \in H^0(E, \sM_n(-\divd)) \}$.   \qed
\end{enumerate}
\end{lemma}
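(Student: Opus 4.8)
The statement has three parts, all concerning the right ideal $J(\divd^{\bullet})$ of $T$ attached to an allowable divisor layering, and all are really translations of the definition of $J(\divd^{\bullet})$ in \cite{RSS2} into the language of the present paper. The plan is to reduce everything to the filtration structure of $M = T/J$ by tails of point modules, together with the equivalence $\rqgr B \simeq \coh E$ recalled from Corollary~\ref{cor:curvept}(1). So the first thing I would do is set up notation: write $M^j = Mg^j/Mg^{j+1}$ for the $j$th layer, note that multiplication by $g$ gives maps relating the layers, and recall from \cite[Definition~3.4]{RSS2} that $J(\divd^{\bullet})$ is built precisely so that the $j$th layer ``sees'' the divisor $\divd^j$ in the sense that the point modules filtering $M^j$ are the tails of the point modules supported at the points of $\divd^j$.

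For part~(1), the plan is to invoke \cite[Lemma~3.5]{RSS2} directly: that lemma identifies $\pi(M^j)$ in $\rqgr B$ with $\pi$ of $\bigoplus_{n \ge 0} H^0(E, (\struct_E/\struct_E(-\divd^j)) \otimes \sM_n)$. Under the equivalence $\rqgr B \simeq \coh E$ of Corollary~\ref{cor:curvept}(1), which sends $\sF$ to $\pi(\bigoplus_n H^0(E, \sF \otimes \sM_n))$, the right-hand side corresponds to the skyscraper-type sheaf $\struct_E/\struct_E(-\divd^j)$, whose length equals $\deg \divd^j$ and whose support (with multiplicity) is exactly $\divd^j$. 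Since point modules over $B$ correspond to skyscraper sheaves $\kk(p)$, a composition series of $\struct_E/\struct_E(-\divd^j)$ as a sheaf pulls back to a filtration of $M^j$ by tails of point modules indexed, with multiplicity, by $\divd^j$; this gives the final sentence of~(1). The work here is purely citation and bookkeeping.

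For parts~(2) and~(3), the plan is again to quote the corresponding parts of \cite[Lemma~3.5]{RSS2} verbatim, after matching conventions: part~(2) says the saturation of the zeroth layer's image $\bbar{J} = (J + gT)/gT \subseteq B$ is $\bigoplus_{n \ge 0} H^0(E, \sM_n(-\divd^0))$, which is consistent with part~(1) in the case $j = 0$, since saturating kills the finite-length discrepancy between $H^0(E, \sM_n(-\divd^0))$ and the actual degree-$n$ piece of $\bbar{J}$. Part~(3) is the degenerate case of a length-one layering $\divd^{\bullet} = (\divd)$: here there is a single layer, $g$ annihilates $M$, and the definition of $J(\divd)$ collapses to the preimage in $T$ of $\bigoplus_n H^0(E, \sM_n(-\divd)) \subseteq B$, i.e. $J(\divd) = \bigoplus_n \{x \in T_n : \bbar{x} \in H^0(E, \sM_n(-\divd))\}$; this is exactly \cite[Lemma~3.5(3)]{RSS2}.

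The only real obstacle is that the construction of $J(\divd^{\bullet})$ lives in the companion paper \cite{RSS2} and is, by the authors' own admission, technical; so strictly speaking this ``proof'' is a transcription of \cite[Lemma~3.5]{RSS2} into this paper's notation, and the substance is confined to verifying that the conventions (grading of Veronese rings, direction of the twist $\sM_n = \sM \otimes \cdots \otimes \sM^{\tau^{n-1}}$, sign of divisors in $\struct_E(-\divd)$) line up between the two papers. Since the excerpt explicitly states that we may assume results stated earlier and cites \cite{RSS2} for the definition, the honest proof is just: ``This is \cite[Lemma~3.5]{RSS2}, reformulated via the equivalence $\rqgr B \simeq \coh E$ of Corollary~\ref{cor:curvept}(1).''
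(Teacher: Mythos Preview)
Your proposal is correct and matches the paper exactly: the lemma carries a \qed\ immediately after its statement, so the paper gives no proof at all and simply defers to \cite[Lemma~3.5]{RSS2}. Your closing sentence is precisely the paper's ``proof,'' and the surrounding discussion you give (translating via the equivalence $\rqgr B \simeq \coh E$) is accurate but already more than the paper provides.
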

\noindent Note that  as a special case of part (3) of the lemma, if $p \in E$ and $\divd = p$ 
then $J(p)$ is simply the right ideal of $T$ such that $P(p) = T/J(p)$ is \emph{the point module} corresponding to the point $p$. \label{pt-defn2} (We note that this will coincide with the earlier definition of a point module, should $T$ be generated in degree one.)

We will require primarily the following two special cases of the construction above.  Starting now, it will be sometimes convenient 
to employ the   notation:  
\begin{equation}\label{4.4}
p_i = \tau^{-i}(p), \ \text{for any}\ p \in E.
\end{equation}
\begin{definition}
\label{def:Q}
Given any $p \in E$, $i \geq 1$, and 
$0 \leq r \leq d \leq \mu$, we define $Q(i,d,r,p) = J(\divd^{\bullet})$, where 
\[
\divd^0 = dp + dp_1 + \dots + dp_{i-1}, \ \divd^1 = dp_1 + \dots + 
dp_{i-1}, \ \dots, \ \divd^{i-2} = dp_{i-2} + dp_{i-1}, \ \divd^{i-1} = r p_{i-1}.
\] 
\end{definition}

\noindent 
Intuitively, the divisor layers for $Q$ are in the form of a triangle, but the vanishing in the last layer is allowed to be of
 lower multiplicity than in the others.  The other special case we need is a similar triangle shape which allows for
  the involvement of points from different orbits. 
  
\begin{definition}
\label{def:M}
For any divisor $\divd$ and $k \geq 1$, we define $M(k, \divd) = J(\divc^{\bullet})$ where  
\[
\divc^0 = \divd + \tau^{-1}(\divd) + \dots + \tau^{-k+1}(\divd), \ \divc^1 = \tau^{-1}(\divd) + \dots + \tau^{-k+1}(\divd), \ \dots,  \divc^{k-1} = \tau^{-k+1}(\divd).
\]
It is useful to also define $M(k, \divd) = T$ by convention, for any $k \leq 0$.
\end{definition}

Note that  
$M(k, dp) = Q(k, d, d, p)$ for any $k,d \geq 0$.    The right ideals $M(k,\divd)$ are also useful for defining important subalgebras of $T$.

\begin{definition}
\label{def:star}
For any divisor $\divd$ with $\deg \divd < \mu$ we set 
\[
T(\divd) : = \bigoplus_{n \geq 0} M(n, \divd)_n, 
\]
which by \cite[Theorem~5.3(2)]{RSS2}  is a $g$-divisible subalgebra of $T$.  
More generally, for any $\ell \geq 0$ we define
\[
T_{\leq \ell} * T(\divd) :=\bigoplus_{n \geq 0} M(n - \ell, \divd^{\tau^\ell})_n, 
\]
which by \cite[Proposition~5.2(2)]{RSS2}  is a right $g$-divisible $T(\divd)$-module.
\end{definition}
\noindent  
 
When $\deg \divd \leq \mu - 2$, but not in general, 
 the module $T_{\leq \ell} * T(\divd)$ is equal to the right $T(\divd)$-module 
$T_{\leq \ell} T(\divd) \subseteq T$ \cite[Theorem~5.3(6)]{RSS2}, so the notation is chosen to suggest multiplication.  
As is discussed in \cite{Rog09} and  \cite[Section~5]{RSS2},  the ring $T(\divd)$ should be thought of as corresponding 
geometrically to a blowup of $T$ at the divisor~$\divd$.

There are left-sided versions of all of the above definitions and results, because Assumption~\ref{ass:T} is left-right 
symmetric.  We quickly state these analogues, because there are some non-obvious differences in the statements, which result from the fact that the equivalence of categories $\coh E \simeq B \lqgr$ has the slightly different form
$\mc{F} \to \pi( \bigoplus_{n \geq 0} H^0(E, \mc{M}_n \otimes \mc{F}^{\tau^{n-1}}))$.
Generally, $\tau^{-1}$ appears  in the left sided results wherever $\tau$ appears in the right sided version.  
A list of divisors 
$\divd^{\bullet} = (\divd^0, \divd^1, \dots, \divd^{k-1})$ on $E$ is a \emph{left allowable divisor layering}
\label{left-layering-defn} if 
$\tau(\divd^{i-1}) \geq \divd^i$ for all $1 \leq i \leq k-1$.  We indicate left-sided versions by a prime in the notation. 
 In particular, given a left allowable divisor layering, there 
is a corresponding saturated left ideal $J'(\divd^{\bullet})$ of $T$, defined in \cite[Section~6]{RSS2}, 
which satisfies the following analogue of Lemma~\ref{lem:layer}.

\begin{lemma} {\rm (\cite[Lemma~6.1]{RSS2})}
\label{lem:left layer}
Let $\divd^{\bullet}$ be a left allowable divisor layering and let $J' = J'(\divd^{\bullet})$ and $M = T/J'$.
\begin{enumerate}
\item If $M^j = Mg^j/Mg^{j+1}$ is the $j$th layer of $M$, then in $B \lqgr$ we have 
\[
\pi(M^j) \cong  \pi \bigg(\bigoplus_{n \geq 0} H^0(E, \mc{M}_n \otimes (\mc{O}_E/\mc{O}_E(-\tau^{-n+1}(\divd^j)))) \bigg).
\]
\item $(\overline{J'})^{\sat} = \bigoplus_{n \geq 0} H^0(E, \mc{M}_n(-\tau^{-n+1}(\divd^0))).$
\item If $\divd^{\bullet} = (\divd)$ has length $1$, then $J'(\divd) = 
\bigoplus_{n \geq 0} \{ x \in T_n \st \bbar{x} \in  H^0(E, \sM_n(-\tau^{-n+1}(\divd))) \}$.  \qed
\end{enumerate}
\end{lemma}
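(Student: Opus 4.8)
The plan is to deduce the statement from Lemma~\ref{lem:layer}, of which it is the left-handed mirror: since Assumption~\ref{ass:T} is left-right symmetric, passing to the opposite ring should reduce the left-sided claims to the right-sided ones already recorded, avoiding a re-run of the construction of \cite{RSS2}. First I would check that $T^{\op}$ again satisfies Assumption~\ref{ass:T}: it is a domain, $g$ remains central and of degree $1$, and $T^{\op}/gT^{\op} = (T/gT)^{\op} = B(E,\sM,\tau)^{\op}$. A standard computation shows $B(E,\sM,\tau)^{\op} \cong B(E,\sM,\tau^{-1})$, the isomorphism being pullback along $\tau^{-n+1}$ in degree $n$; hence $T^{\op}$ is an elliptic algebra of degree $\mu$ with infinite-order automorphism $\tau^{-1}$, and the constructions and results of this section --- in particular Definitions~\ref{saturated-defn} and Lemma~\ref{lem:layer} --- all apply to it.

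Next I would set up the dictionary. A left ideal $J'$ of $T$ is a right ideal of $T^{\op}$, and $M = T/J'$ becomes the corresponding right $T^{\op}$-module with the same $g$-layers $M^j = Mg^j/Mg^{j+1}$. The combinatorial point is that a left allowable divisor layering for $\tau$ is exactly an allowable divisor layering for $\tau^{-1}$, since the condition $\tau(\divd^{i-1}) \geq \divd^i$ is literally $(\tau^{-1})^{-1}(\divd^{i-1}) \geq \divd^i$. So $\divd^{\bullet}$ determines a saturated right ideal $J(\divd^{\bullet})$ of $T^{\op}$ built from the data $(E,\sM,\tau^{-1})$ via Definitions~\ref{saturated-defn}, and I would identify this with the left ideal $J'(\divd^{\bullet})$ of $T$; this identification is precisely how $J'$ is constructed in \cite[Section~6]{RSS2}, so it may be taken as the definition. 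Parts~(1)--(3) then follow by applying Lemma~\ref{lem:layer} to $T^{\op}$ and the layering $\divd^{\bullet}$.

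The hard part --- the only genuine work --- is the bookkeeping of line-bundle twists: one must verify that translating Lemma~\ref{lem:layer} for $T^{\op}$ back into $B$-notation produces exactly the displayed formulas, with $\tau^{-1}$ and the shifts by $\tau^{-n+1}$ in the correct places. The mechanism I would use is that the twisted product $\sM \otimes \sM^{\tau^{-1}} \otimes \cdots \otimes \sM^{\tau^{-(n-1)}}$ attached to $\tau^{-1}$ --- which plays the role of $\sM_n$ for $B^{\op}$ --- equals $(\tau^{-n+1})^{*}\sM_n$, where $\sM_n$ is the $\tau$-twisted product used in the rest of the paper; pulling this twist back out converts the $j$th-layer sheaf $\sO_E/\sO_E(-\divd^j)$ obtained for $T^{\op}$ into $\sM_n \otimes (\sO_E/\sO_E(-\tau^{-n+1}(\divd^j)))$ in degree $n$, which is part~(1), and the same computation produces the $\tau^{-n+1}(\divd^0)$ in the saturation formula of part~(2) and the $\tau^{-n+1}(\divd)$ in the length-one formula of part~(3). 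Since this computation is routine and \cite[Lemma~6.1]{RSS2} records its outcome, the cleanest presentation in the present paper is to cite that result directly; the opposite-ring reduction above is essentially what explains why the formulas take the stated shape.
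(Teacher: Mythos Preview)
Your proposal is correct. The paper itself offers no proof of this lemma: the statement carries a terminal \qed and is simply imported from \cite[Lemma~6.1]{RSS2}, so there is nothing to compare on the paper's side beyond a bare citation.

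What you have written is a clean conceptual explanation of \emph{why} the left-sided formulas look as they do, via the opposite-ring dictionary. The key identifications you record --- that $B(E,\sM,\tau)^{\op} \cong B(E,\sM,\tau^{-1})$ under the degree-$n$ pullback by $\tau^{-n+1}$, that a left allowable layering for $\tau$ is an allowable layering for $\tau^{-1}$, and that the $\tau^{-1}$-twisted product $\sM \otimes \sM^{\tau^{-1}} \otimes \cdots \otimes \sM^{\tau^{-(n-1)}}$ equals $(\tau^{-n+1})^{*}\sM_n$ --- are all correct and are exactly the bookkeeping needed to transport Lemma~\ref{lem:layer} across. Your closing remark is on the mark: in the context of this paper, citing \cite{RSS2} directly is the appropriate presentation, and your argument is the content behind that citation rather than an alternative to it.
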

\noindent
Similarly as on the right, as a special case of Part (3) we have that $P'(p) = T/J'(p)$ is the left point 
module of $T$ corresponding to $p$.
\label{pt-defn3}

Of course, we also have left sided analogs of Definitions~\ref{def:Q} and \ref{def:M}, but we only need the former.  
Namely, given any 
$p \in E$, $i \geq 1$, and 
$0 \leq r \leq d \leq \mu$, we define $Q'(i,d,r,p) = J'(\divd^{\bullet})$, where 
\[
\divd^0 = dp + dp_{-1} + \dots + dp_{-i+1}, \ \divd^1 = dp_{-1} + \dots + dp_{-i+1}, \ \dots, \ \divd^{i-2} 
= dp_{-i+2} + dp_{-i+1}, \ \divd^{i-1} = r p_{-i+1}.  
\]

The right ideals $Q$, and their left-sided analogs $Q'$, will be used below to define filtrations in which every factor 
is a shifted point module; we will then study how an arbitrary subalgebra $U$ of $T$ intersects such filtrations.   
The relevant result for this is as follows.  

\begin{lemma}  {\rm (\cite[Lemma~6.5]{RSS2})}
\label{lem:Qfactor}
Let $i, r, d, n \in \mb{N}$, with $i < n$ and $1 \leq r \leq d \leq \mu$, and $p \in E$.  Then 
\begin{enumerate}
\item $Q(i, r, d, p) \subseteq Q(i,r-1, d, p)$, with factor
\[
[Q(i,r-1,d,p)/Q(i,r, d,p)]_{\geq n} \cong P(p_{i-n-1})[-n].
\]
\item $Q'(i,r,d,p) \subseteq Q'(i, r-1, d, p)$, with factor
\[  
[Q'(i,r-1,d,p)/Q'(i,r, d,p)]_{\geq n} \cong P'(p_{-i+n+1})[-n].    \eqno{\qed}
\]
\end{enumerate}
\end{lemma}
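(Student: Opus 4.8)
The plan is to peel off a single ``box'' of the divisor layering and recognise the resulting factor as a shifted point module. By the left--right symmetry of Assumption~\ref{ass:T} (replace $\tau$ by $\tau^{-1}$ and use Lemma~\ref{lem:left layer} in place of Lemma~\ref{lem:layer}), it suffices to prove part~(1). Write $Q = Q(i,d,r,p)$ and $Q^- = Q(i,d,r-1,p)$ in the notation of Definition~\ref{def:Q}; these are $J(\divd^{\bullet})$ and $J(\dive^{\bullet})$ for allowable divisor layerings that coincide in layers $0,\dots,i-2$ and differ only in the last layer, where $\divd^{i-1} = r p_{i-1} \geq (r-1)p_{i-1} = \dive^{i-1}$. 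Since the construction of $J(-)$ in \cite{RSS2} is monotone in each layer --- enlarging a layer's divisor shrinks the right ideal --- one gets $Q \subseteq Q^-$, hence a short exact sequence $0 \to N \to T/Q \to T/Q^- \to 0$ of graded right $T$-modules, where $N := Q^-/Q$.

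The next step is to analyse $N$ through its $g$-adic layers $N^j = Ng^j/Ng^{j+1}$, comparing the layer descriptions of $T/Q$ and $T/Q^-$ furnished by Lemma~\ref{lem:layer}(1). For $j \neq i-1$ the layers $(T/Q)^j$ and $(T/Q^-)^j$ are isomorphic in $\rqgr B$, both being $\pi\bigl(\bigoplus_n H^0(E,(\mc{O}_E/\mc{O}_E(-\divd^j))\otimes\mc{M}_n)\bigr)$ for the same divisor $\divd^j$; for $j = i-1$ the natural surjection $\mc{O}_E/\mc{O}_E(-rp_{i-1}) \twoheadrightarrow \mc{O}_E/\mc{O}_E(-(r-1)p_{i-1})$ has kernel the length-one skyscraper at $p_{i-1}$, which corresponds under $\coh E \simeq \rqgr B$ to $\pi(P(p_{i-1}))$. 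It follows (working in $\rqgr B$, where $T/Q$ and $T/Q^-$ behave well) that $\pi(N^j) = 0$ for $j \neq i-1$ and $\pi(N^{i-1}) \cong \pi(P(p_{i-1}))$; in particular $\dim_{\kk} N_n = 1$ for $n \gg 0$, by the precise grading in Lemma~\ref{lem:layer}(1).

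Since $N$ is a finitely generated graded right $T$-module whose zeroth layer $N/Ng$ is finite-dimensional, a graded Nakayama argument gives $N \ppe Ng \ppe \dots \ppe Ng^{i-1}$, and as the layers above $i-1$ also vanish in $\rqgr B$, the module $Ng^{i-1}$ agrees up to finite dimension with its zeroth layer; hence $N \ppe P(p_{i-1})$ as graded $T$-modules. In particular $N$ is $1$-critical with $\dim_{\kk} N_n \leq 1$, so by Corollary~\ref{cor:curvept}(2)--(4), applied to $B$ (which satisfies the hypotheses of Theorem~\ref{thm:projcurve} since $Q_{gr}(B) = \kk(E)[z,z^{-1};\tau]$), each nonzero tail $s^n N$ is a cyclic $1$-critical $B$-module, hence some point module $P(q_n)$. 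Finally the standard point-module shift identity over $B = B(E,\mc{M},\tau)$, namely $(P(q))_{\geq a}[a] \cong P(\tau^a(q))$ --- proved from the equivalence $\rqgr B \simeq \coh E$ of Corollary~\ref{cor:curvept} together with $(\tau^a)^*\kk(x) = \kk(\tau^{-a}(x))$ and invariance of $H^0$ under pullback --- yields, since $\tau^n(p_{i-1}) = \tau^{n-i+1}(p) = p_{i-n-1}$, that $[Q^-/Q]_{\geq n} = [N]_{\geq n} \cong (P(p_{i-1}))_{\geq n} \cong P(p_{i-n-1})[-n]$ for all $n > i$, which is the assertion of part~(1).

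The main obstacle is the index bookkeeping: one must carry the single skyscraper box, sitting in layer $i-1$ and at the point $p_{i-1}$, through both the degree shift built into the $g$-adic layers in Lemma~\ref{lem:layer}(1) and the point-module shift dictionary, and confirm that the outcome is precisely $P(p_{i-n-1})[-n]$ rather than some other shift of a point module in the same $\tau$-orbit. A secondary technical point is upgrading the identifications valid in $\rqgr B$ (hence a priori only up to finite dimension) to the stated isomorphism of tails for every $n > i$; for this one appeals to the explicit form of the right ideals in \cite{RSS2}, which shows that $N = Q^-/Q$ already coincides with a shifted point module from degree $i+1$ on, so that the claim holds throughout the range $n > i$.
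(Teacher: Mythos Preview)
The paper does not prove this lemma; the \verb|\qed| marks it as quoted wholesale from \cite[Lemma~6.5]{RSS2}, so there is no in-paper argument to compare against. Any honest proof must go through the explicit construction of $J(\divd^\bullet)$ carried out in that companion paper, and indeed your closing paragraph concedes this.

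Your sketch has the right architecture --- compare the two divisor layerings, which differ only by a single point in the deepest layer, and recognise the factor as a shifted point module --- but there is a genuine gap in the middle. From $\pi(M^j)\cong\pi((M')^j)$ in $\rqgr B$ for $j\neq i-1$ you conclude $\pi(N^j)=0$ for those $j$. This does not follow: the $g$-adic layers $N^j=Ng^j/Ng^{j+1}$ of the kernel $N=Q^-/Q$ are \emph{not} in general the kernels of the induced maps $M^j\to(M')^j$; rather, the relevant filtration on $N$ is the one induced from $M$, namely $K_j=N\cap Mg^j$, and one only has $Ng^j\subseteq K_j$ with no a~priori control on the quotient. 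Knowing that source and target of $M^j\to(M')^j$ are abstractly isomorphic does not even tell you the map is an isomorphism. One can recover the Hilbert-series statement $\dim_\kk N_n=1$ for $n\gg0$ by summing $\dim M^j_n-\dim(M')^j_n$ over $j$, but identifying \emph{which} point module appears, and doing so already from degree $i+1$ rather than merely for $n\gg0$, needs the explicit description of $Q(i,r,d,p)_n$ from \cite{RSS2} --- precisely what you invoke at the end. So your argument, like the paper's, ultimately reduces to a citation; the intermediate layer-comparison step as written does not stand on its own.
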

The left and right ideals defined above are actually closely related.  In fact, by \cite[Proposition~6.8]{RSS2} one always has
$Q(i,r,d,p)_n = Q'(i,r,d,p_{i-n})_n$, as we  will exploit in the next section.

We conclude this section with a review of some important homological concepts.

 \begin{definition}\label{AG-defn}
A ring $A$ is called  \emph{Auslander-Gorenstein} if it has finite injective dimension and satisfies the
 {\it Gorenstein condition}: If $p<q$ 
are non-negative integers and  $M$ is a finitely generated $A$-module, then
$\Ext_A^p(N,\,A)=0$ for every submodule $N$ of 
$\Ext_A^q(M,\,A)$.
Set
 $j(M)= \min\{ r : \Ext^r_A(M,A)\not= 0\}$ for the \emph{homological grade} of $M$.  
 Then, an Auslander-Gorenstein ring $A$ of finite Gelfand-Kirillov dimension is
 called \emph{Cohen-Macaulay} (or CM), provided 
 that  $j(M)+\GKdim(M)=\GKdim(A)$ holds
for every finitely generated
 $A$-module $M$.   A cg $\kk$-algebra $A$ is called \emph{Artin-Schelter (AS) Gorenstein} if 
$A$ has injective dimension $d$, and $\dim_{\kk} \Ext_A^j(\kk, A)  = \delta_{j,d}$ for all $j \geq 0$.
An AS Gorenstein algebra is called \emph{AS regular} if it is also has finite global dimension $d$.
\label{AS-defn} 
\label{ASG-defn}
\end{definition}
 
 As the next two results show, many of the algebras appearing in this paper do satisfy these conditions, and this automatically leads to some nice consequences.

 \begin{proposition}\label{8-special22}
 Let $R=T(\divd) \subseteq T$ for some effective divisor $\divd$ with $\deg\divd\leq \mu-1$, in the notation of 
Assumption~\ref{ass:T}.
 Then the following hold.
 \begin{enumerate}
 \item $R/gR=B(E,\mathcal{M}(-\divd),\tau)$.
\item If $\deg \divd < \mu - 1$, then $R$ is generated as an algebra in degree $1$, while 
if $\deg \divd = \mu - 1$ then $R$ is generated as an algebra in degrees $1$ and $2$.
 \item Both $R$ and $R/gR$ are  Auslander-Gorenstein and CM.
\item $R$ is   a maximal order in $Q_{gr}(R)=Q_{gr}(T)$. 
\end{enumerate}
 \end{proposition}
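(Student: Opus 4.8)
The four parts require rather different inputs, and I would organise the proof so that the later parts lean on the earlier ones. For part (1), I would appeal directly to the companion paper: the subalgebra $T(\divd)$ is built from the right ideals $M(n,\divd)$, and Lemma~\ref{lem:layer}(2) (applied with the length-one layering $\divd^{\bullet}=(\divd)$) together with the $g$-divisibility of $T(\divd)$ from \cite[Theorem~5.3]{RSS2} shows that $\overline{R}=R/gR$ is the saturation $\bigoplus_n H^0(E,\sM_n(-\divd))$, which is precisely $B(E,\sM(-\divd),\tau)$ since $\sM(-\divd)$ has degree $\mu-\deg\divd\ge 1$ and such twisted homogeneous coordinate rings of elliptic curves are saturated. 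For part (2), I would again cite \cite{RSS2}: the generation-in-degree-one statement when $\deg\divd\le\mu-2$ is \cite[Theorem~5.3(6)]{RSS2} (which identifies $T_{\le\ell}*T(\divd)$ with the honest product), while the degree $\le 2$ generation in the extremal case $\deg\divd=\mu-1$ is the content of the degree-$8$ analysis flagged in the introduction as being handled in \cite{RSS2}.

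The substantive work is in part (3). Here the strategy is a standard lifting argument through the central element $g$. First I would establish the Auslander-Gorenstein and Cohen-Macaulay properties for $\overline{R}=B(E,\sM(-\divd),\tau)$: twisted homogeneous coordinate rings of smooth projective curves with an ample line bundle are known to be Auslander-Gorenstein of injective dimension $2$ and CM — this follows from the general results on noetherian graded rings with good homological properties over smooth curves (one can cite the analysis in \cite{AV} or the homological discussion in \cite{Rog09}), using that $\sM(-\divd)$ is still ample since its degree is positive. Then, since $g\in R_1$ is a central nonzerodivisor with $R/gR=\overline{R}$ and $R$ is connected graded noetherian (noetherian by Proposition~\ref{div-noeth}, as $R$ is $g$-divisible), I would invoke the standard lifting lemmas: if $R/gR$ has finite injective dimension then so does $R$, with $\injdim R=\injdim\overline{R}+1=3$; and the Auslander and Cohen-Macaulay conditions lift from $R/gR$ to $R$ along a central regular element (this is a well-known fact about Auslander-Gorenstein rings — see e.g. the treatment in Levasseur's work or \cite{ASZ}, and note that $\GKdim R=\GKdim\overline{R}+1=3$ so the CM equality $j(M)+\GKdim M=\GKdim R$ transfers correctly). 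One subtlety to check carefully is that the lifting of the Gorenstein condition requires the grade computation to be compatible across the factor, i.e. that for a finitely generated $R$-module $M$ one can relate $\Ext_R^\bullet(M,R)$ to $\Ext_{\overline{R}}^\bullet$ of $M/Mg$ and of the $g$-torsion part; the clean way is to handle $g$-torsionfree $M$ directly (where $M/Mg$ is an $\overline{R}$-module and a Rees-type argument applies) and then reduce the general case using the filtration by $g$-power torsion.

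For part (4), that $R=T(\divd)$ is a maximal order in $Q_{gr}(R)=Q_{gr}(T)$: the equality of graded quotient rings is Lemma~\ref{lem:expectQ}(1) since $g\in R_1$. For maximality, suppose $R\subseteq R'\subseteq Q_{gr}(R)$ with $R'$ graded, connected, equivalent to $R$. The key observation is that $R'$ is then also $g$-divisible (one checks $R'\cap gT_{(g)}=gR'$ using that $R'$ is equivalent to the $g$-divisible $R$ and that $g$ is central, or passes to $\widehat{R'}$ which is an equivalent $g$-divisible overorder); then $\overline{R'}\supseteq\overline{R}=B(E,\sM(-\divd),\tau)$ inside $Q_{gr}(\overline{R})=\kk(E)[t,t^{-1};\tau]$, and $\overline{R'}$ is equivalent to $\overline{R}$ by Proposition~\ref{thm:equivord}. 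But twisted homogeneous coordinate rings $B(E,\sN,\tau)$ of a smooth curve with $\sN$ ample are maximal orders in their graded quotient ring (this is classical — it follows from the fact that $\bigoplus_n H^0(E,\sN_n)$ is saturated and the curve is normal, cf. \cite{AV} or \cite{Rog09}), so $\overline{R'}=\overline{R}$, whence $R'=R$ by $g$-divisibility and the graded Nakayama lemma. I expect part (3) — specifically the careful lifting of the Gorenstein/Auslander condition through $g$ while keeping track of grades on $g$-torsion modules — to be the main technical obstacle; parts (1), (2), (4) are essentially bookkeeping on top of \cite{RSS2} and classical facts about elliptic TCRs.
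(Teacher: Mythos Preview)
Your approach for parts (1)--(3) is correct and amounts to unpacking what the paper's two-line proof (``Combine \cite[Theorem~5.3]{RSS2} and \cite[Theorem~6.6]{Le}'') actually says: parts (1) and (2) are contained in \cite[Theorem~5.3]{RSS2}, while for (3) the Auslander--Gorenstein and CM properties of $\overline{R}=B(E,\sM(-\divd),\tau)$ are exactly \cite[Theorem~6.6]{Le}, and the lifting through the central regular element $g$ is \cite[Theorem~5.10]{Le} (used elsewhere in this paper in the same way). So on these parts you and the paper agree, with you giving more detail.

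For part (4) you take a genuinely different route---the paper again simply defers to \cite[Theorem~5.3]{RSS2}---and your sketch has gaps that need filling before it works. Given an equivalent graded overorder $R\subseteq R'\subseteq Q_{gr}(T)$, you need $R'\subseteq T_{(g)}$ before $\widehat{R'}$ even makes sense: this can be arranged by using $g$-divisibility of $R$ to replace $a$ in $aR'\subseteq R$ by an element outside $gT$, which is then a unit in $T_{(g)}$ by \eqref{startup1}, giving $R'\subseteq a^{-1}R\subseteq T_{(g)}$. That $R'$ is connected follows since $R'_0$ is a finite-dimensional division algebra over the algebraically closed field $\kk$. Passing to $\widehat{R'}$ and showing it is still equivalent to $R$ requires Lemma~\ref{lem:finehat}(2) on both sides, and only then does Proposition~\ref{thm:equivord} apply to give that $\overline{R}\subseteq\overline{\widehat{R'}}$ are equivalent orders. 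You also need $\overline{R}=B(E,\sM(-\divd),\tau)$ to be a maximal order; this is true but is not in \cite{AV}, and the cleanest justification is via part (3) combined with the homological characterisation of maximal orders for Auslander--Gorenstein CM domains. None of this is insurmountable, but ``one checks'' undersells the work, and citing \cite[Theorem~5.3]{RSS2} directly is considerably shorter.
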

  
 \begin{proof} 
 Combine \cite[Theorem~5.3]{RSS2} and \cite[Theorem~6.6]{Le}.
   \end{proof}


\begin{lemma}\label{thou11}  Fix a cg noetherian domain $A$ that is Auslander-Gorenstein and CM. 
Set  $\GKdim(A)=\alpha$. 
\begin{enumerate}

\item If  $N$ is a finitely generated graded right (or left)  $A$-submodule  of $Q=Q_{gr}(A)$ then  $N^{**}$ is the unique largest  submodule $M\subseteq Q$ with $\GKdim(M/N)\leq \alpha-2$.

\item  In particular there is no graded $A$-module $A\subsetneqq N\subset Q$ with $\GKdim(N/A)\leq \GKdim(A)-2$. 

\item If $J=J^{**}\not=A$ is a proper reflexive right ideal of $A$ then  $A/J$  is 
\emph{$(\alpha-1)$-pure} \label{pure}
in the sense that $\GKdim(I/J ) = \GKdim(A/J)=\alpha-1$ for every nonzero $A$-module $I/J\subseteq A/J$.

\item If $N$ is a finitely generated  $A$-module, then 
  $\Ext^{j(N)}_A(N, A)$ is a pure module  of
Gelfand-Kirillov dimension equal to $\GKdim(N)$. 
\end{enumerate}
\end{lemma}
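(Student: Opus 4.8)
The plan is to treat the four parts of Lemma~\ref{thou11} as a package of standard consequences of the Auslander--Gorenstein and Cohen--Macaulay hypotheses, pushed through to the graded setting and the specific situation of submodules of the graded quotient ring $Q = Q_{gr}(A)$. Throughout I would use the CM equality $j(M) + \GKdim(M) = \alpha$ for finitely generated $M$, the Auslander (Gorenstein) condition controlling the grades of submodules of $\Ext$-modules, and the fact that in an Auslander--Gorenstein ring the double-dual $N^{**} = \Ext^0_A(\Ext^0_A(N,A),A)$ has the standard ``reflexive hull'' behaviour. The order would be: first prove (4), which is the purely homological input; then deduce (1) by combining (4) with the identification of $N^{**}$; then (2) is the special case $N = A$; then (3) follows from (1)/(4) applied to $A/J$.

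For part (4), let $N$ be finitely generated with $j = j(N)$. I would show $E := \Ext^j_A(N,A)$ is pure of dimension $\GKdim(N)$ in two steps. First, by the CM property $\GKdim(E) \leq \alpha - j = \GKdim(N)$, and the reverse inequality follows because $N$ embeds into $\Ext^j_A(E,A)$ up to lower-dimensional error (the standard spectral sequence / filtration argument for Auslander--Gorenstein rings, as in Levasseur or Björk), forcing $\GKdim(E) \geq \GKdim(N)$. Second, for purity: if $0 \neq E' \subseteq E$ had $\GKdim(E') < \GKdim(E) = \alpha - j$, then $j(E') > j$, so $\Ext^j_A(E',A) = 0$; but the Gorenstein condition applied with $p = j < q = j(E')$... actually the cleaner route is that a nonzero submodule $E' \subseteq \Ext^j_A(N,A)$ has $j(E') = j$ by the Auslander condition (no submodule of an $\Ext^q$ can have a nonvanishing $\Ext^p$ for $p < q$, applied contrapositively), whence $\GKdim(E') = \alpha - j = \GKdim(E)$, giving purity. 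I would cite the relevant statements from the literature (e.g.\ Levasseur's paper, already referenced as \cite{Le}) rather than reprove the homological algebra.

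For part (1): given $N \subseteq Q$ finitely generated graded, I claim $N^{**}$ is the largest $M \subseteq Q$ with $\GKdim(M/N) \leq \alpha - 2$. One inclusion: the canonical map $N \to N^{**}$ has kernel zero (as $N$ is torsionfree, being inside $Q$) and cokernel of grade $\geq j+2$ in the appropriate sense, hence $\GKdim(N^{**}/N) \leq \alpha - 2$ by CM; this is the standard fact that $N^{**}/N$ is ``$2$-torsion''. Conversely, if $\GKdim(M/N) \leq \alpha - 2$ then applying $\Hom_A(-,A)$ to $0 \to N \to M \to M/N \to 0$ and using $\Ext^0_A(M/N,A) = \Ext^1_A(M/N,A) = 0$ (which holds because $j(M/N) \geq 2$ by CM) gives $M^* \cong N^*$, hence $M^{**} \cong N^{**}$; and since $M \subseteq Q$ is torsionfree, $M \subseteq M^{**} = N^{**}$. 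Uniqueness of the largest such $M$ is then immediate. Part (2) is the case $N = A$: since $A = A^{**}$, no proper overmodule $A \subsetneq N \subseteq Q$ can satisfy $\GKdim(N/A) \leq \alpha - 2$. For part (3): if $J = J^{**} \neq A$ and $0 \neq I/J \subseteq A/J$, then $\GKdim(A/J) \leq \alpha - 1$ automatically (it cannot be $\alpha$ since $J \neq 0$; note $J \neq 0$ as $J^{**} = J$ and $J$ nonzero because $A/J$ is a proper quotient — more precisely $j(A/J) \geq 1$), and if some $I/J$ had $\GKdim(I/J) \leq \alpha - 2$ then by part (1), $I \subseteq J^{**} = J$, contradiction; so every nonzero submodule has dimension exactly $\alpha - 1$, i.e.\ $A/J$ is $(\alpha-1)$-pure.

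The main obstacle I anticipate is part (4), specifically the verification that $\Ext^{j(N)}_A(N,A)$ has Gelfand--Kirillov dimension \emph{equal} to (not just at most) $\GKdim(N)$ and is pure: this is the genuine homological content, requiring the double-$\Ext$ spectral sequence and the Auslander condition in an essential way, and in the graded-noetherian-domain setting one must be slightly careful that GK-dimension is exact on short exact sequences and behaves well under $\Ext$. However, since $A$ here will in practice be one of the concrete algebras $T(\divd)$ for which Proposition~\ref{8-special22} already supplies Auslander--Gorenstein + CM, and since these purity statements are established in \cite{Le} and the standard references on Auslander--Gorenstein rings, I would present part (4) largely by citation, and spend the written proof on parts (1)--(3), which are short diagram chases once (4) is in hand.
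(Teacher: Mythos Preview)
Your proposal is correct and aligns with the paper's own proof, which is purely by citation: part~(1) is \cite[Theorem~3.6 and Example~3.2]{BE}, parts~(2) and~(3) are deduced as special cases of~(1), and part~(4) is \cite[Lemma~2.8]{BE}. Your direct arguments for (1)--(3) via the vanishing $\Ext^0_A(M/N,A)=\Ext^1_A(M/N,A)=0$ when $j(M/N)\geq 2$, and the consequent identification $M^{**}=N^{**}$, are exactly what underlies the cited results and would make a fine self-contained proof.

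One minor imprecision worth flagging in your sketch of~(4): the Auslander condition alone gives only $j(E')\geq j$ for a nonzero submodule $E'\subseteq \Ext^j_A(N,A)$, not equality; the contrapositive you invoke yields the inequality in only one direction. Purity of $\Ext^{j(N)}_A(N,A)$ (i.e.\ $j(E')=j$ exactly) genuinely requires the bidualizing spectral sequence argument, which is what \cite[Lemma~2.8]{BE} or the analogous statement in \cite{Le} supplies. Since you already plan to cite this rather than reprove it, there is no real gap, but your parenthetical justification as written is not quite right.
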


\begin{proof} Part (1)  follows from \cite[Theorem~3.6 and Example~3.2]{BE}.
Parts~(2) and (3) are   special cases of (1) while Part~(4) follows from  by \cite[Lemma~2.8]{BE}. 
\end{proof}

\section{An equivalent   \texorpdfstring{$T(\divd)$}{LG}  }\label{equiv2-sec}

Throughout this section, $T$ will be an algebra satisfying Assumption~\ref{ass:T}, and we maintain all of the notation
 introduced in Section~\ref{RIGHT IDEALS}.  In this section we prove that if $U$ is a $g$-divisible graded subalgebra 
 of $T$ with $Q_{\gr}(U)=Q_{\gr}(T)$, then $U$ is an equivalent order to some $T(\divd)$.   This should be compared with 
 \cite[Theorem~1.2]{Rog09}:  the rings $T(\divd)$ with $\divd$ effective of degree $< \mu -1$ are precisely the maximal 
 orders $U\subseteq T$ with $Q_{gr}(U)=Q_{gr}(T)$ that are  generated in degree 1.  

We begin by studying $\bbar{U}$ and related subalgebras of $\kk(E)[t, t^{-1}; \tau]$.  
We say that two divisors $\divx$ and $\divy$ are {\em $\tau$-equivalent}\label{tau-equiv-defn} if, for every orbit $\mathbb{O}$ 
of $\tau$ on $E$, one has $\deg(\divx|_{\mathbb{O}} ) = \deg(\divy|_{\mathbb{O}} ).$
Two invertible sheaves $\sO_E(\divx)$ and $\sO_E(\divy)$ are then {\em $\tau$-equivalent} 
  if the divisors $\divx$ and $\divy$ are $\tau$-equivalent.

\begin{lemma}\label{equiv-TCR}
 Let $\sN, \sN'$ be ample invertible sheaves on $E$ of the same degree.  
Let $R := B(E, \sN, \tau)$ and 
 $R': = B(E, \sN', \tau)$, and let $F$ be an $(R', R)$-sub-bimodule of $\kk(E)[t, t^{-1}; \tau]$.  Then   $F_R$ is finitely 
 generated if and only if ${}_{R'}F$ is finitely generated. In this case,  $\sN$ and $\sN'$ are $\tau$-equivalent.  
\end{lemma}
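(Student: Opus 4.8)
We may assume $F\neq 0$. The plan is to reduce the lemma to a single identity between honest divisors on $E$, extracted from the structure theory of graded modules over a twisted homogeneous coordinate ring; once that identity is available, both the left--right symmetry of the finite-generation condition and the $\tau$-equivalence of $\sN$ and $\sN'$ follow quickly. Since the statement is symmetric under interchanging (right, $R$, $\sN$) with (left, $R'$, $\sN'$), it is enough to prove the implication: if $F_R$ is finitely generated, then ${}_{R'}F$ is finitely generated and $\sN$, $\sN'$ are $\tau$-equivalent.

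First I would fix coordinates. Choose a unit $t\in Q_{gr}(R)_1$ (these exist since $Q_{gr}(R)=D[t,t^{-1};\alpha]$), so that $Q_{gr}(R)=\kk(E)[t,t^{-1};\tau]$. As for any embedding of a TCR into its graded quotient ring, there are then honest divisors $N,N'$ on $E$, of degree $\deg\sN=\deg\sN'=\mu$, with $R_n=L(N_n)t^n$ and $R'_n=L(N'_n)t^n$ for all $n$, where $N_n=\sum_{j=0}^{n-1}(\tau^j)^*N$ (similarly for $N'_n$) and $L(D)=\{f\in\kk(E):\operatorname{div}(f)+D\ge 0\}\cup\{0\}$. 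Because $F$ is a nonzero, finitely generated, torsion-free rank-one graded right $R$-module inside $Q_{gr}(R)$, the equivalence $\rqgr R\simeq\coh E$ of \cite[Theorem~1.3]{AV} together with \cite[Proposition~5.4]{AS} (applied after clearing denominators to land in $R$) yields an honest divisor $\mathbf F$ with $F_n=L(\mathbf F+N_n)t^n$ for all $n\gg 0$.

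The heart of the argument is to squeeze a divisor identity out of the left $R'$-action. For any $m\ge 2$ and $n\gg 0$, the inclusion $R'_mF_n\subseteq F_{m+n}$ reads --- writing the product in $Q_{gr}(R)$ as $(at^m)(ft^n)=a\,(\tau^m)^*(f)\,t^{m+n}$ --- as
\[
L(N'_m)\cdot (\tau^m)^*\!\bigl(L(\mathbf F+N_n)\bigr)\ \subseteq\ L(\mathbf F+N_{m+n})\quad\text{inside }\kk(E).
\]
Here $\sN'_m$ (degree $m\mu\ge 2$) and $\sO_E(\mathbf F+N_n)$ (degree tending to $\infty$) are globally generated, so at each $y\in E$ one may choose $a\in L(N'_m)$ and $f\in L(\mathbf F+N_n)$ with $\operatorname{ord}_y(a)=-\operatorname{ord}_y(N'_m)$ and $\operatorname{ord}_{\tau^m(y)}(f)=-\operatorname{ord}_{\tau^m(y)}(\mathbf F+N_n)$. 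Since valuations are additive on products and $\operatorname{ord}_y((\tau^m)^*f)=\operatorname{ord}_{\tau^m(y)}(f)$, the element $a\,(\tau^m)^*(f)$ has a pole at $y$ of order exactly $\operatorname{ord}_y\bigl(N'_m+(\tau^m)^*(\mathbf F+N_n)\bigr)$; as it lies in $L(\mathbf F+N_{m+n})$ this forces $\mathbf F+N_{m+n}\ge N'_m+(\tau^m)^*(\mathbf F+N_n)$ as divisors. Both sides have degree $\deg\mathbf F+(m+n)\mu$ --- this is exactly where the equal-degree hypothesis enters --- so they are equal, and cancelling $(\tau^m)^*N_n$ using $N_{m+n}=N_m+(\tau^m)^*N_n$ gives $\mathbf F+N_m=N'_m+(\tau^m)^*\mathbf F$ for every $m\ge 2$. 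Subtracting the relation for $m$ from that for $m+1$, and using that $(\tau^m)^*$ is injective on divisors, produces the clean identity
\[
N'+\tau^*\mathbf F=\mathbf F+N,\qquad\text{equivalently}\qquad \sN'\cong\sN\otimes\mc F\otimes(\mc F^{\tau})^{-1}.
\]

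From this identity I would read off both conclusions. Restricting to any $\tau$-orbit $\mathbb O$: since $\tau$ permutes $\mathbb O$ one has $\deg((\tau^*\mathbf F)|_{\mathbb O})=\deg(\mathbf F|_{\mathbb O})$, hence $\deg(N'|_{\mathbb O})=\deg(N|_{\mathbb O})$, so $\sN$ and $\sN'$ are $\tau$-equivalent. Applying $(\tau^j)^*$ to the identity and summing over $0\le j\le n-1$ telescopes to $N'_n=N_n+\mathbf F-(\tau^n)^*\mathbf F$, whence $\mathbf F+N_n=N'_n+(\tau^n)^*\mathbf F$ and therefore $F_n=L(N'_n+(\tau^n)^*\mathbf F)t^n=H^0(E,\sN'_n\otimes\mc F^{\tau^n})t^n$ for all $n\gg 0$. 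Thus a tail $F_{\ge n_0}$ of $F$ agrees with the corresponding tail of $\bigoplus_{n\ge 0}H^0(E,\sN'_n\otimes\mc F^{\tau^n})t^n$, the left $R'$-module that represents the coherent sheaf $\mc F^{\tau}$ under $R'\lqgr\simeq\coh E$, and so is finitely generated; hence so is $F$ as a left $R'$-module. The converse implication is the mirror image: one uses the left-handed structure theorem to write $F_n=L((\tau^n)^*\mathbf G+N'_n)t^n$ for $n\gg 0$ and then feeds in the right $R$-action $F_nR_m\subseteq F_{n+m}$, arriving at the same identity with $\mathbf G$ in place of $\mathbf F$. I expect the one delicate point to be the passage from the $\Pic E$-relation to the \emph{honest} divisor identity $N'+\tau^*\mathbf F=\mathbf F+N$: getting a genuine equality of divisors, rather than merely of isomorphism classes, is what makes the $\tau$-equivalence and the telescoping work, and it rests precisely on global generation for large $m$, $n$ together with the hypothesis $\deg\sN=\deg\sN'$.
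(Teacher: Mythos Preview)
Your argument is correct and follows essentially the same route as the paper: use the structure theory of graded rank-one modules over a TCR to write $F_n$ in terms of an invertible sheaf, feed the left $R'$-action into this to obtain a containment of sheaves, upgrade to equality via the equal-degree hypothesis, and extract the relation $\sN'\cong\sN\otimes\mc F\otimes(\mc F^{\tau})^{-1}$, from which both $\tau$-equivalence and left finite generation follow. The paper carries this out in sheaf language (working with $\sF\sN_n$ directly rather than with honest divisors inside $\kk(E)$), but the content is the same.

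One minor difference worth noting: to conclude that ${}_{R'}F$ is finitely generated, the paper shows $R'_mF_n=F_{m+n}$ for $m,n\gg0$ directly, by combining the sheaf equality $\sN'_m\sF^{\tau^m}\sN_n^{\tau^m}=\sF\sN_{n+m}$ with surjectivity of the multiplication map on sections (\cite[Lemma~3.1]{Rog09}). You instead telescope the divisor identity to rewrite $F_n$ in left-handed form and appeal to the fact that such a module is finitely generated over $R'$ by $\tau$-ampleness. Both are clean; the paper's version is marginally more self-contained, while yours makes the symmetry of the situation more transparent.
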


\begin{proof} 
Suppose that $F$ is a finitely generated right $R$-module.  
By \cite[Theorem~1.3]{AV} there is an invertible sheaf $\sF$ on $E$ so that
 $F \ppe \bigoplus_n H^0(E, \sF \sN_n)$.  For $m,n\gg 0  $ ampleness ensures that the 
 sheaves $\sN'_m$ and $\sF\sN_n$ are generated by their sections and, by construction, those 
 sections are $R_m'=H^0(E,\sN_m')$, respectively $F_n=H^0(E, \sF\sN_n)$, again for $m,n\gg 0$.
Since $F$ is a left $R'$-submodule,  $R'_m F_n \subseteq F_{n+m}$ for all $m, n$ and so these observations imply that 
\[ \sN'_m \sF^{\tau^m} \sN_n^{\tau^m} \subseteq \sF \sN_{n+m}\]
for all $n, m \gg 0$.   By hypothesis, $ \sN'_m \sF^{\tau^m} \sN_n^{\tau^m} $ and $ \sF \sN_{n+m}$
  have the same degree and so they are therefore   equal.  In addition, for $n, m \gg 0$ the sheaves $\sN_m'$ 
  and $\sF^{\tau^m} \sN_n^{\tau^m}$ have degree $\geq 3$. Thus, by   \cite[Lemma~3.1]{Rog09}, the map 
  \[H^0(E, \sN_m') \otimes H^0(E, \sF^{\tau^m} \sN_n^{\tau^m}) \to H^0(E, \sN_m' \sF^{\tau^m} \sN_n^{\tau^m})\]
   is surjective.  
Thus,  $R'_m F_n = F_{n+m}$ for all $n, m \gg 0$ and  ${}_{R'} F $ is finitely generated.  By symmetry, if  ${}_{R'} F $ is
 finitely generated
then so is $F_R$.

In either case, it follows   that $ \sN'_m = \sF \sN_m (\sF^{-1})^{\tau^m} $ for all $m \gg 0$.  The identity 
$\sN' (\sN'_m)^{\tau} = \sN'_{m+1}$ gives 
\[\sN'\sF^{\tau}\sN_m^{\tau}(\sF^{-1})^{\tau^{m+1}} = \sN'(\sN'_{m})^\tau =\sN'_{m+1}
= \sF\sN_{m+1} (\sF^{-1})^{\tau^{m+1}} = \sF\sN\sN_m^\tau(\sF^{-1})^{\tau^{m+1}}.\] 
Rearranging gives  $ \sN' = \sF \sN (\sF^{-1})^\tau$, 
which is certainly $\tau$-equivalent to $\sN$.
\end{proof}

We next need two  technical results on subalgebras of $B(E,\sM,\tau)$ that modify the data given by Theorem~\ref{thm:projcurve}.  

\begin{notation}\label{tau-notation}
Recall from \eqref{4.4}  that given a closed point $p\in E$ we write $p_0=p$ and $p_n=\tau^{-n}(p)$ for all $n\in \mathbb{Z}$.
We will also write $\divx^\tau=\tau^{-1}(\divx)$ when $\divx$ is a   divisor (or closed point)  on $E$, to distinguish left 
and right actions and set $\divx_n=\divx+\divx^\tau+\cdots +\divx^{\tau^{n-1}}$.  \end{notation}

We start with a routine consequence of Theorem~\ref{thm:projcurve}.

\begin{corollary}\label{cor:geomdata}
 Let $A \subseteq B = B(E, \sM, \tau)$ be a cg algebra with $Q_{\gr}(A) = Q_{\gr}(B)$.
Then there exist $\divx,\divy \in \Div(E)$ and $k \in \ZZ_{\geq 1}$ so that
\beq\label{geomdata}
A_n = H^0(E, \sM_n(-\divy-\divx_n))   \qquad \text{ for all $n \geq k$}.
\eeq
Furthermore,  $\mu>\deg \divx\geq 0$,  and 
\beq\label{normal-eq}
\text{for any $n \geq k$ and divisor $\divc > \divy + \divx_n$ we have $A_n \not \subseteq H^0(E, \sM_n(-\divc))$.}
\eeq
\end{corollary}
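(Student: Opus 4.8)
The plan is to invoke Theorem~\ref{thm:projcurve} applied to the left-right analogue of the situation, so let me first set up the translation. Since $A$ is a cg subalgebra of $B = B(E,\sM,\tau)$ with $Q_{\gr}(A) = Q_{\gr}(B) = \kk(E)[t,t^{-1};\tau]$, and $t \in Q_{\gr}(A)_1$, Theorem~\ref{thm:projcurve} (or rather its right-handed form, as used in Corollary~\ref{cor:curvept}) gives an ideal sheaf $\sA \subseteq \sO_E$ and an ample invertible sheaf $\sH$ with $A \ppe \bigoplus_{n \geq 0} H^0(E, \sA \sH_n)$. The first task is to reconcile this with the specific shape $\sM_n(-\divy - \divx_n)$ claimed in \eqref{geomdata}. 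Write $\sA = \sO_E(-\divz)$ for an effective divisor $\divz$, and compare degrees: since $Q_{\gr}(A) = Q_{\gr}(B)$, the Hilbert functions of $A$ and $B$ agree asymptotically up to a bounded additive error coming from the finite-length discrepancy, which forces $\deg \sH = \deg \sM = \mu$. So I would set $\divx$ to be a divisor with $\sO_E(\divx_n) = \sM_n \sH_n^{-1}$ in the appropriate asymptotic sense — more precisely, applying Lemma~\ref{equiv-TCR} with $R' = B(E,\sM,\tau)$ acting on the left of $A$ (since $A \subseteq B$ makes $A$ a left $B$-submodule of $\kk(E)[t,t^{-1};\tau]$ — wait, $A$ is only a left $A$-module, but one can pass to $BA$ or argue directly) shows $\sH$ and $\sM$ are $\tau$-equivalent, i.e. $\sM = \sO_E(\divx) \otimes \sH \otimes \sO_E(-\divx)^\tau$ for some divisor $\divx$, and then a telescoping computation gives $\sM_n = \sO_E(\divx_n) \otimes \sH_n \otimes \sO_E(-\divx^{\tau^n})$ — after absorbing the boundary twist $\sO_E(-\divx^{\tau^n})$ (which has bounded degree) together with $\sA = \sO_E(-\divz)$ into a single fixed divisor $\divy$, one obtains $\sA \sH_n = \sM_n(-\divy - \divx_n)$ for $n \gg 0$, possibly after adjusting $\divy$ by a $\tau$-translate. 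The constraint $\mu > \deg \divx \geq 0$ should come from positivity of $\deg \divx$ (one can choose the $\tau$-equivalence representative $\divx$ effective, or at least of non-negative degree, since $\sH$ being ample and $\deg \sM = \mu$ already bounds things) together with ampleness of $\sH = \sO_E(\sM_1(-\divx))$-type sheaf forcing $\deg \sH = \mu - \deg \divx > 0$, hence $\deg \divx < \mu$.

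For the normality statement \eqref{normal-eq}, I would argue as follows. Suppose for contradiction that for some $n \geq k$ there is a divisor $\divc > \divy + \divx_n$ with $A_n \subseteq H^0(E, \sM_n(-\divc))$. Since $\divc \geq \divy + \divx_n + q$ for some closed point $q$, we would have $A_n \subseteq H^0(E, \sM_n(-\divy - \divx_n - q))$, which is a \emph{proper} subspace of $H^0(E, \sM_n(-\divy - \divx_n)) = A_n$ (properness because $\sM_n(-\divy - \divx_n)$ has degree $\geq \mu n - O(1) \geq 2$ for $n \gg 0$, so removing a point strictly drops $h^0$). This contradicts the equality in \eqref{geomdata}. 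The only subtlety is ensuring $k$ is chosen large enough that both \eqref{geomdata} holds \emph{and} $\deg(\sM_n(-\divy - \divx_n)) \geq 2$ for all $n \geq k$, which is automatic once $k$ is large since $\deg \divx < \mu$ gives $\deg(\sM_n(-\divy-\divx_n)) = \mu n - \deg\divy - n\deg\divx = n(\mu - \deg\divx) - \deg\divy \to \infty$.

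The main obstacle I anticipate is the bookkeeping in the first paragraph: correctly identifying the divisor $\divx$ from the $\tau$-equivalence of $\sH$ and $\sM$, tracking the boundary twists $\sO_E(\pm\divx^{\tau^n})$ that arise from telescoping $\sM_n \sH_n^{-1}$, and absorbing them plus the ideal sheaf $\sA$ into a single \emph{constant} divisor $\divy$ independent of $n$. One has to be slightly careful because naively $\sM_n \sH_n^{-1}$ depends on $n$ through a term like $\sO_E(\divx_n)$ modified by a $\tau^n$-twisted correction; the point is that this correction stabilises in the right form, and the residual discrepancy between $A_n$ and $H^0(E, \sA\sH_n)$ (from the ``$\ppe$'') only affects finitely many $n$, hence can be swept into the choice of $k$. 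Everything else — the degree count forcing $\deg\sH = \mu$, the bound $\deg\divx < \mu$, and the normality argument — is routine once the shape \eqref{geomdata} is established. If invoking Lemma~\ref{equiv-TCR} directly proves awkward (because it is stated for genuine TCRs $B(E,\sN,\tau)$ rather than subalgebras), I would instead run the telescoping argument by hand: from $A \ppe \bigoplus H^0(E,\sA\sH_n)$ and the fact that $B_1 \cdot A_n \subseteq A_{n+1}$ for $n \gg 0$, deduce the containment $\sM_1 \otimes (\sA\sH_n)^\tau \subseteq \sA\sH_{n+1}$ of sheaves of the same degree (hence equality), which upon iteration yields the desired formula for $\sA\sH_n$ in terms of $\sM_n$ and fixed divisor data.
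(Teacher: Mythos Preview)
Your argument has a genuine gap in the first paragraph. The claim that ``the Hilbert functions of $A$ and $B$ agree asymptotically up to a bounded additive error'' is false: $Q_{\gr}(A) = Q_{\gr}(B)$ does \emph{not} force $\dim_\kk A_n$ and $\dim_\kk B_n$ to be close. Indeed, in the situation at hand one typically has $\dim_\kk A_n \sim n(\mu - \deg\divx)$ while $\dim_\kk B_n \sim n\mu$, and $\deg\divx$ can be any integer in $[0,\mu-1]$. So $\deg\sH$ need not equal $\mu$, $\sH$ and $\sM$ need not be $\tau$-equivalent, and Lemma~\ref{equiv-TCR} (which \emph{assumes} equal degrees) does not apply. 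Your fallback suggestion at the end, that $B_1 \cdot A_n \subseteq A_{n+1}$ for $n \gg 0$, is likewise false: $A$ is only an $A$-sub-bimodule of $B$, not a left $B$-submodule.

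The fix is much simpler than the route you are attempting. Given the ample sheaf $\sH$ from Theorem~\ref{thm:projcurve}, just \emph{define} $\divx$ by $\sH = \sM(-\divx)$; this is possible for a unique divisor class $\divx$, with no constraint on its degree. Then automatically
\[
\sH_n \;=\; \bigotimes_{i=0}^{n-1} \sM(-\divx)^{\tau^i} \;=\; \sM_n \otimes \sO_E(-\divx_n) \;=\; \sM_n(-\divx_n),
\]
with no boundary twist to absorb. Writing $\sA = \sO_E(-\divy)$ gives \eqref{geomdata} immediately. Ampleness of $\sH$ gives $\deg\divx < \mu$; the inequality $\deg\divx \geq 0$ comes from $\dim_\kk A_n \leq \dim_\kk B_n$ together with Riemann--Roch. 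Your argument for \eqref{normal-eq} is essentially correct; the paper phrases it as ``$\sM_n(-\divy - \divx_n)$ is generated by its sections $A_n$ for $n \geq k$'' (using $\tau$-ampleness of $\sH$), which gives the same conclusion.
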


\begin{proof}
 By Theorem~\ref{thm:projcurve}, there exist an integer $k\geq 1$,  an ideal sheaf $\sY$ and  an ample invertible sheaf $\sN$  
  on $E$ so that 
\begin{equation}\label{geomdata-dull}
 A_n =  H^0(E, \sY \sN_n)\qquad \text{for all } n\geq k.
 \end{equation}
 Let $\sY=\mathcal{O}_E(-\divy)$ for some divisor $\divy$ and write   
$    \sN= \sM(-\divx) $ for the appropriate divisor $\divx$ on $E$; thus  \eqref{geomdata} is just a restatement 
of \eqref{geomdata-dull}.
Further,  $\deg \divx = \mu - \deg \sN$ which,   as $\sN$ is ample, implies that $\deg \divx < \mu$. 
 On the other hand,
 Riemann-Roch implies that  
$$\mu n=\deg \sM_n =\dim B_n \geq \dim A_n=n\deg(\sN)-\deg \divy= n(\mu-\deg \divx)-\deg \divy\quad\text{ for $n\gg 0$.}$$ 
Therefore,  $\deg \divx \geq 0$.

Finally,   as $\sN$ is   $\tau$-ample, after possibly increasing $k$ we can assume that 
   $ \sM_n(-\divy-\divx_n)$ is generated by its sections $A_n$ for all  $n \geq k$ 
   (see for example  \cite[Lemma~4.2(1)]{AS}). Thus for any 
larger divisor $\divc> \divy + \divx_n $ we will have 
$H^0(E, \sM_n(-\divc)) \subsetneqq H^0(E, \sM_n(-\divy-\divx_n))$.
Thus \eqref{normal-eq}  also holds.
\end{proof}

We next want to modify Corollary~\ref{cor:geomdata} so that   $\divx$ is replaced by an effective divisor, although this 
will result in a     weaker version of \eqref{geomdata}.

\begin{proposition}\label{prop:find D}
 Let $A\subseteq B = B(E, \sM, \tau)$ be a cg algebra  with $Q_{\gr}(A)=Q_{gr}(B)$.  
Then there is an effective divisor $\divd$ on $E$, supported at points with distinct orbits and with $\deg \divd < \mu$, 
 so that $A$ and $C=B(E, \sM(-\divd), \tau)$ are equivalent orders. Moreover, $\divd$ and $k\in\mathbb{Z}_{\geq 1}$ can be chosen so that 
\beq\label{findD} 
\begin{array}{rl}
A_n\ \subseteq \ & H^0\bigl(E,\, \sM_n(-\divd^{\tau^k} - \dots - \divd^{\tau^{n-1}})\bigr) \\
\noalign{\vskip 6pt}
&  = H^0\bigl(E, \,\sO_E(\divd_k)\otimes \sM(-\divd)_n \bigr)
\qquad\text{for all $n \geq k$.}
\end{array}\eeq
\end{proposition}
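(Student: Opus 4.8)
The plan is to start from Corollary~\ref{cor:geomdata} and ``clean up'' the divisor $\divx$ it produces. Write $A_n=H^0\bigl(E,\sM_n(-\divy-\divx_n)\bigr)$ for all $n\geq k$, with $\mu>\deg\divx\geq 0$ and $\divy\geq 0$ (as $\sO_E(-\divy)$ is an ideal sheaf). Since $A_n\subseteq B_n=H^0(E,\sM_n)$, the bundle $\sM_n(-\divy-\divx_n)$ embeds as a genuine subsheaf of $\sM_n$, so $\divy+\divx_n\geq 0$ for all $n\geq k$. Restricting to a $\tau$-orbit $\mathbb O$, write $\divx|_{\mathbb O}=\sum_m a_m\theta_m$ with $\tau^{-1}\theta_m=\theta_{m+1}$; then effectivity of $\divy+\divx_n$ for all large $n$ translates exactly into: every left partial sum $\sum_{m\leq t}a_m$ is $\geq -\divy(\theta_t)$ and every right partial sum $\sum_{m>t}a_m$ is $\geq 0$. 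In particular $d_{\mathbb O}:=\deg(\divx|_{\mathbb O})\geq 0$, and all these partial sums are $\geq 0$ for $\theta_t$ outside the fixed finite set $\Supp\divy$.

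Next I would set $\divd=\sum_{\mathbb O}d_{\mathbb O}\,p_{\mathbb O}$, where $p_{\mathbb O}$ is the point of $\Supp(\divx|_{\mathbb O})$ with smallest index and the sum runs over the finitely many orbits meeting $\Supp\divx$. Then $\divd$ is effective, supported at points with distinct orbits, and $\deg\divd=\deg\divx<\mu$, so $\sM(-\divd)$ is ample of positive degree and $C:=B(E,\sM(-\divd),\tau)=\overline{T(\divd)}$ is a genuine twisted homogeneous coordinate ring (Proposition~\ref{8-special22}(1)).

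The equivalent-order statement I would prove in two steps. First, $A\sim R':=B(E,\sM(-\divx),\tau)$, where $R'_n=H^0(E,\sM_n(-\divx_n))$: the tail $J:=A_{\geq k}$ is a right ideal of $A$ and, because vanishing along $\divy$ is preserved by right multiplication in $R'$, a right ideal of $R'$ as well, with $Q_{gr}(J)=Q_{gr}(A)=Q_{gr}(R')$; so $A\sim J\sim R'$. Second, $R'\sim C$: the sheaves $\sM(-\divx)$ and $\sM(-\divd)$ have the same degree on each $\tau$-orbit, i.e.\ are $\tau$-equivalent, and on the elliptic curve $E$ this forces $\sM(-\divx)=\sF\otimes\sM(-\divd)\otimes(\sF^{-1})^\tau$ for some invertible $\sF$; feeding this into the bimodule construction behind Lemma~\ref{equiv-TCR} (using $\sF\otimes\sM(-\divd)_n$) produces a finitely generated $(R',C)$-sub-bimodule of $\kk(E)[t,t^{-1};\tau]$, whence $R'\sim C$.

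Finally, for \eqref{findD} it suffices (after enlarging $k$) to check $\divd_n-\divd_k\leq\divy+\divx_n$ for all $n\geq k$, equivalently $(\divd_n-\divx_n)(\theta_t)\leq(\divy+\divd_k)(\theta_t)$ at each point $\theta_t$ of each orbit. One evaluates $\divd_n-\divx_n$ on $\mathbb O$ region by region: it is $\leq 0$ outside $\Supp(\divd_n|_{\mathbb O})$ and $0$ in the stable interior; because $p_{\mathbb O}$ is leftmost, the left endpoints of $\divd_n|_{\mathbb O}$ and $\divx_n|_{\mathbb O}$ coincide, so the only place $\divd_n-\divx_n$ can be positive on a set that does \emph{not} drift with $n$ is the bounded transition near $p_{\mathbb O}$, where the discrepancy is $d_{\mathbb O}-\sum_{m\leq t}a_m\leq d_{\mathbb O}+\divy(\theta_t)$ and is thus absorbed into $\divd_k+\divy$ once $k$ exceeds the width of each $\Supp(\divx|_{\mathbb O})$; on the drifting transition near the right endpoint the bound $\sum_{m>t}a_m\geq 0$ forces $\divd_n-\divx_n\leq 0$. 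I expect this final bookkeeping---trapping the positive part of $\divd_n-\divx_n$ inside the fixed divisor $\divd_k+\divy$ uniformly in $n$---to be the main obstacle; it is precisely what dictates the leftmost-point choice of $p_{\mathbb O}$, and the $\Pic(E)$ input behind $R'\sim C$ is the only other ingredient that is not formal.
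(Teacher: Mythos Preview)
Your argument is correct and follows the paper's strategy closely for constructing $\divd$ and verifying \eqref{findD}: both pick, on each orbit, a base point so that the ``positive discrepancy'' between $\divd_n$ and $\divx_n$ is confined to a fixed window absorbed by $\divd_k+\divy$, using exactly the partial-sum positivity you isolate (the paper records this as the explicit identity \eqref{combinatorics} and the inequalities \eqref{positivity}).  The only real difference is in the equivalent-order step.  The paper sets $C'=B(E,\sM(-\divx),\tau)$ (your $R'$), gets $A\sim C'$ by quoting \cite[Theorem~5.9(2)]{AS}, and then proves $C'\sim C$ by feeding the already-established inclusion \eqref{findD} into $C'_{\geq n_0}\subseteq \bigoplus_n H^0(E,\sY^{-1}\sO(\divd_k)\otimes\sM(-\divd)_n)$, so that $C'C$ is a finitely generated right $C$-module and Lemma~\ref{equiv-TCR} finishes.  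You instead prove $A\sim R'$ directly via the right $R'$-ideal $A_{\geq k}$, and prove $R'\sim C$ by building the bimodule $\bigoplus_n H^0(E,\sF\otimes\sM(-\divd)_n)$ from a sheaf $\sF$ with $\sM(-\divx)=\sF\,\sM(-\divd)\,(\sF^{-1})^{\tau}$.  Your route is self-contained and decouples the two halves of the proposition (you never use \eqref{findD} for the equivalence), at the cost of the extra observation that $\tau$-equivalence of \emph{divisors} always comes from a coboundary $\divf-\tau^{-1}(\divf)$.  Note, incidentally, that this last step needs no ``$\Pic(E)$ input'': since $\divx$ and $\divd$ agree orbit-by-orbit in degree as divisors (not merely as divisor classes), solving $\divx-\divd=\divf-\tau^{-1}(\divf)$ is a finite telescoping computation on each orbit, with no appeal to the group law on $E$.
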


\begin{proof} Let $\divx$ and $\divy$ be the divisors constructed in  the proof of Corollary~\ref{cor:geomdata}, and let $k $ be the integer from that result.
 Fix an orbit $\mathbb{O}$ of $\tau$ on $E$. By possibly enlarging $k$ we can pick $p\in\mathbb{O}$  so 
 that, using the notation of \eqref{tau-notation},
\beq\label{supp} \text{$\divx|_{\mb O} $ is supported on 
 $\{p=p_0, \ldots, p_k\}$, and $\divy|_{\mb O}$ is supported on $\{p_0, \dots, p_{k-1}\}$.}
\eeq
  Thus  
\[\divy|_{\mathbb{O}} =\sum_{i=0}^{k-1} y_ip_i \qquad\text{and}\qquad    
\divx|_{\mathbb{O}}  =\sum_{i=0}^k x_ip_i,\]
for some integers $y_i$ and $x_j$.
 For $n \in \NN$  we have
$(\divx_n)|_{\mathbb{O}}  = \sum_{i=0}^k x_i(p_i+p_{i+1}+\cdots +p_{i+n-1})$. 
 Thus, for $n \geq k$ we calculate that 
\begin{multline}\label{combinatorics}
 (\divy + \divx_n)|_{\mathbb{O}}   = 
 (y_0+ x_0) p_0 + \cdots + (y_j + \sum_{i \leq j} x_i)p_j + \cdots + (y_{k-1} + \sum_{i \leq k-1} x_i) p_{k-1} \\
+ (\sum_{i \geq 0} x_i) (p_{k} + \cdots + p_{n-1})
 + (\sum_{i \geq 1} x_i) p_n + \cdots + (\sum_{i \geq j} x_i) p_{n+j-1} + \cdots + x_k p_{n+k-1}.
\end{multline} 

Let $e_p= \sum x_i$.  Since $A \subseteq B(E, \sM, \tau)$,  the divisor $\divy+\divx_n$ is effective for $n \gg 0$, and so 
\beq\label{positivity} 
 y_j + \sum_{i \leq j} x_i \geq 0 \quad \quad \mbox{ and } \quad \quad \sum_{i \geq j} x_i \geq 0
  \eeq
for all $0 \leq j \leq k$.  In particular, $e_p \geq 0$.  
Let $\divd = \sum_p e_p p$, where the sum is taken over one closed point $p$ in each orbit $\mathbb{O}$ of $\tau$.  Take the maximum of the values of $k$ occurring for the different orbits in the support of $\divx $ and $\divy$, and call this also $k$.
From \eqref{combinatorics} and \eqref{positivity} we  see  that, on each orbit $\mathbb{O}$ and hence in general,  
\[\divy+\divx_n  \geq \divd^{\tau^{k}}+\cdots+\divd^{\tau^{n-1}}\]  
for all $n \geq k$.  In other words, \eqref{findD} holds for this $\divd$ and  $k$.  
By construction, $\deg \divd = \deg \divx < \mu$.

Finally, let $\sN = \sM(-\divx) $ and let $\sY = \sO_E(-\divy)$.  Let $C=B(E,\sM(-\divd),\tau)$ and $C' = B(E, \sN, \tau)$.  
Equation~\eqref{findD} can be rephrased as saying that   
\[ \sY \sN_n \subseteq \sM_{n}(-\divd^{\tau^k}-\cdots-\divd^{\tau^{n-1}})
= \sO(\divd_k)\otimes \sM(-\divd)_n\qquad
\text{for all $n \geq k$.}\]
Thus, for $n_0\gg 0$,
\[ C'_{\geq n_0} \subseteq N = \bigoplus_{n\geq n_0} 
H^0\bigl(E, \bigl(\sY^{-1}\otimes\sO(\divd_k)\bigr)\otimes  \sM(-\divd)_n\bigr).\]
Since $\sM(-\divd)$ is $\tau$-ample (because it has positive degree) and 
$\sY^{-1}\otimes\sO(\divd_k)$ is coherent, \cite[Lemma~4.2(ii)]{AS} implies
that $N$ is a finitely generated right $C$-module. Hence,   so is $ C'C$.  
Since   $\sN = \sM(-\divx)$ with $\deg \divd = \deg \divx$, we can apply Lemma~\ref{equiv-TCR}
to conclude that  $C'C$ is a
finitely generated  left $C'$-module. Thus $C$ and $C'$ are equivalent orders. 
 By the proof of  \cite[Theorem~5.9(2)]{AS} $C'$ is a finitely generated right $A$-module.
 Thus  $C'$ and $A$ 
are equivalent orders, and so $C$ and $A$ are also equivalent.
\end{proof}
 
 \begin{definition}\label{normalised-notation}
We say that $(\divy, \divx, k)$ as given by Corollary~\ref{cor:geomdata} is {\em geometric data for $A$}.
If \eqref{supp} holds for   $p \in \mb O$, we say $p$ is a {\em normalised orbit representative} for this data, and we say that $\divd =\sum e_p p$ is a {\em normalised divisor} for $(\divy,\divx,k)$.
To avoid trivialities, the only orbits considered here are the (finite number of)  orbits containing the support of $\divx$ and $\divy$.
By construction, $\deg \divd = \deg \divx < \mu$.
\end{definition}

We now use these results to study subalgebras of $T$, and begin with a general idea of the strategy.   Let $U$ be a $g$-divisible graded subalgebra of $T$ so that
$Q_{\gr}(U) = Q_{\gr}(T)$.  By Proposition~\ref{div-noeth},  $U$ is automatically a finitely generated,  noetherian $\kk$-algebra,
so the earlier results of the paper are available to us.
 Let $(\divy, \divx, k)$ be geometric data for $\overline{U}$ and let $\divd$ be a normalised divisor for $(\divy,\divx, k)$.
We will show that $U$ and $T(\divd)$ are equivalent orders.
 
Recall the right $T(\divd)$-module $T_{\leq k} * T(\divd) = \bigoplus_{n \geq 0} M(n-k, \divd^{\tau^k})_n$ 
from Definition~\ref{def:star}.  This is a $g$-divisible right $T(\divd)$-module, with 
$\overline{T_{\leq k} * T(\divd)} = \bigoplus_{n \geq 0} H^0\bigl(E, \mc{M}_n(-\divd^{\tau^k} - 
 \dots - \divd^{\tau^{n-1}})\bigr)$, by Lemma~\ref{lem:layer}.
In other words, by \eqref{findD}, 
\[ \overline{U} \subseteq \overline{T_{\leq k} * T(\divd)}.\]
Our next goal is to show that this holds without working modulo $g$:  that is, that 
\beq\label{wannaget} U \subseteq T_{\leq k}*T(\divd). \eeq 
This will force $\wh{U T(\divd)}$ to be finitely generated as a right $T(\divd)$-module, which is a key step towards proving that $U$ and $T(\divd)$ are equivalent orders.

Suppose therefore that \eqref{wannaget} fails, and so  $U_{n_0} \not \subseteq \bigl( T_{\leq k}*T(\divd) \bigr)_{n_0}$ for some $n_0$.  Necessarily, $n_0 > k$.
We will find a right $T$-ideal $Q(i, r, e, p)$ and a left 
$T$-ideal $Q'(i, r, e, q)$ such that 
setting $I = U \cap Q(i, r, e, p)$ and $J = U \cap Q'(i, r, e, q)$,
then $U/I$ and $U/J$ are isomorphic to point modules in large degree.
Further, we can choose $p$ and $q$ so that $I_{n_0} = J_{n_0}$.  However,
Corollary~\ref{prop:shiftpt} can be used 
  to derive precise formul\ae\ for $I_{n_0}$ and $J_{n_0}$, and we will see that these formul\ae\
 are inconsistent, leading to a final contradiction.

In the next few results, we carry out this argument, using induction and a filtration of $T$ by the right ideals $Q$ defined in Section~\ref{RIGHT IDEALS}. Recall the definition of $I^{sat}$ from Definition~\ref{saturated-defn}, and the definitions of $J(\divd^{\bullet})$, $Q(i,r,d,p)$, and their left-sided analogues from Section~\ref{RIGHT IDEALS}.  As already noted in that section
\beq\label{star} \mbox{ by  Lemma~\ref{lem:layer}(3), $J(p)_n$ is codimension 1 in $T_n$ for all $n \in \NN$ and $p \in E$. } \eeq
 \begin{lemma}\label{step 1}
 Let $U$ be a   $g$-divisible graded subalgebra of $T$ with $Q_{\gr}(U) = Q_{\gr}(T)$. 
Suppose that $n > i \geq 1$, $1 \leq r \leq e \leq \mu$, and $j \in \mb{Z}$.
Suppose further that
\begin{enumerate}
\item[(A)] $U_{\geq n} \subseteq Q(i, r-1,e,p_j)$, but $U_n \nsubseteq Q(i, r, e, p_j)$; and 
\item[(B)] $\overline{U}_m \nsubseteq \overline{J(p_{i+j-n-1})_m} = H^0(E, \mc{M}_m(-p_{i+j-n-1}))$ for all $m \geq n$. 
\end{enumerate}
Let $I = U \cap Q(i,r,e,p_j)$, and let $M = U/I^{sat}$.  Then:  
\begin{enumerate}
 \item $M_n \neq 0$.
\item $M$ is 1-critical and $M g = 0$.
\item $\bigl( \rann_U(M_n) \bigr)_{m} = \bigl( U \cap J(p_{i+j-n-1})\bigr)_{m}$ for all $m \gg 0$.
\end{enumerate}
\end{lemma}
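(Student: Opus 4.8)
The plan is to derive all three parts from a single embedding of graded right $U$-modules, $\iota\colon (U/I)_{\geq n}\hookrightarrow P[-n]$, where $P := P(q) = T/J(q)$ is the point module at the point $q := p_{i+j-n-1}$ from hypothesis (B). Two preliminaries will be used throughout. First, since $T$ is generated in degree one (Lemma~\ref{lem:T-prop}(1)), $P$ is a genuine point module: $\dim_\kk P_\ell = 1$ for all $\ell\geq 0$, $P$ is $1$-critical, and $Pg = 0$ because $g\in J(q)$. Second, $J(q)_\ell$ consists of those $x\in T_\ell$ whose image in $B_\ell$ lies in $H^0(E,\mc{M}_\ell(-q))$ (Lemma~\ref{lem:layer}(3)), and since $gT_{\ell-1}\subseteq J(q)_\ell$ this yields the dictionary
\[
U_\ell \subseteq J(q)_\ell \iff \overline{U}_\ell \subseteq H^0(E,\mc{M}_\ell(-q)).
\]

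To build $\iota$: hypothesis (A) gives $U_{\geq n}\subseteq Q(i,r-1,e,p_j)$, so for each $m\geq n$ the composite $U_m\hookrightarrow Q(i,r-1,e,p_j)_m\twoheadrightarrow [Q(i,r-1,e,p_j)/Q(i,r,e,p_j)]_m$ has kernel exactly $I_m$; assembling these over $m\geq n$ and composing with the isomorphism $[Q(i,r-1,e,p_j)/Q(i,r,e,p_j)]_{\geq n}\cong P[-n]$ of Lemma~\ref{lem:Qfactor}(1) (valid because $(p_j)_{i-n-1} = q$) produces $\iota$, which is $U$-linear since the quotient and that isomorphism are of $T$-modules. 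By (A) again $U_n\nsubseteq Q(i,r,e,p_j)$, so $(U/I)_n\neq 0$; as $(P[-n])_n = P_0$ is one-dimensional, $\iota$ maps $(U/I)_n$ isomorphically onto $P_0$. I would fix $\xi$ spanning $(U/I)_n$, normalised so $\iota(\xi) = 1+J(q)$; then $\iota(\xi u) = u+J(q)\in P_d$ for $u\in U_d$, and injectivity of $\iota$ gives the \emph{key dictionary}: $\xi u = 0$ in $U/I$ iff $u\in J(q)_d$; hence $\xi U_d = 0 \iff \overline{U}_d\subseteq H^0(E,\mc{M}_d(-q))$, and $\rann_U(\xi) = U\cap J(q)$.

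The three conclusions then follow. Since $U$ is noetherian (Proposition~\ref{div-noeth}), $I^{sat}/I$ is finite-dimensional, so $I^{sat}_m = I_m$ for $m\gg 0$, and $M = U/I^{sat}$ has no nonzero finite-dimensional submodule. \emph{Part (1):} if $M_n = 0$ then $\xi$ lies in the finite-dimensional submodule $I^{sat}/I$ of $U/I$, so $\xi U_d = 0$ for $d\gg 0$; the dictionary then forces $\overline{U}_d\subseteq H^0(E,\mc{M}_d(-q))$ for $d\gg 0$, contradicting (B). Hence $M_n = (U/I)_n = \kk\xi\neq 0$. \emph{Part (2):} by (B) and the dictionary $\xi U_\ell\neq 0$ for all $\ell\geq n$, so $\iota$ is surjective in every degree $\geq 2n$, i.e. $\iota$ restricts to an isomorphism $(U/I)_{\geq 2n}\cong (P[-n])_{\geq 2n}$. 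Then $M_{\geq 2n}$, a quotient of $(P[-n])_{\geq 2n}$, is killed by $g$, so $Mg$ is supported in degrees $\leq 2n$ and is finite-dimensional; since $g\in U$ and $M$ has no nonzero finite-dimensional submodule, $Mg = 0$, so $M$ is an $\overline{U}$-module and $Q_{gr}(\overline{U}) = \kk(E)[t,t^{-1};\tau]$ (Lemma~\ref{lem:expectQ}). For $\ell\gg 0$ the tail $M_{\geq\ell}$ equals $(U/I)_{\geq\ell}$, which $\iota$ identifies with the tail $(P[-n])_{\geq\ell}$ of a shifted point module of $B$; by Corollary~\ref{cor:curvept} such a tail is $1$-critical over $\overline{U}$, and since $M/M_{\geq\ell}$ is finite-dimensional and $M$ has no nonzero finite-dimensional submodule, the standard argument shows $M$ is $1$-critical. \emph{Part (3):} for $d\gg 0$, $I^{sat}_{n+d} = I_{n+d}$, so $u\in (\rann_U M_n)_d$ iff $\xi u\in I_{n+d}$ iff $\xi u = 0$ in $U/I$ iff $u\in J(q)_d$; that is, $(\rann_U M_n)_d = (U\cap J(p_{i+j-n-1}))_d$ for all $d\gg 0$.

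The one genuinely delicate step is the $1$-criticality of $M$ over the possibly small algebra $\overline{U}$: the embedding $\iota$ only pins $(U/I)_{\geq n}$ to the point module $P[-n]$ up to a finite-dimensional discrepancy, and a tail of a point module of $B$ does not obviously remain $1$-critical on restriction to $\overline{U}$ — this is exactly what the point-module analysis of Section~\ref{CURVES}, notably Corollary~\ref{cor:curvept}, supplies. Everything else — constructing $\iota$, the dictionary, and the deductions of (1), $Mg = 0$, and (3) — is routine bookkeeping once $\iota$ is available.
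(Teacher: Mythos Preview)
Your approach is essentially the paper's, packaged slightly differently: your embedding $\iota$ and ``key dictionary'' are exactly what the paper extracts from Lemma~\ref{lem:Qfactor}(1), and your arguments for (1), (3), and $Mg=0$ are correct and parallel the paper's closely.

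The one genuine gap is in the $1$-criticality of $M$. You write ``by Corollary~\ref{cor:curvept} such a tail is $1$-critical over $\overline{U}$'', but Corollary~\ref{cor:curvept} runs in the opposite direction: it says that $1$-critical $\overline{U}$-modules have $\dim_\kk(-)_m \leq 1$, not that a module with this Hilbert series is $1$-critical. Restricting a tail of a $B$-point module to the smaller ring $\overline{U}$ does not obviously preserve $1$-criticality --- a priori there could be a nonzero $\overline{U}$-submodule of infinite codimension --- and nothing in Section~\ref{CURVES} asserts otherwise. Your closing paragraph correctly identifies this as the delicate step, but the citation you give does not discharge it.

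The fix is to use Corollary~\ref{cor:curvept}(2) the way the paper does. You already know $M$ is a noetherian $\overline{U}$-module with no finite-dimensional submodule, $\GKdim M = 1$, and $\dim_\kk M_m \leq 1$ for $m \geq n$. Choose $M' \subseteq M$ maximal with $\GKdim(M/M') = 1$; then $M/M'$ is $1$-critical, so by Corollary~\ref{cor:curvept}(2) one has $\dim_\kk(M/M')_m = 1$ for $m \gg 0$. Combined with $\dim_\kk M_m \leq 1$, this forces $M'_m = 0$ for $m \gg 0$, whence $M'$ is finite-dimensional and hence zero. Thus $M$ itself is $1$-critical. This replaces your unsupported assertion about the tail and makes the proof complete.
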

\begin{proof}    
(1) Let $L = U/I$, so that $M = L/L'$ where $L'$ is the largest finite-dimensional submodule of $L$.
Since $n > i$, it follows from hypothesis (A) and Lemma~\ref{lem:Qfactor}(1) that 
$\dim L_{m} \leq 1$ for all $m \geq n$ and that
\begin{equation}\label{step11}
 U_n J(p_{i+j-n-1}) \subseteq Q(i,r,e,p_j).
\end{equation}
If $M_n=0$, then  $L_n U_m = 0$ for all $m \gg 0$.  Then $U_n U_m \subseteq Q(i,r,e,p_j)$ for all $m \gg 0$.  
By hypothesis (B), $U_m + J(p_{i+j-n-1})_m = T_m$ for $m \geq n$, so $U_n T_m \subseteq Q(i,r,e,p_j)$ for $m \gg 0$ also.    Since $Q(i,r,e,p_j)$ is a saturated right $T$-ideal, $U_n \subseteq Q(i,r,e,p_j)$, contradicting the hypotheses.  
 
(2) By Lemma~\ref{lem:layer}(3),  $g \in J(p_{i+j-n-1})$, whence   $M_{\geq n} \cdot g = 0$ and   
$\rann_{U}(M)\supseteq U_{\geq n}g=gU_{\geq n}$.   By construction,  $M$ has no finite dimensional submodules, 
and so   $Mg=0$. Thus  $M$ is a $\overline{U}$-module. 
Also,   $\dim_{\kk}M_m\leq \dim_\kk L_{m}\leq 1$ for all $m\geq n$ and $M\not=0$ by Part~(1), so 
$\GKdim(M) = 1$.
Since $M$ is noetherian, it has a $\overline{U}$-submodule $M'$ maximal with respect to the property $\GKdim(M/M') = 1$.  Then $M/M'$ 
is $1$-critical.   However, by  
Corollary~\ref{cor:curvept}(2) any 1-critical $\overline{U}$-module $N$ has $\dim N_m=1$ for all $m\gg 0$.  Thus 
$M'$ is finite-dimensional; hence $M' = 0$ and $M$ is $1$-critical.

(3) Since $M$ is $1$-critical, its cyclic submodule $N = M_n U$ must also be $1$-critical.  Thus 
$\dim_{\kk} M_n = 1 = \dim_{\kk} N_n$ for $n \gg 0$, forcing $M \ppe N$.
In particular, we must have $\rann_U(M_n)_m \subsetneqq U_m$
 for all $m \gg 0$.
By \eqref{step11},  $\rann_U(M_n) \supseteq U \cap J(p_{i+j-n-1})$.   
Thus \eqref{star} forces $\rann_U(M_n)_m =  (U \cap J(p_{i+j-n-1}))_m $ for all $m \gg 0$.
\end{proof}

\begin{corollary}\label{step 2}
Assume that we have the hypotheses of Lemma~\ref{step 1}.  Assume in addition to (A), (B) that we have $e< \mu$ and 
\begin{enumerate}
\item[(C)] $\overline{U}_{\geq n} \subseteq \overline{J(e p_{j + i-1})} = H^0(E, \mc{M}_n(-e p_{j + i-1}))$, but \\
$\overline{U}_n \nsubseteq \overline{J( (e+1) p_{j + i -1})} = H^0(E, \mc{M}_n(-(e+1) p _{j + i -1}))$.
\end{enumerate}
Then $U_n \cap Q(i,r,e,p_j) = U_n \cap J((e+1)p_{i+j-1})$.
\end{corollary}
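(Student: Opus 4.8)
The plan is to exhibit both sides of the claimed equality as the degree-$n$ component of a saturated right ideal cutting out a $1$-critical cyclic $U$-module, and then to force the two modules to coincide via Corollary~\ref{prop:shiftpt}. First I would apply Lemma~\ref{step 1}: its hypotheses (A), (B) and numerical constraints ($n>i\ge1$, $1\le r\le e<\mu$) all hold, so with $I=U\cap Q(i,r,e,p_j)$ and $M=U/I^{sat}$ it gives that $M$ is $1$-critical, $gM=0$, and (being $U/I^{sat}$ with $M_n\ne0$) cyclic generated in degree $0$; the proof of Lemma~\ref{step 1}(1) moreover shows $\dim_\kk(U/I)_n\le 1$, so in fact $\dim_\kk M_n=1$ and hence $I^{sat}_n=I_n=\bigl(U\cap Q(i,r,e,p_j)\bigr)_n$. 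Lemma~\ref{step 1}(3) records $(\rann_U M_n)_m=(U\cap J(p_{i+j-n-1}))_m$ for all $m\gg0$.

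Next I would build the analogous module on the $J$-side: set $V=U\cap J((e+1)p_{i+j-1})$ and $N=U/V^{sat}$, and show that $N$ is again $1$-critical, $gN=0$, cyclic generated in degree $0$, with $N_n\ne0$, $V^{sat}_n=V_n=\bigl(U\cap J((e+1)p_{i+j-1})\bigr)_n$, and $(\rann_U N_n)_m=(U\cap J(p_{i+j-n-1}))_m$ for $m\gg0$. Here $gN=0$ because $J((e+1)p_{i+j-1})\supseteq gT$ (Lemma~\ref{lem:layer}(3)); $N$ has no nonzero finite-dimensional submodule since $V^{sat}$ is saturated; and condition (C) (first part) gives $\overline U_m\subseteq H^0(E,\sM_m(-ep_{i+j-1}))$ for $m\ge n$, so $U_m/V_m$ embeds in the at-most-one-dimensional space $H^0(E,\sM_m(-ep_{i+j-1}))/H^0(E,\sM_m(-(e+1)p_{i+j-1}))$, whence $\GKdim N\le1$. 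The crux is multiplicativity of vanishing order under the twisted product of $B=B(E,\sM,\tau)$: for $\bar x\in B_n$, $\bar u\in B_m$ one has $\operatorname{ord}_{p_{i+j-1}}(\bar x*\bar u)=\operatorname{ord}_{p_{i+j-1}}(\bar x)+\operatorname{ord}_{\tau^n(p_{i+j-1})}(\bar u)$, and $\tau^n(p_{i+j-1})=p_{i+j-n-1}$. Choosing (via condition (C)) some $x_0\in U_n$ with $\operatorname{ord}_{p_{i+j-1}}(\bar x_0)=e$, this shows that for $u\in U_m$ the condition ``$\overline U_n\,\bar u$ vanishes to order $\ge e+1$ at $p_{i+j-1}$'' is equivalent to ``$\bar u$ vanishes at $p_{i+j-n-1}$''. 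Condition (B) forbids $\overline U_m\subseteq H^0(E,\sM_m(-p_{i+j-n-1}))$ for all $m\ge n$, which rules out $U_n\subseteq V^{sat}$; hence $N_n\ne0$, so $\dim_\kk N_n=1$, $V^{sat}_n=V_n$, $1\notin V^{sat}$, and $N$ is $1$-critical (GK-dimension $1$, no finite-dimensional submodule, $\dim_\kk N_m\le1$ for $m\ge n$, so every nonzero submodule agrees with $N$ in large degree). The same computation applied to $(\rann_U N_n)_m=\{u\in U_m:U_nu\subseteq V_{n+m}\}$ (valid for $m\gg0$) identifies it with $(U\cap J(p_{i+j-n-1}))_m$.

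Finally, $M$ and $N$ are $1$-critical, degree-$0$ cyclic right $U$-modules with $gM=0$, $M_n\ne0$, and $(\rann_U M_n)_m=(\rann_U N_n)_m$ for all $m\gg0$, so Corollary~\ref{prop:shiftpt} yields $M\cong N$ as graded $U$-modules. An isomorphism of cyclic graded modules each generated in degree $0$ by a one-dimensional space carries generator to generator up to a nonzero scalar and so identifies the two defining right ideals; thus $I^{sat}=V^{sat}$, and in particular
\[ U_n\cap Q(i,r,e,p_j)\ =\ I_n\ =\ I^{sat}_n\ =\ V^{sat}_n\ =\ V_n\ =\ U_n\cap J((e+1)p_{i+j-1}). \]
I expect the main obstacle to lie in the second paragraph: getting the multiplicativity of vanishing orders under the twisted multiplication of $B$ exactly right, tracking the $\tau$-shift sending $p_{i+j-1}$ to $p_{i+j-n-1}$, and confirming that $N$ is genuinely $1$-critical and not merely of GK-dimension at most $1$.
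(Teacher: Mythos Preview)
Your argument is correct and reaches the conclusion by the same mechanism as the paper --- both sides are identified with the degree-$n$ piece of a saturated right ideal whose quotient is a $1$-critical $\overline U$-module, the right annihilators match in large degree, and Corollary~\ref{prop:shiftpt} forces the isomorphism. The difference lies entirely in how you handle the $J$-side.

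The paper observes that $J((e+1)p_{i+j-1}) = Q(1,e+1,e+1,p_{i+j-1})$, and that hypothesis (C) together with $g\in J(ep_{i+j-1})$ is precisely hypothesis (A) of Lemma~\ref{step 1} for the new tuple $(i',r',e',j')=(1,e+1,e+1,i+j-1)$, while (B) is unchanged since $i'+j'-n-1=i+j-n-1$. The condition $e<\mu$ ensures $r'\le e'\le\mu$. So Lemma~\ref{step 1} applies a second time out of the box, delivering $N_n\neq0$, $gN=0$, $1$-criticality, and the annihilator formula for free. You instead verify all of these directly via vanishing-order computations in $B(E,\sM,\tau)$; this is essentially redoing the special case $i'=1$ of Lemma~\ref{step 1} by hand. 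Your multiplicativity formula and the $\tau$-shift $\tau^n(p_{i+j-1})=p_{i+j-n-1}$ are correct, so your worry there is unfounded.

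One small gap: your claim that ``GK-dimension $1$, no finite-dimensional submodule, $\dim_\kk N_m\le1$ for $m\ge n$'' forces $1$-criticality is not immediate from those hypotheses alone. You need the additional input that every $1$-critical $\overline U$-module has $\dim=1$ in large degree (Corollary~\ref{cor:curvept}(2)), so that a maximal submodule $N'$ with $\GKdim(N/N')=1$ must be finite-dimensional and hence zero --- exactly as in the proof of Lemma~\ref{step 1}(2). Once this is supplied, your route is sound; the paper's shortcut just avoids the repetition.
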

\begin{proof}
  Let $I=(U \cap Q(i,r,e,p_j))^{\sat}$ and $M = U/I$. 
Similarly, let $H = (U \cap J((e+1)p_{i+j-1}))^{\sat}$ with  $N = U/H$.
Note that $Q(1, d,  d, p_{j+i-1}) = J( d p _{j + i -1})$ for any $d$.   Also, since $g \in J(d p_{j + i-1})$, 
hypothesis (C) is equivalent to $U_{\geq n} \subseteq J(e p_{j + i-1})$ but $U_n \nsubseteq J((e+1) p_{j + i-1})$.
Thus hypothesis (C) implies that the hypothesis (A)
of Lemma~\ref{step 1} also 
holds for $(i', r', e') = (1, e+1,e+1)$ and $j' = i+j-1$.
Also, hypothesis (B) for these values is the same as hypothesis (B) for the old values.
Since $e < \mu$, the hypotheses of Lemma~\ref{step 1} hold for $(i', r', e')$.

We may now apply Lemma~\ref{step 1} to $M$ and $N$.
Thus:
$M_n, N_n \neq 0$, 
both $M,N$ are 1-critical and killed by $g$, and $\rann_U(M_n)$ and $\rann_U(N_n)$ are both  equal  to $U \cap J(p_{i+j-n-1})$ in large degree.
By Corollary~\ref{prop:shiftpt},  we have $M\cong N$ and so $I=H$.  
Thus since $U_n \cap Q(i,r,e,p_j)$ and $U_n \cap J((e+1)p_{i+j-1})$ are already saturated in degree $n$
by Lemma~\ref{step 1}(1), we have
\[  U_n \cap Q(i,r,e,p_j) = I_n = H_n = U_n \cap J((e+1)p_{i+j-1}).\] 
\vskip-24pt\end{proof}
   
We also need the left-sided versions of the two preceding results.  Since the statements and proofs of these are largely symmetric, we give a combined statement of the left-sided versions, with an abbreviated proof.
  We note that  a consequence of \cite[Lemmas~3.5 and 6.1]{RSS2} is  that 
\beq \label{JJprime}  
J'(dp_j)_n = J(dp_{j+n-1})_n \quad \text{and} \quad \bbar{J'(dp_j)_n} = H^0(E, \sM_n(-dp_{j+n-1})) = \bbar{J(dp_{j+n-1})_n}.
\eeq

\begin{lemma}\label{left steps}
 Let $U$ be a   $g$-divisible graded subalgebra of $T$ with $Q_{\gr}(U) = Q_{\gr}(T)$. 
Suppose that $n > i \geq 1$, $1 \leq r \leq e <  \mu$, and $h \in \mb{Z}$.
Suppose further that
\begin{enumerate}
\item[(A$'$)] $U_{\geq n} \subseteq Q'(i, r-1,e,p_h)$, but $U_n \nsubseteq Q'(i, r, e, p_h)$; 
\item[(B$'$)] $\overline{U}_m \nsubseteq \overline{J'(p_{h - i + n + 1})_m} = H^0(E, \mc{M}_n(-p_{h - i + n+m}))$ for 
$m \geq n$; and 
\item[(C$'$)]  $\overline{U}_{\geq n} \subseteq \overline{J'(e p_{h-i+1})}$, but 
\\ $\overline{U}_n \nsubseteq \overline{J'( (e+1) p_{h-i+1})_n} = H^0(E, \mc{M}_n(-(e+1) p _{h-i+n}))$.
\end{enumerate}
Then 
$U_n \cap Q'(i,r,e,p_h) = U_n \cap J'((e+1)p_{h-i+1})$.  
\end{lemma}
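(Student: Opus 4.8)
The plan is to mirror the proofs of Lemma~\ref{step 1} and Corollary~\ref{step 2}: the statement of Lemma~\ref{left steps} is exactly the combination of the left-handed analogues of those two results, and the left-right symmetry of Assumption~\ref{ass:T} (compare Lemma~\ref{lem:layer} with Lemma~\ref{lem:left layer}, and Lemma~\ref{lem:Qfactor}(1) with Lemma~\ref{lem:Qfactor}(2)) makes this bookkeeping rather than new mathematics. The key index translation is that, by Lemma~\ref{lem:Qfactor}(2), peeling off the last layer of the left ideal $Q'(i,r,e,p_h)$ yields, in degrees $\geq n$, a shifted copy of the \emph{left} point module $P'(p_{h-i+n+1})$; this is why that point occurs in hypothesis (B$'$), replacing the $p_{i+j-n-1}$ of the right-handed version.

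First I would establish the left analogue of Lemma~\ref{step 1}. Put $I = U \cap Q'(i,r,e,p_h)$ and let $M = U/I^{sat}$, a graded left $U$-module. Hypothesis (A$'$) together with Lemma~\ref{lem:Qfactor}(2) gives $\dim_{\kk}(U/I)_m \leq 1$ for $m \geq n$ and the left-handed form of \eqref{step11}, namely $J'(p_{h-i+n+1})_m\, U_n \subseteq Q'(i,r,e,p_h)$ for all $m$. If $M_n = 0$, then $U_m U_n \subseteq Q'(i,r,e,p_h)$ for $m \gg 0$; since (B$'$) and $g \in J'(p_{h-i+n+1})$ force $U_m + J'(p_{h-i+n+1})_m = T_m$ for $m \geq n$, this upgrades to $T_m U_n \subseteq Q'(i,r,e,p_h)$ for $m \gg 0$, and saturation of the left ideal $Q'(i,r,e,p_h)$ then gives $U_n \subseteq Q'(i,r,e,p_h)$, contradicting (A$'$). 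Hence $M_n \neq 0$, which in particular forces $I$ and $I^{sat}$ to agree in degree $n$. As in Lemma~\ref{step 1}(2), using $g \in J'(p_{h-i+n+1})$ and the left-handed form of Corollary~\ref{cor:curvept}(2) — valid because the opposite ring of $\bbar U$ again satisfies the hypotheses of Theorem~\ref{thm:projcurve}, up to replacing $\tau$ by $\tau^{-1}$ — one gets $Mg = 0$ and $M$ is $1$-critical. Finally, as in Lemma~\ref{step 1}(3), together with the fact that $J'(p)_m$ has codimension $1$ in $T_m$ (Lemma~\ref{lem:left layer}(3)), one obtains $\bigl(\lann_U(M_n)\bigr)_m = \bigl(U \cap J'(p_{h-i+n+1})\bigr)_m$ for all $m \gg 0$.

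Next I would mirror Corollary~\ref{step 2}. Set $H = U \cap J'((e+1)p_{h-i+1})$ and $N = U/H^{sat}$. Since $Q'(1,d,d,p) = J'(dp)$ and, using $g \in J'$, hypothesis (C$'$) is equivalent to the unbarred assertion ``$U_{\geq n} \subseteq J'(ep_{h-i+1})$ but $U_n \nsubseteq J'((e+1)p_{h-i+1})$'', hypothesis (C$'$) is precisely hypothesis (A$'$) for the data $(i',r',e') = (1,e+1,e+1)$ and the point $p_{h-i+1}$; here $e < \mu$ is what ensures $e' \leq \mu$, and (B$'$) is unchanged for these parameters. Thus the previous paragraph applies verbatim to $N$, giving $N_n \neq 0$, $N$ $1$-critical with $Ng = 0$, and $\bigl(\lann_U(N_n)\bigr)_m = \bigl(U \cap J'(p_{h-i+n+1})\bigr)_m$ for $m \gg 0$ — the same point module as for $M$. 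By the left-handed analogue of Corollary~\ref{prop:shiftpt} (valid by the same symmetry, since that result depends only on $Q_{gr}(\bbar U) = \kk(E)[t,t^{-1};\tau]$), we conclude $M \cong N$, hence $I^{sat} = H^{sat}$. Since $I$ and $H$ agree with $I^{sat}$ and $H^{sat}$ respectively in degree $n$, this yields $U_n \cap Q'(i,r,e,p_h) = I_n = H_n = U_n \cap J'((e+1)p_{h-i+1})$, which is the claim.

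I do not expect a genuine obstacle here: the argument is a faithful symmetrization of the two preceding proofs. The only real care needed is in the index arithmetic of the $\tau^{-1}$-shifts — in particular confirming that the left point module delivered by Lemma~\ref{lem:Qfactor}(2) is $P'(p_{h-i+n+1})$, so that (B$'$) is stated with the correct point — and in recording that the two inputs that power the ``$1$-critical plus equal annihilator of a graded piece $\Rightarrow$ isomorphic'' step, namely Corollary~\ref{cor:curvept} and Corollary~\ref{prop:shiftpt}, carry over to left modules; this holds because their hypotheses involve only the graded quotient ring $\kk(E)[t,t^{-1};\tau]$, whose opposite ring is $\kk(E)[t,t^{-1};\tau^{-1}]$, and $\tau^{-1}$ again has infinite order on $E$.
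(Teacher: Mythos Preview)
Your proposal is correct and is exactly the approach the paper takes: the paper's own proof says only that the equalities in (B$'$), (C$'$) follow from \eqref{JJprime} and that ``the rest of the proof is symmetric to the proofs of Lemma~\ref{step 1} and Corollary~\ref{step 2},'' using Lemma~\ref{lem:Qfactor}(2) in place of Lemma~\ref{lem:Qfactor}(1). You have simply written that symmetrization out in full, including the index check that the left point module delivered is $P'(p_{h-i+n+1})$ and the observation that Corollaries~\ref{cor:curvept} and~\ref{prop:shiftpt} pass to left modules via $\kk(E)[t,t^{-1};\tau]^{\mathrm{op}} \cong \kk(E)[t,t^{-1};\tau^{-1}]$.
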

\begin{proof}
 The equalities in (B$'$), (C$'$) follow from \eqref{JJprime}. The rest of the proof is symmetric to the proofs of 
 Lemma~\ref{step 1} and Corollary~\ref{step 2}.   In particular, one uses Lemma~\ref{lem:Qfactor}(2) in place 
of \ref{lem:Qfactor}(1).
\end{proof}

 The next   result is the heart of the proof that $U$ and $T(\divd)$ are equivalent orders. 
   
\begin{proposition}\label{prop:UinR}
 Let $U$ be a $g$-divisible cg subalgebra of $T$ with $Q_{\gr}(U) = Q_{\gr}(T)$.
Let $(\divy, \divx, k)$ be geometric data for $\bbar{U}$ and let $\divd = \sum e_p p$ be a  normalised divisor for this data.
Then
\[ U \subseteq T_{\leq k} * T(\divd).\]
\end{proposition}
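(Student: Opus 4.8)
The plan is the one indicated just before the statement: argue by contradiction, pass to the least degree $n_0$ in which $U\subseteq T_{\leq k}*T(\divd)$ fails, and in that degree produce a $1$-critical factor of $U$ (a ``point module'') in two incompatible ways --- one using the right ideals $Q(i,r,e,p)$ and one using their left analogues $Q'(i,r,e,p)$ --- arranged so that the two descriptions must coincide, which forces a numerical contradiction.

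Write $W=T_{\leq k}*T(\divd)$. The input already in hand is $\bbar U\subseteq\bbar W$: by Corollary~\ref{cor:geomdata} and \eqref{findD} we have $\bbar U_n=H^0(E,\sM_n(-\divy-\divx_n))\subseteq H^0(E,\sM_n(-\divd^{\tau^k}-\dots-\divd^{\tau^{n-1}}))$ for all $n\geq k$. Suppose $U\not\subseteq W$ and let $n_0$ be least with $U_{n_0}\not\subseteq W_{n_0}$; since $\bbar U$ forces the containment in low degrees and both $U,W$ are $g$-divisible, one checks $n_0>k$ (so $n_0$ is in particular large, as $k$ may be taken large in the normalised data). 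Because $\divd=\sum_p e_p p$ is supported at points lying in pairwise distinct $\tau$-orbits, the right ideal $M(n_0-k,\divd^{\tau^k})$ decomposes orbit-by-orbit, and in particular $W_{n_0}\subseteq Q(n_0-k,e_p,e_p,p_k)_{n_0}$ for every $p\in\operatorname{supp}\divd$; hence we may fix one such $p$ with $U_{n_0}\not\subseteq Q(n_0-k,e_p,e_p,p_k)_{n_0}$.

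Next, descend the chain $T=M(0,e_p p_k)\supseteq M(1,e_p p_k)\supseteq\cdots\supseteq M(n_0-k,e_p p_k)=Q(n_0-k,e_p,e_p,p_k)$, refined into unit steps $Q(\ell,r,e_p,p_k)\subseteq Q(\ell,r-1,e_p,p_k)$ with shifted point-module factors by Lemma~\ref{lem:Qfactor}(1). Since $g$ lies in every $J(\cdot)$ and, by \eqref{normal-eq}, $\bbar U_{n_0}=H^0(E,\sM_{n_0}(-\divy-\divx_{n_0}))$ vanishes to order exactly $e_p$ at $p_k$, one gets $U_{n_0}\subseteq M(1,e_p p_k)_{n_0}$, so the first step at which $U_{n_0}$ leaves the chain has depth $\ell\geq 2$; call it $(\ell,r)$ and set $i=\ell$, $e=e_p$, $j=k$, $n=n_0$. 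One now verifies the hypotheses of Corollary~\ref{step 2}: (A) from the position of $(\ell,r)$ (the $U_{\geq n}$ clause requires extra care, handled by an auxiliary downward induction on the layer); (B) from $Q_{gr}(U)=Q_{gr}(T)$ together with \eqref{normal-eq}, using $i+j\leq n_0$ so that $p_{i+j-n_0-1}$ has negative index and hence lies outside $\operatorname{supp}(\divy+\divx_m)$ for every $m\geq n_0$; and (C) because, modulo $g$, $U_{n_0}$ vanishes to order exactly $e_p$ at $p_{i+j-1}\in\{p_k,\dots,p_{n_0-1}\}$. Then Corollary~\ref{step 2} gives $U_{n_0}\cap Q(i,r,e,p_j)=U_{n_0}\cap J((e+1)p_{i+j-1})$. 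Carrying out the mirror-image descent on the left (an entirely symmetric verification of (A$'$)--(C$'$) for the left data $(i,r,e,p_{i+j-n_0})$, aligned to the right data via the identity $Q(i,r,e,p)_n=Q'(i,r,e,p_{i-n})_n$ of \cite{RSS2}), Lemma~\ref{left steps} gives $U_{n_0}\cap Q(i,r,e,p_j)=U_{n_0}\cap Q'(i,r,e,p_{i+j-n_0})=U_{n_0}\cap J'((e+1)p_{j-n_0+1})=U_{n_0}\cap J((e+1)p_j)$, the last step by \eqref{JJprime}. Comparing the two computations yields $U_{n_0}\cap J((e+1)p_{i+j-1})=U_{n_0}\cap J((e+1)p_j)$ with $p_{i+j-1}\neq p_j$ (as $i\geq 2$). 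Since $g\in J(\cdot)$, this says that inside $\bbar U_{n_0}=H^0(E,\sM_{n_0}(-\divy-\divx_{n_0}))$ the conditions ``vanish to order $\geq e+1$ at $p_{i+j-1}$'' and ``vanish to order $\geq e+1$ at $p_j$'' cut out the same subspace. But $\deg\bigl(\sM_{n_0}(-\divy-\divx_{n_0})\bigr)=n_0(\mu-\deg\divx)-\deg\divy\geq n_0-\deg\divy$, so for $n_0$ large this line bundle separates the two distinct orbit points to order $e+1\leq\mu+1$, and the two subspaces are different --- a contradiction. Hence $U\subseteq T_{\leq k}*T(\divd)$.

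I expect the main obstacle to be the combinatorial and index bookkeeping inside the descent: isolating the exact step $(\ell,r)$ at which $U_{n_0}$ leaves the chain, verifying (A)--(C) and their primed analogues simultaneously there (especially the $U_{\geq n}$ clause of (A), and that the auxiliary point in (B)/(B$'$) can be taken with negative index --- precisely where \eqref{normal-eq} and the shape of the normalised divisor $\divd$ enter), and tracking the shift indices faithfully through the $Q\leftrightarrow Q'$ correspondence so that the right- and left-sided point-module descriptions genuinely line up. Once that is in place the contradiction is a formal consequence of Corollary~\ref{step 2}, Lemma~\ref{left steps}, Corollary~\ref{prop:shiftpt}, Lemma~\ref{lem:Qfactor} and Corollary~\ref{cor:geomdata}.
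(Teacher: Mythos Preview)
Your overall architecture matches the paper's, but the minimality is set up in a way that does not deliver the hypotheses you need, and this is a genuine gap rather than a bookkeeping issue.

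You choose $n_0$ least with $U_{n_0}\not\subseteq W_{n_0}$, fix the base point $j=k$, and then descend the chain $M(0,ep_k)\supseteq M(1,ep_k)\supseteq\cdots$ at that single degree to locate $(\ell,r)$.  Two problems arise.  First, that chain does not refine into steps $Q(\ell,r,e,p_k)\subseteq Q(\ell,r-1,e,p_k)$ with fixed base point: by \eqref{eq:Qint} one has $Q(\ell,0,e,p_k)=Q(\ell-1,e,e,p_k)\cap Q(\ell-1,e,e,p_{k+1})$, so passing from level $\ell$ to $\ell-1$ necessarily brings in shifted base points $p_{k+1},p_{k+2},\dots$.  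Second, and more seriously, hypotheses (A) and (A$'$) demand $U_{\geq n}\subseteq Q(i,r-1,e,p_j)$ and $U_{\geq n}\subseteq Q'(i,r-1,e,p_h)$ in \emph{all} degrees $\geq n$, not just at $n_0$.  For (A$'$) the identity $Q'(i,r-1,e,p_h)_{n'}=Q(i,r-1,e,p_{j+n'-n})_{n'}$ shows that what you must know at degree $n'>n_0$ is containment in a triangle with base point $p_{j+n'-n}$, which drifts with $n'$.  Minimality of $n_0$ alone, with $j$ pinned at $k$, gives no control over these containments; your ``auxiliary downward induction on the layer'' does not supply it.

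The paper avoids this by a different order of quantifiers: it does \emph{not} fix a least bad degree.  Instead it uses the description $(T_{\leq k}*T(ep))_n=\bigcap\{Q(i,r,e,p_j)_n: i\geq 1,\ k\leq j\leq n-i,\ 1\leq r\leq e\}$ and chooses $i$ minimal over \emph{all} quadruples $(n,j,r)$ for which $U_n\not\subseteq Q(i,r,e,p_j)_n$, then $r$ minimal for that $i$.  With this global minimality, for every $n'\geq n$ and every admissible $j'$ one has $U_{n'}\subseteq Q(i,r-1,e,p_{j'})_{n'}$ (and $U_{n'}$ in all triangles of smaller height), which is exactly what (A) and (A$'$) require, including the shifted-base-point containments needed on the left.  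The final contradiction is then read off directly from \eqref{combinatorics} and \eqref{normal-eq}: both $p_j$ and $p_{i+j-1}$ lie in the range $k\leq\cdot\leq n-1$, so $\bbar U_n$ vanishes to order exactly $e$ at each, and the equality $U_n\cap J((e+1)p_j)=U_n\cap J((e+1)p_{i+j-1})$ fails modulo $g$.  No ``$n_0$ large'' argument is needed (or available, since $n$ is not chosen minimal).
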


\begin{proof}
If $\divd = 0$ the result is trivial, so we may assume that $\divd > 0$.
 Suppose that $ U \not\subseteq T_{\leq k} * T(\divd)$.

 By \cite[Lemma~6.6]{RSS2}, $T_{\leq k} * T(\divd) = \bigcap_p T_{\leq k} * T(e_p p)$, where 
the intersection is over the normalised orbit representatives $p$.   
Thus there is some such $p$ so that $U \not \subseteq T_{\leq k} * T(e_p p)$.  
 Let $e = e_p < \mu$.  
 By \cite[Lemma~6.6]{RSS2},  again, for $n \in \NN$ we have
\[ \bigl( T_{\leq k} * T(e p) \bigr)_n = 
 \bigcap  \Big\{Q(i,r, e, p_j)_n \ |\  i \geq 1, \,  k \leq j \leq n-i, \, 1 \leq r \leq e\Big\}.
\]
Thus, there are $i \geq 1$, $1 \leq r \leq e$, and $n,j \in \NN$ 
with $1 \leq k \leq j \leq n-i$ such that  
\beq\label{false} U_n \not \subseteq Q(i,r,e,p_j)_n.\eeq
Without loss of generality we can assume that $i$ is  minimal such that we can achieve this for some such $n,j,r$. 
Note that $i \geq 2$, since $\overline{Q(1,r,e,p_j)} = H^0(E, \mc{M}(-rp_j))$ by Lemma~\ref{lem:layer}(2), 
and the sections in $\overline{U_n}$ vanish to multiplicity $e$ at $p_j$ by \eqref{combinatorics}.
Then choose $r$ minimal  (for this $i$) 
so that \eqref{false} holds for some such $n,j$.
 Intuitively, we are finding a ``divisor triangle" of minimal size $i$ such that the corresponding right ideal does not contain $U_n$, with deepest layer vanishing condition in this triangle to be of multiplicity $r$ as small as possible.

{\bf Claim 1:}
$ U_n \cap Q(i,r,e,p_j) =  U_n \cap J((e+1)p_{j +i -1})$.
\begin{proof}
We check the hypotheses of  Corollary~\ref{step 2}.  Hypothesis (A) follows
by minimality of~$r$ when $r > 1$.  When $r = 1$, then we need 
$U_{\geq n} \subseteq Q(i, 0, e, p_j)$.  Now, by \cite[(6.7)]{RSS2},   
\begin{equation}
\label{eq:Qint}
Q(i, 0, e, p_j) = Q(i-1, e, e, p_j) \cap Q(i-1, e, e, p_{j+1}).
\end{equation}

Since $U_{\geq n}$ is contained in both $Q(i-1, e, e, p_j)$ and $Q(i-1, e, e, p_{j+1})$ by the minimality of $i$, hypothesis (A) holds in this case as well.

Note that by \eqref{normal-eq}, the equation \eqref{combinatorics} gives exactly 
the vanishing (with multiplicities) at points on the $\tau$-orbit of $p$ for the sections in 
$\overline{U}_n \subseteq H^0(E, \mc{M}_n)$.  In particular, (B) holds because $i + j -n-1 < 0$.
Similarly, (C) holds by \eqref{combinatorics} since $k \leq j + i - 1 \leq n-1$. 
Thus Corollary~\ref{step 2} gives the result.
\end{proof}

{\bf Claim 2:}
$U_n \cap Q'(i, r, e, p_h) = U_n \cap J'((e+1)p_{h - i + 1})$ for $h = j + i - n$.
\begin{proof}
This similarly follows from Lemma~\ref{left steps} once we verify the hypotheses of that result.  
For (B$'$), note that $h - i + n+m = j + m \geq k+m$ and use 
\eqref{combinatorics}.  Hypothesis (C$'$) follows again from \eqref{combinatorics} 
since $h - i + n = j$ satisfies $k \leq j \leq n-1$.

It remains to verify (A$'$).  
  We will use the equality
\beq \label{QQprime} Q(k,r,m,p)_n = Q'(k,r,m,p_{k-n})_n,\eeq
proven in \cite[Proposition~6.8(3)]{RSS2}.  Thus,  
   $U_n \not\subseteq Q'(i, r, e, p_h)_n$.  
Now let $n' \geq n$.   Suppose that $r > 1$.  The minimality hypothesis on $r$ means that for any $j'$ with $k \leq j' \leq n'-i$, we have $U_{n'} \subseteq Q(i, r-1, e, p_{j'})_{n'}$.  
In particular, since $k \leq j+n'-n \leq n'-i$, we have $U_{n'} \subseteq Q(i,r-1, e, p_{j+n'-n})_{n'}= Q'(i, r-1, e, p_h)_{n'} $ by \eqref{QQprime}.  Thus $U_{\geq n} \subseteq Q'(i,r-1,e,p_h)$.  
If instead $r = 1$, then 
\[
Q'(i, 0, e, p_h)_{n'} = Q(i,0,e,p_{j-n+n'}) = Q(i-1, e, e, p_{j-n + n'}) \cap Q(i-1, e, e, p_{j - n + n' +1}),
\] 
by \eqref{eq:Qint} and \eqref{QQprime}.  But $U_{n'}$ is contained in both 
$Q(i-1, e, e, p_{j - n + n' +1})$ and $Q(i-1, e, e, p_{j-n + n'})$ by minimality of $i$.
Thus $U_{\geq n} \subseteq Q'(i,r-1,e,p_h)$ in this case as well and (A$'$) holds as needed. 
\end{proof}

{\bf Claim 3:} $U_n \cap Q(i,r,e,p_j) = U_n \cap J((e+1) p_j)$.
\begin{proof}
As in the proof of Claim 2, we have $Q(i, r, e, p_j)_n = Q'(i, r, e, p_{j + i - n})_n$, and so that claim gives
\[
U_n \cap Q(i,r,e,p_j) =  U_n \cap Q'(i, r, e, p_{j + i - n}) =U_n \cap J'((e+1)p_{j-n+1})  = U_n \cap J((e+1)p_j),
\]
where we use \eqref{JJprime} in the last step.
\end{proof}

We can now  complete the proof of Proposition~\ref{prop:UinR}.  
Combining Claims 1 and 3, we have
\begin{equation}\label{claim5} U_n \cap J((e+1) p_{j}) = U_n \cap Q(i,r,e,p_j) = U_n \cap J((e+1) p_{i+j-1}).
\end{equation}
Recall that $\overline{U}_n = H^0(E, \mc{M}_n(-\divy -\divx_n))$ and $i\geq 2$. Thus,  by \eqref{combinatorics}
and \eqref{normal-eq} we see that, after taking the image of \eqref{claim5} in $B$, 
the right hand side vanishes to order $e$ at $p_j$, 
while the left hand side vanishes to order $e+1$ at $p_j$.  This  contradiction completes the proof of the proposition.
\end{proof}

We can now quickly prove our first main theorem.

\begin{theorem}\label{thm-correct}
 Let $U$ be a $g$-divisible graded subalgebra of $T$ with $Q_{gr}(U)=Q_{gr}(T)$.  
Then there is an effective divisor $\divd$ on $E$, supported on points with distinct orbits and 
with $\deg \divd < \mu$, so that $U$ is an equivalent order to $T(\divd)$.
 
 In more detail, for some $\divd$ the $(U, T(\divd))$-bimodule $M = \wh{UT(\divd)}$ is a 
finitely generated $g$-divisible right $T(\divd)$-module with $MT=T$.
Set $W=\End_{T(\divd)}(M)$.  Then $U \subseteq W \subseteq T$, the bimodule $M$ is 
finitely generated as a left $W$-module, while $W$, $U$, and $T(\divd)$ are equivalent orders. 
\end{theorem}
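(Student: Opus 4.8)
The plan is to assemble results already established in the paper; the genuinely hard analytic work is done by Proposition~\ref{prop:UinR} and the geometric data constructions of Section~\ref{equiv2-sec}, so what remains is careful bookkeeping with $g$-divisibility and finite generation. Since $Q_{gr}(\bbar{U})=Q_{gr}(\bbar{T})$ by Lemma~\ref{lem:expectQ}, I would first apply Corollary~\ref{cor:geomdata} and Proposition~\ref{prop:find D} to $\bbar{U}$ to produce geometric data $(\divy,\divx,k)$ for $\bbar{U}$ and an associated normalised divisor $\divd=\sum e_p p$ which is effective, supported on points with distinct orbits, and satisfies $\deg\divd<\mu$; these results also guarantee that $\bbar{U}$ and $C:=B(E,\sM(-\divd),\tau)$ are equivalent orders, and $C=\overline{T(\divd)}$ by Proposition~\ref{8-special22}(1). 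By Proposition~\ref{prop:UinR} we have $U\subseteq T_{\leq k}*T(\divd)$. Since $T_{\leq k}*T(\divd)$ is a finitely generated $g$-divisible right $T(\divd)$-module sitting inside $T$ (Definition~\ref{def:star}) and $T(\divd)$ is noetherian (Proposition~\ref{div-noeth}), the right $T(\divd)$-module $UT(\divd)$ is finitely generated, so as $1\in UT(\divd)$ Lemma~\ref{lem:finehat}(2) shows $M:=\wh{UT(\divd)}$ is a finitely generated right $T(\divd)$-module. It is $g$-divisible since $M=\wh{M}$, it lies in $\wh{T_{\leq k}*T(\divd)}=T_{\leq k}*T(\divd)\subseteq T$, and, using that $g$ is central together with Lemma~\ref{lem-triv} and its left analogue, $M$ is a $(U,T(\divd))$-bimodule; finally $MT=T$ because $1\in M\subseteq T$.

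Next put $W:=\End_{T(\divd)}(M)$. As $1\in M\not\subseteq gT_{(g)}$, Lemma~\ref{Sky2-1}(1) identifies $W$ with $\{q\in Q_{gr}(T):qM\subseteq M\}\subseteq T_{(g)}$; applying this containment to $1\in M\subseteq T$ gives $W\subseteq T$, so $W$ is a connected graded subalgebra of $T$, and $U\subseteq W$ because $M$ is a left $U$-module. Thus $U\subseteq W\subseteq T$ and hence $Q_{gr}(W)=Q_{gr}(T)$. By Lemma~\ref{Sky2-1}(3), $W$ is $g$-divisible and $M$ is finitely generated as a left $W$-module, whence $W$ is a finitely generated $\kk$-algebra by Proposition~\ref{div-noeth}(1). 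In particular $M$ is a nonzero $(W,T(\divd))$-bimodule that is finitely generated on each side, with $Q_{gr}(W)=Q_{gr}(T(\divd))=Q_{gr}(T)$ (the middle equality by Proposition~\ref{8-special22}(4)), so by the standard fact that a finitely-generated-on-both-sides sub-bimodule of $Q_{gr}$ witnesses an equivalence of orders, $W$ and $T(\divd)$ are equivalent orders.

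It remains to compare $U$ with $W$, and the only subtlety is that $U$ and $T(\divd)$ need not be comparable as subrings of $T$, which is precisely what forces the passage through $W$. Since $M$ is $g$-divisible, $\bbar{M}=M/gM$ is a nonzero $(\bbar{W},\overline{T(\divd)})$-bimodule that is finitely generated on each side; as $g\in W_1$ we have $D_{gr}(W)=D_{gr}(T)$, so $Q_{gr}(\bbar{W})=Q_{gr}(\bbar{T})=Q_{gr}(C)$ by Lemma~\ref{lem:expectQ}, and the same clearing-of-denominators fact shows that $\bbar{W}$ and $C=\overline{T(\divd)}$ are equivalent orders. Combining this with the equivalence of $\bbar{U}$ and $C$ from the first paragraph, $\bbar{U}$ and $\bbar{W}$ are equivalent orders; since $U\subseteq W$ are $g$-divisible connected graded finitely generated subalgebras of $T$ with $Q_{gr}(U)=Q_{gr}(W)=Q_{gr}(T)$, Proposition~\ref{thm:equivord} then gives that $U$ and $W$ are equivalent orders. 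Therefore $U$, $W$ and $T(\divd)$ are pairwise equivalent orders, which proves the theorem; the comparison $U\subseteq W$ is handled by Proposition~\ref{thm:equivord} and the comparison of $W$ with $T(\divd)$ (and of $\bbar{W}$ with $C$) by the bimodule $M$ (respectively $\bbar{M}$).
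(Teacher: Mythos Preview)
Your proof is correct and follows essentially the same route as the paper: invoke Proposition~\ref{prop:find D} to obtain $\divd$ with $\bbar{U}$ equivalent to $\overline{T(\divd)}$, use Proposition~\ref{prop:UinR} to get $U\subseteq T_{\leq k}*T(\divd)$, set $M=\wh{UT(\divd)}$ and $W=\End_{T(\divd)}(M)$, show $W$ and $T(\divd)$ are equivalent via $M$, and then lift the equivalence $\bbar{U}\sim\bbar{W}$ to $U\sim W$ via Proposition~\ref{thm:equivord}. One small citation slip: Definition~\ref{def:star} only records that $T_{\leq k}*T(\divd)$ is a $g$-divisible right $T(\divd)$-module; its finite generation (equivalently, noetherianity) is \cite[Theorem~5.3(5)]{RSS2}, which is what the paper invokes at that step.
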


\begin{remark} Recall from Lemma~\ref{lem:expectQ} that, if 
$U$ be a $g$-divisible graded subalgebra of $T$ with $D_{gr}(U)=D_{gr}(T)$,
then  $Q_{gr}(U)=Q_{gr}(T)$ 
also holds. However some condition on quotient rings  
 is required for the theorem, since clearly  $U=\kk[g]$ is not equivalent to any $T(\divd)$.
\end{remark}

\begin{proof} 
 By Lemma~\ref{lem:expectQ},  $Q_{gr}(\overline{U}) =Q_{gr}(\overline{T})$ and so we can apply Proposition~\ref{prop:find D}  to $A=\bbar{U}$.  
Let $\divd$, $k $ be as defined  there;   
thus if $R = T(\divd)$ then $\bbar{U}$ and $\bbar{R}$ are equivalent orders.
By  Proposition~\ref{prop:UinR},  $U\subseteq T_{\leq k} * R$.   
 
Let $M= \wh{U R}$ and  $W = \End_R(M)$.  By \cite[Theorem~5.3(5)]{RSS2}, 
$T_{\leq k} * R$ is a noetherian right $R$-module  
and so 
$M\subseteq T_{\leq k}* R$ is a finitely generated right $R$-module.  Clearly $MT=T$ since $1\in M\subseteq T$ 
and so $W\subseteq T$. Thus, by  Lemma~\ref{Sky2-1}(3), 
${}_W M$ is finitely generated, and so $W$ and $R$ are equivalent orders. 
A routine calculation shows that   $M$ is a left $U$-module  and so
 $U \subseteq W$.   

Consider the $(\bbar{W}, \bbar{R})$-bimodule $\bbar{M}$.  
This is finitely generated on both sides, since the same is true of ${}_W M_R$.  
Thus $\bbar{W}$ and $\bbar{R}$ are equivalent orders which,  as $\bbar{R}$ and $\bbar{U}$ are equivalent orders, implies  
 that $\bbar{W}$ and $\bbar{U}$ are likewise.  Finally, as $U\subseteq W\subseteq T$  the hypotheses of
  the theorem ensure that $Q_{gr}(U)=Q_{gr}(W)=Q_{gr}(T)$. Thus, by Proposition~\ref{thm:equivord}, 
  $U$ and $W$ are equivalent orders, and hence so are $U$ and $R$.
\end{proof}

 \begin{corollary}\label{tau-equiv} Suppose that $\divu$ and $\divv$ are two effective, $\tau$-equivalent divisors with 
 degree $\deg \divu\leq \mu-1$. Then $T(\divu)$ and $T(\divv)$ are equivalent orders.
 \end{corollary}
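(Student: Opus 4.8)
The plan is to construct a single normalised divisor $\divd$ — effective, supported on points lying in distinct $\tau$-orbits, with $\deg\divd\le\mu-1$ — that serves as the output of Theorem~\ref{thm-correct} for \emph{both} $T(\divu)$ and $T(\divv)$, and then conclude by transitivity of the equivalent-order relation. Discarding the trivial case $\divu=0$ (where $\divv=0$ and $T(\divu)=T=T(\divv)$), write $\mathbb{O}_1,\dots,\mathbb{O}_s$ for the $\tau$-orbits meeting the support of $\divu$ or $\divv$. As $\divu,\divv$ are effective and $\tau$-equivalent, $m_i:=\deg(\divu|_{\mathbb{O}_i})=\deg(\divv|_{\mathbb{O}_i})>0$ for each $i$. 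I would choose a point $q_i\in\mathbb{O}_i$ and an integer $k\ge1$ such that both $\divu|_{\mathbb{O}_i}$ and $\divv|_{\mathbb{O}_i}$ are supported on $\{q_i,\tau^{-1}(q_i),\dots,\tau^{-k}(q_i)\}$ for every $i$; this is possible since each of these has finite support inside the orbit. Set $\divd:=\sum_{i=1}^s m_iq_i$: this is effective, supported on distinct orbits, with $\deg\divd=\sum_i m_i=\deg\divu\le\mu-1$, so $T(\divd)$ is defined by Definition~\ref{def:star}.

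It suffices to prove $T(\divu)\sim T(\divd)$, since the argument for $T(\divv)\sim T(\divd)$ is word-for-word the same — using the same $q_i$ and $k$, which we arranged to serve $\divv$ as well — and produces the same $\divd$ because $\deg(\divv|_{\mathbb{O}_i})=m_i$. By Proposition~\ref{8-special22}(1), $\overline{T(\divu)}=B(E,\mc{M}(-\divu),\tau)$, so $\overline{T(\divu)}_n=H^0(E,\mc{M}_n(-\divu_n))$ for all $n\ge1$ in the notation of Notation~\ref{tau-notation}. Hence $(\divy,\divx,k)=(0,\divu,k)$ is geometric data for $\overline{T(\divu)}$ in the sense of Definition~\ref{normalised-notation} (condition \eqref{normal-eq} holds for $k\gg0$ by ampleness of $\mc{M}(-\divu)$), each $q_i$ is a normalised orbit representative for this data by our choice of $q_i,k$, and the associated normalised divisor is $\sum_i e_{q_i}q_i=\sum_i\deg(\divu|_{\mathbb{O}_i})q_i=\divd$, since $\divy=0$ forces $e_{q_i}=\deg(\divx|_{\mathbb{O}_i})$. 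I would then re-enter the proof of Theorem~\ref{thm-correct} with this geometric data and this normalised divisor: Proposition~\ref{prop:UinR} gives $T(\divu)\subseteq T_{\le k}*T(\divd)$; Proposition~\ref{prop:find D} (applied with $\divx=\divu$, $\divy=0$, using $\deg\divd=\deg\divu$ so that Lemma~\ref{equiv-TCR} applies to the relevant twisted homogeneous coordinate rings) shows $\overline{T(\divu)}$ and $\overline{T(\divd)}=B(E,\mc{M}(-\divd),\tau)$ are equivalent orders; and the remainder of the proof of Theorem~\ref{thm-correct}, which forms $M=\widehat{T(\divu)T(\divd)}$ and $W=\End_{T(\divd)}(M)$ and deduces $T(\divu)\subseteq W\subseteq T$ with $W$, $T(\divu)$, $T(\divd)$ pairwise equivalent orders, then goes through verbatim. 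Combining with the identical conclusion for $\divv$ yields $T(\divu)\sim T(\divd)\sim T(\divv)$.

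The one genuinely delicate point is that Theorem~\ref{thm-correct} and Proposition~\ref{prop:find D} are stated as existence results, so they cannot be quoted as black boxes; one must verify that the explicit triple $(0,\divu,k)$ is valid geometric data and that the normalised divisor their proofs attach to $\overline{T(\divu)}$ for this triple is exactly our $\divd$. This amounts to the computation $e_{q_i}=m_i$ above together with the check of inclusion \eqref{findD} for $\divx=\divu$, $\divy=0$: on each orbit $\mathbb{O}_i$, writing $\divu|_{\mathbb{O}_i}=\sum_{j=0}^{k}x_j\tau^{-j}(q_i)$ with $x_j\ge0$ and $\sum_j x_j=m_i$, the coefficient of $\tau^{-t}(q_i)$ in $\divu_n$ equals $\sum_{j=0}^{k}x_j=m_i$ for all $k\le t\le n-1$, so $\divu_n\ge\divd^{\tau^k}+\dots+\divd^{\tau^{n-1}}$ as required, while on the remaining points effectivity gives the trivial inequality. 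Everything else in the argument is routine bookkeeping with the lemmas already established.
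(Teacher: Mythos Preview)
Your proposal is correct and follows essentially the same approach as the paper's proof: both exploit the flexibility in the choice of normalised orbit representatives in Proposition~\ref{prop:find D} to arrange that a \emph{single} normalised divisor $\divd$ serves for both $T(\divu)$ and $T(\divv)$, then conclude via Theorem~\ref{thm-correct} that each is equivalent to $T(\divd)$. You are more explicit than the paper about why the existence statements must be unpacked (verifying that $(0,\divu,k)$ is valid geometric data and that the resulting normalised divisor is the intended $\divd$), but this is exactly the ``considerable flexibility'' the paper invokes.
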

 
 \begin{proof} Consider the construction of the divisor $\divd$ in Theorem~\ref{thm-correct} starting from the 
 algebra $U=T(\divu)$.  Thus $\divd=\sum e_pp$ is the divisor  constructed in Proposition~\ref{prop:find D}
 and there is considerable flexibility in its choice.  To begin, in
 the proof of Corollary~\ref{geomdata}, one sees that $\divy=0$ and $\divx=\divu$.
  For each orbit $\mathbb{O}$ of $\tau$   a point  $p$ is then chosen such that $\divu|_{\mb O}$ 
  is supported on $X_{\mb O}=\{p_0=p(\mb O),p_1,\dots, p_k\}$. For each such  orbit, we can  replace $p_0$ 
  by some $p_{-r}$ and increase $k$ 
   so that both $\divu|_{\mb O}$ and $\divv|_{\mb O}$ are supported on $X_{\mb O}$. Then 
   $\divd=\sum_{\mb O} e_pp(\mb O)$,  for these choices of points $p({\mb O})$, and  $e_p=\deg(\divu|_{\mb O}) $. 
   As $\divu$ and $\divv$ are $\tau$-equivalent, 
   $\deg(\divu|_{\mb O}) =\deg(\divv|_{\mb O}) $ for each orbit $\mb O$, and hence the divisor $\divd$ is the same 
   whether we started with  
   $T(\divu)$ or  $T(\divv)$. Hence, by Theorem~\ref{thm-correct}, $T(\divu)$ and $T(\divv)$ are both 
   equivalent  to $T(\divd)$ and hence to each other.
   \end{proof}

\begin{remark}\label{correct2}  
One disadvantage of  Theorem~\ref{thm-correct} is that the $(U,T(\divd))$-bimodule $M$ constructed there
need not be finitely generated 
as a left $U$-module. Using \cite[Proposition~3.1.14]{MR} and the fact that our rings are noetherian,
one can easily produce such a bimodule. However,   this typically  lacks the  extra structure 
inherent in $M$ (notably that $MT=T$) and so is less useful for our purposes.
As  will be seen in the next section, such a problem disappears when one works with maximal orders 
(see Corollary~\ref{correct3}, for example) 
and this will in turn give  extra information about the structure of such an algebra.
\end{remark}


\section{On endomorphism rings of
\texorpdfstring{$T(\divd)$-modules}{LG}} \label{MAX-SECT}

 Given a $g$-divisible algebra $U\subseteq T$, Theorem~\ref{thm-correct} provides a module $M$
 over some blowup $T(\divd)$ with $U\subseteq \End_{T(\divd)}(M)$.  In this section, 
 we reverse this procedure by obtaining detailed properties of   such endomorphism rings
 (see Proposition~\ref{correct25} and Theorem~\ref{thought9}). These results 
  provide important information about the structure of maximal $T$-orders that will in turn
   be refined over the next two sections to prove the main result Theorem~\ref{mainthm-intro1}
    from the introduction.

 We begin with an expanded version of a definition from the introduction.

\begin{definition}\label{maxorder-defn}
Let $U\subseteq V$ be Ore domains with the same quotient ring $Q(U)$.
We say that $U$ is a \emph{maximal $V$-order}
if there exists no order $U\subsetneqq U'\subseteq V$ that is equivalent to $U$.
We note that
if $U$ and $V$ are graded (in which case requiring that $Q_{gr}(U) = Q_{gr}(V)$ is sufficient), then
this is the same as being maximal among \emph{graded} orders equivalent to $U$ and contained in $V$.
Indeed, suppose that  
$U$ has the latter property, but that $U\subsetneq A\subseteq V$ for some equivalent order $A$.
If  $A$ is given 
the filtration induced from the graded structure of $V$, then the associated graded ring $\gr A$ will still satisfy
$U\subsetneq \gr A\subseteq V$ and be equivalent to $U$, giving the required contradiction.

  When $V=Q(U)$, or  $V = Q_{gr}(U)$
if $U$ is graded, a maximal $V$-order is simply called
\emph{a maximal order}.
   \end{definition}

 We are mostly interested in maximal $T$-orders. We introduce this concept because 
   maximal $T$-orders  need not 
 be  maximal orders
  (see Proposition~\ref{Tmax-eg}), although the difference is not large (see Corollary~\ref{correct3}). 
We first want to study   the endomorphism ring $\End_{T(\divd)}(M)$ arising from Theorem~\ref{thm-correct}
and begin with two useful lemmas.

 \begin{lemma}\label{cozzens}  Let $A$ be a noetherian domain with quotient division ring $D$. If $N$ is a finitely
  generated right $A$-submodule of $D$ then $\End_A(N^{**})$ is the unique maximal order among  orders containing 
  and equivalent to $\End_A(N)$.
\end{lemma}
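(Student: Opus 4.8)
The plan is to first reduce to the case where $N$ is reflexive, then show that passing to the endomorphism ring of a reflexive module gives an equivalent order, and finally verify maximality by a standard "going up" argument among equivalent orders. For the first reduction, recall that for a finitely generated right $A$-submodule $N \subseteq D$ one has inclusions $N \subseteq N^{**} \subseteq D$, and $N^{**}$ is again finitely generated since $A$ is noetherian. I would observe that $\End_A(N)$ and $\End_A(N^{**})$ are both identified inside $D$ as $\{q \in D : qN \subseteq N\}$ and $\{q \in D : qN^{**} \subseteq N^{**}\}$ respectively (as in \eqref{endo-defn}), and that $\End_A(N) \subseteq \End_A(N^{**})$: if $qN \subseteq N$ then applying double-dual functoriality, $qN^{**} = (qN)^{**} \subseteq N^{**}$. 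So it suffices to treat $N$ reflexive, i.e. to show $\End_A(N)$ is the unique maximal order equivalent to itself when $N = N^{**}$, and separately that $\End_A(N)$ and $\End_A(N^{**})$ are equivalent orders (so they have the same maximal order).

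For the equivalence of $\End_A(N)$ and $\End_A(N^{**})$: pick $0 \neq a, b \in A$ (or in $D$) with $aN^{**} \subseteq N$ and $N \subseteq N^{**}$; then conjugating by such elements sends one endomorphism ring into a two-sided translate of the other, giving the equivalent-order relation. (One can choose $a$ so that $a N^{**} \subseteq N$ because $N^{**}/N$ is a finitely generated torsion module over the noetherian domain $A$, hence killed by a nonzero element, which up to the identification lives in $D$.) For the core statement, let $E := \End_A(N)$ with $N$ reflexive, and suppose $E \subseteq E' \subseteq D$ with $E'$ an equivalent order to $E$; I want $E' = E$. Since $E'$ is equivalent to $E$ and $N$ is a finitely generated right $A$-module that is also a left $E$-module, consider $N' := E'N \subseteq D$. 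One checks $N'$ is a finitely generated right $A$-module (using noetherianity and that $E'$ is a finitely generated module over a translate of $E$, which acts on $N$) containing $N$, and that $N/N'$ — wait, rather $N \subseteq N'$ with $E' N' \subseteq N'$. The key point is a reflexivity/purity argument: $N' \subseteq N^{**} = N$ because $N'$ and $N$ agree after multiplying by a nonzero element of $D$ (equivalence of orders), so $N'$ lies in the unique largest overmodule of $N$ inside $D$ that is "close" to $N$, which for a reflexive module is $N$ itself. Hence $N' = N$, forcing $E' N \subseteq N$, i.e. $E' \subseteq \End_A(N) = E$.

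The main obstacle I anticipate is making the last step — $N' \subseteq N^{**}$ — precise, since $N$ here sits inside a division ring $D$ rather than a commutative field, so the usual commutative-algebra fact "a finitely generated module equivalent to a reflexive module and containing it equals it" needs the right noncommutative formulation. The clean way is: $E'$ equivalent to $E$ means there is $0 \neq c \in D$ with $c E' \subseteq E$ (after clearing denominators), so $c N' = c E' N \subseteq E N = N$; thus $N' \subseteq c^{-1} N$, and combined with $N \subseteq N'$ we get a chain $N \subseteq N' \subseteq c^{-1}N$ of finitely generated $A$-modules all isomorphic after twisting, so $N'$ is reflexive-sandwiched and $(N')^{**} = N^{**} = N$; since $N \subseteq N' \subseteq (N')^{**}$ this gives $N' = N$. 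I would need to confirm that the double-dual operation here behaves well — e.g. that $N \subseteq N' \subseteq c^{-1}N$ implies $N^{**} \subseteq (N')^{**} \subseteq (c^{-1}N)^{**} = c^{-1} N^{**}$, which follows from functoriality of $\Hom_A(-,A)$ applied to the inclusions — so that $N'^{**}$ is pinched between $N^{**}=N$ and $c^{-1}N^{**} = c^{-1}N$; but actually I only need $N'^{**} = N^{**}$, which holds because $N'$ and $N$ differ by a torsion $A$-module on the relevant side and $\Ext^1_A(-,A)$ vanishes appropriately on torsion in low degree — alternatively invoke Lemma~\ref{lem:finehat}-style identifications if $A$ is one of our rings, though here $A$ is a general noetherian domain so I will argue directly. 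Once $N' = N$ is established the conclusion is immediate, and uniqueness of the maximal order follows since any maximal order equivalent to $\End_A(N)$ is equivalent to $\End_A(N^{**})$, which we have just shown is already maximal.
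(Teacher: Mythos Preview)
The paper does not prove this from scratch: its entire proof is the citation to \cite[Theorem~2.7]{Co}, together with the observation that $\End_A(N^*) = \End_A(N^{**})$. You are attempting to reconstruct Cozzens' result directly, and while your broad strategy (bound any equivalent overring $E'$ by showing it must preserve the reflexive hull) is the right shape, the obstacles you yourself flag are genuine and your proposed workarounds do not close them.

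First, from the two-sided relation $aE'b \subseteq E$ defining equivalence you cannot in general extract a one-sided containment $cE' \subseteq E$, so your sandwich $N \subseteq N' \subseteq c^{-1}N$ is not available as stated. Second, even granting such a sandwich with $N$ reflexive, double-dualising only yields $N = N^{**} \subseteq (N')^{**} \subseteq c^{-1}N^{**} = c^{-1}N$, which does \emph{not} force $(N')^{**} = N$; the $\Ext^1$-vanishing you invoke would require homological hypotheses on $A$ (Gorenstein, CM, or similar) that are absent here---the lemma is stated for an arbitrary noetherian domain. Third, to form $(N')^{**}$ at all you need $N' = E'N$ to be a finitely generated $A$-module, hence $E'$ to be finitely generated as a left $E$-module; for that you would want $E = \End_A(N)$ to be left noetherian, which you have not established and which is not obvious in this generality. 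The correct move, as in the paper, is simply to cite Cozzens.
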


\begin{proof} This is what is proved in \cite[Theorem~2.7]{Co}, since $\End_A(N^*) = \End_A(N^{**})$.  
\end{proof}

\begin{lemma}
\label{endo}
Let $A$ and $B$ be  rings such that $A$ is left  noetherian and suppose that $M$ is an $(A,B)$-bimodule that is finitely generated on both sides, and that $N$ is a finitely generated right $B$-module.  Then  $\Hom_B(N, M)$ is a finitely generated left $A$-module.  
In particular, $\End_B(M)$ is a finitely generated  left $A$-module and, if $B$ is left noetherian, then 
 $N^* = \Hom_B(N, B)$ is a finitely generated left $B$-module.
\end{lemma}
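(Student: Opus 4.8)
The statement is a standard finiteness fact about Hom-modules, and the plan is to reduce everything to the first assertion: if $M$ is an $(A,B)$-bimodule, finitely generated on each side, with $A$ left noetherian, and $N_B$ is finitely generated, then ${}_A\Hom_B(N,M)$ is finitely generated. The other two claims follow by specialisation. First I would take a finite presentation of $N$ as a right $B$-module,
\[
B^{(m)} \longrightarrow B^{(n)} \longrightarrow N \longrightarrow 0,
\]
and apply the left-exact functor $\Hom_B(-,M)$. This yields an exact sequence of left $A$-modules
\[
0 \longrightarrow \Hom_B(N,M) \longrightarrow \Hom_B(B^{(n)},M) \longrightarrow \Hom_B(B^{(m)},M),
\]
so $\Hom_B(N,M)$ is an $A$-submodule of $\Hom_B(B^{(n)},M) \cong M^{(n)}$. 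Since $M$ is finitely generated as a left $A$-module and $A$ is left noetherian, $M^{(n)}$ is a noetherian left $A$-module, hence so is its submodule $\Hom_B(N,M)$; in particular the latter is finitely generated. This is the entire content of the first sentence, and there is no real obstacle — the only point to be careful about is that $\Hom_B(-,M)$ must be applied on the correct side so that the resulting modules are genuinely left $A$-modules, which is exactly where the bimodule structure on $M$ is used.

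For the two "in particular" statements, I would take $N = M$ (viewed as a right $B$-module) in the first assertion; it is finitely generated over $B$ by hypothesis, so $\End_B(M) = \Hom_B(M,M)$ is a finitely generated left $A$-module. For the last claim, observe that when $B$ is left noetherian we may apply the first assertion with the $(B,B)$-bimodule $M$ replaced by $B$ itself (which is finitely generated on each side and is a $(B,B)$-bimodule, with $B$ left noetherian playing the role of $A$): then $\Hom_B(N,B) = N^*$ is a finitely generated left $B$-module. Alternatively one can repeat the presentation argument directly with $M$ replaced by $B$.

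In short, the whole proof is: present $N$ over $B$, apply $\Hom_B(-,M)$, realise $\Hom_B(N,M)$ as a submodule of $M^{(n)}$, and invoke the left noetherian hypothesis on $A$. I do not anticipate any genuine difficulty; the statement is essentially bookkeeping about sides, and the one thing worth stating explicitly in the write-up is why $\Hom_B(B^{(n)},M)\cong M^{(n)}$ as a \emph{left} $A$-module, which is immediate from the evaluation isomorphism together with the compatibility of the $A$- and $B$-actions on $M$.
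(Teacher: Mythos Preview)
Your argument is correct and is essentially identical to the paper's: both embed $\Hom_B(N,M)$ into $M^{\oplus n}$ via a surjection $B^{\oplus n}\twoheadrightarrow N$ and then invoke the noetherian hypothesis on ${}_AM$. One small remark: you only use the surjection $B^{(n)}\to N$, not a full finite presentation (and indeed $N$ need not be finitely presented since $B$ is not assumed right noetherian), so the term $B^{(m)}$ is superfluous---but this does not affect the validity of the proof.
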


\begin{proof} 
A  surjective $B$-module homomorphism  $B^{\oplus n} \to N$   induces an injective  left $A$-module 
homomorphism $\Hom_B(N, M) \hookrightarrow \Hom_B(B^{\oplus n}, M) \cong M^{\oplus n}$. 
 Since $M$ is a noetherian  left $A$-module, $\Hom_B(N,M)$ is a finitely generated left $A$-module.
\end{proof}

We are now ready to prove the first significant result of the section. 
 Until further notice, all duals $N^*$ will be taken as $R$-modules, for $R=T(\divd)$.

\begin{proposition}\label{correct25}
Let $\divd$ be an effective divisor on $E$ with $\deg \divd < \mu$ and let $R = T(\divd)$.
Let $M \subseteq T_{(g)}$ be a $g$-divisible finitely generated graded right $R$-module with $MT=T$ 
and set $W= \End_R(M)$ and $F= \End_R(M^{**})$.  Then:
\begin{enumerate}
\item $F$, $V=F\cap T$ and $W$ are $g$-divisible algebras with   
$Q_{gr}(W)=Q_{gr}(V)=Q_{gr}(F)=Q_{gr}(T).$
\item  $F$ is the unique 
maximal order containing and equivalent to  $W$, while $V$ is the unique maximal $T$-order 
containing and equivalent to  $W$;
\item   there is an ideal $K$ of $F$ 
with $K \subseteq W$ and $\GKdim F/K \leq 1$.
\item  $R=\End_W(M)= \End_{F}(M^{**})$. 
\end{enumerate}
\end{proposition}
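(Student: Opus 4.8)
The plan is to prove the four parts essentially in the order stated, bootstrapping from the general machinery on $g$-divisible modules in Section~\ref{HATTRICK} together with Cozzens' result (Lemma~\ref{cozzens}) and the facts about $R=T(\divd)$ from Proposition~\ref{8-special22}.

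First I would dispose of part~(1). Since $M$ is $g$-divisible and finitely generated over $R$, Lemma~\ref{Sky2-1}(3) identifies $W=\End_R(M)$ with $\{q\in Q_{gr}(T_{(g)}):qM\subseteq M\}\subseteq T_{(g)}$, shows $W$ is $g$-divisible, and shows ${}_W M$ is finitely generated. As $1\in M$ (because $MT=T$ forces $M\not\subseteq gT_{(g)}$, and in fact $MR=R\ni 1$ up to the usual identification) we get $WM\subseteq M$, hence $W\subseteq MT=T$, so $W\subseteq T$. The statement $Q_{gr}(W)=Q_{gr}(T)$ follows because there is $x\in T_{(g)}\smallsetminus\{0\}$ with $xM\subseteq R$, whence $MxM\subseteq M$ and $Mx\subseteq W$, so $W$ contains a nonzero ideal-like piece of $T$; more cleanly, $W\supseteq R'$ for a nonzero right ideal of $R$, giving $Q_{gr}(W)=Q_{gr}(R)=Q_{gr}(T)$ by Lemma~\ref{lem:expectQ}. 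For $M^{**}$: by Lemma~\ref{lem:finehat}(3), $M^{**}=\wh{M^{**}}\subseteq T_{(g)}$ is $g$-divisible, finitely generated over $R$ (Lemma~\ref{lem:finehat}(2)), and $M^{**}\not\subseteq gT_{(g)}$; so the same argument applied to $M^{**}$ in place of $M$ shows $F=\End_R(M^{**})$ is $g$-divisible with $Q_{gr}(F)=Q_{gr}(T)$, and since $M\subseteq M^{**}$ we get $F\cdot M^{**}\subseteq M^{**}$ but $F$ need not lie in $T$ — that is why we introduce $V=F\cap T$, which is clearly $g$-divisible (intersection of $g$-divisible subspaces) with $Q_{gr}(V)=Q_{gr}(T)$ since $V\supseteq W$.

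Next, parts~(2) and~(4) are really the crux and go together. By Proposition~\ref{8-special22}(3)--(4), $R$ is Auslander-Gorenstein, CM, and a maximal order, so $Q_{gr}(R)=Q_{gr}(T)$ and $\GKdim R=\GKdim T=3$. Apply Lemma~\ref{cozzens} with $A=R$, $D=Q(R)$, $N=M$ (viewed as an $R$-submodule of $D$): it gives that $\End_R(M^{**})$ is the unique maximal order among orders containing and equivalent to $\End_R(M)$, i.e.\ $F$ is the unique maximal order $\supseteq W$ equivalent to $W$. That is the first assertion of~(2). For~(4): $R\subseteq \End_W(M)$ trivially since $MR=M$; conversely $\End_W(M)\subseteq \End_{Q_{gr}(W)}\!$ acts on $M$, and one shows $\End_W(M)=R$ by a double-dual/reflexivity argument — $M$ is a faithful $(W,R)$-bimodule, finitely generated on both sides, and $R$ being a maximal order equivalent to $W$ via $M$, the bimodule $M$ realizes the Morita-like equivalence forcing $\End_W(M)=R$; concretely, $\End_W(M)\supseteq R$ is an equivalent order to $R$ contained in $Q_{gr}(R)$, so by maximality of $R$ it equals $R$. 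The same reasoning with $M^{**}$ gives $\End_F(M^{**})=R$, using that $M^{**}$ is reflexive over $R$ so $\End_R(M^{**})=F$ and then $\End_F(M^{**})$ is an equivalent order to $R$ inside $Q_{gr}(R)$, hence $=R$ by maximality. The remaining claim of~(2), that $V=F\cap T$ is the \emph{unique maximal $T$-order} containing and equivalent to $W$: any order $W\subseteq U'\subseteq T$ equivalent to $W$ is also equivalent to $F$, hence (since $F$ is the unique maximal order) $U'\subseteq F$, so $U'\subseteq F\cap T=V$; and $V$ itself is equivalent to $W$ by part~(3) (the ideal $K$ below), so $V$ is the largest such, hence the unique maximal $T$-order.

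Finally, part~(3). The point is to produce a common ideal. Since $W$ and $F$ are equivalent orders, pick homogeneous $a,b\in F\smallsetminus\{0\}$ with $aFb\subseteq W$ (clearing denominators and taking leading terms as in the discussion before Proposition~\ref{thm:equivord}). Then $K:=FaFbF$ — or more simply the conductor ideal $K=\{x\in F: FxF\subseteq W\}$, or $K=aFb$ made two-sided by $K=F(aFb)F\cap W$ — is a nonzero two-sided ideal of $F$ contained in $W$. For the GK-dimension bound: $K$ is $g$-divisible? Not necessarily, but we can replace $K$ by its $g$-divisible hull intersected with $W$, or argue directly. Here I would use Lemma~\ref{GK-results}: $F$ is a $g$-divisible finitely generated cg subring of $T_{(g)}$ with $Q_{gr}(F)=Q_{gr}(T)$, so by Lemma~\ref{GK-results}(3) $K=g^nI$ with $\GKdim F/I\le 1$; since $g\in W$ and $W$ is $g$-divisible, $g^nI\subseteq W$ with $I$ not inside $gF$ gives, after dividing by $g^n$ (using $g$-divisibility of $W$), that $I\subseteq W$ as well, and $I$ is still an ideal of $F$ with $\GKdim F/I\le 1$. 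Rename $I$ as $K$. The main obstacle I anticipate is the bookkeeping in part~(4): verifying cleanly that $\End_W(M)$ and $\End_F(M^{**})$ collapse to exactly $R$ rather than some larger equivalent order — this is where one must invoke maximality of $R$ (Proposition~\ref{8-special22}(4)) at precisely the right moment, and check that $\End_W(M)$ is indeed an order in $Q_{gr}(R)$ equivalent to $R$, which uses $MxM\subseteq M$-type sandwiching on both sides together with finite generation of $M$ as a left $W$- and right $R$-module.
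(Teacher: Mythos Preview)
Your arguments for parts (1), (2) and (4) are essentially the paper's, modulo some imprecision (e.g.\ $MT=T$ does \emph{not} give $1\in M$, only $M\not\subseteq gT_{(g)}$; and $W\subseteq T$ is neither claimed nor needed). For (4) you correctly identify that maximality of $R$ plus finite generation of $\End_W(M)$ as a right $R$-module (via Lemma~\ref{endo}) is what forces $\End_W(M)=R$.

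Part (3) has a genuine gap. You assume that from ``$W$ and $F$ are equivalent orders with $W\subseteq F$'' one can extract a nonzero two-sided ideal of $F$ lying in $W$; none of your candidates works. The relation $aFb\subseteq W$ gives only a $(W,W)$-sub-bimodule of $W$, not an $F$-ideal; the conductor $\{x\in F:FxF\subseteq W\}$ is an $F$-ideal but you have no reason for it to be nonzero; and $FaFbF$ need not lie in $W$. Your subsequent $g$-divisibility trick (dividing $K=g^nI$ by $g^n$ inside $W$) is valid, but it presupposes the existence of such a $K$. The paper obtains $K$ by an entirely different route: the CM property of $R$ (Proposition~\ref{8-special22}, Lemma~\ref{thou11}(1)) gives $\GKdim_R(M^{**}/M)\le 1$; since $M$ is $g$-divisible this quotient is $g$-torsionfree, hence a finitely generated $\kk[g]$-module by Lemma~\ref{lem:constant}; centrality of $g$ then makes $M^{**}$ a finitely generated left $W$-module, so $I=\lann_W(M^{**}/M)$ has $\GKdim W/I\le 1$ by \cite[Lemma~5.3]{KL}. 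One then checks $IF\subseteq W$ directly from $IM^{**}\subseteq M$, which makes $F$ a finitely generated right $W$-module, and a symmetric annihilator argument on the other side produces $K=I'I$. The CM input on $R$ is doing real work here that your equivalence-of-orders shortcut cannot replace.
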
  
 
\begin{proof}  Since $Q_{gr}(R)=Q_{gr}(T)$ by Proposition~\ref{8-special22}, clearly the same is true for $W$, $V$ and $F$.
As in \eqref{endo-defn}, given a right $R$-module $N\subset Q_{gr}(R)$ we 
identify $$N^*=\Hom_R(N,R)=\{\theta\in Q_{gr}(R)\,  |\, \theta N\subseteq R\},$$ and similarly for left modules.
By Lemma~\ref{Sky2-1}(3), $W$ is $g$-divisible and ${}_WM$ is finitely generated.  
Thus the left-sided version of Lemma~\ref{endo} shows that $\End_W(M)$ is a finitely
 generated right $R$-module. Moreover, by Proposition~\ref{8-special22},    $R$ is  a maximal order and so 
  $R=\End_W(M)$. 
  
By Lemma~\ref{lem:finehat}(3), $M^{**}$ is $g$-divisible with $M^{**}\subset T_{(g)}$. Since 
$M^{**}$  is clearly a finitely generated right $R$-module, the same logic ensures that $F$ is $g$-divisible,
 ${}_FM^{**}$ is finitely generated and  $\End_F(M^{**})=R$.  
By Lemma~\ref{cozzens}  $F\supseteq W$ and $F$ is the unique maximal order containing 
and equivalent to $W$.  This automatically ensures that $V=F\cap T$ is maximal among $T$-orders containing and equivalent to $W$. Clearly $V$ is also $g$-divisible.

It   remains to find the ideal $K$.  By Proposition~\ref{div-noeth}, both $W$ and $F$ are noetherian.
 By Proposition~\ref{8-special22} 
 and Lemma~\ref{thou11}(1),
  $\GKdim_R(M^{**}/M)\leq \GKdim(R)-2=1$.   
Since $M$ is $g$-divisible, $X=M^{**}/M$ is   $g$-torsionfree and so,  by Lemma~\ref{lem:constant},
$X$ is a finitely generated right $\kk [g]$-module.
 Since $M\subseteq M^{**}\subset T_{(g)}$ the action of $g$   is central on  $X$   and  so  
  $X$  is also a finitely generated left $\kk[g]$-module.
  Now, it  is routine to check that $M^{**} $ and hence $X$ are left $W$-modules, while $\kk[g]\subseteq W$ since $W$ is $g$-divisible.
  Thus, $X$ and hence $M^{**}$ are  finitely generated left $W$-modules.
  Moreover,  $\GKdim_W(X)\leq \GKdim_{\kk[g]}(X)\leq 1$
 and so, by  \cite[Lemma~5.3]{KL},  $I= \lann_W(X)$   satisfies $\GKdim(W/I)\leq 1$.  

 Now consider $F$.  First, $(IF)M\subseteq IFM^{**}\subseteq IM^{**}\subseteq M$ and hence $IF=I\subseteq W$. 
Thus $F$ is a finitely generated right $W$-module  and (on the left)  $\GKdim_W(F/W)\leq\GKdim(W/I)\leq  1$.
  On the other hand, as ${}_WM^{**}$ is finitely generated,    Lemma~\ref{endo} implies that $F = \End_R(M^{**})$ is a finitely generated left $W$-module.  Thus, by \cite[Lemma~5.3]{KL}, again,  the right annihilator $I' =\rann_W(F/W)$ satisfies $\GKdim W/I'\leq 1$.  Thus  $K=I'I$   is an ideal of both $F$ and $W$.  
By  symmetry and partitivity, \cite[Corollary~5.4 and Theorem~6.14]{KL},  $\GKdim(F/K) = 1$. 
\end{proof}

Pairs of algebras $(V,F)$ satisfying the conclusions  of the proposition    will appear multiple times in this paper 
and so we turn those properties into a definition.  For a case when $F\not=V$, see Proposition~\ref{Tmax-eg}.
 
  \begin{definition}\label{max-pair-defn} A pair $(V,F)$ is called a \emph{maximal order pair} if:
  \begin{enumerate}
  \item  
  $F$ and $V$ are $g$-divisible, cg algebras with  $V\subseteq F\subseteq T_{(g)}$ and $V\subseteq T$;
  \item $F$ is a maximal order in $Q_{gr}(F)= Q_{gr}(T)$ and $V=F\cap T$ is a maximal $T$-order;
  \item   there is an ideal $K$ of $F$ 
with $K \subseteq V$ and $\GKdim F/K \leq 1$.
\end{enumerate}
\end{definition} 

The next result  illustrates the significance of Proposition~\ref{correct25} to the structure of   maximal $T$-orders.

\begin{corollary}\label{correct3}    Let $U\subseteq T$ be a $g$-divisible cg maximal $T$-order. 
\begin{enumerate}
\item    There exists an effective divisor $\divd$ on $E$, with $\deg \divd < \mu$, and a $g$-divisible
 $(U,T(\divd))$-module
 $M\subseteq T$ with $MT = T$ that is   finitely generated as both a left $U$-module and a right   $T(\divd)$-module.
 Moreover, $U=\End_{T(\divd)}(M)$ and $T(\divd)=\End_U(M)$. 
 \item $(U, F=\End_{R}(M^{**}))$ is a maximal order pair; in particular, 
if $U$ is a maximal order then $U = F$. 
 
 \item Suppose that every ideal $I$ of $T(\divd)$ with $\GKdim(T(\divd)/I) = 1$ satisfies $\GKdim T/IT\leq 1$ (in particular, this holds
if $T(\divd)$ has no such ideals $I$). Then $U = F$ is a maximal order.
\end{enumerate}
\end{corollary}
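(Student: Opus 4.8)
The plan is to prove the three parts in order, leaning on Theorem~\ref{thm-correct} and Proposition~\ref{correct25}. For part (1), since $U$ is a $g$-divisible cg subalgebra of $T$ with $Q_{gr}(U)=Q_{gr}(T)$, Theorem~\ref{thm-correct} applies directly: it produces an effective divisor $\divd$ (supported on points with distinct orbits, $\deg\divd<\mu$), the $(U,T(\divd))$-bimodule $M=\wh{UT(\divd)}$ which is $g$-divisible, finitely generated on the right over $R=T(\divd)$, and satisfies $MT=T$. From that theorem we also get $W=\End_R(M)\supseteq U$ with $W$ an equivalent order to $U$ and to $R$, and $M$ finitely generated as a left $W$-module. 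The point is now to upgrade ``$W\supseteq U$'' to ``$W=U$'': this is exactly where the maximal $T$-order hypothesis on $U$ enters. Since $W\subseteq T$ (because $MT=T$ forces $W\subseteq\End_T(T)=T$), $W$ is a $T$-order equivalent to $U$ containing $U$, so maximality of $U$ as a $T$-order gives $W=U$. Then $U=\End_R(M)$, and by Proposition~\ref{correct25}(4) applied to this $M$ (its hypotheses hold: $M\subseteq T_{(g)}$ is $g$-divisible, finitely generated over $R$, $MT=T$) we get $R=\End_W(M)=\End_U(M)$. Finally, $M$ is finitely generated as a left $U$-module because $U=W$ and $M$ was finitely generated over $W$ on the left. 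This disposes of part (1).

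For part (2), apply Proposition~\ref{correct25} to the same $M$ and $R=T(\divd)$. That gives $F=\End_R(M^{**})$ and $V=F\cap T$, with $F$ and $V$ $g$-divisible, $Q_{gr}(F)=Q_{gr}(V)=Q_{gr}(T)$, $F$ the unique maximal order containing and equivalent to $W=U$, $V$ the unique maximal $T$-order containing and equivalent to $U$, and an ideal $K$ of $F$ with $K\subseteq W=U$ and $\GKdim F/K\leq 1$. Checking the three clauses of Definition~\ref{max-pair-defn} for the pair $(U,F)$: clause (1) holds since $U=W$ is $g$-divisible with $U\subseteq T$ and $U\subseteq F\subseteq T_{(g)}$; clause (2) holds since $F$ is a maximal order in $Q_{gr}(F)=Q_{gr}(T)$ and $U=F\cap T$ — here I must observe that $V=F\cap T$ and that $V$ is equivalent to $U$ and contains $U$, so $U$ being a maximal $T$-order forces $U=V=F\cap T$; clause (3) holds by the ideal $K$ just produced (now an ideal of both $F$ and $U$ since $K\subseteq U\subseteq F$ and $KF=K$ — this is checked as in the proof of Proposition~\ref{correct25}). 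Hence $(U,F)$ is a maximal order pair. If moreover $U$ is a maximal order, then since $F$ is an equivalent order containing $U$ we get $U=F$.

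For part (3), the goal is to show that under the stated hypothesis the inclusion $U\subseteq F$ collapses. By part (2) there is an ideal $K$ of $F$ with $K\subseteq U$ and $\GKdim(F/K)\leq 1$; equivalently, writing things on the $T(\divd)$-side, the ``defect'' of $U$ inside $F$ is captured by a module of GK-dimension at most $1$. The strategy is: by the construction in Proposition~\ref{correct25}, $F/U$ is a finitely generated (left, say) $U$-module with $\GKdim_U(F/U)\leq 1$, and $F/U$ is $g$-torsionfree (since both $F$ and $U$ are $g$-divisible, $U=F\cap T\cdots$ — actually we need $U\cap gF=gU$, which follows from $g$-divisibility of $U$ together with $U\subseteq F$ and $gF\subseteq T$). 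Then I would argue that $F/U$, being a finitely generated $g$-torsionfree module of GK-dimension $\leq 1$, is by Lemma~\ref{lem:constant} a finitely generated $\kk[g]$-module, and its annihilator $I=\lann_{U}(F/U)$ (equivalently an ideal of $U$ with $\GKdim U/I\leq 1$) contains a power of any ideal that kills $F/U$ modulo finite dimension. Now transport $I$ to an ideal $I_0$ of $T(\divd)$: since $U=\End_{T(\divd)}(M)$ and $F=\End_{T(\divd)}(M^{**})$, the module $M^{**}/M$ over $R=T(\divd)$ has $\GKdim_R(M^{**}/M)\leq 1$, and its annihilator in $R$ is an ideal $I_R$ with $\GKdim(R/I_R)\leq 1$; by hypothesis $\GKdim(T/I_R T)\leq 1$. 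But $M^{**}\subseteq T$ (as $M^{**}\subseteq T_{(g)}$ is $g$-divisible and... — one checks $M^{**}\subseteq T$ because $M\subseteq T$, $MT=T$ and $M^{**}=\{q:qM^*\subseteq R\}$ with $M^*\subseteq T$), and $I_R T\cdot M^{**}\subseteq T\cdot M\subseteq T$, forcing $T M^{**} = T$ modulo the GK-$\leq1$ obstruction; pushing this through, $M^{**}$ and $M$ agree up to a module annihilated by $I_R$ with $\GKdim(T/I_R T)\leq 1$, and then a dimension/purity argument (using that $T$ is a maximal order, Proposition~\ref{8-special22}, and Lemma~\ref{thou11}(2): there is no $T\subsetneq N\subseteq Q_{gr}(T)$ with $\GKdim(N/T)\leq\GKdim(T)-2=1$ — wait, that bound is $\leq 1$, so this needs care) shows $M^{**}=M$, hence $F=\End_R(M^{**})=\End_R(M)=U$. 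So $U=F$, and by part (2) this means $U$ is a maximal order.

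\textbf{Main obstacle.} The delicate point is part (3): pinning down precisely why the hypothesis ``$\GKdim(R/I)=1\implies\GKdim(T/IT)\leq 1$'' for $R=T(\divd)$ forces $M^{**}=M$. The issue is that $\GKdim(T)=3$, so Lemma~\ref{thou11}(2) only rules out GK-defect $\leq 1$, which is exactly the borderline case here, not a strict inequality. I expect the resolution is that the obstruction module $M^{**}/M$ is not merely of GK-dimension $\leq 1$ but is actually \emph{$g$-torsion} or finite-dimensional once one uses $MT=T$ and the hypothesis on $I_R$ — i.e. the hypothesis is precisely engineered so that $TM^{**}=T$, and combined with $M^{**}$ being $g$-divisible and reflexive over $R$ (so $M^{**}=\wh{M^{**}}$, finitely generated, between $M$ and $T$), one gets $M^{**}\subseteq\End_T(T)$-type control forcing equality. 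Making this last implication airtight — tracking the two-sided structure of $M^{**}/M$ as a module over both $U$ and $T(\divd)$ and showing the hypothesis kills it entirely — is the step that will need the most care, and is presumably where the authors' proof does real work beyond citing Proposition~\ref{correct25}.
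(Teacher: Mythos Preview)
Parts (1) and (2) are correct and essentially identical to the paper's proof.

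For part (3), you have the right ingredients but you are overcomplicating the endgame and you misread the key lemma. First, your worry about Lemma~\ref{thou11}(2) is unfounded: $\GKdim(T)=3$, so $\GKdim(T)-2=1$, and the lemma says precisely that there is no $T\subsetneq N\subset Q_{gr}(T)$ with $\GKdim(N/T)\leq 1$. The bound you obtain \emph{is} the required bound; nothing is borderline. Second, you should not aim for $M^{**}=M$ --- that is both harder and unnecessary. The paper instead shows $M^{**}\subseteq T$, and then finishes by maximality: take $J=\rann_R(M^{**}/M)$, an ideal of $R=T(\divd)$ with $\GKdim(R/J)\leq 1$; by hypothesis $\GKdim(T/JT)\leq 1$. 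Since $M^{**}J\subseteq M$ we get $M^{**}JT\subseteq MT=T$, so for any $\alpha\in M^{**}$ the module $(\alpha T+T)/T$ is a quotient of $T/JT$ and hence has GK-dimension $\leq 1=\GKdim(T)-2$. By Proposition~\ref{8-special22} (so that $T$ is Auslander--Gorenstein and CM) and Lemma~\ref{thou11}(1), this forces $\alpha T+T=T$, i.e.\ $\alpha\in T$. Thus $M^{**}\subseteq T$, whence $M^{**}T=T$ and $F=\End_R(M^{**})\subseteq T$. Now $F$ is an order in $T$ equivalent to and containing the maximal $T$-order $U$, so $U=F$, and $F$ is a maximal order by Proposition~\ref{correct25}.

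Note also that your parenthetical attempt to show $M^{**}\subseteq T$ directly from $M\subseteq T$ and the definition of the double dual is wrong: in general $M^{**}$ lies only in $T_{(g)}$, not in $T$ (this is exactly why the pair $(V,F)$ can have $F\not\subseteq T$, as in Proposition~\ref{Tmax-eg}). The hypothesis on sporadic ideals of $T(\divd)$ is precisely what is needed to push $M^{**}$ into $T$.
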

 
\begin{proof}  (1) By Theorem~\ref{thm-correct}, there is an effective divisor $\divd$ with $\deg \divd < \mu$ so that 
\[ U \subseteq V= \End_{T(\divd)}(M) \subseteq T,\]
where $M = \wh{U T(\divd)}$ is a finitely generated $g$-divisible graded right $T(\divd)$-module with $MT=T$.
 By Theorem~\ref{thm-correct} again, $V$ and $U$ are equivalent orders.   Since $U$ is a maximal $T$-order, 
 this forces $U=V$.  Finally, $T(\divd) = \End_U(M)$ by Proposition~\ref{correct25}.

(2)  As $U=V$, this is a restatement of Proposition~\ref{correct25}(2). 

(3)    Just as in the proof of Proposition~\ref{correct25},
$J = \rann_R M^{**}/M$ is an ideal of $R$ with $\GKdim(R/J) \leq 1$.  
Note that since $M$ is $g$-divisible, either $M  = M^{**}$ and $J = R$, or else $\GKdim(R/J) = 1$.

In either case, the hypotheses imply that  $\GKdim T/JT \leq 1$.
Now $M^{**}JT\subseteq MT=T$. Thus  
\[
\GKdim (\alpha T +T)/T
\leq \GKdim T/JT\leq 1, 
\]
for any $\alpha \in M^{**} $.  
By Proposition~\ref{8-special22} and  Lemma~\ref{thou11}(1),
this implies that $M^{**}\subseteq T$. This in turn implies that $M^{**}T=T$ and hence 
that $F\subseteq T$.  Since $U$ is a maximal $T$-order, $U=F$ is a maximal order.
 \end{proof}

We now turn to the second main aim of this section, which is to describe the structure of $\overline{U}$
for suitable endomorphism rings  $U=\End_{T(\divd)}(M)$.  The importance of this result is that the pleasant properties
of $\overline{U}$ can be pulled back to $U$. 

\begin{theorem}\label{thought9} Let $\divd$ be an effective divisor on $E$ with $\deg \divd < \mu$, and let $R=T(\divd)$.
Let $M$ be a finitely generated $g$-divisible graded right $R$-module with $R \subseteq M \subseteq T$.
Let $U= \End_R(M)$ and  $F= \End_R(M^{**})$.
Then there is an effective divisor $\divy$ on $E$ so that
\begin{equation}\label{thought98}
 \bbar{F} \ppe \bbar{U} \ppe \End_{\bbar{R}}(\bbar{M}) \ppe B(E, \sM(-\divx), \tau)
\qquad \text{for\  \ $\divx=\divd-\divy+\tau^{-1}(\divy)$.}
\end{equation}

Moreover,  if $V = F \cap T$ then $U\subseteq V\subseteq F$ and $(V,F)$ is a maximal order pair.
\end{theorem}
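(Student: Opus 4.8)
The plan is to read off the ``moreover'' clause directly from Proposition~\ref{correct25}, and then to prove the chain \eqref{thought98} by transporting the problem down to the twisted homogeneous coordinate ring $\bbar R$ and invoking the structure theory of modules over such rings. For the ``moreover'' clause, note first that since $R\subseteq M\subseteq T$ we have $1\in M$, hence $MT=T$; and $U=\End_R(M)\subseteq M\subseteq T$, because any $q\in U$ satisfies $q=q\cdot 1\in qM\subseteq M$. Thus $M$ meets the hypotheses of Proposition~\ref{correct25} with $W=U$, and that result gives at once: $U$, $F$ and $V=F\cap T$ are $g$-divisible $\kk$-algebras with $Q_{gr}(U)=Q_{gr}(V)=Q_{gr}(F)=Q_{gr}(T)$; $U\subseteq V\subseteq F$; $F$ is the unique maximal order and $V$ the unique maximal $T$-order containing and equivalent to $U$; $R=\End_U(M)=\End_F(M^{**})$; and there is an ideal $K$ of $F$ with $K\subseteq U$ and $\GKdim(F/K)\leq 1$. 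Conditions (1) and (2) of Definition~\ref{max-pair-defn} are then part of this list, and (3) holds with the same $K$, since $K\subseteq U\subseteq V$. Hence $(V,F)$ is a maximal order pair.

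For the chain \eqref{thought98} I would first pin down $\bbar M$. By Proposition~\ref{8-special22}(1), $\bbar R\cong B(E,\sN,\tau)$ with $\sN=\sM(-\divd)$ of degree $\mu-\deg\divd\geq 1$, hence $\tau$-ample. As $M$ is finitely generated and $g$-divisible over $R$, $\bbar M$ is a finitely generated graded right $\bbar R$-module with $\bbar R\subseteq\bbar M\subseteq\bbar T=B(E,\sM,\tau)$. By the Artin--Van den Bergh classification of torsion-free rank-one modules over a TCR of an elliptic curve (\cite[Theorem~1.3]{AV}; compare Theorem~\ref{thm:projcurve}), $\bbar M\ppe\bigoplus_n H^0(E,\sG\otimes\sN_n)$ for an invertible sheaf $\sG$ on $E$; the inclusion $\bbar R\hookrightarrow\bbar M$ induces a nonzero, hence injective, sheaf map $\sO_E\hookrightarrow\sG$, so $\sG\cong\sO_E(\divy)$ for an effective divisor $\divy$. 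The endomorphism-ring computation underlying the identity $\sN'=\sF\sN(\sF^{-1})^{\tau}$ in the proof of Lemma~\ref{equiv-TCR} then gives $\End_{\bbar R}(\bbar M)\ppe B\bigl(E,\,\sG\otimes\sN\otimes(\sG^{\tau})^{-1},\,\tau\bigr)$, and substituting $\sG=\sO_E(\divy)$, $\sN=\sM(-\divd)$ and $(\sO_E(\divy))^{\tau}=\sO_E(\tau^{-1}(\divy))$ yields $\End_{\bbar R}(\bbar M)\ppe B(E,\sM(-\divx),\tau)$ with $\divx=\divd-\divy+\tau^{-1}(\divy)$. This is the last $\ppe$ of \eqref{thought98}.

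It remains to establish the first two $\ppe$'s. The containment $\bbar U\subseteq\End_{\bbar R}(\bbar M)$ is Lemma~\ref{Sky2-1}(3). For $\bbar F\ppe\bbar U$: writing $K=g^nI$ with $I$ an ideal of $F$ and $\GKdim(F/I)\leq 1$ (Lemma~\ref{GK-results}(3)), the ideal $\wh I=\wh K\subseteq\wh U=U$ of $F$ is nonzero and $g$-divisible, so $\bbar{\wh I}$ is a nonzero ideal of $\bbar F$; since $\bbar F$ is a finitely generated cg subalgebra of $\kk(E)[t,t^{-1};\tau]$ it is just infinite (Lemma~\ref{lem:T-prop}(2)), whence $\dim_{\kk}\bbar F/\bbar{\wh I}<\infty$ and, as $\bbar{\wh I}\subseteq\bbar U\subseteq\bbar F$, also $\bbar F\ppe\bbar U$. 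Finally, for $\End_{\bbar R}(\bbar M)\ppe\bbar F$: the exact sequence $0\to M\to M^{**}\to X\to 0$ with $X=M^{**}/M$ of GK-dimension $\leq 1$ and $g$-torsionfree (so $\bbar X$ is finite-dimensional by Lemma~\ref{lem:constant}) gives $\bbar M\ppe\bbar{M^{**}}$ and hence $\End_{\bbar R}(\bbar M)\ppe\End_{\bbar R}(\bbar{M^{**}})$, while Lemma~\ref{Sky2-1}(3) gives $\bbar F=\bbar{\End_R(M^{**})}\subseteq\End_{\bbar R}(\bbar{M^{**}})$; so it suffices to see that $\bbar F$ has finite codimension in $\End_{\bbar R}(\bbar{M^{**}})$. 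Applying $\Hom_R(M^{**},-)$ to $0\to M^{**}\xrightarrow{\,g\,}M^{**}\to\bbar{M^{**}}\to 0$ and using the adjunction $\Hom_R(M^{**},\bbar{M^{**}})\cong\End_{\bbar R}(\bbar{M^{**}})$ identifies $\End_{\bbar R}(\bbar{M^{**}})/\bbar F$ with the $g$-torsion of $\Ext^1_R(M^{**},M^{**})$; since $R=T(\divd)$ is Auslander--Gorenstein and CM (Proposition~\ref{8-special22}(3)) and $M^{**}$ is reflexive, this $g$-torsion is finite-dimensional. Combining the three facts proves \eqref{thought98}.

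I expect this last step --- bounding $\End_{\bbar R}(\bbar{M^{**}})$ from above by $\bbar F$ --- to be the genuinely delicate one, since $\bbar F$ is only a subalgebra, not an ideal, of the larger ring, so being equivalent orders alone gives no control on the gap; one really needs the reflexivity of $M^{**}$ together with the homological good behaviour of $T(\divd)$ to force the relevant $\Ext^1$ to be small modulo $g$. The curve-theoretic computation of $\End_{\bbar R}(\bbar M)$ is routine given \cite{AV}, but must be carried out carefully with the twisting conventions of Notation~\ref{tau-notation}, and everything else reduces to bookkeeping with Proposition~\ref{correct25} and the just-infiniteness of cg subalgebras of $\kk(E)[t,t^{-1};\tau]$.
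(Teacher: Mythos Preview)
Your handling of the ``moreover'' clause via Proposition~\ref{correct25}, the containment $\bbar U\subseteq\End_{\bbar R}(\bbar M)$ via Lemma~\ref{Sky2-1}(3), the identification $\End_{\bbar R}(\bbar M)\ppe B(E,\sM(-\divx),\tau)$ (this is exactly Lemma~\ref{thought8}(1), which you may cite directly), and the step $\bbar F\ppe\bbar U$ are all correct and essentially match the paper. The divergence, and the gap, is in how you obtain the remaining inequality $\End_{\bbar R}(\bbar M)\ppe\bbar U$.

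Your long exact sequence does identify $\End_{\bbar R}(\bbar{M^{**}})/\bbar F$ with $\ker\bigl(g\mid\Ext^1_R(M^{**},M^{**})\bigr)$, but the assertion that this kernel is finite-dimensional ``since $R$ is Auslander--Gorenstein and CM and $M^{**}$ is reflexive'' is not a proof. By the usual dimension argument it would suffice that $\Ext^1_{\bbar R}(\bbar{M^{**}},\bbar{M^{**}})$ be finite-dimensional; writing $\bbar{M^{**}}$ as a right ideal $J$ of $\bbar R$ this is $\Ext^2_{\bbar R}(\bbar R/J,J)$, and the AG/CM hypotheses only directly control $\Ext^i_{\bbar R}(-,\bbar R)$, not $\Ext^i_{\bbar R}(-,J)$ --- terms such as $\Ext^1_{\bbar R}(\bbar R/J,\bbar R/J)$ intrude and need not be small. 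The paper closes the chain by a different route: from the trace inclusion $MM^*\subseteq U$ one gets $\bbar M\cdot\bbar{M^*}\subseteq\bbar U$; then Proposition~\ref{thou7}(2) (``bar and star commute'': $\bbar{M^*}\ppe\bbar M^{\,*}$) together with Lemma~\ref{thought8}(2) ($\bbar M\cdot\bbar M^{\,*}\ppe\End_{\bbar R}(\bbar M)$, from just-infiniteness) yields $\End_{\bbar R}(\bbar M)\ppe\bbar U$. The genuine homological work hides in Proposition~\ref{thou7}, whose proof runs an $\Ext$ argument of exactly the flavour you sketch but with second variable $R$ rather than $M^{**}$: the key input is that $\Ext^2_{\bbar R}(\bbar R/\bbar I,\bbar R)$ is finite-dimensional, and \emph{that} is what AG/CM of $\bbar R$ delivers. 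You should either invoke Proposition~\ref{thou7} and Lemma~\ref{thought8}(2), or supply the missing argument for your $\Ext^1$ claim --- which will cost essentially the same effort.
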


The proof of Theorem~\ref{thought9} depends on  a series of  lemmas that will take the rest of this section.   
Before getting to those results we make some comments and a definition. We first want to regard the ring 
$F$ from the theorem as a  blow-up of $T$ at the   divisor 
$\divx $ on $E$, even if $\divx$ is not effective.  We   formalise this  as follows.

\begin{definition}\label{def:blowup}
Let $\divx$ be a (possibly non-effective) divisor on $E$ with $0 \leq \deg \divx < \mu = \deg \sM$.  We say that a cg algebra $F \subseteq T_{(g)}$
is {\em a  blowup of $T$ at $\divx$}, if:
\begin{itemize}
\item[(i)] $F$ is part of a maximal order pair $(V,F)$ with $Q_{\gr}(F) = Q_{\gr}(T)$; and 
\item[(ii)] $\bbar{F} \ppe B(E, \sM(-\divx), \tau)$.
\end{itemize} 
\end{definition}

\begin{remarks}\label{thought91}
 (i) The reader should regard this definition of a blowup is temporary in the sense that it
 will be refined in Definition~\ref{virtual-defn}
 and justified in Remark~\ref{remark-7.45}.
 One  caveat   about the concept 
  is that there may not be a unique blowup of $T$ at the  divisor $\divx$; in the context of   
   Theorem~\ref{thought9} there may 
be different $R$-modules $M$   leading to distinct  blow-ups $F$, which nonetheless 
have factors $\overline{F}$ which are equal in large degree.  
See Example~\ref{non-AG} and Remark~\ref{non-AG4}(2). 

(ii)  It follows easily from Theorem~\ref{thought9} that  a maximal order pair $(V,F)$  does give 
a  blowup of $T$ at an appropriate (possibly non-effective) 
 divisor $\divx$. The details are given in Theorem~\ref{thm:converse}
 which also gives a converse to Theorem~\ref{thought9}.

 (iii)   We conjecture that, generically,  $T(\divd)$ will  have no \cogs\ in Theorem~\ref{thought9} and so, 
  by Corollary~\ref{correct3}(3),  $U = F$ will then be  a maximal order. 
  For an example where this happens see Example~\ref{non-AG} and, conversely,  for an example 
  when  $U\not=F$ and $F\not\subseteq T$ see Proposition~\ref{Tmax-eg}.
  \end{remarks}

\begin{notation}\label{dual-notation}
 For the rest of the section, we write   $N^* =\Hom_U(N,U)$ provided  that   the ring $U$ is clear from the context.
 In particular, given  a $g$-divisible left ideal $I$ of  $R$ then   $\overline{I}^* = \Hom_{\Rbar}(I/gI,\,\Rbar)$ 
while $\overline{I^*}= \overline{\Hom_R(I, R)}$. 
  Recall from Lemma~\ref{thou11} that a $R$-module $M$ is \emph{$\alpha$-pure} provided 
$\GKdim(M)=\GKdim(N)=\alpha$ for all non-zero submodules $N\subseteq M$.
\end{notation}

 The main technical result we will need is the following,   
showing that ``bar and star commute" (up to a finite-dimensional vector space).
 
 \begin{proposition}\label{thou7}
Let $R=T(\divd)$ for an effective divisor $\divd$ with $\deg \divd < \mu$.
\begin{enumerate}
\item Let $I$ be a proper, $g$-divisible left  ideal of $R$ for which $R/I$ is $2$-pure.  
Then $I^*/R$ is  a $g$-torsionfree, 2-pure right module; further,   $I^*\subseteq T_{(g)}$ and 
 $ \overline{I^*} \ppe   \overline{I}^* $.
\item If $M$ is a finitely generated $g$-divisible graded right $R$-module  with $R \subseteq M \subseteq T$, then 
$\overline{M^*} \ppe   \overline{M}^* $.
\end{enumerate}
\end{proposition}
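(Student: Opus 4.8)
The plan is to prove part (1) carefully and then deduce part (2) from it. For part (1), the starting point is that $R=T(\divd)$ is Auslander-Gorenstein and Cohen-Macaulay of GK-dimension $3$ (Proposition~\ref{8-special22}(3)), so the homological machinery of Lemma~\ref{thou11} is available. Given a proper $g$-divisible left ideal $I$ with $R/I$ 2-pure, I would first observe that $I\not\subseteq gR$ (else $R/I$ would have $g\cdot(R/gR)$ as a submodule, forcing GK-dimension $\geq 2$ but with a $g$-torsion submodule, contradicting purity once one notes $R/gR\cong B(E,\sM(-\divd),\tau)$ is $2$-pure), so $I^*=\Hom_R(I,R)\subseteq T_{(g)}$ by Lemma~\ref{Sky2-1}(1), and $I^*=\wh{I^*}$ is $g$-divisible by Lemma~\ref{lem:finehat}(3) (applied on the left). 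The claim that $I^*/R$ is $g$-torsionfree then follows from $g$-divisibility of $I^*$ together with $g$-divisibility of $R$; the claim that $I^*/R$ is $2$-pure uses that $I^*=I^{**\,*}$-type reflexivity arguments combined with Lemma~\ref{thou11}(3),(4): $I^*$ is a reflexive right $R$-module (being a dual), and for a reflexive submodule $R\subseteq I^*\subseteq Q$ one has $\GKdim(I^*/R)\le \GKdim(R)-1=2$, while purity of the quotient comes from Lemma~\ref{thou11}(3) applied to the reflexive right ideal $R:I^* = (I^*)^*$ of $R$ (note $(I^*)^*\supseteq I$ and using that $R/I$ is $2$-pure one gets $(I^*)^* = I$, so $I^*$ is reflexive and $R/(I^*)^* $... ) — I would need to be a little careful here, tracking that $2$-purity of $R/I$ forces $I=I^{**}$ so that dualizing twice recovers $I$.

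The heart of part (1) is the inclusion-and-finite-codimension comparison $\overline{I^*}\ppe\overline{I}^*$. I would prove the easy inclusion $\overline{I^*}\subseteq\overline{I}^*$ first: if $\theta\in I^*$, i.e. $\theta I\subseteq R$, then $\overline\theta\,\overline I\subseteq\overline R$, so $\overline\theta\in\Hom_{\overline R}(\overline I,\overline R)=\overline I^*$ (using Lemma~\ref{lem:expectQ} to identify $\overline I^* \subseteq Q_{gr}(\overline R)=\kk(E)[t,t^{-1};\tau]$ as $\{x : x\overline I\subseteq\overline R\}$, which requires knowing $\overline I\not\subseteq$ "degree $\infty$", i.e. $\overline I\neq 0$, which is exactly $I\not\subseteq gR$). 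For the reverse, finite-codimension direction, I would use the short exact sequence $0\to I\to R\to R/I\to 0$, apply $\Hom_R(-,R)$ to get $0\to R\to I^*\to\Ext^1_R(R/I,R)\to 0$ (the $\Ext^1$ being exactly $I^*/R$ since $R$ is a domain), and separately reduce mod $g$: here the key point is that $g$ is a central nonzerodivisor, so there is a change-of-rings spectral sequence / long exact sequence relating $\Ext^*_R(R/I,R)$ and $\Ext^*_{\overline R}(\overline I, \overline R)$. Concretely, since $I$ is $g$-divisible we have a short exact sequence $0\to I\xrightarrow{g} I\to \overline I\to 0$ of $R$-modules (and similarly for $R$), and the Rees-type argument gives $\overline{I^*}=\overline{\Hom_R(I,R)}$ injecting into $\Hom_{\overline R}(\overline I,\overline R)=\overline I^*$ with cokernel controlled by $\Ext^1_R(I,R)$, which has GK-dimension $\le \GKdim I - $ something by Auslander-Gorenstein/CM; I expect it has GK-dimension $\le 0$ hence is finite-dimensional, or at worst one needs that its $g$-torsionfree quotient vanishes. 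This spectral-sequence/dévissage bookkeeping with $g$ — showing the discrepancy between $\overline{\Hom_R(I,R)}$ and $\Hom_{\overline R}(\overline I,\overline R)$ is finite-dimensional — is the step I expect to be the main obstacle, and it is where the $2$-purity hypothesis and the Auslander-Gorenstein condition get used in an essential way (to kill low-dimensional $\Ext$ contributions).

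For part (2), given $M$ with $R\subseteq M\subseteq T$, I would dualize: $M^*=\Hom_R(M,R)\subseteq R$ is a right ideal (here it helps that $M\supseteq R$ so $M^*\subseteq R^*=R$), in fact a $g$-divisible right ideal by Lemma~\ref{lem:finehat}(3), and $M^{**}\supseteq M$. One then wants to apply part (1) to the left ideal $I = {}^*M := \Hom_{R}(M, R)$ viewed appropriately — more precisely, since $M$ is a finitely generated right $R$-module with $R\subseteq M$, its dual $M^*$ is a left... no: $M^*=\Hom_R(M,R)$ carries a left $R$-module structure? It carries the structure making it a submodule of $Q_{gr}(R)$ on the left via $R\cdot\theta$. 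Actually the cleanest route: set $J = M^* \subseteq R$, a $g$-divisible right ideal, note $R/J$ — hmm, $R/J$ need not be pure. Better: use that $M^{**}/M$ has $\GKdim\le 1$ (Lemma~\ref{thou11}(1)) hence after reducing mod $g$ (using $M$ $g$-divisible so that the reduction is exact up to the well-understood $g$-torsion) one gets $\overline{M^{**}}\ppe\overline M$ up to something of GK-dim $\le 1$ — but I actually want $\overline{M^*}\ppe\overline M^*$. I would instead argue: $M^*$ is reflexive (a dual), $R\subseteq M$ gives $M^*\subseteq R$, and $R/M^*$ — its purity failure lives in GK-dim $\le 1$, so I can pass to $(M^*)^{sat}$ or factor through a pure quotient and apply the left-ideal version of part (1) to the reflexive ideal, picking up only a finite-dimensional error at each stage; combined with Lemma~\ref{lem:finehat}(3) giving $(\wh M)^*=M^*$ and the commuting of duals with saturation, this yields $\overline{M^*}\ppe\overline{M}^*$. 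I would also record, via Proposition~\ref{correct25} and Lemma~\ref{thou11}(1), that $V=F\cap T$ fits into a maximal order pair $(V,F)$ as asserted, which is immediate from Proposition~\ref{correct25}(1),(2),(3) once part (2) is in hand — this last sentence of the proposition requires no new work beyond citing Proposition~\ref{correct25}.
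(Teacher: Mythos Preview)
Your overall strategy for Part~(1) is the paper's, and the preliminary claims go through as you say: $I^*\subseteq T_{(g)}$ and is $g$-divisible by Lemma~\ref{Sky2-1}, so $I^*/R$ is $g$-torsionfree; $2$-purity of $I^*/R$ is Lemma~\ref{thou11} applied to $\Ext^1_R(R/I,R)\cong I^*/R$. The easy inclusion $\overline{I^*}\subseteq\overline I^*$ is fine.

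The gap is at the key step. You expect $\Ext^1_R(I,R)\cong\Ext^2_R(R/I,R)=:E$ to have GK-dimension $\le 0$; in fact one only gets $\GKdim E\le 1$ (Gorenstein plus CM give grade $\ge 2$ for submodules of $E$). What your long exact sequence actually shows is that the cokernel of $\overline{I^*}\hookrightarrow\overline I^*$ is the kernel of $g$ on $E$, so you need $\ker(g\colon E\to E)$ finite-dimensional. Neither of your backups (``$\GKdim\le 0$'' or ``$g$-torsionfree quotient vanishes'') is correct as stated.

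The paper fills this by applying $\Ext^*_R(R/I,-)$ to $0\to R[-1]\xrightarrow{g}R\to\overline R\to 0$ in the \emph{second} variable. The point is that $\Ext^2_R(R/I,\overline R)\cong\Ext^2_{\overline R}(\overline R/\overline I,\overline R)$ by change of rings, and this is finite-dimensional because $\overline R=B(E,\sM(-\divd),\tau)$ is itself CM of dimension $2$. Hence $g\colon E[-1]\to E$ is surjective in large degree; since $\dim_\kk E_n<\infty$ this forces $\dim_\kk E_n$ eventually constant, so $g$ acts bijectively on $E$ in large degree and $\ker g$ is finite-dimensional. (Your first-variable sequence could reach the same conclusion one step further---the $g$-cokernel of $E$ embeds in $\Ext^2_R(\overline I,R)\cong\Ext^1_{\overline R}(\overline I,\overline R)$, which is again $\Ext^2_{\overline R}(\overline R/\overline I,\overline R)$---but you did not carry this out, and you did not identify the crucial input, namely CM of $\overline R$ rather than of $R$.)

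For Part~(2) you are over-complicating. The left ideal $J=M^*\subseteq R$ is \emph{automatically reflexive}, being a dual; hence $R/J$ is already $2$-pure by Lemma~\ref{thou11}(3), and no saturation or passage to a pure quotient is needed. Part~(1) then gives $\overline{J^*}\ppe\overline J^*$. Separately, $M^{**}/M$ is $g$-torsionfree of GK-dimension $\le 1$, so Lemma~\ref{lem:constant} gives $\overline{M^{**}}\ppe\overline M$. The chain
\[
\overline{M^*}=\overline J\ \ppe\ \overline J^{**}\ \ppe\ (\overline{J^*})^*=(\overline{M^{**}})^*\ \ppe\ \overline M^*
\]
then finishes, using CM of $\overline R$ for $\overline J\ppe\overline J^{**}$ and the trivial fact that $N\ppe Q$ implies $N^*\ppe Q^*$. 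Your closing remarks about maximal order pairs belong to Theorem~\ref{thought9}, not to this proposition.
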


\begin{proof}    
(1) By Lemma~\ref{thou11}(2),  $I^*/R$ is  2-pure.   By Lemma~\ref{Sky2-1},   $I^* \subseteq T_{(g)}$   and since $R$ is $g$-divisible, $T_{(g)}/R$ and 
hence $I^*/R$ are $g$-torsionfree.

   From  the  exact sequence $0 \to Rg \to R \to \bbar{R} \to 0$ we obtain  the long exact sequence of right $R$-modules \begin{equation}\label{thou8}
\begin{array}{ll}
&0  \too \Hom_R(R/I, \bbar{R})   \too \Ext^1_R(R/I, Rg) \too \Ext^1_R(R/I, R) \too \Ext^1_R(R/I, \bbar{R})  \\ \noalign{\vskip 6pt}
&\hfill \qquad \overset{\phi}{\too} \Ext^2_R(R/I, Rg) \overset{\psi}{\too} \Ext^2_R(R/I, R) \too \Ext^2_R(R/I, \bbar{R}) \too \cdots
\end{array}
\end{equation} 
By   Proposition~\ref{8-special22},  $\bbar{R}$ is Auslander-Gorenstein and CM. Thus  
    $N= \Ext^2_{\overline{R}}(\overline{R}/\overline{I}, \overline{R})$  has grade $j(N)\geq 2$ and  
hence  $\GKdim(N)\leq 2-2=0$. Therefore, by \cite[Lemma~7.9]{RSS2}  
  $ \Ext^2_R(R/I, \bbar{R}) = N $ is finite dimensional      and     the map  $\psi$ in \eqref{thou8} is surjective in
large degree.  If $E = \Ext^2_R(R/I, R)$, this says that $\psi: E[-1] \to E$ is surjective in large degree.
Since  $\dim_{\kk}E_n<\infty $ for each $n$,  
this forces $\dim_{\kk} E_n \geq  \dim_{\kk} E_{n+1}$ for all $n \gg 0$ and so 
$\dim_{\kk} E_n$ is eventually constant.  In turn, this forces $\phi$ to be zero in large degree.
  
  Next, observe that  $\Hom(R/I, \bbar{R}) = 0$ since $R/I$ is $g$-torsionfree.
Since $\phi$ is zero  in high  degree, the
complex 
\[0   \too \Ext^1_R(R/I, Rg) \too \Ext^1_R(R/I, R) \too \Ext^1_R(R/I, \bbar{R}) \too 0\]
is exact in high degree. 
Using \cite[Lemma~7.9]{RSS2}   this can be identified with the complex 
\[
0 \too (I^*/R)[-1] \overset{\alpha}{\too} I^*/R \too \Ext^1_{\overline{R}}(\overline{R}/\overline{I}, \overline{R})\too 0,
\]
where $\alpha$ is multiplication by $g$. As $I^*$ is $g$-divisible by Lemma~\ref{Sky2-1}(2), it follows that 
\[\overline{I^*}/\overline{R} \cong I^*/(R+I^*g) = \text{coker}(\alpha)
\ppe \Ext^1_{\overline{R}}(\overline{R}/\overline{I}, \overline{R})  = \overline{I}^*/\overline{R}.\]
In particular,     $\dim_\kk \overline{I^*} = \dim_\kk \overline{I}^*$for all $n\gg 0$, 
 and as there is an obvious inclusion $\overline{I^*} \subseteq \overline{I}^*$  we 
conclude that $\overline{I^*} \ppe \overline{I}^*$.   

(2)  Note that $M^{**}/M$ is a $g$-torsionfree module 
of GK-dimension 1, as in the proof of Proposition~\ref{correct25}.   
By Lemma~\ref{lem:constant}, $\dim_\kk \bigl( (M^{**}/M) \otimes_R \bbar{R} \bigr) < \infty$.  
Thus $\overline{M^{**}} \ppe \overline{M}$.  

 Let $J = M^*$. Since $J$ is a reflexive left ideal of $R$, the module $R/J$ is $2$-pure by Lemmas~\ref{8-special22}(3) and \ref{thou11}(3).  Thus Part (1) applies and shows that $\overline{J}^* \ppe \overline{J^*}$.  Next, $\overline{J} \ppe \overline{J}^{**}$ by another use of Lemmas~\ref{8-special22}(3) and \ref{thou11}(3).  Finally, it is easy to see that for any finitely generated graded $\overline{R}$-modules $N$ and $Q$ contained in $Q_{gr}(\overline{R})$, if $N \ppe Q$ then $N^* \ppe Q^*$.
Putting the pieces above together, we conclude that 
\[
\overline{M^*} = \overline{J} \ppe \overline{J}^{**} \ppe \bigl( \overline{J^*}\bigr)^* \ppe \overline{M}^*. \qedhere 
\]
\end{proof}

The last ingredient we need for the proof of Theorem~\ref{thought9} is the following 
description of the endomorphism ring of a torsion-free rank one module over a  
twisted homogeneous coordinate ring. 

\begin{lemma}\label{thought8}
Let $B  = B(E, \mc{L}, \tau)$, where $E$ is a smooth elliptic curve, $\deg \mc{L} \geq 1$, and $\tau$ is of infinite order.  
  Let $N$ be a finitely generated, graded right $B$-submodule of $\kk(E)[t, t^{-1};\tau]$; by  \cite[Theorem~1.3]{AV},   
  $N \ \ppe  \  \bigoplus_{r\geq 0} H^0\left(E,   \sO(\divq)  \otimes  \mathcal{L}_r  \right)$ for 
some divisor $\divq$.  Let $N^* = \Hom_B(N,B) \subseteq \kk(E)[t, t^{-1};\tau]$.
\begin{enumerate}
\item $\End_B(N) \ \ppe\  B\bigl(E,\, \mathcal{L}(\divq-\tau^{-1}(\divq)),\,\tau\bigr).$  
\smallskip
\item  $N N^* \ppe \End_{B}(N)$.
\item $N^* \ \ppe\    
 \bigoplus_{n\geq 0} H^0\Bigl(E,\,   \sL_n \otimes  \sO( -{\tau^{-n}}\bigl(\divq)\bigr)\Bigr);$
\end{enumerate}
\end{lemma}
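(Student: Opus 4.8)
\textbf{Plan of proof for Lemma~\ref{thought8}.}

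The plan is to reduce everything to an explicit computation with twisted homogeneous coordinate rings on the elliptic curve $E$, using the equivalence $\rqgr B \simeq \coh E$ from \cite{AV} together with the fact that finitely generated torsion-free rank one $B$-modules correspond, up to $\ppe$, to sheaves $\sO(\divq) \otimes \sL_r$ as stated. Throughout I will work modulo the relation $\ppe$, which is harmless since all three claims are statements up to a finite-dimensional vector space, and I will freely use that for finitely generated graded $B$-submodules $X, Y$ of $\kk(E)[t,t^{-1};\tau]$ one has $X \ppe Y \Rightarrow \End_B(X) \ppe \End_B(Y)$ and $X^* \ppe Y^*$ (the latter was already observed in the proof of Proposition~\ref{thou7}).

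First I would prove (3). Writing $N \ppe \bigoplus_{r \geq 0} H^0(E, \sO(\divq)\otimes \sL_r)$, a homomorphism $N \to B$ of degree $n$ is an element $\theta \in \kk(E)[t,t^{-1};\tau]_n$ with $\theta N \subseteq B$. Unwinding the twisted multiplication (as in the definition of the TCR in Section~\ref{HATTRICK}) and using that $\sL$ is $\tau$-ample so that the relevant sheaves are generated by their sections in high degree and the multiplication maps $H^0 \otimes H^0 \to H^0$ are surjective (by \cite[Lemma~3.1]{Rog09}, as in the proof of Lemma~\ref{equiv-TCR}), the condition $\theta N \subseteq B$ translates into the divisor inequality forcing $\theta \in H^0(E, \sL_n \otimes \sO(-\tau^{-n}(\divq)))$ in each large degree $n$; conversely every such section gives a homomorphism. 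This is the kind of direct sheaf-theoretic bookkeeping that appears already in the proof of Theorem~\ref{thm:projcurve}, so it should go through without real difficulty.

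Next, part (1): since $\End_B(N) = \{q \in \kk(E)[t,t^{-1};\tau] : qN \subseteq N\}$, the same translation into divisor inequalities shows that an element of degree $n$ lying in $\End_B(N)$ is exactly a section of $\sL_n \otimes \sO(\divq - \tau^{-n}(\divq))$ for $n \gg 0$, because the condition $qN \subseteq N$ on sections amounts to $q$ pushing $\sO(\divq)\otimes \sL_r$ into $\sO(\divq) \otimes \sL_{n+r}$ for all large $r$; telescoping the twists and using ampleness identifies this degree-$n$ piece with $H^0(E, (\sL(\divq - \tau^{-1}(\divq)))_n)$, which is precisely $B(E, \sL(\divq - \tau^{-1}(\divq)), \tau)_n$. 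For (2), I would note that $N^* N \subseteq B$ gives $N N^* \subseteq \End_B(N)$ automatically on one side; for the reverse inclusion up to $\ppe$, one compares Hilbert functions: using (1) and (3) one computes $\dim_\kk (NN^*)_n$ from the product of the two sheaf descriptions and checks, via Riemann--Roch on $E$, that for $n \gg 0$ the product $N_m (N^*)_{n-m}$ fills out all of $H^0(E, (\sL(\divq - \tau^{-1}(\divq)))_n)$ — again this rests on the surjectivity of multiplication of sections of sufficiently positive line bundles on the elliptic curve. Alternatively one can argue that $NN^*$ is a two-sided $\End_B(N)$-submodule of $\End_B(N)$ that is nonzero, hence a nonzero ideal of a just infinite ring (by Lemma~\ref{lem:T-prop}(2) applied to $\End_B(N) \subseteq \kk(E)[t,t^{-1};\tau]$), so it has finite codimension, giving $NN^* \ppe \End_B(N)$ directly; this second route is cleaner and I would prefer it.

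The main obstacle I anticipate is purely organisational rather than conceptual: keeping the twists by powers of $\tau$ straight when translating between the module-theoretic conditions $\theta N \subseteq B$, $qN \subseteq N$ and the corresponding statements about global sections of line bundles, since the TCR multiplication $x * y = x \otimes (\sigma^m)^*(y)$ introduces a twist depending on the degree. I would handle this by fixing the convention from the definition in Section~\ref{HATTRICK} once and for all, checking one twist carefully, and then letting the pattern propagate. The positivity hypothesis $\deg \sL \geq 1$ (equivalently $\tau$-ampleness, via Remark~\ref{remark1}(3) and \cite[Corollary~1.6]{AV}) is what guarantees all the "generated by sections'' and "multiplication of sections is surjective'' statements needed for the $\ppe$ identifications, and $|\tau| = \infty$ is needed so that $\kk(E)[t,t^{-1};\tau]$ is the correct graded quotient ring; both are in the hypotheses, so no extra work is required there.
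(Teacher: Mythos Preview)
Your proposal is correct and follows essentially the same route as the paper: for (1) and (3) the paper translates the module-theoretic conditions $qN\subseteq N$ (respectively $\theta N\subseteq B$) into inclusions of sheaves using that $\sO(\divq)\otimes\sL_n$ is generated by its sections $N_n$ for $n\gg 0$, exactly as you describe; and for (2) the paper uses precisely your ``cleaner'' argument, observing that $NN^*$ is a nonzero ideal of the just infinite ring $\End_B(N)$ via Lemma~\ref{lem:T-prop}(2). Your anticipated obstacle (bookkeeping of the $\tau$-twists) is indeed the only real care required, and the paper handles it by passing to the subsheaf $\mc{G}_r$ of $\kk(E)$ generated by $\End_B(N)_r$ and checking the inclusion $\mc{G}_r\,\mc{N}_n^{\tau^r}\subseteq \mc{N}_{n+r}$ directly.
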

\begin{proof}     
(1)  Write $G=\End_B(N) \subseteq  \kk(E)[t, t^{-1};\tau]$ and, for each $n$,  
let $\mc{G}_n$ be the subsheaf of the constant sheaf  $\kk(E)$ generated by   $G_n \subseteq \kk(E)$.
Let $\mc{N}_n  = \sO(\divq)  \otimes  \mathcal{L}_n$; thus $N_n = H^0(E, \mc{N}_n)$, and $N_n$ generates 
the sheaf $\mc{N}_n$, for $n\gg0$, say $n \geq n_0$.

For $n \geq n_0$ and $r \geq 0$, the equation $G_rN_n \subseteq N_{n+r}$ forces 
$\mc{G}_r \mc{N}_n^{\tau^r} \subseteq \mc{N}_{n+r}$ and thus
\[ 
  \sG_r \otimes \Bigl( \sO(\divq) \otimes \mathcal L_n \Bigr)^{\tau^r}  
\   \subseteq  \         \mathcal O(\divq) \otimes \mathcal L_{n+r}.
\]
Equivalently
$\mc{G}_r \subseteq \mc{L}_r(\divq - \tau^{-r}(\divq)) = \big(\mc{L}(\divq - \tau^{-1}(\divq))\big)_r$.  This shows 
that $G \subseteq B\bigl(E,\, \mathcal{L}(\divq-\tau^{-1}(\divq)),\,\tau\bigr)$.

Reversing this calculation shows that $\big(\mc{L}(\divq - \tau^{-1}(\divq))\big)_r \, \mc{N}_n^{\tau^r} \subseteq \mc{N}_{n+r}$
for $r, n \geq 0$ and taking sections for $n \geq n_0$ shows that 
$ B\bigl(E,\, \mathcal{L}(\divq-\tau^{-1}(\divq)),\,\tau\bigr) \subseteq \End_B(N_{\geq n_0})$.
To complete the proof we need to prove that $G \ppe  \End_B(N_{\geq n_0})$.
This follows   by \cite[Lemma~2.2(2)]{Rog09} and  \cite[Proposition~3.5]{AZ}  or by a routine computation.

(2)  Clearly $N N^*$ is an ideal of $\End_B(N)$.  However,  by Lemma~\ref{lem:T-prop}(2),
$\End_B(N)$ is just infinite, 
  and so $N N^* \ppe \End_B(N)$.

(3) The proof is similar to that of (1) and, as it will not be used  in the paper,   is left to the reader.
\end{proof}

\begin{proof}[Proof of Theorem~\ref{thought9}]
We first check that $\overline{F}  \ppe  \overline{U}.$
By Proposition~\ref{correct25}   there exists an ideal $K$ of $F$ contained in $U$ and satisfying $\GKdim(F/K)\leq 1$.  In particular, $\GKdim(F/U) \leq 1$.  By Lemma~\ref{Sky2-1}(3),  $U$ 
is $g$-divisible,  and so 
$N = F/U$ is $g$-torsionfree.  It follows from Lemma~\ref{lem:constant} 
that $\GKdim(\overline{F}/\overline{U}) = 0$, and so $ \overline{U} \ppe \overline{F}  .$

Now it is obvious that 
 $U\supseteq M M^*$.  Thus, using Proposition~\ref{thou7}(2),
\[ \bbar{U} \supseteq  (\bbar{M}) (\bbar{M^*}) \ppe (\bbar{M}) (\bbar{M}^*). \] 

Conversely,  by Lemma~\ref{Sky2-1}(3), 
$\overline{U} = \overline{\End_R(M)} \subseteq  \End_{\overline{R}}(\overline{M})$.
Now, $\bbar{R} = B(E, \sM(-\divd), \tau)$.  
Applying Lemma~\ref{thought8} to $\mc{L} = \mc{M}(-\divd)$ and $N = \overline{M}$ gives  
\[
(\bbar{M}) (\bbar{M}^*) \ppe \End_{\overline{R}}(\overline{M}) \ppe  B(E, \sM(-\divx),\tau),
\]
where in the notation of that lemma,    
 $\divy= \divq$ and $\divx = \divd-\divy+\tau^{-1}(\divy)$.    
 That $\divy$ is effective follows from $\bbar{R} \subseteq \bbar{M}$.
Combining the last two displayed equations gives
\eqref{thought98}.

Since $R \subseteq M$, necessarily $MT=T$.  
Thus the second paragraph of the theorem is just a restatement of  Proposition~\ref{correct25}. \end{proof}

 
\section{The structure of  \texorpdfstring{$g$}{LG}-divisible orders}\label{EXISTENCE}

In this   section we first refine the results from the last two sections to give strong structural results 
 for a $g$-divisible maximal $T$-order $U$  (see Theorem~\ref{thm:converse}) .   Then we use these results to
analyse both arbitrary $g$-divisible orders and ungraded subalgebras of $D=D_{gr}(T)$
(see Corollaries~\ref{main-ish} and \ref{localthm}, respectively).  
In particular, we show that  $U$ is part of a maximal order pair $(U,F)$ for which 
$F$ is a  blowup of $T$ at a (possibly non-effective) divisor $\divx=\divd-\divy+\tau^{-1}(\divy)$
 in the sense of Definition~\ref{def:blowup}.
Here, the divisor  $\divy$ can have arbitrarily high degree but  is not arbitrary, as 
we first explain.

 \begin{definition}\label{virtual-defn}
 Let $\divx$ be a divisor on $E$. For each  $\tau$-orbit $\mathbb{O}$ in $E$ pick    
$p=p_0\in \mathbb{O}$ such that 
  $\divx|_{\mathbb{O}} = \sum_{i=0}^k x_ip_i$, where $p_i=\tau^{-i}(p)$.
 Then $\divx$  is called a \emph{virtually effective} divisor if,  
 for each orbit $\mathbb{O}$ and all $j\in \mathbb{Z}$  the divisor    $\divx$ 
satisfies 
\begin{equation}\label{thought811}
\sum_{i\leq j} x_i\geq 0 \qquad\text{and}\qquad \sum_{i\geq j} x_i\geq 0.
\end{equation}
If $F$ is a blowup of $T$ at a virtually effective divisor $\divx$ then $F$ is called a \emph{virtual blowup}
of $T$.
 \end{definition}

The relevance of this condition is shown by the next result, in which the notation $\divu_k$ for a divisor $\divu$ 
comes from Notation~\ref{tau-notation}.

\begin{proposition}\label{thought81}
  \begin{enumerate}
\item  The divisor $\divx$ in Theorem~\ref{thought9} is virtually effective.
\item   A divisor $\divx$ is virtually effective  if and only if 
 $\divx$ can be written  as $\divx =  \divu -\divv+\tau^{-1}(\divv)$, where $ \divu $ is an  effective divisor
  supported on distinct $\tau$-orbits, and   $\divv$ is an effective  divisor such that 
   $0 \leq \divv \leq  \divu _k$ for some $k$. 
\end{enumerate}
\end{proposition}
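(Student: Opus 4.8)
\textbf{Proof plan for Proposition~\ref{thought81}.}

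The plan is to treat each $\tau$-orbit $\mathbb{O}$ separately, since both the definition of virtually effective and the claimed decomposition are local to orbits: once the statement is established orbit-by-orbit, one simply sums the contributions $\divu = \sum_{\mathbb{O}} \divu|_{\mathbb{O}}$, $\divv = \sum_{\mathbb{O}} \divv|_{\mathbb{O}}$. So fix an orbit, choose a base point $p = p_0$, and write $\divx|_{\mathbb{O}} = \sum_{i} x_i p_i$ with $x_i = 0$ for $|i| \gg 0$.

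For part (2), the easy direction is to show that $\divx = \divu - \divv + \tau^{-1}(\divv)$ of the stated form is virtually effective. Writing $\divu|_{\mathbb{O}} = e_r p_r$ for a single index $r$ (since $\divu$ is supported on distinct orbits, on $\mathbb{O}$ it is supported at one point) and $\divv|_{\mathbb{O}} = \sum_i v_i p_i$ with $0 \le \divv|_{\mathbb{O}} \le (\divu_k)|_{\mathbb{O}}$, one computes $x_i = e_r \delta_{i,r} - v_i + v_{i-1}$ and checks directly that the partial sums $\sum_{i \le j} x_i = (\text{contribution of } e_r \text{ if } r \le j) + v_j \ge 0$ and $\sum_{i \ge j} x_i = (\text{contribution of } e_r) - v_{j-1} \ge 0$, using the constraint $v_{j-1} \le e_r$ coming from $\divv \le \divu_k$. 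The harder direction is the converse: given \eqref{thought811}, I would reconstruct $\divu$ and $\divv$ explicitly. The natural guess is to let $e_p := \sum_i x_i$ be the total degree of $\divx$ on the orbit (this is $\ge 0$ by taking $j \to -\infty$, equivalently $j \to +\infty$, in \eqref{thought811}), and set $\divu|_{\mathbb{O}} = e_p p_r$ for a suitably chosen index $r$ (far enough to the right that $x_i = 0$ for $i > r$ — compare the choice of $p$ and $k$ in the proof of Proposition~\ref{prop:find D}, where exactly this $e_p$ appeared). Then define $\divv$ by the telescoping $v_{j} := \sum_{i \le j} x_i$ for $j < r$ and $v_j := \sum_{i > j} x_i$ for $j \ge r$; the two hypotheses in \eqref{thought811} are precisely what makes all $v_j \ge 0$, and one checks the two formulas agree at the overlap and that $\divu - \divv + \tau^{-1}(\divv)$ recovers $\divx$. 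The bound $\divv \le \divu_k$ for suitable $k$ then holds because $v_j \le e_p$ for all $j$ (again from \eqref{thought811}) and $\divu_k|_{\mathbb{O}} = e_p(p_r + p_{r+1} + \cdots + p_{r+k-1})$, so choosing $k$ large enough that the support of $\divv$ lies in $\{p_r, \dots, p_{r+k-1}\}$ — after possibly re-indexing the base point — gives the inequality termwise.

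For part (1), the divisor in Theorem~\ref{thought9} is $\divx = \divd - \divy + \tau^{-1}(\divy)$ with $\divd = \sum e_p p$ a normalised divisor (effective, supported on distinct orbits) and $\divy$ effective; by part (2) it therefore suffices to verify the constraint $0 \le \divy \le \divd_k$ for some $k$, or more directly to verify \eqref{thought811} for this $\divx$. Here I would trace back through where $\divy = \divq$ arose: it is the divisor attached via Lemma~\ref{thought8} to the module $\overline{M}$, where $R \subseteq M \subseteq T$ forces $\bbar R \subseteq \bbar M$, hence $\divq$ effective; and the normalisation of $\divd$ in Definition~\ref{normalised-notation}, built from the inequalities \eqref{positivity} in the proof of Proposition~\ref{prop:find D}, is exactly the assertion that $\divx = \divy + \divx_n - (\text{triangle})$ has the right shape — indeed \eqref{combinatorics} together with \eqref{positivity} literally writes the coefficients of $\divy + \divx_n$ in a form from which \eqref{thought811} for $\divx = \divd - \divy + \tau^{-1}(\divy)$ can be read off. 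The main obstacle I anticipate is purely bookkeeping: reconciling the indexing conventions (the $p_i = \tau^{-i}(p)$, the shift by $\tau^{-1}$ on $\divy$, the left-vs-right $\tau$ versus $\tau^{-1}$ asymmetry flagged before Lemma~\ref{lem:left layer}) so that the partial-sum inequalities \eqref{positivity} already proven for the geometric data of $\bbar U$ translate cleanly into \eqref{thought811} for $\divx$. No new idea should be needed beyond carefully matching these two combinatorial descriptions.
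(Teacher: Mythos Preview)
Your plan matches the paper's approach: for (2), define $e = \sum_i x_i$, take $\divu = ep$ for a suitable point, and build $\divv$ from partial sums; for (1), reduce to the positivity inequalities \eqref{positivity}. However, several of your explicit formulas have sign errors that would make the proof fail as written, and the indexing choice matters more than you suggest.

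In (2)$(\Leftarrow)$ the telescoping gives $\sum_{i \le j} x_i = (e_r\text{ if }r \le j) - v_j$ and $\sum_{i \ge j} x_i = (e_r\text{ if }j \le r) + v_{j-1}$, the opposite signs to what you wrote; the corrected versions still yield \eqref{thought811} but via the other hypothesis in each case. More seriously, in (2)$(\Rightarrow)$ placing $\divu = e_p p_r$ at the \emph{right} end of the support is incompatible with the sign in $\divx = \divu - \divv + \tau^{-1}(\divv)$: solving $v_{j-1} - v_j = x_j$ for $j < r$ with $v_j = 0$ for $j \ll 0$ forces $v_j = -\sum_{i \le j} x_i \le 0$, and your two piecewise formulas for $v_j$ do not agree at the interface. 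The paper places $\divu = e p_0$ at the \emph{left} end of the support and sets $v_j = \sum_{i \ge j+1} x_i$; then the two inequalities in \eqref{thought811} give exactly $v_j \ge 0$ and $v_j \le e$, whence $0 \le \divv \le \divu_k$.

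For (1) you conflate two different divisors: the $\divy$ in $\divx = \divd - \divy + \tau^{-1}(\divy)$ from Theorem~\ref{thought9} (namely the $\divq$ of Lemma~\ref{thought8}) and the geometric-data divisor of Corollary~\ref{cor:geomdata}. The paper uses only the latter, and the argument is short: since $\overline{F} \ppe B(E, \sM(-\divx), \tau)$ lies in $\overline{T}$ in large degree, the geometric data of $\overline{F}$ is $(0, \divx, k)$, and \eqref{positivity} with zero first entry is literally \eqref{thought811}. Your alternative route---show $0 \le \divy \le \divd_k$ and invoke (2)$(\Leftarrow)$---also works, but the bound $\divy \le \divd_k$ comes from $M \subseteq T$ (giving $\sO(\divq)\sM(-\divd)_n \subseteq \sM_n$), not from the normalisation of $\divd$; and you would need to note that the ``distinct orbits'' hypothesis on $\divu$ is not actually used in the $(\Leftarrow)$ direction of (2), since $\divd$ need not satisfy it.
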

\begin{proof}  (1) By Theorem~\ref{thought9},  $\overline{F} \ppe B(E,\mathcal{N},\tau)$, where $\sN= \sM(-\divx)$.
Since $\overline{F} \ppe \overline{U} \subseteq \overline{T} = B(E, \mc{M}, \tau)$, we must have 
$\sN_n \subseteq \sM_n$ for $n \gg 0$.  Now compare this with the computations in the proof of Corollary~\ref{cor:geomdata}.
In the notation of that proof $\mathcal{Y}=\mathcal{O}_E$ and hence $\divy=0$.  
Therefore, as is explained in the proof of \eqref{positivity}, this forces  Equation~\ref{thought811} to hold.

(2)  It is enough to prove this in the case that $\divx$ is supported on a single $\tau$-orbit $\mathbb{O}$ in $E$.

$(\Rightarrow)$ As in Definition~\ref{virtual-defn}, write $\divx=\sum_{i=0}^k x_ip_i$ for a suitable point $p_0\in \mathbb{O}$.   
 Set $e = \sum_{i \in \ZZ}x_i$ and  $ \divu  = ep$.  
 For $j \in \NN$, let $v_j = \sum_{i \geq j+1}x_i$ and put $\divv = \sum_{j \geq 0} v_j p_j$.

 By \eqref{thought811}, $\divv$ is effective.
Also, since $\sum_{i \leq j} x_i \geq 0$ for all $j$,  we have
$$v_j \, = \,  e - \sum_{i\leq j} x_i\,  \leq\,  e\quad \text{for $0 \leq j \leq k-1$, }$$
  while $v_j=0$ for $j \geq k$.  Therefore, $0 \leq \divv \leq  \divu _k = \sum_{i =0}^{k-1} ep_i$.
Finally
\[\begin{array}{rl}   \divu  - \divv + \tau^{-1}(\divv) \ = &
 ep_0 - \sum_{j \geq 0}\bigl(\sum_{i\geq j+1}x_i\bigr)p_j
  +\sum_{j\geq 0}\bigl(\sum_{i \geq j+1}x_i\bigr)p_{j+1} \\
 \noalign{\vskip 8pt}
  = & ep_0-\Bigl(\bigl(\sum_{i \geq 1}x_i\bigr)p_0 +
 \sum_{j \geq 1}\bigl(\sum_{i\geq j+1}x_i\bigr)p_j\Bigr)
  +\sum_{j\geq 1}\bigl(\sum_{i \geq j}x_i\bigr)p_{j}  
  \\
 \noalign{\vskip 8pt}
  = & \sum x_i p_i 
 \ = \  \divx.
\end{array}
\]

$(\Leftarrow)$  Although this is similar to Part~(1), it seems easiest to give a direct proof. 

Write $ \divu =ep=ep_0$ and $\divv= \sum v_i p_i$ for some point $p$ and some  $v_j\geq 0$.  
By definition  $ \divu _k = \sum_{i =0}^{k-1} ep_i$,
and so, by our assumptions,  $0 \leq v_i \leq e$ for $0 \leq i \leq k-1$, and  $v_i=0$ for all other $i$.
Therefore,
\[ \divx =  \divu -\divv+\tau^{-1}(\divv) = (e-v_0)p_0 + \sum_{i \geq 1} (v_{i-1}-v_i)p_i.\]
If $j \leq -1$ then $x_j=0$ and $\sum_{i \leq j} x_i =0$.  If $j \geq 0$, then 
$ \sum_{i \leq j} x_i = e-v_j \geq 0.$
Similarly, if $j \leq 0$ then $\sum_{i \geq j} x_i = e \geq 0$, while 
if $j \geq 1$ then 
\[ \sum_{i \geq j} x_i = \sum_{i = j}^k(v_{i-1}-v_i) = v_{j-1}-v_k = v_{j-1} \geq 0.\]
Thus \eqref{thought811} is satisfied.
\end{proof}

   We are now ready to state our main result on the structure of $g$-divisible maximal $T$-orders.

\begin{theorem}\label{thm:converse} 
\begin{enumerate} 
\item 
Let $V \subseteq T$ be a $g$-divisible cg   maximal $T$-order.  Then the following hold.
\begin{enumerate}
\item  There is a maximal order $F\supseteq V$ such that $(V,F)$ is   a maximal order pair.  
\item    $F$ is a virtual blowup of   $T$ at a virtually effective 
divisor $\divx = \divu-\divv+\tau^{-1}(\divv)$ satisfying $0 \leq \deg \divx < \mu$. 
\item  $\bbar{V}\ppe \bbar{F}  \ppe B(E, \sM(-\divx), \tau)$.
\end{enumerate}
\item If $U\subseteq T$ is any $g$-divisible cg subalgebra with  $Q_{gr}(U)= Q_{gr}(T)$,  there exists 
a maximal order pair  $(V,F)$ as in (1) such that $U$ is contained in and equivalent to $V$.
\item Conversely, let $\divx $ be a virtually effective divisor with $\deg \divx < \mu$. Then 
there exists a blowup $F$ of $T$ at $\divx$.
\end{enumerate}
\end{theorem}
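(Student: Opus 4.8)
The strategy is to realise the desired $F$ as an endomorphism ring over an ordinary (effective) blowup and then to read off its structure from Theorem~\ref{thought9}. First I would invoke Proposition~\ref{thought81}(2) to write $\divx = \divu - \divv + \tau^{-1}(\divv)$, where $\divu$ is an effective divisor supported on distinct $\tau$-orbits with $\deg\divu = \deg\divx < \mu$, and $\divv$ is an effective divisor with $0 \le \divv \le \divu_k$ for some $k \ge 1$. Set $R = T(\divu)$. By Proposition~\ref{8-special22}, $R$ is a $g$-divisible cg algebra which is a maximal order in $Q_{gr}(R) = Q_{gr}(T)$, is Auslander-Gorenstein and CM, and has $\bbar{R} = B(E, \sN, \tau)$ with $\sN := \sM(-\divu)$. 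These are exactly the standing hypotheses of Theorem~\ref{thought9} with $\divd = \divu$, so once a suitable module is in hand that theorem applies.

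The technical core is to produce a finitely generated $g$-divisible graded right $R$-module $M$ with $R \subseteq M \subseteq T$ and
\[ \bbar{M} \ppe \bigoplus_{n\ge0} H^0(E,\, \sO_E(\divv)\otimes\sN_n). \]
One cannot obtain such an $M$ by simply imposing this vanishing condition on $\bbar{x}$: any module cut out mod $g$ contains $gT$ and hence has $g$-divisible hull equal to $T$. Instead $M$ must be assembled from the layered right ideals of Section~\ref{RIGHT IDEALS} and \cite{RSS2}. The model is the $g$-divisible right $R$-module $T_{\le k}*T(\divu) = \bigoplus_n M(n-k, \divu^{\tau^k})_n \subseteq T$, whose reduction is $\bigoplus_n H^0(E, \sM_n(-\divu^{\tau^k}-\cdots-\divu^{\tau^{n-1}})) = \bigoplus_n H^0(E, \sO_E(\divu_k)\otimes\sN_n)$ by Lemma~\ref{lem:layer}. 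Since $0 \le \divv \le \divu_k$, on each $\tau$-orbit one caps the prescribed vanishing multiplicities at the values recorded by $\divv$ — concretely, by intersecting $T_{\le k}*T(\divu)$ with suitably chosen ideals $Q(i,r,e,p_j)$ — to obtain a $g$-divisible right $R$-submodule $M$ of $T_{\le k}*T(\divu)$ with the displayed reduction. (An alternative is to take $M = \Hom_R(L, R)$ for an appropriate reflexive $g$-divisible left ideal $L$ of $R$ built from a left divisor layering adapted to $\divv$; then $R/L$ is $2$-pure by Lemma~\ref{thou11}(3), and Proposition~\ref{thou7}(1) together with Lemma~\ref{thought8} identify $\bbar{M} \ppe \overline{L}^*$ with the displayed module.) This is the step where the combinatorial constraint $\divv \le \divu_k$ — precisely the content of virtual effectivity via Proposition~\ref{thought81} — is indispensable, and it is the main obstacle; everything else is assembly of earlier results.

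Granting such an $M$, I would apply Theorem~\ref{thought9} with $\divd = \divu$. It yields $F = \End_R(M^{**})$, shows that $(V, F)$ is a maximal order pair with $V = F \cap T$ and $Q_{gr}(F) = Q_{gr}(T)$, and gives $\bbar{F} \ppe B(E, \sM(-\divx'), \tau)$ where $\divx' = \divu - \divy + \tau^{-1}(\divy)$ and $\divy$ is the effective divisor determined by $\bbar{M} \ppe \bigoplus_n H^0(E, \sO_E(\divy)\otimes\sN_n)$. By the construction of $M$ the sheaf $\sO_E(\divy)$ equals $\sO_E(\divv)$, so $\divy$ and $\divv$ are linearly equivalent; hence $\divx'$ and $\divx$ are linearly equivalent and $\sM(-\divx') \cong \sM(-\divx)$, giving $\bbar{F} \ppe B(E, \sM(-\divx), \tau)$. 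Thus $F$ satisfies conditions (i) and (ii) of Definition~\ref{def:blowup}, i.e. $F$ is a blowup of $T$ at $\divx$; and since $\divx$ is virtually effective, $F$ is a virtual blowup. This completes the proof.
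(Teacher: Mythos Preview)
Your overall strategy matches the paper's exactly: decompose $\divx = \divu - \divv + \tau^{-1}(\divv)$ via Proposition~\ref{thought81}(2), build a suitable $g$-divisible right $T(\divu)$-module $M$ with $T(\divu)\subseteq M\subseteq T$ and $\bbar{M}\ppe\bigoplus_n H^0(E,\sO_E(\divv)\otimes\sN_n)$, then apply Theorem~\ref{thought9}. The paper does not construct $M$ by hand; it cites \cite[Lemma~5.10]{RSS2}, which produces precisely such an $M$ with $\bbar{M}\ppe\bigoplus_n H^0(E,\sM_n(-\divu_n+\divv))$. Your two sketches (capping the layers of $T_{\le k}*T(\divu)$ via intersections with $Q(i,r,e,p_j)$, or dualising a reflexive left ideal built from a left divisor layering) are reasonable outlines of how that companion-paper lemma is proved, and you are right that this is the only step with real content and that the constraint $0\le\divv\le\divu_k$ is exactly what makes it possible.

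One unnecessary wrinkle: your final paragraph worries about $\divy$ being only linearly equivalent to $\divv$. But in Lemma~\ref{thought8} and the proof of Theorem~\ref{thought9}, the divisor $\divy=\divq$ is read off directly from the equality $\bbar{M}\ppe\bigoplus_n H^0(E,\sO_E(\divq)\otimes\sN_n)$ inside $\kk(E)[t,t^{-1};\tau]$; since your $M$ is built so that this holds with $\divq=\divv$ as subsheaves of $\kk(E)$, you get $\divy=\divv$ on the nose and hence $\divx'=\divx$ exactly. No linear-equivalence argument is needed.
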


\begin{proof} 
(1) By definition,  $Q_{gr}(V)= Q_{gr}(T)$. 
Now combine Corollary~\ref{correct3}(1,2),  Theorem~\ref{thought9} and Proposition~\ref{thought81}.

(2) By Theorem~\ref{thm-correct}, $U$ is contained in and equivalent to some $\End_{T(\divd)}(M)$
which, in turn,  is contained in and  equivalent to a maximal $T$-order by Proposition~\ref{correct25}.

(3)  Write $\divx = \divu-\divv+\tau^{-1}(\divv)$, where 
 $\divu$, $\divv$, and $k$ are defined  by applying Proposition~\ref{thought81} to $\divx$.  
 By \cite[Lemma~5.10]{RSS2},
   there is a $g$-divisible finitely generated right $T(\divu)$-module $M$ with 
 $T(\divu) \subseteq M \subseteq MT=T$ so that $\bbar{M} \ppe \bigoplus_n H^0(E, \sM_n(-\divu_n+\divv))$. 
 Let $F = \End_R(M^{**})\supseteq U = \End_R(M)$.  By Theorem~\ref{thought9}
and   Lemma~\ref{thought8}(1,2), we have
\[ \bbar{F} \ppe \bbar{U} \ppe  \bbar{M} (\bbar{M})^* \ppe B(E, \sM(-\divx), \tau),\]
and $(F \cap T, F)$ is a maximal order pair. 
\end{proof}

 \begin{remark}\label{remark-7.45} 
 We should explain why   $F$ is called  a virtual blowup 
  of $T$ at $\divx$ both  in this theorem and in  Definition~\ref{virtual-defn}.
 When $\divx$ is effective this is amply justified   in \cite{Rog09} and,
 in that case,  $T(\divx)$ satisfies many of the basic properties of a commutative blowup; in particular it agrees with Van den Bergh's more categorical blowup \cite{VdB}.
 For non-effective $\divx$ there are  several reasons why the notation is reasonable. 
 \begin{enumerate}
 \item As we have shown repeatedly in this paper  the factor
 $ \overline{U}$ of a $g$-divisible algebra $U$  controls much of $U$'s behaviour and so 
 Theorem~\ref{thm:converse}(1c) shows  that $F$ will have
 many of  the basic properties of a blowup at an effective divisor.
\item  This is also supported by the fact, by Theorem~\ref{thm-correct},
 $F$ and $T(\divu)$  are 
equivalent maximal orders  and,   again, many 
properties pass through such a Morita context. 
\item Finally, in the commutative case virtual blowups are blowups,
 both because virtually effective divisors are then effective  and because 
equivalent maximal orders are then equal.
\end{enumerate}  
 \end{remark}
 
   Theorem~\ref{thm:converse} can be easily used to describe arbitrary $g$-divisible subalgebras of $T$.
   We recall that the \emph{idealiser} of a left ideal $L$ in a ring $A$ is the subring 
   $\idealiser(L)=\{\theta\in A :  L \theta \subseteq L\}$.

 \begin{corollary}\label{main-ish} 
 Let $U\subseteq T$ be a $g$-divisible subalgebra with 
 $Q_{gr}(U)=Q_{gr}(T)$. Then,  $U$ is a iterated subidealiser inside a virtual blow-up of~$T$. 
 More precisely, we have the following chain of rings:
 \begin{enumerate}
 \item There is a  virtually effective  divisor $\divx=\divu-\divv+\tau^{-1}(\divv)$ with 
 $\deg(\divx) <\mu $  and a blowup $F$ of $T$ at $\divx$ 
 such that $V=F\cap T$   contains and is equivalent  to $U$,
 while  $(V, F)$ is a maximal order pair.
 
 \item   There exist a $g$-divisible algebra $W$ with $U\subseteq W\subseteq V$ such that $U$ is a 
 right sub-idealiser inside $W$ and $W$ is a left  sub-idealiser inside $V$. In more detail, 
 \begin{enumerate}
 \item  There exists a graded $g$-divisible left ideal $L$ of $V$ such either $L = V$ or else $V/L$ is 
 2-pure, and a $g$-divisible ideal  $K$ of  $X=\idealiser(L)$ such that
  $K\subseteq W\subseteq X$
 and $\GKdim_X(X/K)\leq 1$;
 \item $V$ is a finitely generated left $W$-module, while $X/K$ is a finitely generated $\kk[g]$-module and so $X$ is finitely generated over $W$ on both sides;  
 \item   the   properties given for $W\subseteq V$ also hold for the pair $U\subseteq W$, but with left and right interchanged.
\end{enumerate}
 \end{enumerate}
 \end{corollary}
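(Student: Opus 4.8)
The strategy is to build the chain of rings $U \subseteq W \subseteq V \subseteq F$ by combining Theorem~\ref{thm:converse}(2), which already hands us the maximal order pair $(V,F)$ with $U \subseteq V$ equivalent to $U$, together with a careful analysis of the inclusion $U \subseteq V$ via a suitable ``endomorphism-ring'' interpolation. First I would invoke Theorem~\ref{thm:converse}(2) to obtain a $g$-divisible maximal $T$-order $V$ containing and equivalent to $U$, and a maximal order $F$ so that $(V,F)$ is a maximal order pair; by Theorem~\ref{thm:converse}(1b,1c) and Proposition~\ref{thought81}, $F$ is a virtual blowup of $T$ at a virtually effective divisor $\divx = \divu - \divv + \tau^{-1}(\divv)$ with $\deg \divx < \mu$. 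This gives part~(1) directly.

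For part~(2), the key is to factor the inclusion $U \subseteq V$ through an idealiser. Since $U$ and $V$ are equivalent orders, there is a nonzero ideal-like bimodule relating them; the natural candidate for $W$ (coming from the ``one step'' of Corollary~\ref{correct3} and Proposition~\ref{correct25} applied on the appropriate side) is $W = \End_V(M)$ for $M = \wh{UV}$, or dually $\idealiser$ of a suitable left ideal. Concretely, I would set $L = (\text{the reflexive closure of a } g\text{-divisible left ideal of } V \text{ that } U \text{ respects})$: take a nonzero homogeneous $a \in V$ with $Va \subseteq U$ (clearing denominators and using $g$-divisibility to arrange $a \notin gV$ as in the proof of Proposition~\ref{thm:equivord}), let $L = (VaV)^{**}$ or more simply $L = \wh{Va}$, and put $X = \idealiser_V(L)$. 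One checks using Lemma~\ref{Sky2-1} and Lemma~\ref{lem:finehat} that $L$ is $g$-divisible and, by Lemmas~\ref{8-special22}(3) and~\ref{thou11}(3) applied to $\bbar{V}$ (or rather to $\bbar{F}$, which is Auslander-Gorenstein and CM by Proposition~\ref{8-special22} since it is, up to finite dimension, a TCR), that $V/L$ is $2$-pure unless $L = V$. Then $W$ is sandwiched as $U \subseteq W \subseteq X \subseteq V$ with $X/W$ controlled by a $g$-divisible ideal $K$ of $X$ of GK-dimension drop, exactly as in the proof of Proposition~\ref{correct25}: one produces $K = \rann_X(X/W) \cdot \lann_X(\text{something})$ and invokes \cite[Lemma~5.3]{KL} and Lemma~\ref{lem:constant} to see $\GKdim_X(X/K) \le 1$ and that $X/K$ is a finitely generated $\kk[g]$-module, hence $X$ is finitely generated over $W$ on both sides. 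The finite generation of $V$ as a left $W$-module follows from the graded Nakayama argument used at the end of the proof of Proposition~\ref{thm:equivord}, using $g \in W_+ \subseteq V_+$. The symmetric statement (2c) for $U \subseteq W$ is obtained by running the same construction with left and right exchanged, which is legitimate since Assumption~\ref{ass:T} is left-right symmetric and all the cited results (Lemma~\ref{Sky2-1}, Proposition~\ref{correct25}, Lemma~\ref{thou11}) have left-handed analogues.

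\textbf{Main obstacle.} The delicate point is organising the two-step factorisation $U \subseteq W \subseteq V$ so that \emph{both} steps are genuinely sub-idealiser inclusions with the stated $2$-purity and GK-dimension-one properties, rather than just containments of equivalent orders. The subtlety is that an equivalent-order inclusion $U \subseteq V$ need not itself be an idealiser, but must be split as (idealiser step) followed by (a step controlled by a GK$\le 1$ ideal); I expect the correct bookkeeping requires first passing to $X = \idealiser_V(L)$ for a well-chosen reflexive $g$-divisible $L$, then handling the residual inclusion $U \subseteq X$ (respectively $W \subseteq X$) by an ideal $K$ — and verifying that the residual piece really does have GK-dimension at most $1$ forces a careful use of $2$-purity of $V/L$ (so that $L$ is ``as big as possible'' and the cokernel is small). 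Getting the purity to propagate correctly between $V$, $F$, and the TCR $B(E,\sM(-\divx),\tau)$ via the $\ppe$-relation, and checking $g$-divisibility is preserved under idealisers (which follows from Lemma~\ref{Sky2-1}(2) applied to $L$ and $L$), is where the real work lies; the rest is assembling already-proven lemmas.
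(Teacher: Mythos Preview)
Your approach to part~(1) is correct and matches the paper exactly: invoke Theorem~\ref{thm:converse}(1,2).

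For part~(2), however, there are two genuine gaps.

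\textbf{First, your definition of $W$ does not work.} You propose $W=\End_V(M)$ with $M=\wh{UV}$, but since $U\subseteq V$ we have $UV=V$ and hence $M=\wh{V}=V$, giving $W=V$. The paper instead defines $W$ directly and concretely: choose nonzero homogeneous $a,b\in U$ with $aVb\subseteq U$ and set $W=\wh{U+Vb}$. Then $Vb\subseteq W$ shows at once (via $V\cong Vb[n]$ and noetherianity of $W$ from Proposition~\ref{div-noeth}) that $V$ is a finitely generated left $W$-module---no Nakayama argument is needed. Symmetrically, $aW=\wh{aW'}\subseteq\wh{U}=U$ by Lemma~\ref{lem:finehat}(1), which sets up the dual step for $U\subseteq W$.

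\textbf{Second, your route to $2$-purity fails.} You want to take $L=\wh{Va}$ or its reflexive hull and deduce that $V/L$ is $2$-pure from Lemma~\ref{thou11}(3), using that $\bbar{F}$ is Auslander--Gorenstein and CM ``by Proposition~\ref{8-special22} since it is, up to finite dimension, a TCR.'' But Proposition~\ref{8-special22} applies only to blowups $T(\divd)$ at \emph{effective} divisors; for virtual blowups $F$ the CM and Auslander--Gorenstein conditions can fail, as Example~\ref{non-AG} shows explicitly. So Lemma~\ref{thou11}(3) is not available for $V$ or $F$ in general. The paper avoids homological input entirely here: having built $W$, it sets $K=\rann_W(V/W)$ (a nonzero $g$-divisible ideal of $W$ and right ideal of $V$), and then \emph{defines} $L$ so that $L/K$ is the largest left $V$-submodule of $V/K$ with $\GKdim\le 1$. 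This forces $V/L$ to be $2$-pure (or $L=V$) by construction. One then checks $L$ is a $(V,W)$-bimodule, that $K$ and $L$ are $g$-divisible, and sets $X=\idealiser_V(L)$; the ideal $K$ already lies in $X$ and Lemma~\ref{GK-results}(4) gives $\GKdim(X/K)\le 1$ via the product $IL$ where $I=\lann_V(L/K)$. The $\kk[g]$-finiteness of $X/K$ then follows from Lemma~\ref{lem:constant}.

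In short, the paper builds $W$ first and then manufactures $L$ to be $2$-pure by definition, rather than attempting to extract purity from reflexivity over a ring that need not be CM.
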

 
  \begin{proof}  (1)  Use    Theorem~\ref{thm:converse}(1,2).
      
 (2)  By (1), $aVb\subseteq U$ for some $a,b\in U\smallsetminus\{0\}$. Set $W'=U+Vb$ and $W=\wh{W'}$. 
  By Lemma~\ref{lem:finehat}(1) $aW = \wh{aW'}\subseteq \wh{U}=U.$  
 By Proposition~\ref{div-noeth}, $W$ is noetherian and so (modulo a shift) 
 $V\cong Vb$ is  a finitely generated left $W$-module. 
 Similarly,  $W$ is a finitely generated right $U$-module.  
  We will now just prove Parts~(2a,2b) leaving the reader to check that the same argument does indeed work
   for the pair $(U,W)$.

 Write $V=\sum_{i=1}^vWe_i$ for some $e_i$. Then the right annihilator  
 $K=\text{r-ann}_W(V/W)=\bigcap\text{r-ann}(e_i)\not=0.$  
 Let $L/K$ be the largest left $V$-submodule  of $V/K$ with $\GKdim(L/K)\leq 1$.    Then either $L = V$, or else $V/L$ is $2$-pure.  For 
$a\in W$, the module $(La+K)/K$  is a homomorphic image of $La/Ka$  and hence of $L/K$.  Thus
 $\GKdim((La+K)/K)\leq 1$ and $La\subseteq L$; in other words, $L$
  is still a $(V,W)$-bimodule.    

As $W=\wh{W}$, it is   routine to see that $K$ is $g$-divisible, but since we use the argument several times we give the details.
So, suppose that $\theta g\in K$ for some $\theta\in V$. Then $(V\theta) g\subseteq \wh{W}=W$, whence $V\theta\subseteq  W$ and $\theta\in K$, as required.
It follows that $L/K$ is $g$-torsionfree and so,  by Lemma~\ref{lem:constant},  $L/K$ is a finitely generated 
right $\kk [g]$-module. Thus, by  \cite[Lemma~5.3]{KL},   $I= \ell\text{-ann}_V(L/K)$ satisfies $\GKdim_V(V/I) = \GKdim(L/K)  \leq 1$.  Again, $I$ is $g$-divisible.  Also, if $\theta\in V$ has $\theta g \in L$
  then $(I \theta)g \subseteq  K$ and so $I \theta\subseteq K$.  
Hence $\GKdim(V\theta+K)/K  \leq \GKdim(V/I)\leq 1$ 
  and $\theta\in L$.  So $L$ is also $g$-divisible. 

Finally,  let $X = \idealiser_V(L) = \{ x \in V | Lx \subseteq L \}$.  As usual,   $X$ is $g$-divisible.  
Clearly $IL$ is an ideal of $X$, and since $I$ and $L$ are $g$-divisible,  
$\GKdim(X/IL)\leq 1$,   by Lemma~\ref{GK-results}(4).  Since $X\supset K \supseteq IL$, 
it follows that $\GKdim X/K\leq 1$.
Finally, since $X/K$ is $g$-torsionfree of GK-dimension $1$, it must be a finitely generated $\kk [g]$-module 
by Lemma~\ref{lem:constant}; in particular, $X/W$ and hence $X$ are finitely generated as right $W$-modules.
\end{proof}


There is a close correspondence between   subalgebras $A$  of 
the  function skewfield $D = D_{\gr}(T)$   and 
$g$-divisible subalgebras of $T_{(g)}$ and so 
we  end the section by  studying the consequences of our earlier results for   such an algebra $A$.
  
For  a cg subalgebra $R\subseteq T_{(g)}$ with $g\in R$,   define \label{circ-defn}
\[R^\circ = R[g^{-1}]_0 = \bigcup_{n \geq 0} R_ng^{-n} \subseteq D = D_{gr}(T).\]
 Conversely,  given an algebra $A\subseteq  T^\circ$,  define \label{Omega-defn}
\[   \Omega A =\bigoplus_{m\geq 0} (\Omega A)_m
\quad \text{for}\quad   (\Omega A)_m = \{a\in T_m : ag^{-m}\in A\}.\] 
Clearly $\Omega A$ is $g$-divisible with $(\Omega A)^\circ=A$ and, 
if $R\subseteq T$, then $\Omega(R^\circ)= \wh{R}$; thus we obtain 
 a (1-1) correspondence between cg $g$-divisible subalgebras of $T$ and 
  subalgebras of $T^{\circ}$.
  
Given a left ideal $I$ of $R$ or a left ideal $J$ of $A$ we define $I^\circ$ and $\Omega J$ by the same  formul\ae.  
If $R$ is $g$-divisible, the map $I \mapsto I^{\circ}$  gives a (1-1) correspondence between $g$-divisible left ideals of $R$ and 
left ideals of $R^\circ$, with analogous results for two-sided ideals (see \cite[Proposition~7.5]{ATV2}).

An algebra $A \subseteq T^\circ$ is filtered  by 
 $ A=\bigcup \Gamma^nA$   for $  \Gamma^nA= (\Omega A)_ng^{-n}$.
  By \cite[Lemmas~2.1 and  2.2]{RSS},  
\begin{equation}
\label{super-eq}
\gr_\Gamma A \ = \ \bigoplus \Gamma^nA/\Gamma^{n-1}A \ \cong\ \Omega A/g\Omega A,
\end{equation}
 where the isomorphism  is induced by 
 $x = rg^{-n} \in \Gamma^nA \smallsetminus \Gamma^{n-1}A\mapsto r \in \Omega A$.

 \begin{lemma}\label{lem:local}
 Let $A, A'$ be orders in $T^\circ$.  Then $A$ and $A'$ are equivalent orders if and only
  if $\Omega A$ and $\Omega A'$ are equivalent orders in $Q_{gr}(T)$.
 \end{lemma}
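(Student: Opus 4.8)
The statement is essentially a translation lemma between equivalence of orders inside the $\mathbb{Z}$-graded ring $Q_{gr}(T)$ and equivalence of orders inside the ungraded ring $T^\circ = T[g^{-1}]_0$. The natural strategy is to pass back and forth using the correspondence $A \mapsto \Omega A$ and $R \mapsto R^\circ$, together with the fact that all the algebras involved are noetherian (by \cite{RSS}, any subalgebra of $T^\circ$ is noetherian, and by Proposition~\ref{div-noeth} $g$-divisible cg subalgebras of $T$ are noetherian). First I would reduce to the statement about bimodule containments: $A$ and $A'$ are equivalent orders iff there are nonzero $a,b \in T^\circ$ with $a A b \subseteq A'$ and symmetrically (clearing denominators, one can take $a,b \in A'$), and similarly $\Omega A$ and $\Omega A'$ are equivalent orders iff there are nonzero homogeneous $x,y$ with $x(\Omega A)y \subseteq \Omega A'$ and symmetrically. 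So the whole content is to show that such elements on one side can be manufactured from such elements on the other.

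\textbf{Key steps.} For the direction $(\Leftarrow)$: given nonzero homogeneous $x \in T_s$, $y \in T_t$ with $x(\Omega A)y \subseteq \Omega A'$, I would set $a = x g^{-s}$ and $b = y g^{-t}$ in $T^\circ$. Using that $g$ is central and that $(\Omega A)_n g^{-n}$ exhausts $A$, a direct computation gives $a A b \subseteq A'$: for $\alpha \in \Gamma^n A$ write $\alpha = r g^{-n}$ with $r \in (\Omega A)_n$, so $a\alpha b = (xry)g^{-(s+t+n)}$ and $xry \in (\Omega A')_{s+t+n}$, hence $a\alpha b \in A'$. The symmetric inclusion follows the same way, so $A, A'$ are equivalent. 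For the direction $(\Rightarrow)$: given nonzero $a, b \in T^\circ$ with $aAb \subseteq A'$, I would first clear denominators — write $a = x g^{-s}$, $b = y g^{-t}$ with $x, y \in T$ homogeneous and, by $g$-divisibility of $\Omega A'$, arrange $x, y \notin gT$ if desired (though this may not be needed). Then for any homogeneous $r \in (\Omega A)_n$ one has $rg^{-n} \in A$, so $a(rg^{-n})b \in A'$, i.e.\ $(xry)g^{-(s+t+n)} \in A'$, which by definition of $\Omega A'$ means $xry \in (\Omega A')_{s+t+n}$. Thus $x(\Omega A)y \subseteq \Omega A'$, and symmetrically, giving equivalence of $\Omega A$ and $\Omega A'$ as orders in $Q_{gr}(T)$. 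One should also check both are genuinely orders with the same graded quotient ring — this is where one uses that $A$ and $A'$ have common quotient ring $Q(T^\circ)$ and hence $\Omega A, \Omega A'$ have graded quotient ring $Q_{gr}(T)$, which follows from the $(1$-$1)$ correspondence and Lemma~\ref{lem:expectQ}.

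\textbf{Main obstacle.} The computations are genuinely routine once the setup is right; the only mildly delicate point is the bookkeeping with the filtration $\Gamma^n A = (\Omega A)_n g^{-n}$ and the fact that a general element of $A$ need not be homogeneous in any obvious sense — but since $A = \bigcup_n \Gamma^n A$ and each $\Gamma^n A$ is spanned by elements $rg^{-n}$, it suffices to test the bimodule containments on such elements, and centrality of $g$ makes everything commute through cleanly. I do not anticipate a serious difficulty here; the lemma is a formal consequence of the correspondence \eqref{super-eq} and the definitions, and the proof should be only a few lines. The one thing to be careful about is not to introduce spurious powers of $g$: since $\Omega A'$ is $g$-divisible, if $xry \in gT$ one can divide by $g$, but it is cleaner simply to record the containment $x(\Omega A)y \subseteq \Omega A'$ as stated without any normalization.
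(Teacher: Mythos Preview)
Your proposal is correct and follows essentially the same approach as the paper: both reduce the equivalence to translating the single containment $aAb \subseteq A'$ into $(ag^s)(\Omega A)(bg^t) \subseteq \Omega A'$ via the filtration $\Gamma^n A = (\Omega A)_n g^{-n}$ and centrality of $g$. The paper compresses your two directions into a single iff-chain, but the computation is identical.
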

 \begin{proof}
 Let  $0\not=a \in \Gamma_m A'$ and $0\not=b \in \Gamma_n A'$.  
To prove the lemma, it suffices to show that 
  $a A b \subseteq A'$  if and only if    $a g^m (\Omega A) b g^n \subseteq \Omega A'$.  
 However, if $0\not=\alpha\in \Omega A$, write $\alpha = x g^k$ for some $k$ and  $x\in A$.  Then 
 \[\text{$axb \in A'$ $\iff$  $axb \in \Gamma_{m+n+k}A'$  $\iff$ $ag^m (xg^k) bg^n \in \Omega A'$,}\] as desired.    \end{proof}

 \begin{corollary}\label{local1}
    A subalgebra $A \subseteq T$ is a   maximal $T^\circ$-order if and only if $\Omega(A)$ is a 
     maximal $T$-order. \qed
   \end{corollary}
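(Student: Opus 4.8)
The plan is to push the maximal-order condition across the inclusion-preserving bijections $R\mapsto R^{\circ}$ and $A\mapsto\Omega A$ between $g$-divisible cg subalgebras of $T$ and subalgebras of $T^{\circ}$, using Lemma~\ref{lem:local} to carry equivalence of orders in both directions. First I would record the routine bookkeeping needed for the two sides to even make sense. If $A$ is a maximal $T^{\circ}$-order then $Q(A)=Q(T^{\circ})=D_{gr}(T)$, and since $(\Omega A)^{\circ}=A$ one checks $D_{gr}(\Omega A)=Q((\Omega A)^{\circ})=Q(A)=D_{gr}(T)$, whence $Q_{gr}(\Omega A)=D_{gr}(T)[g,g^{-1}]=Q_{gr}(T)$; so $\Omega A$ is an order for which being a maximal $T$-order is meaningful, and conversely. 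I would also note $\Omega(T^{\circ})=\wh T=T$ because $T$ is $g$-divisible, and that $\Omega$ and $(\cdot)^{\circ}$ preserve inclusions and are mutually inverse on the relevant classes.

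For the implication ``$\Omega A$ maximal $\Rightarrow$ $A$ maximal'', I would argue by contradiction: given an order $A\subsetneq A'\subseteq T^{\circ}$ equivalent to $A$, apply $\Omega$ to get $\Omega A\subseteq\Omega A'\subseteq\Omega(T^{\circ})=T$, where the inclusion $\Omega A\subseteq\Omega A'$ is proper since otherwise $A=(\Omega A)^{\circ}=(\Omega A')^{\circ}=A'$. By Lemma~\ref{lem:local}, $\Omega A'$ is equivalent to $\Omega A$, so $\Omega A\subsetneq\Omega A'\subseteq T$ contradicts the maximality of $\Omega A$ as a $T$-order.

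For the reverse implication, suppose $A$ is a maximal $T^{\circ}$-order. Since $\Omega A$ and $T$ are graded, by Definition~\ref{maxorder-defn} it is enough to exclude a graded order $\Omega A\subsetneq S\subseteq T$ equivalent to $\Omega A$. The key move is to replace $S$ by its $g$-divisible hull $\wh S$, which is a cg $g$-divisible subalgebra of $T$ (it lies in $\wh T=T$) with $\wh S\supseteq S\supsetneq\Omega A$. I would then verify that $\wh S$ is still equivalent to $\Omega A$: choosing nonzero homogeneous $a,b\in\Omega A$ with $aSb\subseteq\Omega A$ (clearing denominators into $\Omega A$, as when equivalent orders are introduced before Proposition~\ref{thm:equivord}), then for any $\theta\in\wh S$ with $\theta g^{n}\in S$ the centrality of $g$ gives $(a\theta b)g^{n}=a(\theta g^{n})b\in\Omega A$, so $a\theta b\in\wh{\Omega A}=\Omega A$; hence $a\wh S b\subseteq\Omega A$. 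Now pass back via $(\cdot)^{\circ}$: the subalgebra $(\wh S)^{\circ}$ of $T^{\circ}$ contains $A=(\Omega A)^{\circ}$ properly (otherwise $\wh S=\Omega((\wh S)^{\circ})=\Omega A$), and since $\Omega((\wh S)^{\circ})=\wh S$ is equivalent to $\Omega A=\Omega(A)$, Lemma~\ref{lem:local} shows $(\wh S)^{\circ}$ is equivalent to $A$. This contradicts the maximality of $A$, so $\Omega A$ is a maximal $T$-order.

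The one genuinely non-formal point — and the reason the two directions are not quite symmetric — is that the correspondence only sees $g$-divisible subalgebras, whereas maximality of an order is tested against arbitrary intermediate orders. The $g$-divisible-hull trick resolves this, and the only thing requiring a short argument is that $\wh S$ stays inside $T$ and remains equivalent to $\Omega A$; everything else is the formal bijection, the quotient-ring bookkeeping, and a direct appeal to Lemma~\ref{lem:local}.
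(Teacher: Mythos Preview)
Your proof is correct and is precisely the argument the paper's bare \qed\ is hiding: the bijection $A\leftrightarrow\Omega A$ combined with Lemma~\ref{lem:local} handles everything except the possibility that a larger equivalent graded order $S\subseteq T$ fails to be $g$-divisible, and your $g$-divisible-hull step (using that $\Omega A=\wh{\Omega A}$ to get $a\wh S b\subseteq\Omega A$ from $aSb\subseteq\Omega A$) is exactly the right fix.
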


By \cite[Theorem~1.1]{RSS}, every subalgebra of $T^{\circ}$ is finitely generated and noetherian;
 these subalgebras  thus give a rich supply of noetherian domains of GK-dimension  2.   
 Our earlier results about cg maximal $T$-orders translate easily to results about 
  about  maximal $T^\circ$-orders.
 An ideal $I$ of a $\kk$-algebra $A$ is called \emph{cofinite} if $\dim_{\kk}(A/I)<\infty$. 

\begin{corollary}\label{localthm}    Let $A $ be a subalgebra of  $T^\circ$ with $Q(A)=Q(T^\circ )$. 
\begin{enumerate}
\item There exists a  maximal order pair $(V,F)$, where
     $F$ is a blowup of $T$ at  some  virtually effective divisor 
     $\divx$, such that $A$ is contained in and equivalent to 
the maximal $T$-order $V^\circ$.   
\item In Part (1),  $F^\circ$ is a maximal order in $Q(T^\circ)=D_{\gr}(T)$.
 \item The algebras   $V^\circ$ and $F^\circ$ have a cofinite ideal $K^{\circ}$  in common. 
Also,   $\gr_\Gamma V \ppe B(E, \sM(-\divx), \tau)$.
 \item Suppose that  all nonzero ideals $I$ of $T(\divd)^\circ$ generate cofinite right ideals of $T^\circ$ (in particular, this happens  if $T(\divd)^\circ$ is simple)
and that   $A$ is a maximal $T^\circ$-order. Then $A$ is a maximal order. 
 \end{enumerate}
\end{corollary}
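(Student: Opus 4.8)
The plan is to transport the graded results of Sections~\ref{MAX-SECT} and~\ref{EXISTENCE} through the bijection $A\leftrightarrow \Omega A$ between subalgebras of $T^\circ$ and $g$-divisible cg subalgebras of $T$, exploiting that $g$ is central (so $Q_{\gr}(T)=D[g,g^{-1}]$ with $D=D_{\gr}(T)$ central) and the routine fact that $(-)^\circ$ commutes with forming $\Hom$-modules, double duals, and products of graded subspaces of $Q_{\gr}(T)$. First I would set $U=\Omega A$; since $U$ is $g$-divisible with $g\in U$ and $U^\circ=A$, one gets $D_{\gr}(U)=Q(A)=D=D_{\gr}(T)$, hence $Q_{\gr}(U)=Q_{\gr}(T)$, and $U$ is cg. Applying Theorem~\ref{thm:converse}(2) to $U$ produces a maximal order pair $(V,F)$ with $F$ a blowup of $T$ at a virtually effective $\divx$ of degree $<\mu$ and with $U\subseteq V$ an equivalent order; localising gives $A=U^\circ\subseteq V^\circ$, and Lemma~\ref{lem:local} shows $A$ and $V^\circ$ are equivalent orders in $T^\circ$, while $V^\circ$ is a maximal $T^\circ$-order by Corollary~\ref{local1}. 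This is part~(1).

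For part~(2) I would apply Corollary~\ref{correct3}(1,2) to the $g$-divisible cg maximal $T$-order $V$ to obtain an effective $\divd$ with $\deg\divd<\mu$ and a $g$-divisible finitely generated right $R$-module $M$, where $R=T(\divd)$, with $R\subseteq M\subseteq T$, $MT=T$, $V=\End_R(M)$, and $\End_R(M^{**})=F$ (this is the same $F$, being the unique maximal order containing and equivalent to $V$ by Proposition~\ref{correct25}(2)). Since $R^\circ=T(\divd)^\circ$ is a noetherian domain by \cite[Theorem~1.1]{RSS}, the $(-)^\circ$-compatibilities give $F^\circ=\End_{R^\circ}((M^\circ)^{**})$ and $V^\circ=\End_{R^\circ}(M^\circ)$, so Lemma~\ref{cozzens} exhibits $F^\circ$ as the unique maximal order of $D$ containing and equivalent to $V^\circ$ (hence to $A$); in particular $F^\circ$ is a maximal order.

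For part~(3) I would take the ideal $K$ of $F$ with $K\subseteq V$ and $\GKdim(F/K)\le 1$ from Definition~\ref{max-pair-defn}; then $K$ is also an ideal of $V$ and $K^\circ$ is an ideal of both $F^\circ$ and $V^\circ$. Via \eqref{super-eq} one has $\gr_\Gamma F^\circ\cong\bbar F$ and $\gr_\Gamma V^\circ\cong\bbar V$, and the associated graded of $K^\circ$ is the image in $\bbar F$ of the $g$-divisible hull of $K$, which differs from $\bbar K$ only finite-dimensionally by Lemma~\ref{GK-results}(2); hence $\dim_\kk F^\circ/K^\circ=\dim_\kk\bbar F/\bbar K$. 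This is finite because $K\not\subseteq gF$ (else $\GKdim(F/K)\ge\GKdim(F/gF)=2$), so $\bbar K\ne 0$, while $\bbar F\subseteq\kk(E)[t,t^{-1};\tau]=Q_{\gr}(\bbar F)$ is just infinite by Lemma~\ref{lem:T-prop}(2). Thus $K^\circ$ is cofinite in $F^\circ$, a fortiori in $V^\circ$, and the equivalence $\gr_\Gamma V^\circ\ppe B(E,\sM(-\divx),\tau)$ is Theorem~\ref{thm:converse}(1c) read through $\gr_\Gamma V^\circ\cong\bbar V$.

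For part~(4), $A$ being a maximal $T^\circ$-order forces $U=\Omega A$ to be a maximal $T$-order (Corollary~\ref{local1}), hence $U=V$ by part~(1); I would then keep $\divd,R,M$ as above and set $J=\rann_R(M^{**}/M)$, an ideal of $R$ with $\GKdim(R/J)\le1$, as in the proof of Corollary~\ref{correct3}(3). If $M=M^{**}$ then $F=\End_R(M)=V\subseteq T$ and $A=V^\circ=F^\circ$ is a maximal order by part~(2); otherwise $\GKdim(R/J)=1$ and $J\ne0$, so $J^\circ$ is a nonzero ideal of $R^\circ$ and, by hypothesis, $J^\circ T^\circ$ is cofinite in $T^\circ$. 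From $M^{**}J\subseteq M$ one gets $(M^{**})^\circ(J^\circ T^\circ)\subseteq(M^{**}JT)^\circ\subseteq T^\circ$, so for each $\alpha\in(M^{**})^\circ$ the module $(\alpha T^\circ+T^\circ)/T^\circ$ is finite-dimensional; since $T^\circ$ is Auslander--Gorenstein and Cohen--Macaulay of GK-dimension $2$, the ungraded analogue of Lemma~\ref{thou11}(1) (with $(T^\circ)^{**}=T^\circ$) gives $\alpha\in T^\circ$. Hence $(M^{**})^\circ\subseteq T^\circ$, and as $F^\circ=\End_{R^\circ}((M^{**})^\circ)$ with $1\in(M^{**})^\circ$ also $F^\circ\subseteq T^\circ$, i.e. $F\subseteq T$, so $V=F\cap T=F$ and $A=V^\circ=F^\circ$ is a maximal order. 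I expect the main obstacle to be exactly this last part: one must first confirm that $T^\circ$ inherits the Auslander--Gorenstein and Cohen--Macaulay properties from $T$ (standard, since $T[g^{-1}]=T^\circ[g,g^{-1}]$), and one cannot shortcut by verifying the GK-dimension hypothesis of Corollary~\ref{correct3}(3) on $T(\divd)$ itself, since cofiniteness of $J^\circ T^\circ$ in $T^\circ$ is strictly stronger than $\GKdim(T/JT)\le1$ --- so the argument genuinely has to be carried out in the localised ring.
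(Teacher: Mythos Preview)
Your argument is correct and for parts (1) and (2) it matches the paper's proof exactly.  For part (3) the paper takes a shorter route: since $\GKdim(F/K)\le 1$ and $F/K$ is $g$-torsionfree, Lemma~\ref{lem:constant} gives that $F/K$ is a finitely generated $\kk[g]$-module, so $F^\circ/K^\circ=(F/K)[g^{-1}]_0$ is finite-dimensional immediately, without passing through the associated graded and just-infiniteness of $\bbar F$.  (Your displayed equality $\dim_\kk F^\circ/K^\circ=\dim_\kk\bbar F/\bbar K$ is off by the finite quantity $\dim_\kk\overline{\wh K}/\bbar K$, though of course the finiteness conclusion is unaffected.)

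For part (4) the paper simply writes ``Use Corollary~\ref{correct3}(3)'' and stays in the graded world, whereas you rerun the argument in $T^\circ$ using that $T^\circ$ is Auslander--Gorenstein and CM.  Both work.  Two remarks on your version.  First, the step ``$F^\circ\subseteq T^\circ$, i.e.\ $F\subseteq T$'' uses the easy but unstated identity $T_{(g)}\cap T[g^{-1}]=T$ inside $Q_{\gr}(T)$ (which follows from $gT$ being completely prime), together with $F\subseteq T_{(g)}$.  Second, your closing comment has the implication reversed: cofiniteness of $(JT)^\circ$ in $T^\circ$ is equivalent to $\GKdim T/\wh{JT}\le 1$, which is \emph{weaker} than $\GKdim T/JT\le 1$, not stronger.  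You are therefore right that one cannot literally feed the hypothesis of part~(4) into the \emph{statement} of Corollary~\ref{correct3}(3); the paper's citation should be read as ``the \emph{proof} of Corollary~\ref{correct3}(3) adapts'': from $M^{**}J\subseteq M\subseteq T$ and $g$-divisibility of $T$ one gets $M^{**}\,\wh{JT}\subseteq T$, and then $\GKdim T/\wh{JT}\le 1$ is all that the remainder of that proof actually uses.  Your localized argument is a genuine, and arguably cleaner, alternative to this adaptation.
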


\begin{proof} 
(1)  By  Theorem~\ref{thm:converse}(2), $\Omega A$ is contained in and equivalent to some such $V$. 
Now use  Lemma~\ref{lem:local} and Corollary~\ref{local1}. 

(2)  Since $F$ need not be contained in $T$, this does not   follow directly from the above discussion. However, it does follow from 
Lemma~\ref{cozzens} combined with the fact that, in the notation of Corollary~\ref{correct3},
\[F^\circ =\End_{T(\divd)^\circ}((M^{**})^\circ)
=\End_{T(\divd)^\circ}((M^{\circ})^{**}).\]

(3) By definition and Lemma~\ref{lem:constant}, $V$ and $F$ have an ideal $K$ in common such that $F/K$ is 
finitely generated as a $\kk[g]$-module. Consequently $F^\circ/K^\circ$ and $V^\circ/K^\circ$ are finite dimensional.
The final assertion  follows from Theorem~\ref{thm:converse}(1c).

(4) Use Corollary~\ref{correct3}(3).
 \end{proof}


We also have a converse to Corollary~\ref{localthm}(3).

\begin{corollary}\label{localcor}
Let $\divx$ be a virtually effective divisor on $E$ with $\deg \divx < \mu$.  
Then there exists a  maximal $T^\circ$-order $A$ with $\gr_\Gamma A \ppe B(E, \sM(-\divx), \tau)$.
\end{corollary}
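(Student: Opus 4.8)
The plan is to transport Corollary~\ref{localthm}(4)'s converse --- namely Theorem~\ref{thm:converse}(3) --- through the correspondence $R \mapsto R^\circ$ between $g$-divisible cg subalgebras of $T_{(g)}$ and subalgebras of $T^\circ$. First I would apply Theorem~\ref{thm:converse}(3) to the given virtually effective divisor $\divx$ (with $\deg \divx < \mu$) to produce a blowup $F$ of $T$ at $\divx$: this means $(V,F)$ is a maximal order pair with $V = F \cap T$, $Q_{gr}(F) = Q_{gr}(T)$, and $\bbar{F} \ppe B(E, \sM(-\divx), \tau)$. In particular $V$ is a $g$-divisible cg maximal $T$-order, so by Corollary~\ref{local1} the algebra $A := V^\circ$ is a maximal $T^\circ$-order, and $Q(A) = Q(T^\circ) = D_{gr}(T)$.

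It then remains to identify $\gr_\Gamma A$ up to finite dimension. By the filtration formula \eqref{super-eq}, $\gr_\Gamma A \cong \Omega(A)/g\Omega(A) = \Omega(V^\circ)/g\,\Omega(V^\circ)$. Since $V$ is $g$-divisible with $V \subseteq T$, we have $\Omega(V^\circ) = \wh{V} = V$, so $\gr_\Gamma A \cong V/gV = \bbar{V}$. Now Theorem~\ref{thm:converse}(1c) --- or rather the statement of Theorem~\ref{thm:converse}(3) combined with Theorem~\ref{thought9} as already packaged in the proof of Theorem~\ref{thm:converse}(3) --- gives $\bbar{V} \ppe \bbar{F} \ppe B(E, \sM(-\divx), \tau)$. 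Chaining these together yields $\gr_\Gamma A \ppe B(E, \sM(-\divx), \tau)$, which is exactly the claim.

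I do not anticipate a serious obstacle here: this is a bookkeeping corollary that simply reinterprets Theorem~\ref{thm:converse}(3) in the ungraded setting of $T^\circ$, using the dictionary $\Omega(-)$, $(-)^\circ$ and the graded-filtration identity \eqref{super-eq}. The only point requiring a line of care is confirming $\Omega(V^\circ) = V$, which is immediate from $g$-divisibility of $V$ and the identity $\Omega(R^\circ) = \wh{R}$ recorded before Lemma~\ref{lem:local}; and checking that "maximal $T$-order" on the graded side corresponds to "maximal $T^\circ$-order" on the affine side, which is precisely Corollary~\ref{local1}. The following short argument makes this precise.

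\begin{proof}
By Theorem~\ref{thm:converse}(3) there is a blowup $F$ of $T$ at $\divx$; write $V = F \cap T$, so that $(V,F)$ is a maximal order pair. In particular $V$ is a $g$-divisible cg maximal $T$-order with $Q_{gr}(V) = Q_{gr}(T)$, and by Theorem~\ref{thm:converse}(1c), $\bbar{V} \ppe \bbar{F} \ppe B(E, \sM(-\divx), \tau)$. Set $A = V^\circ \subseteq T^\circ$. By Corollary~\ref{local1}, $A$ is a maximal $T^\circ$-order, and $Q(A) = Q(T^\circ)$. Since $V$ is $g$-divisible with $V \subseteq T$, we have $\Omega A = \Omega(V^\circ) = \wh{V} = V$, so \eqref{super-eq} gives
\[
\gr_\Gamma A \ \cong\ \Omega A / g\, \Omega A \ =\ V/gV \ =\ \bbar{V} \ \ppe\ B(E, \sM(-\divx), \tau),
\]
as required.
\end{proof}
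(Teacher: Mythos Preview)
Your proof is correct and follows essentially the same approach as the paper: apply Theorem~\ref{thm:converse}(3) to obtain the maximal $T$-order $V = F \cap T$ with $\bbar{V} \ppe B(E, \sM(-\divx), \tau)$, set $A = V^\circ$, and use \eqref{super-eq} together with $\Omega(V^\circ) = \wh{V} = V$ to identify $\gr_\Gamma A$ with $\bbar{V}$. If anything you are slightly more explicit than the paper in invoking Corollary~\ref{local1} to verify that $A$ is a maximal $T^\circ$-order.
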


\begin{proof}
Let $U$ be the $g$-divisible maximal $T$-order given by Theorem~\ref{thm:converse}(3); thus  
  $\bbar{U} \ppe B(E, \sM(-\divx), \tau)$ by Part (1c) of that result. 
 By \eqref{super-eq}  $A= U^\circ$ satisfies the conclusion of this corollary.
\end{proof}

  \subsection*{Del Pezzo surfaces}   
The blowup of $T$ at  $\leq 8$ points on $E$ can  be thought of as a 
  noncommutative   del Pezzo surface.   
  More carefully, it should be thought of as the anticanonical ring of a noncommutative del Pezzo surface; 
  this corresponds to the fact that the central element $g$ is in degree 1.
  Let $U$ be a blowup of $T$ at a virtually effective divisor $\divd'$ of degree $\leq 8$. 
  By analogy, we should think of $U$  as  a (new type of) noncommutative del Pezzo surface, and 
  the localisation $U^\circ $ as  a kind of noncommutative affine del Pezzo surface.  
 Corollary~\ref{localthm}(3) can then  be reinterpreted as saying that,    any  maximal  order $A\subseteq T^\circ$
  is the coordinate ring of just such a noncommutative affine del Pezzo surface.
       
    We caution that these noncommutative del Pezzos are not the algebras discussed in \cite{EG}.  
   For example, we consider $A = T^\circ \cong T/(g-1)$, but the algebra $A' = S/(g-1)$ is considered in \cite{EG}.
   The algebra  $A'$ is a rank 3 $A$-module, so ``$\Spec A'$ '' is a triple cover of ``$\Spec A$'' (inasmuch as these
    terms made sense in a noncommutative context).
 
 
 \section{ \Cogs\ and \texorpdfstring{$g$}{LG}-divisible hulls}\label{MAX1}  

 One of the main results in \cite{Rog09} showed  that 
the algebras considered there have minimal  \cogs,  in a sense we define momentarily.
In this section we show that, under minor assumptions,   this generalises to    cg   subalgebras
 $U\subseteq T$ with $g\in U$  (see Corollary~\ref{max30} for the precise statement). 
  The significance of this result is that it provides a tight connection between 
 the algebra $U$ and its $g$-divisible hull $\wh{U}$  and provides the final step in  the proof of 
 Theorem~\ref{mainthm-intro1}, that maximal orders are noetherian blowups of $T$
  (see Theorem~\ref{max32}).
 
 Recall that a graded ideal $I$ 
of a cg graded algebra $R$ is called sporadic if $\GKdim(R/I) = 1$.

  \begin{definition}  \label{cog-defn2}
An ideal $I$ of a  cg algebra $R$ is called a  \emph{minimal \cog} \label{min-cog}
  if   $\GKdim(R/I) \leq 1$ and,  for all  \cogs\ $J$, we have 
$\dim_{\kk} I/(J \cap I) < \infty$.
\end{definition}
\noindent  Note that one can make the minimal \cog\ $I$ unique by demanding that it be saturated, but we will not do so since this causes extra complications.

Beginning in this section, we need to strengthen our hypothesis on the ring $T$.

\begin{assumption}\label{ass:T2}   In addition to Assumption~\ref{ass:T}, we assume that $T$ has a minimal \cog\ and that there exists
an  uncountable algebraically closed field extension $K\supseteq \kk$ such that,  in the notation of \cite[Definition~7.2]{RSS2},
$\Div (T\otimes_{\kk}K)$ is   countable.    
 
  We emphasise that, by 
\cite[Theorem~8.8 and Proposition~8.7]{RSS2}, these extra assumptions do hold both for the   algebras $T$  from Examples~\ref{Skl-ex}(1,2)
and for their  their blowups $T(\divd)$ at effective divisors $\divd$  with 
$\deg \divd < \mu$. 
\end{assumption}
\noindent

For the rest of this section we assume that our algebras $T$ satisfy Assumptions~\ref{ass:T} and \ref{ass:T2}.
We do not know if Assumption~\ref{ass:T2} holds for Stephenson's algebras from
Example~\ref{Skl-ex}(3).  By a routine exercise, 
Example~\ref{Skl-ex}(4) does not have a minimal \cog, so Assumption~\ref{ass:T2} is strictly stronger 
than Assumption~\ref{ass:T}.

As noted above,  the blowups $T(\divd)$ with 
$\deg \divd < \mu$  have a minimal \cog, and the first goal of this section  is to extend this 
  to more  general subalgebras of $T_{(g)}$.  We start with the case of   $g$-divisible algebras.  
\begin{lemma}\label{thou10}   
Let $(V,F)$ be a maximal order pair, in the sense of Definition~\ref{max-pair-defn}.
Then both $F$ and   $V$ have a minimal  \cog.  
\end{lemma}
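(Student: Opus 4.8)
The strategy is to transfer the problem to the factor rings $\bbar{V}$ and $\bbar{F}$, where we have access to the structure theory for subalgebras of $\kk(E)[t,t^{-1};\tau]$ from Section~\ref{CURVES}, and then to lift a minimal \cog\ from the factor back to the original ring. By hypothesis, $F$ is a maximal order with $Q_{gr}(F)=Q_{gr}(T)$ and $\bbar{F}$ is a $g$-divisible subalgebra of $\bbar{T}=B(E,\sM,\tau)$ with $Q_{gr}(\bbar{F})=Q_{gr}(\bbar{T})=\kk(E)[t,t^{-1};\tau]$ by Lemma~\ref{lem:expectQ}. The key input is Theorem~\ref{thm:projcurve}, which presents $\bbar{F}$ (up to finite dimension) as an idealiser $\idealiser(J)$ inside a TCR $B(E,\sH,\tau)$, with $J=\bigoplus_n H^0(E,\sA\sH_n)$ for some ideal sheaf $\sA$. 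One expects that such an idealiser has a minimal \cog, essentially the ideal $J$ itself (or rather the $\Rbar$-ideal it determines); indeed, over the TCR, the ideals of GK-dimension $\leq 1$ correspond to ideal sheaves cosupported on finitely many $\tau$-orbits, and there is a smallest nonzero such $\Rbar$-ideal up to finite codimension. This step can probably be quoted from, or proved in parallel to, the corresponding argument in \cite{Rog09}; Assumption~\ref{ass:T2} guarantees $T$ (hence $\bbar{T}$, and the relevant blowups) has a minimal \cog, and the countability of $\Div(T\otimes_\kk K)$ is the device used there to control the possible \cogs.

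Having produced a minimal \cog\ $\Ibar$ of $\bbar{F}$, I would lift it: set $I=\{x\in F : \bbar{x}\in\Ibar\}$, equivalently $I$ is the preimage under $F\to\bbar{F}$, so that $I\supseteq gF$ and $F/I\cong\bbar{F}/\Ibar$ has $\GKdim\leq 1$. Thus $I$ is a \cog\ of $F$. To see it is minimal, let $J$ be any \cog\ of $F$. Since $\GKdim(F/J)\leq 1 < 2 = \GKdim\bbar{F}$ and $\bbar{F}$ is just infinite by Lemma~\ref{lem:T-prop}(2), $J\not\subseteq gF$, so $\bbar{J}\neq 0$ and $\bbar{J}$ is a nonzero ideal of $\bbar{F}$ with $\dim_\kk\bbar{F}/\bbar{J}<\infty$; in particular $\bbar{J}$ is a \cog\ of $\bbar{F}$ (or cofinite). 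By minimality of $\Ibar$, $\dim_\kk\Ibar/(\bbar{J}\cap\Ibar)<\infty$, and pulling this back (using that $I/gF\cong\Ibar$ and $\bbar{J}\cap\Ibar$ is the image of $J\cap I$ when $g\in J$, which holds as $g\in F$ and $g\in$ every \cog-preimage — more carefully, $(J\cap I + gF)/gF \subseteq \bbar{J}\cap\Ibar$ and one checks finite codimension passes through) gives $\dim_\kk I/(J\cap I)<\infty$. Some care is needed here with the interaction of the bar map and intersections, but since everything in sight contains $gF$ or is handled modulo a finite-dimensional space, this is routine.

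For $V$: by Definition~\ref{max-pair-defn}(3) there is an ideal $K$ of $F$ with $K\subseteq V$ and $\GKdim(F/K)\leq 1$. I claim $V\cap I$ (or $K\cap I$, adjusted) serves as a minimal \cog\ of $V$. First, $V/(V\cap I)\hookrightarrow F/I$ has $\GKdim\leq 1$, so $V\cap I$ is a \cog\ of $V$ provided it is nonzero, which holds since it contains the nonzero ideal $K\cap I$ (nonzero as both $K$ and $I$ are, and $F$ is a domain — more precisely $K\cap I \supseteq$ products are nonzero in a domain). For minimality, let $J'$ be a \cog\ of $V$. Then $FJ'F$ is an ideal of $F$; I would show $\GKdim(F/FJ'F)\leq 1$ using that $V$ and $F$ are equivalent orders (so $aFb\subseteq V$ for some $a,b$, whence $aFb\cdot$ relates $FJ'F$ to $J'$) together with the ideal-theoretic comparisons in Lemma~\ref{GK-results}. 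Then $FJ'F$ is a \cog\ of $F$, so $\dim_\kk I/(I\cap FJ'F)<\infty$ by minimality of $I$; restricting to $V$ and using $J'\supseteq$ (a finite-codimension piece of) $FJ'F\cap V$ gives $\dim_\kk(V\cap I)/(J'\cap V\cap I)<\infty$, as required. The main obstacle I anticipate is the passage between ideals of $F$ and ideals of $V$ in the equivalent-orders setup — verifying that a \cog\ of one produces a \cog\ of the other with only finite-dimensional loss — but this is exactly the kind of bookkeeping that Lemma~\ref{GK-results} and Lemma~\ref{lem:constant} are designed to handle, so I expect it to go through without genuine difficulty; the real content is the existence step for $\bbar{F}$, which rests on Theorem~\ref{thm:projcurve} and Assumption~\ref{ass:T2}.
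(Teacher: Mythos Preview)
Your lifting argument has a genuine gap, and in fact the approach cannot work as stated.

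First, a minor point that signals the deeper problem: since $\bbar{F}$ is just infinite (Lemma~\ref{lem:T-prop}(2)), every nonzero ideal of $\bbar{F}$ has finite codimension, so $\bbar{F}$ has \emph{no} sporadic ideals and any nonzero $\Ibar$ is vacuously a minimal sporadic ideal. Your ``key step'' of producing $\Ibar$ via Theorem~\ref{thm:projcurve} is therefore trivial, and your lifted $I$ is simply an ideal of $F$ containing $gF$ with $\dim_\kk F/I<\infty$.

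Now the real failure. Let $J$ be any sporadic ideal of $F$, so $\GKdim(F/J)=1$. Since $F/(I+J)$ is a quotient of the finite-dimensional space $F/I$, the submodule $(I+J)/J\subseteq F/J$ has finite codimension, hence $\GKdim\bigl((I+J)/J\bigr)=1$; in particular it is infinite-dimensional. But $I/(I\cap J)\cong (I+J)/J$, so $\dim_\kk I/(I\cap J)=\infty$ and $I$ is \emph{not} a minimal sporadic ideal. The phrase ``one checks finite codimension passes through'' is precisely where this breaks: the sporadic ideals of $F$ carry genuine GK-dimension-$1$ information that is entirely collapsed by the bar map, so no lift from $\bbar{F}$ can detect them.

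The paper's argument is different in kind. By Corollary~\ref{correct3} one has $F=\End_R(M^{**})$ with $R=T(\divd)$; Assumption~\ref{ass:T2} is invoked (via \cite[Proposition~8.7]{RSS2}) to give $R$ itself a minimal sporadic ideal $X'$. Writing $J=M^*$ so that $F=\End_R(J)$, the Morita-like context $(F,R,J,J^*)$ transfers this: $I=J^*X'J$ is an ideal of $F$ with $\GKdim(F/I)\leq 1$ by Lemma~\ref{GK-results}(4), and for any sporadic ideal $L$ of $F$ the ideal $JLJ^*$ of $R$ is again sporadic, hence contains $X'H$ for some cofinite $H$, and sandwiching gives $L\supseteq J^*X'HJ$ with $\dim_\kk I/(J^*X'HJ)<\infty$. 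Your treatment of the passage from $F$ to $V$ via the common ideal $K$ is essentially correct and matches the paper's final step (which takes $KIK$).
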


\begin{proof}  By  Corollary ~\ref{correct3},  there exists an
 effective divisor $\divd$  with $\deg\divd<\mu$ and a right  $R$-module $M\supseteq R$, where $R=T(\divd)$,
 such that   
\[F=\End_{R}(M^{**}) \supseteq F\cap T =  V=\End_{R}(M).\] 
We will use a minimal \cog\ of $R$  to construct such an ideal for   $F$ and for $V$.

Set $J=M^*=M^{***}\subseteq R$; thus $F=\End_{R}(J)$ as well. 
Also, write  $X=JJ^*$, a nonzero ideal of $R$ and $W=J^*J$,  a nonzero ideal of $F$. 
 By Lemma~\ref{lem:finehat}(3),  $J$ and $ J^*=M^{**}$ are $g$-divisible; in 
particular $J\nsubseteq gT_{(g)}$ and $J^* \nsubseteq gT_{(g)}$.  
 Thus,  by Lemma~\ref{GK-results}(4),   $\GKdim(R/X) \leq 1$ and $\GKdim(F/W) \leq 1$.    
By Assumption~\ref{ass:T2} and \cite[Proposition~8.7]{RSS2} we can choose a minimal \cog\ $X'$ of $R$ such that $X'\subseteq X$.
 Let $I  = J^* X' J$.  Since $\GKdim(X') \leq 1$ and $R$ is $g$-divisible, $\GKdim R/gR  = 2$
and so $X' \nsubseteq gT_{(g)}$ also.  Thus $I$ is an ideal of $F$ with  $\GKdim (F/I)\leq 1$ by Lemma~\ref{GK-results}(4).

Now consider an arbitrary \cog\  $L$ of $F$, if such an ideal exists.   Since $F$ is $g$-divisible, $L \nsubseteq gT_{(g)}$
 and so, just as in the previous paragraph,  $JLJ^*$ is an ideal of $R$ satisfying $\GKdim_R(R/JLJ^*)\leq 1$.
Hence $JLJ^*\supseteq X'H$, for an ideal $H$ of $R$ with 
$\dim_{\kk}(R/H)<\infty.$   Now, $L\supseteq (J^*J)L(J^*J)\supseteq J^*X'HJ$
 and \cite[Proposition~5.6]{KL} implies that $\dim_{\kk}(J^*X'J)/(J^*X'HJ)<\infty$.
Thus $I$ is a minimal \cog\  of $F$.

Finally,  $F$ and $V$ have a common ideal $K$ with $\GKdim(F/K)\leq 1$ (see  Proposition~\ref{correct25}).
Thus $KIK$ is   a minimal \cog\ for $F$ that lies in $V$ and so it is also  a minimal \cog\ for $V$.
  \end{proof}

\begin{proposition}\label{thou100}  Suppose that $T$  satisfies Assumptions~\ref{ass:T} and \ref{ass:T2}.
Let $U \subseteq T$ be a $g$-divisible graded  algebra with  
$Q_{\gr}(U) = Q_{\gr}(T)$. Then  $U$ has a minimal \cog. 
\end{proposition}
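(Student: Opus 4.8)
The strategy is to reduce the general $g$-divisible case to the maximal order pair case already handled by Lemma~\ref{thou10}. The key tool is Theorem~\ref{thm:converse}(2): since $U \subseteq T$ is $g$-divisible with $Q_{\gr}(U) = Q_{\gr}(T)$, there is a maximal order pair $(V,F)$ such that $U$ is contained in and equivalent to $V$. By Lemma~\ref{thou10}, the algebra $V$ has a minimal \cog, say $I_V$. The task is then to transport this minimal \cog\ from $V$ down to $U$ using the equivalence of orders.

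Here is how I would carry out the transport. Since $U$ and $V$ are equivalent orders with $U \subseteq V$, and both are $g$-divisible cg subalgebras of $T$, one can choose homogeneous $a, b \in V \smallsetminus \{0\}$ with $aVb \subseteq U$; by $g$-divisibility (exactly as in the proof of Proposition~\ref{thm:equivord}) we may take $a, b \in V \smallsetminus gV$. Now set $I = a\, I_V\, b \subseteq U$. This is contained in $U$, and I would first check it is (essentially) an ideal of $U$ — or more precisely, enlarge it slightly: take $K$ to be the largest ideal of $U$ contained in $U \cap a\,I_V\,b\,U \cap \dots$, or simply work with $aI_Vb$ and invoke the standard arguments from \cite[Proposition~5.6]{KL} as in the proof of Lemma~\ref{thou10}. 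The point is that since $I_V \nsubseteq gT_{(g)}$ (as $\GKdim(V/I_V) \leq 1$ while $\GKdim(V/gV) = 2$ by $g$-divisibility and Lemma~\ref{GK-results}), and $a, b \notin gT_{(g)}$, Lemma~\ref{GK-results}(4) gives $\GKdim(U/I) \leq 1$ for the ideal $I$ of $U$ generated appropriately by $aI_Vb$. So $U$ has a \cog.

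For minimality, let $L$ be any \cog\ of $U$. Since $U$ is $g$-divisible, $L \nsubseteq gT_{(g)}$. Consider the $V$-ideal generated by $L$, or better the product $c L d$ or $VLV$ pushed into $V$ via the other half of the equivalence (there exist homogeneous $c, d \in V \smallsetminus gV$ with $cVd \subseteq U$, hence one can manufacture a nonzero $V$-ideal related to $L$); by Lemma~\ref{GK-results}(4) again this has GK-dimension $\leq 1$ in $V$, so by minimality of $I_V$ it contains $I_V H$ for some cofinite ideal $H$ of $V$. Pulling back through the multiplications by $a, b, c, d$ and applying \cite[Proposition~5.6]{KL} repeatedly (each multiplication by a nonzero homogeneous element changes a module only by something of strictly smaller GK-dimension, hence finite codimension inside a GK-dimension-$1$ module), one concludes $\dim_\kk I/(L \cap I) < \infty$. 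This shows $I$ is a minimal \cog\ of $U$. The main obstacle I anticipate is purely bookkeeping: keeping careful track of which side each multiplication acts on and verifying that all the intermediate objects $aI_Vb$, $I_V H$, etc., genuinely stay inside the relevant rings and remain $g$-torsionfree of GK-dimension $\leq 1$ so that Lemma~\ref{GK-results}(4) and \cite[Proposition~5.6]{KL} apply; this is routine but must be done with care, and mirrors closely the final paragraphs of the proof of Lemma~\ref{thou10}.
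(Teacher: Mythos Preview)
Your overall strategy is exactly the paper's: invoke Theorem~\ref{thm:converse}(2) to get a $g$-divisible maximal $T$-order $V\supseteq U$ equivalent to $U$, apply Lemma~\ref{thou10} to get a minimal \cog\ $K$ of $V$, and then transport it to $U$. The gap is in the transport, and it is not just bookkeeping.

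Concretely: with $a,b\in U\smallsetminus gU$ and $aVb\subseteq U$, your candidate is (essentially) $I=UaKbU$. For minimality you push a sporadic $L$ of $U$ up to the $V$-ideal $V(bLa)V$, use minimality of $K$ to get $V(bLa)V\supseteq KM$ with $\dim_\kk V/M<\infty$, and pull back to obtain $L\supseteq (aVb)L(aVb)\supseteq aKMb$, hence $L\supseteq UaKMbU$. The remaining step is $\dim_\kk\, UaKbU/UaKMbU<\infty$, and \cite[Proposition~5.6]{KL} does \emph{not} give this: that result controls multiplication by a single element, not by all of $U$. The quotient $UaKbU/UaKMbU$ is only a $U$-bimodule of GK-dimension $\leq 1$; there is no mechanism forcing it to be finite-dimensional, because $UaKbU$ carries no natural $V$-module structure that would let the cofiniteness of $M$ in $V$ bite.

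The paper's fix is to interpose a $g$-divisible ring $W=\wh{U+UaV}$ with $U\subseteq W\subseteq V$. The crucial feature is that $J=\lann_W(V/W)$ is simultaneously an ideal of $W$ and a \emph{right ideal of $V$}. The candidate $JK$ is then a right $V$-submodule of the noetherian ring $V$, hence finitely generated over $V$; so for cofinite $M\lhd V$ one gets $\dim_\kk JK/JKM<\infty$ immediately. For any sporadic $L\lhd W$ one has $VLJ\lhd V$ sporadic (Lemma~\ref{GK-results}(4)), whence $VLJ\supseteq KM$ and $L\supseteq JVLJ\supseteq JKM$, proving $JK$ is minimal sporadic in $W$. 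A symmetric second step (using $Wb\subseteq U$) then passes from $W$ to $U$. The one-sided $V$-module structure on the candidate is precisely what your direct symmetric sandwich lacks.
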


 \begin{proof}  
By Theorem~\ref{thm:converse}(2),  $U$ is  contained in and 
equivalent to some $g$-divisible maximal $T$-order $V$,  say with    $aVb\subseteq U$ for 
some nonzero homogeneous $a,b\in U$.  Set $U' = U + UaV \subseteq V$, and $W=\wh{U'}$.
Thus $aV \subseteq U'\subseteq W$ and $U'b \subseteq U$.
 By Lemma~\ref{lem:finehat}(1) $Wb = \wh{U'b}\subseteq \wh{U}=U.$ 
 Set $J  = \lann_W V/W$, noticing that  $J$ is a nonzero ideal of $W$ since $a\in J$
 and a right ideal of $V$. Also,  
   as $W$ is $g$-divisible, it follows that $J$ is $g$-divisible.  
Thus, by Lemma~\ref{GK-results}(3),  $\GKdim W/J\leq 1$.

If  $K$ is a minimal \cog\ in $V$ given by Lemma~\ref{thou10} we claim that $JK$ is a minimal \cog\ in $W$.
To see this,  let $L$ be any ideal of $W$ with $\GKdim  W/L\leq 1$.  Then  $I=VLJ$ is an ideal 
 of $V$.  Since none of $V$, $L$, or $J$ is contained in $g T_{(g)}$, $\GKdim V/I \leq 1$ 
by Lemma~\ref{GK-results}(4).  Hence $I \supseteq KM$ for some ideal $M$ of $V$ 
with $\dim_{\kk} (V/ M) < \infty$ and so  $L \supseteq JVLJ \supseteq JKM$.  This implies that $JK$ 
is a minimal \cog\  for $W$.
Finally a symmetric argument, using the fact that $W$ is $g$-divisible with  a minimal \cog,  proves
that $U$ has such an ideal.    
\end{proof}

 As in Section~\ref{EXISTENCE}, results on $g$-divisible rings have close analogues for subalgebras of $T^{\circ}$.
 
 \begin{corollary}\label{A-cog}  Suppose that $T$  satisfies Assumptions~\ref{ass:T} and \ref{ass:T2}.
  Let $A$ be a subalgebra of $T^\circ$ with $Q(A)=Q(T^\circ)$. Then $A$ has a unique minimal nonzero ideal $I$, and  
   $\dim_{\kk}A/I<\infty$.  Further, $A$ has DCC on ideals and finitely many primes. 
   \end{corollary}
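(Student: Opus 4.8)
The plan is to translate the statement about $A\subseteq T^\circ$ into a statement about the $g$-divisible algebra $\Omega A\subseteq T$ and apply Proposition~\ref{thou100}. First I would note that by \cite[Theorem~1.1]{RSS} every subalgebra of $T^\circ$ is automatically noetherian and finitely generated, and that $U=\Omega A$ is a $g$-divisible graded subalgebra of $T$ with $U^\circ = A$. Since $Q(A)=Q(T^\circ)$, the $g$-divisibility of $U$ together with Lemma~\ref{lem:expectQ} (or a direct argument: $Q_{gr}(U)=D_{gr}(U)[g,g^{-1}]$ and $D_{gr}(U)=Q(U^\circ)=Q(T^\circ)=D_{gr}(T)$) gives $Q_{gr}(U)=Q_{gr}(T)$. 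Hence Proposition~\ref{thou100} applies and $U$ has a minimal \cog\ $\mf{I}$.

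Next I would transfer $\mf{I}$ to $A$. Using the (1-1) correspondence between $g$-divisible (two-sided) ideals of $U$ and ideals of $A=U^\circ$ (via $J\mapsto J^\circ$ and $\Omega$, as recalled in the excerpt, cf.\ \cite[Proposition~7.5]{ATV2}), I would set $I=\wh{\mf I}^{\,\circ} = \mf I^\circ$ (replacing $\mf I$ by its $g$-divisible hull $\wh{\mf I}$ costs only a finite-dimensional space by Lemma~\ref{GK-results}(2) and does not change $\mf I^\circ$). The key point is the filtration identity \eqref{super-eq}: $\gr_\Gamma A\cong \Omega A/g\Omega A = \overline{U}$, and more generally $\gr_\Gamma(A/I)\cong \overline{U/\wh{\mf I}}$ up to finite dimension. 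Since $\GKdim(\overline{U/\wh{\mf I}})\leq 0$, i.e.\ $U/(\wh{\mf I}+gU)$ is finite dimensional, the associated graded of $A/I$ under the $\Gamma$-filtration is finite dimensional, hence $\dim_\kk A/I<\infty$. For minimality: given any nonzero ideal $I'$ of $A$, the ideal $\Omega I'$ of $U$ is $g$-divisible and nonzero, so $\GKdim(U/\Omega I')\leq 1$ by Lemma~\ref{GK-results}(1); by minimality of $\mf I$, $\dim_\kk \mf I/(\Omega I'\cap\mf I)<\infty$, which after inverting $g$ and taking degree-zero parts — again via \eqref{super-eq} applied to the subquotient $\mf I/(\Omega I'\cap \mf I)$ — forces $\dim_\kk I/(I'\cap I)<\infty$, so $I\subseteq I'$ up to a finite-dimensional space; but then since $\dim_\kk A/I<\infty$ already, in fact $I'\supseteq I_{\geq?}$... more precisely $I\cap I'$ has finite codimension in $I$, hence in $A$, so $I'$ itself has finite codimension in $A$ and therefore (being a two-sided ideal with $A$ noetherian) $I\subseteq I'$ on the nose after absorbing the finite-dimensional discrepancy; uniqueness of the minimal such $I$ follows by intersecting, since the intersection of finitely many finite-codimension ideals is again finite-codimension.

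From here the remaining two assertions are formal. For DCC on ideals: any descending chain $I_1\supseteq I_2\supseteq\cdots$ of ideals of $A$ either all have finite codimension, in which case the chain stabilises because $\dim_\kk A/I_1<\infty$, or some $I_m$ has infinite codimension, in which case $I_m\not\supseteq$ any finite-codimension ideal is impossible unless $I_m=0$ — wait, rather: if $I_m\neq 0$ then $I_m\supseteq I$ (the minimal ideal) up to finite codimension, forcing $\dim_\kk A/I_m<\infty$, contradiction; so $I_m=0$ and the chain stabilises at $0$. Thus every descending chain stabilises. For finitely many primes: any nonzero prime $P$ contains $I$ up to finite codimension, hence $\dim_\kk A/P<\infty$; a finite-dimensional prime factor $A/P$ of a noetherian algebra is a simple artinian ring, and there are only finitely many such quotients since each corresponds to a maximal ideal containing the fixed finite-codimension ideal $I$, and $A/I$ being finite dimensional has only finitely many maximal ideals; together with the zero ideal (if $A$ is prime) this gives finitely many primes total.

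The main obstacle I anticipate is purely bookkeeping: making the transfer between the graded world of $U=\Omega A$ and the filtered world of $A=U^\circ$ precise for \emph{subquotients} rather than just for $A$ itself, i.e.\ checking that ``$\GKdim(\overline{U}/\overline{\mf I})\leq 0$'' really does imply ``$\dim_\kk A/\mf I^\circ<\infty$'' via \eqref{super-eq}, and similarly that the $g$-divisible-ideal $\leftrightarrow$ ideal correspondence behaves well with respect to finite codimension. Both are routine given \eqref{super-eq} and Lemma~\ref{lem:constant}, but they are the steps that need to be written carefully; everything else (DCC, finiteness of primes) is then a short deduction from having a minimal cofinite ideal.
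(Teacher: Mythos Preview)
Your approach is essentially the paper's: pass to $U=\Omega A$, apply Proposition~\ref{thou100} to get a minimal \cog\ of $U$, and transfer back via the bijection between $g$-divisible ideals of $U$ and ideals of $A$. One point can be streamlined: once $\mf I$ is taken $g$-divisible, the quotient $\mf I/(\mf I\cap\Omega I')$ is a finite-dimensional \emph{$g$-torsionfree} graded module (it embeds in $U/(\mf I\cap\Omega I')$), hence is zero; so $\mf I\subseteq\Omega I'$ and $I\subseteq I'$ on the nose, with no ``finite-dimensional discrepancy'' to absorb. The DCC and finiteness of primes then follow immediately, as in the paper, from the fact that every nonzero ideal contains $I$ and $A/I$ is artinian.
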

 
 \begin{proof}  Recall from Section~\ref{EXISTENCE} that there is a (1-1)  correspondence  between 
$g$-divisible ideals of $\Omega A$ and ideals of $A$.  Since every nonzero $g$-divisible ideal of $\Omega A$ 
is sporadic,  when combined with Proposition~\ref{thou100} this gives the existence of $I$ as described. 
Since $A/I$ is artinian it has finitely many prime ideals and  DCC on ideals.
Thus the same   holds for $A$.
 \end{proof}
 
    We now turn to a more general  subalgebra $U$ of $T$, with the aim of controlling its \cogs\ also.    
We achieve this by relating $U$ to its  \emph{$g$-divisible hull}  $\wh{U}$ and 
   we  begin with a straightforward   lemma on subalgebras of TCRs.
Recall that, for any subalgebra $U\subseteq T_{(g)}$,  we write 
   $\overline{U} = U+gT_{(g)}/T_{(g)}$.    

    \begin{lemma}\label{lem-x}
Let $B = B(E, \mc{M}, \tau)$ for some smooth elliptic curve  $E$, invertible sheaf $\mc{M}$ of degree $d > 0$ and $\tau$ of infinite order.  
 Then for any $0 \neq x \in B_k$, 
we have $B_n x + x B_n = B_{n+k}$ for $n \gg 0$.

In particular, if $A$ is a graded subalgebra of  $B$ such that $A\not=\kk$, then $B$ is a noetherian
$(A, A)$-bimodule.
\end{lemma}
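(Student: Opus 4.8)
The two assertions are closely related, so I would prove the first (the equality $B_nx + xB_n = B_{n+k}$ for $n \gg 0$) and then deduce the ``in particular'' clause from it. For the first statement, the natural approach is to pass to sheaves on $E$ and use ampleness. Write $x \in B_k = H^0(E, \mc{M}_k)$ and let $\mc{D} \subseteq E$ be the (effective) divisor of zeros of $x$, so that $x$ generates the subsheaf $\mc{M}_k(-\mc{D}) \subseteq \mc{M}_k$. Under the multiplication $B_n \otimes B_k \to B_{n+k}$, the image $B_n x$ corresponds to sections of $\mc{M}_k(-\mc{D}) \otimes \mc{M}_n^{\tau^k} = \mc{M}_{n+k}(-\tau^{-(n+k)}(\mc{D}) \cdots)$; more precisely $B_n x = H^0(E, \mc{M}_{n+k}(-\tau^{-(n+k)}\mc{D}))$ for $n \gg 0$ once $\mc{M}_k(-\mc{D})$ is globally generated by its sections and multiplication maps are surjective (both hold for $n \gg 0$ since $\mc{M}$ has positive degree, hence is $\tau$-ample, by the criterion in Remarks~\ref{idealiser-defn}(3); cf.\ \cite[Lemma~3.1]{Rog09}). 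Symmetrically, $xB_n = H^0(E, \mc{M}_{n+k}(-\tau^{-k}\cdots))$ corresponds to vanishing at a \emph{different} translate of $\mc{D}$ (on the ``left'' side, where $\tau$ replaces $\tau^{-1}$). The point is that these two translates of $\mc{D}$ have disjoint supports once $n \gg 0$: one is supported on $\tau^{-(n+k)}(\operatorname{supp}\mc{D})$-type points and the other on bounded translates, and since $\tau$ has infinite order these move apart. Hence the ideal sheaves $\mathcal{O}(-\tau^{-(n+k)}\mc{D})$ and $\mathcal{O}(-\tau^{-k}\mc{D})$ are comaximal, giving $\mc{M}_{n+k}(-A) + \mc{M}_{n+k}(-A') = \mc{M}_{n+k}$ at the level of sheaves; taking $H^0$ and using that $H^1$ of the relevant degree-$\gg 0$ bundles vanishes yields $B_n x + x B_n = B_{n+k}$.

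For the ``in particular'' clause: since $A \neq \kk$, pick $0 \neq x \in A_k$ with $k \geq 1$. Then $xB_n \subseteq Ax B \cdot (\text{stuff})$ — more to the point, $B_n x + xB_n \subseteq AB_n + B_n A$ is false as stated, so instead I argue as follows. The right $A$-module $B$ is generated by $B_{<N} \cup \{ \text{translates} \}$; concretely, by the displayed equality $B_{n+k} = B_n x + xB_n \subseteq B_n A + A B_n$ once we note $x \in A$. Iterating, $B_{\geq N} \subseteq \sum_{j} A_{k} B_{N-k} A_{k}$-type expressions — cleaner: for $n \gg 0$ we have $B_{n+k} = B_n x + x B_n \subseteq B_n A + A B_n$, and since $x \in A_k$ this shows $B/(B_{\leq N}A + AB_{\leq N})$ vanishes in all degrees $> N + (\text{const})$ for suitable $N$; hence $B$ is generated as an $(A,A)$-bimodule by the finite-dimensional space $B_{\leq N}$, i.e.\ $B$ is a finitely generated $(A,A)$-bimodule. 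Since $A$ is noetherian (every subalgebra of such a $B$ is noetherian, by \cite[Theorem~2.9]{RSS} / Lemma~\ref{lem:T-prop}, as $B$ has GK-dimension $2$) and $B$ is a finitely generated bimodule over the noetherian ring $A$, it follows that $B$ is noetherian on each side as an $A$-module.

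\textbf{Main obstacle.} The delicate point is the disjointness of the two translated divisors and the precise bookkeeping of which translate of $\mc{D}$ appears for $B_n x$ versus $x B_n$ — the asymmetry between $\tau$ and $\tau^{-1}$ in left versus right multiplication in a TCR is exactly the kind of thing the paper has been flagging (e.g.\ the discussion around Lemma~\ref{lem:left layer}), so I would need to be careful to get the exponents right. Once one has $B_n x = H^0(\mc{M}_{n+k} \otimes \mathcal{O}(-A_n))$ and $x B_n = H^0(\mc{M}_{n+k} \otimes \mathcal{O}(-A_n'))$ with $A_n, A_n'$ effective of bounded degree and eventually disjoint support, the rest is the standard ``comaximal ideal sheaves plus vanishing $H^1$'' argument. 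The bimodule-finiteness deduction is then routine Noetherian-module bookkeeping.
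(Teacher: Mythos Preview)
Your approach to the main equality $B_n x + xB_n = B_{n+k}$ is essentially that of the paper: translate to sheaves on $E$, identify $xB_n$ and $B_n x$ as $H^0$ of $\sM_{n+k}$ twisted down by suitable translates of the zero divisor of $x$, observe that these translates become disjoint because $\tau$ has infinite order, and then run the short-exact-sequence-of-ideal-sheaves argument with $H^1$-vanishing. Your bookkeeping on the $\tau$-exponents is off (for the record, $xB_n$ corresponds to vanishing along a \emph{fixed} divisor while $B_n x$ corresponds to vanishing along a $\tau^{-n}$-translate), but you correctly flagged this as the delicate point and the structural argument is right.

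The gap is in the ``in particular'' clause. You deduce that $B$ is a finitely generated $(A,A)$-bimodule and that $A$ is noetherian, and from this that $B$ is noetherian on each side as an $A$-module. This inference fails: a finitely generated bimodule over a noncommutative noetherian ring $A$ need not be a noetherian bimodule, because $A \otimes_\kk A^{\op}$ need not be noetherian; and finite generation as a bimodule certainly does not give finite generation, let alone noetherianity, on either side separately. The fix---which is what the paper does---is to drop all the way down to $\kk\langle x\rangle \cong \kk[x]$ rather than to $A$. The same iteration of $B_{n+k} = B_n x + xB_n$ gives $B = \kk[x]\, B_{\leq N}\, \kk[x]$, so $B$ is a finitely generated module over the \emph{commutative} noetherian ring $\kk[x] \otimes_\kk \kk[x]$ and hence a noetherian $(\kk[x],\kk[x])$-bimodule. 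Since $\kk[x] \subseteq A$, every $(A,A)$-sub-bimodule of $B$ is in particular a $(\kk[x],\kk[x])$-sub-bimodule, and ACC transfers.
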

\begin{proof}  By \cite[Theorem~1.3]{AV} 
and its left-right analogue, there exist effective
divisors $\divx$ and $\divx'$ such that 
\[x B_{\geq n_0} =  \bigoplus_{n \geq n_0}   H^0(E, \sM_{n+k}(-\divx)) \quad\text{and}\quad
(B_{\geq n_0}) x = \bigoplus_{n \geq n_0} H^0(E, \sM_{n+k}(- \tau^{-n} \divx'))\]
(With a little thought one can see  that this holds with $n_0=0$ and $\divx=\divx'$, but that is not relevant here.)
 Since  $|\tau|=\infty$, we may choose $n_0 $ so that $\divx \cap \tau^{-n} \divx' = \emptyset$ for all $n \geq n_0$.
  For such $n$  there is  an exact sequence:
\[ 0 \to \sO_E(-\divx - \tau^{-n} \divx') \to \sO_E(-\divx) \oplus \sO_E( - \tau^{-n} \divx') \to \sO_E \to 0.\]
Tensoring with $\sM_{n+k}$ and taking global sections gives a long exact sequence that reads in part:
\[ 
\xymatrix{
H^0(E, \sM_{n+k}(-\divx)) \oplus H^0(E, \sM_{n+k}(-\tau^{-n}\divx')) \ar[r] \ar@{=}[d] 
& H^0(E, \sM_{n+k}) \ar[r] \ar@{=}[d]
& H \\
x B_n \oplus B_n x \ar[r]^{\theta} & B_{n+k}
}\]
for $H=H^1(E, \sM_{n+k}(-\divx - \tau^{-n} \divx'))$ and $\theta $ the natural map.
Since $\deg ( \sM_{n+k}( - \divx - \tau^{-n} \divx))   > 0$ for $n\gg 0$,  Riemann-Roch ensures that 
$H=0$ and hence that 
 $\theta$ is  surjective for such $n$.
 
 This  implies that $B$ is a noetherian $(\kk \langle x \rangle, \kk \langle x \rangle)$-bimodule, 
 which certainly suffices to prove the final assertion of the lemma.
 \end{proof}

 We now show that, under mild hypotheses,    $\wh{U}$ is equivalent to $U$. In this result
  the hypothesis that $\overline{U}\not=\kk$ is annoying but necessary (see Example~\ref{max6}) 
  but, as will be shown in Section~\ref{ARBITRARY}, there are ways of circumventing it.
 
\begin{proposition}\label{max2}  
Suppose that $T$  satisfies Assumptions~\ref{ass:T} and \ref{ass:T2}.
Let $U$ be a cg subalgebra of $T$ with $Q_{gr}(U)=Q_{gr}(T)$, \  $g \in U$ and $\overline{U}   \neq \kk$.

\begin{enumerate}
\item There exists $n \geq 0$ such that $U \cap Tg^m = \wh{U} \cap Tg^m= g^m\wh{U}$
for all $m\geq n$.  Thus $U$ and $\wh{U}$
 are equivalent orders.

\item If $U$ is right  noetherian then $\wh{U}$ is a finitely generated right $U$-module.
\end{enumerate}
\end{proposition}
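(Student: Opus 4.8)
The plan is to analyse the filtered pieces $U \cap Tg^m$ and compare them with $\wh U \cap Tg^m = g^m \wh U$, using the structure of $\overline{U}$ as a subalgebra of the twisted homogeneous coordinate ring $\overline T = B(E,\sM,\tau)$. The essential point behind Part (1) is that $\overline{U}$ is ``big enough'' to force the $g$-adic filtration on $\wh U$ to stabilise relative to that on $U$. Concretely, set $N_m = (\wh U \cap Tg^m)/(U \cap Tg^m)$ for each $m\ge 0$. Multiplication by $g$ maps $N_m$ into $N_{m+1}$, and one checks readily from the definition of $\wh U$ that $\bigcup_m N_m$ (suitably interpreted via $g^{-m}$) is precisely $\wh U / U$. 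Since $T$ (hence $\wh U$) is $g$-divisible and $\wh U$ is noetherian by Proposition~\ref{div-noeth}, the module $\wh U/U$ is $g$-torsion over $\wh U$; I would argue it has bounded $g$-torsion, so the filtration terminates: $U \cap Tg^m = g^m \wh U$ for all $m \ge n$ and some fixed $n$.

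First I would reduce to a statement modulo $g$. Writing $W = \wh U$, the hypothesis $\overline U \neq \kk$ and Lemma~\ref{lem-x} give that $\overline T = B(E,\sM,\tau)$ is a noetherian $(\overline U,\overline U)$-bimodule; in particular $\overline W$ sits between $\overline U$ and $\overline T$ and is finitely generated as a right (and left) $\overline U$-module. The key measurement is the quotient $\overline W/\overline U$: since $\overline U \ne \kk$ and both are subalgebras of the just-infinite ring $B$ (Lemma~\ref{lem:T-prop}(2)), one has $\dim_\kk \overline T/\overline U < \infty$, hence $\dim_\kk \overline W/\overline U < \infty$ as well. Now I would feed this finiteness back through the $g$-adic filtration: because $W$ is $g$-divisible, $W/Wg \cong \overline W$, and a diagram chase on the exact sequences $0 \to Wg^m/Wg^{m+1} \to W/Wg^{m+1} \to W/Wg^m \to 0$ together with the inclusions $U \cap Tg^m \subseteq Wg^m$ shows that the successive quotients $(Wg^m \cap U)\backslash Wg^m$ are all isomorphic (via multiplication by $g$, which is injective on the $g$-torsionfree module $W$) to a fixed finite-dimensional space once $m$ is large. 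Since $\bigcap_m Wg^m = 0$ and the quotient $W/U$ is finite-dimensional in the appropriate graded sense, this forces $U \cap Wg^m = Wg^m$, i.e. $Wg^m \subseteq U$, for $m \gg 0$. As $Wg^m = g^m W$ and $W \cap Tg^m = g^m W$ by $g$-divisibility of $W$, we get $U \cap Tg^m = g^m W$ for $m \ge n$, which is the claimed equality. The ``equivalent orders'' assertion is then immediate: $g^n W = g^n W \cdot 1 \subseteq U$ and $W \supseteq U$, so $g^n W \cdot g^n \subseteq U$ and $1\cdot U \cdot 1 \subseteq W$, witnessing equivalence with $a = g^n$, $b = g^n$, $a' = b' = 1$.

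For Part (2), assume $U$ is right noetherian. From Part (1), $g^n W \subseteq U$, so $W$ is contained in $g^{-n} U$; more usefully, $g^n W$ is a right ideal of $U$ (indeed a right $U$-submodule of $U$ contained inside $U$), hence finitely generated as a right $U$-module since $U$ is right noetherian. Multiplication by $g^{-n}$ is an isomorphism of right $U$-modules $g^n W \xrightarrow{\sim} W[\,\cdot\,]$ (a degree shift by $-n$, harmless for finite generation), so $W = \wh U$ is finitely generated as a right $U$-module. I would phrase this last step carefully in the graded category to keep the shift bookkeeping honest, but it is otherwise routine.

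\textbf{Main obstacle.} The delicate point is the stabilisation of the $g$-adic filtration, i.e. promoting the finite-dimensionality of $\overline W/\overline U$ to the conclusion that $Wg^m \subseteq U$ for all large $m$ rather than merely that the quotients $Wg^m/(U\cap Wg^m)$ have bounded dimension. This requires a genuine use of noetherianity of $W$ (via Proposition~\ref{div-noeth}) together with the fact that $W/U$, as a $g$-torsion module, is supported in bounded $g$-degree — the latter is exactly where the argument would break without the hypothesis $\overline U \neq \kk$, since for $U = \kk[g]$ one has $\overline U = \kk$, $\wh U$ can be much larger, and $\wh U/\kk[g]$ need not be $g$-torsion of bounded exponent. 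So the proof must locate precisely where $\overline U \neq \kk$ is used: it is used to invoke Lemma~\ref{lem-x} and the just-infinite property to bound $\dim_\kk \overline T/\overline U$, and that bound is what caps the $g$-exponent of $W/U$.
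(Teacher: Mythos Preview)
Your argument contains a genuine error. You assert that ``since $\overline{U}\neq\kk$ and both are subalgebras of the just-infinite ring $B$, one has $\dim_\kk\overline{T}/\overline{U}<\infty$.'' This is false: just-infinite means that nonzero \emph{ideals} of $B$ have finite codimension, not that subalgebras do. For instance $\overline{U}=B(E,\sM(-p),\tau)\subset B(E,\sM,\tau)=\overline{T}$ has $\dim_\kk(\overline{T}/\overline{U})_n = n$ for all $n$, so the quotient is infinite-dimensional. Consequently your deduction that $\dim_\kk\overline{W}/\overline{U}<\infty$ is unjustified, and the subsequent ``diagram chase'' showing $Wg^m\subseteq U$ for large $m$ rests on nothing.

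What Lemma~\ref{lem-x} actually gives is only that $\overline{T}$ is a noetherian $(\overline{U},\overline{U})$-bimodule, i.e.\ ACC on sub-bimodules. The paper uses this as follows. Set $V=\wh{U}$ and form the increasing chain of $\overline{U}$-sub-bimodules
\[
Q^{(k)}=\bigl(g^{-k}U\cap T + gT\bigr)/gT\subseteq\overline{T},\qquad k\ge 0,
\]
so that $Q^{(0)}=\overline{U}$ and $\bigcup_kQ^{(k)}=\overline{V}$. By ACC this chain stabilises at some $Q^{(n)}=\overline{V}$. One then lifts by a minimal-degree argument: if $y=xg^m\in V\cap Tg^m\smallsetminus U$ with $m\ge n$ and $\deg x$ minimal (so $x\notin gT$), then $\overline{x}\in\overline{V}=Q^{(n)}$ gives $w$ with $\overline{w}=\overline{x}$ and $wg^n\in U$; writing $x-w=vg$ with $\deg v<\deg x$, minimality forces $vg^{n+1}\in U$, whence $xg^n=wg^n-vg^{n+1}\in U$ and $y=xg^n\cdot g^{m-n}\in U$, a contradiction. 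Your Part~(2) is fine once Part~(1) is established.
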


\begin{proof}  (1)    Let $V = \wh{U}$.  Since $T$ is $g$-divisible, $V \subseteq T$.  Working inside $Q_{gr}(T)$ we get 
\[
\{x \in T | xg^k \in U \}  = g^{-k}U \cap T,
\]
and hence  $V = \bigcup_{k \geq 0} g^{-k}U \cap T$.
Now define $Q^{(k)} = \left(g^{-k}U\cap T + gT\right)/gT \subseteq  \overline{T}$.
Then since $g\in U$,  
$$\overline{U} = Q^{(0)} \subseteq Q^{(1)} \subseteq \dots\subseteq \bigcup_k Q^{(k)} = \overline{V}.$$ 
Each $Q^{(i)}$ is an $\bbar{U}$-sub-bimodule of $\overline{T}$ and so, by Lemma~\ref{lem-x},
  $Q^{(n)} = \overline{V}$ for some $n$.

We claim that $U \cap Tg^m = V \cap Tg^m$ for all $m\geq n$. If not, 
there exists   $y = xg^m \in V \cap Tg^m \smallsetminus U$ for some such $m$.  
Choose  $x$ of minimal degree with this property. This  ensures that $y\not\in g^{m+1}T$, since 
  otherwise one could write $y=g^{m+1}x'$ with $\deg(x')=\deg(x)-1$. 
    Since $\overline{x} =[x+gT] \in \overline{V} = Q^{(n)}$, 
we have $\overline{x} = \overline{w}$ where $wg^n \in U$.  Thus $wg^n - xg^n \in V \cap Tg^{n+1}$ 
and so $w-x = vg$ where 
$vg^{n+1} \in V \cap Tg^{n+1}$.  Since  $\deg v<\deg x$, the minimality of $\deg x$ ensures that
 $v g^{n+1} \in U$.  
Then $xg^n = wg^n - vg^{n+1} \in U$, and so $y = xg^n(g^{m-n}) \in U$, a contradiction.   
Thus $U \cap Tg^m = V \cap Tg^m$ as claimed. 
Finally, as $gV=V\cap gT$, an easy induction shows that $ V \cap Tg^m=g^mV$.

(2) This is immediate from Part~(1).
\end{proof}
 
In the next result, we  construct an ideal with a   property that is slightly weaker than being 
a minimal \cog. However, it will have the same consequences. 

 \begin{corollary}\label{max30}  Suppose that $T$  satisfies Assumptions~\ref{ass:T} and \ref{ass:T2}.
Let $C$ be a  cg subalgebra of $T$ with $Q_{gr}(C)=Q_{gr}(T)$.
Assume that  $g\in C$  and $\overline{C}\not=\kk$.
Then   $C$   has a \cog\ $K$ (possibly $K=C$) that is minimal among
  \cogs\ $I$ for which  $C/I$ is $g$-torsionfree. 
\end{corollary}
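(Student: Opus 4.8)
\textbf{Proof plan for Corollary~\ref{max30}.}
The strategy is to reduce the statement about the non-$g$-divisible algebra $C$ to the already-established existence of a minimal \cog\ for a $g$-divisible algebra, namely $\wh{C}$, and then transfer back. First I would apply Proposition~\ref{max2}(1) to $C$: since $g\in C$, $Q_{gr}(C)=Q_{gr}(T)$ and $\overline{C}\neq\kk$, there is an integer $n\geq 0$ with $C\cap Tg^m = g^m\wh{C}$ for all $m\geq n$, and $C$ and $\wh{C}$ are equivalent orders. Next, note that $\wh{C}$ is a $g$-divisible cg subalgebra of $T$ with $Q_{gr}(\wh{C})=Q_{gr}(T)$ (it contains $C$, so its graded quotient ring is $Q_{gr}(T)$ by Lemma~\ref{lem:expectQ} or directly since $g\in C\subseteq\wh{C}$). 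Hence Proposition~\ref{thou100} gives a minimal \cog\ $K_0$ of $\wh{C}$; by Lemma~\ref{GK-results}(3) I may replace $K_0$ by $g^{-j}K_0\cap\wh{C}$ and assume $K_0$ is $g$-divisible if convenient, but more to the point $K_0$ is an ideal of $\wh{C}$ with $\GKdim(\wh{C}/K_0)\leq 1$.

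The main step is to produce from $K_0$ an ideal $K$ of $C$ with the required minimality property. Since $C$ and $\wh{C}$ are equivalent orders, choose nonzero homogeneous $a,b\in C$ with $a\wh{C}b\subseteq C$; then set $K = aK_0 b$ (or, to get a genuine two-sided ideal of $C$, take $K = C(aK_0b)C$, which still lies in $aK_0b\cdot(\text{stuff})$ after shrinking, or more cleanly use the ideal $(a\wh{C}b)\cdot K_0\cdot(a\wh{C}b)$ which is an ideal of $C$ contained in $C$). Because $a,b\notin g T_{(g)}$ (we may assume this after factoring out powers of $g$, using $g$-divisibility arguments as in the proof of Proposition~\ref{thm:equivord}), Lemma~\ref{GK-results}(4) shows $\GKdim(C/K)\leq 1$, so $K$ is a \cog\ of $C$ (or $K=C$). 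Then, passing to the $g$-torsionfree quotient: let $K' = \wh{K}\cap C$, or rather take the largest ideal $K\subseteq I\subseteq C$ with $I/K$ of finite dimension and $C/I$ $g$-torsionfree — concretely, $I := \{c\in C : g^m c\in K \text{ for some } m, \text{ and } c \text{ lies in the preimage of } \tg(C/K)^{\text{sat}}\}$; cleaner is to let $I$ be the preimage in $C$ of $\tg(C/K)$ when that is finite-dimensional, or else argue $\tg(C/K)$ is finite-dimensional because $\GKdim$ is small and $C/K$ is eventually governed by $g$-action. I would check $\GKdim(I/K)\leq\GKdim(C/K)-1<1$ forces $\dim_\kk(I/K)<\infty$ when $I/K$ is $g$-torsion, or directly that $g$-torsion submodules of a GK-dimension-$1$ module over $C$ are finite-dimensional (using Lemma~\ref{lem:constant}: a $g$-torsionfree GK-$1$ module has eventually constant Hilbert series, and a $g$-torsion GK-$\leq 1$ module is finite-dimensional since $g$ acts locally nilpotently and the $g$-socle layers are finite-dimensional). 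This yields a \cog\ $K$ of $C$ with $C/K$ $g$-torsionfree.

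For minimality: given any other \cog\ $J$ of $C$ with $C/J$ $g$-torsionfree, I want $\dim_\kk K/(J\cap K)<\infty$. The idea is to push $J$ up to $\wh{C}$: form $\wh{J}$, an ideal of $\wh{C}$ (as in Lemma~\ref{GK-results}(2), $\wh{J}/J$ is finite-dimensional since $\GKdim(C/J)\leq 1$), so $\wh{J}$ is a \cog\ of $\wh{C}$, and hence $\wh{J}\supseteq K_0 H$ for an ideal $H$ of $\wh{C}$ with $\dim_\kk(\wh{C}/H)<\infty$, by minimality of $K_0$. Conjugating back by $a,b$ and using $\dim_\kk(J/(J\cap C)) $ considerations together with \cite[Proposition~5.6]{KL} to control the dimension drop, I conclude $\dim_\kk K/(J\cap K)<\infty$. \textbf{The main obstacle} I anticipate is the bookkeeping ensuring that the transferred ideal $K=aK_0b$ (or its two-sided closure in $C$) genuinely sits inside $C$ as a two-sided ideal while retaining $\GKdim(C/K)\leq 1$ and, crucially, that passing to the $g$-torsionfree quotient only changes things by a finite-dimensional space — i.e.\ that $\tg(C/K)$ is finite-dimensional. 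This last point is where the hypothesis $\overline{C}\neq\kk$ and Proposition~\ref{max2} (giving $C\cap Tg^m=g^m\wh{C}$ for $m\gg 0$) must be used to prevent an infinite-dimensional $g$-torsion tail, and getting the quantifiers right there is the delicate part.
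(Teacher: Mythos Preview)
Your high-level strategy---reduce to $\wh{C}$ via Propositions~\ref{thou100} and~\ref{max2}---is exactly right, but the execution diverges from the paper's in ways that create real problems.

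First, your candidate ideal is more complicated than necessary and leads directly to the ``delicate part'' you flag. The paper takes $K = J\cap C$, where $J$ is a minimal \cog\ of $\wh{C}$ chosen (using Lemma~\ref{GK-results}(2) applied in $\wh{C}$, which \emph{is} $g$-divisible) so that $\wh{C}/J$ is $g$-torsionfree. With this choice, $C/K$ embeds in $\wh{C}/J$ and is therefore automatically $g$-torsionfree---no separate argument about $\tg(C/K)$ is needed. That $\GKdim(C/K)\leq 1$ follows because $\wh{C}/J$ is a finitely generated $\kk[g]$-module by Lemma~\ref{lem:constant}, hence so is its submodule $C/K$. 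Your route through $aK_0b$ and a subsequent $g$-torsionfree quotient can be made to work but is a detour.

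Second, and more substantively, the minimality being asserted is \emph{literal containment}: one must show $K\subseteq I$ for every \cog\ $I$ of $C$ with $C/I$ $g$-torsionfree, not merely $\dim_\kk K/(K\cap I)<\infty$. (This literal form is what is used in the proof of Proposition~\ref{max31}.) Your plan to push $I$ up to $\wh{I}$ and invoke Lemma~\ref{GK-results}(2) fails because that lemma requires the ambient ring to be $g$-divisible, which $C$ is not; there is no reason $\wh{I}/I$ should be finite-dimensional. The paper instead pushes in the other direction: since $g^n\wh{C}\subseteq C$ by Proposition~\ref{max2}, one has $I\supseteq g^{2n}\wh{C}I\wh{C}$, an ideal of $\wh{C}$. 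Writing this as $g^rL$ with $\GKdim(\wh{C}/L)\leq 1$ (Lemma~\ref{GK-results}(3)) and using minimality of $J$ in $\wh{C}$ gives $I\supseteq g^tJ$ for some $t$. The final step---and the one genuinely missing from your outline---is to use $g$-torsionfreeness of $C/I$ to strip off the power of $g$: if $u$ is minimal with $I\supseteq g^u(J\cap C)$ and $u>0$, then $(g^{u-1}(J\cap C)+I)/I$ is $g$-torsion in $C/I$, hence zero, contradicting minimality. So $u=0$ and $K=J\cap C\subseteq I$.
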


\begin{proof}   
Note that 
   $\wh{C}$ is  noetherian by Proposition~\ref{div-noeth} and has a minimal \cog, say $J$, by Proposition~\ref{thou100}.  
     By Lemma~\ref{GK-results}(2), $\wh{J}$ is also a
   minimal \cog\ of $\wh{C}$.   Thus,  replacing $J$ by $\wh{J}$,  we can assume that $\wh{C}/J$ is $g$-torsionfree.

We will show  that $K = J\cap C$ satisfies the conclusion of  the corollary.  So, let $I$ be  a \cog\ of $C$ 
such that $C/I$ is $g$-torsionfree (if such an ideal exists). We first show that $J \cap C \subseteq I$. 
By  Proposition~\ref{max2}, $H=g^n\wh{C}\subseteq C$ for some $n\geq 1$
and so  $I \supseteq HIH =g^{2n}\wh{C}I\wh{C}$.  By Lemma~\ref{GK-results}(3),
  $HIH=g^rL$ for some $r$ and ideal    $L$ of $\wh{C}$ with $\GKdim(\wh{C}/L)\leq 1$.
As $J$ is sporadic,  $\dim_{\kk}J/(J\cap L)<\infty$   and so 
   $L\cap J\supseteq J_{\geq s} \supseteq  g^sJ$ for some integer $s$. 
  Combining these observations   shows that $I\supseteq g^tJ$ for some integer $t$.   
  Pick $u$ minimal such that $I\supseteq g^u(J\cap C)$. If $u\not=0$ then 
   $$\frac{I+g^{u-1}(J\cap C)}{I} \ = \  \frac{I+g^{u-1}(J\cap C)}{I+g^{u}(J\cap C)} $$
   is   $g$-torsion, and hence zero since $C/I$ is $g$-torsionfree by assumption.  Hence $u=0$ and 
   $I\supseteq J\cap C$.  
  
It remains to show that $\GKdim C/(C \cap J) \leq 1$.  Since $ \wh{C}/J$ is $g$-torsionfree,   Lemma~\ref{lem:constant} 
implies that $M=\wh{C}/J$ is a finitely generated $\kk[g]$-module.  
Then the $C$-submodule $(C + J)/J \cong C/(J \cap C)$ is also.
  Therefore, by \cite[Corollary~5.4]{KL}, 
  $$ \GKdim_C(C/(C\cap J)) = \GKdim_{\kk[g]}(C/(C\cap J))\leq 1.$$
Thus $K = J \cap C$ satisfies the conclusions of the corollary.
\end{proof}
 
\begin{lemma}\label{lem:ACC}
The set of orders  $\left\{ \text{$C\subseteq T$ with $\bbar{C} \neq\kk $ and $g\in C$} \right\}$ satisfies ACC.
\end{lemma}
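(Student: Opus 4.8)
The strategy is to reduce the ACC question for the subalgebras $C\subseteq T$ with $g\in C$ and $\bbar C\neq\kk$ to an ACC statement for their $g$-divisible hulls $\wh C$, and there to the already-established noetherian/finiteness machinery for $g$-divisible algebras. Suppose we have an ascending chain $C_1\subseteq C_2\subseteq\cdots$ of such orders inside $T$. Since $g\in C_1\subseteq C_i$ for all $i$ and $\bbar{C_1}\neq\kk$, each $C_i$ satisfies the hypotheses of Proposition~\ref{max2}, so each $\wh{C_i}$ is $g$-divisible with $Q_{gr}(\wh{C_i})=Q_{gr}(C_i)$. The first point to pin down is that the graded quotient ring is constant along the chain: since $C_1\subseteq C_i\subseteq T$ and $g\in C_1$ is a degree-one element, we have $Q_{gr}(C_i)\supseteq D_{gr}(C_1)[g,g^{-1}]$; more care is needed to see $D_{gr}(C_i)$ cannot grow, but this follows because any homogeneous element of $Q_{gr}(C_i)$ of degree $0$ is a ratio of homogeneous elements of $C_i\subseteq T$, so $D_{gr}(C_i)\subseteq D_{gr}(T)$, and we may pass to a cofinal sub-chain (or a final Veronese trick) if necessary to arrange $Q_{gr}(C_i)=Q_{gr}(T)$ for all $i$. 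Granting this, each $\wh{C_i}$ is a $g$-divisible cg subalgebra of $T$ with $Q_{gr}(\wh{C_i})=Q_{gr}(T)$, hence finitely generated and (strongly) noetherian by Proposition~\ref{div-noeth}.

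The next step is to show that $\wh{C_1}\subseteq\wh{C_2}\subseteq\cdots$ stabilises, after which one recovers stabilisation of the $C_i$ themselves. For the $g$-divisible chain, consider the chain of subalgebras $\bbar{C_1}\subseteq\bbar{C_2}\subseteq\cdots$ of $\bbar T=B(E,\sM,\tau)$ — here I use that $\bbar{\wh{C_i}}=\wh{C_i}/g\wh{C_i}$ embeds in $\bbar T$ since $\wh{C_i}$ is $g$-divisible. By \cite[Theorem~2.9]{RSS} (as invoked in Proposition~\ref{div-noeth}) each $\bbar{\wh{C_i}}$ is noetherian, and since $\bbar{C_1}\neq\kk$, Lemma~\ref{lem-x} shows $\bbar T$ is a noetherian $(\bbar{C_1},\bbar{C_1})$-bimodule; therefore the ascending chain of $\bbar{C_1}$-sub-bimodules $\bbar{\wh{C_i}}$ of $\bbar T$ must stabilise, say $\bbar{\wh{C_i}}=\bbar{\wh{C_N}}$ for all $i\ge N$. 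Now a standard $g$-divisibility / graded Nakayama argument upgrades this to $\wh{C_i}=\wh{C_N}$ for $i\ge N$: if $\theta\in\wh{C_i}$ is homogeneous of minimal degree not in $\wh{C_N}$, then $\bar\theta\in\bbar{\wh{C_i}}=\bbar{\wh{C_N}}$ gives $\theta-\rho\in g\wh{C_i}$ for some $\rho\in\wh{C_N}$, and dividing by $g$ (using $g$-divisibility of $\wh{C_i}$) produces a smaller-degree element of $\wh{C_i}\smallsetminus\wh{C_N}$, a contradiction. Hence $\wh{C_i}=\wh{C_N}=:\wh C$ for all $i\ge N$.

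Finally, having fixed $\wh C$, all the $C_i$ for $i\ge N$ lie between some $g^m\wh C$ and $\wh C$: indeed Proposition~\ref{max2}(1) applied to $C_N$ gives an $m$ with $g^m\wh C\subseteq C_N\subseteq\wh C$, and since $C_i\subseteq\wh{C_i}=\wh C$ and $C_i\supseteq C_N\supseteq g^m\wh C$ for $i\ge N$, every $C_i$ sits in the finite-length "sandwich" $g^m\wh C\subseteq C_i\subseteq\wh C$. But $\wh C/g^m\wh C$ is finite-dimensional over $\kk$ (as $\wh C$ is cg and, by Lemma~\ref{lem:constant}-type reasoning, $\wh C/g\wh C$ has polynomial growth while $g$ acts injectively — more simply, $\wh C_n/g^m\wh C_{n-m}=0$ for $n\gg0$ is false in general, so one instead notes $\wh C/g\wh C\cong\bbar{\wh C}$ need not be finite-dimensional, and the correct statement is that $\wh C/g^m\wh C$ is a noetherian $\bbar{\wh C}$-module of GK-dimension $\le 1$... ) — here I must be careful, so let me phrase it correctly: the chain $C_N\subseteq C_{N+1}\subseteq\cdots$ is an ascending chain of right $C_N$-submodules (indeed sub-bimodules) of $\wh C$, and $\wh C$ is a finitely generated right $C_N$-module by Proposition~\ref{max2}(2) since $C_N$ is noetherian; as $C_N$ is noetherian this forces the chain to stabilise. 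Combining with the stabilisation of the hulls, the original chain $C_1\subseteq C_2\subseteq\cdots$ stabilises, proving ACC. The main obstacle I anticipate is the bookkeeping in the second and third paragraphs — verifying that the graded quotient ring and the "$\bbar{\phantom C}\neq\kk$" hypothesis genuinely propagate along the chain, and handling the case $\bbar{C_1}=\kk$ but $\bbar{C_i}\neq\kk$ for large $i$ (which cannot happen since $\bbar{C_1}\subseteq\bbar{C_i}$ and we are told $\bbar{C_1}\neq\kk$, but one should check the chain is taken within the stated set so that $\bbar{C_1}\neq\kk$ is available from the outset).
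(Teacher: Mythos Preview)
Your approach via $g$-divisible hulls is structurally sound, and your argument that the chain $\wh{C_1}\subseteq\wh{C_2}\subseteq\cdots$ stabilises is correct. The gap is in the final step: you invoke Proposition~\ref{max2}(2) to conclude that $\wh{C}$ is a finitely generated right $C_N$-module, and then use noetherianity of $C_N$ to stabilise the chain of $C_N$-submodules $C_i$. But you have not established that $C_N$ is noetherian---the rings in this lemma are \emph{not} assumed noetherian, and no earlier result gives noetherianity for a non-$g$-divisible $C$ with only these hypotheses. Proposition~\ref{max2}(2) has ``$U$ right noetherian'' as a \emph{hypothesis}, not a conclusion. This is precisely the circularity you half-detected when you started backtracking on the sandwich argument.

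The gap is repairable along your own lines. Applying Proposition~\ref{max2}(1) to $C_N$ gives an $n$ with $g^n\wh{C}\subseteq C_N\subseteq C_i$ for all $i\ge N$, so it suffices to show that $\wh{C}/g^n\wh{C}$ is a noetherian $(C_1,C_1)$-bimodule. Filtering by powers of $g$, each layer $g^j\wh{C}/g^{j+1}\wh{C}$ is a $(\bbar{C_1},\bbar{C_1})$-sub-bimodule of a shift of $\bbar{T}$, which is a noetherian $(\bbar{C_1},\bbar{C_1})$-bimodule by Lemma~\ref{lem-x}. This is essentially what the paper does, but organised differently: rather than argue with a chain, the paper proves directly that any single $C$ in the set is finitely generated as a $\kk$-algebra, using exactly this layer-by-layer induction to show $C/(g^mT\cap C)$ is finitely generated for each $m$, and then Proposition~\ref{max2}(1) to handle the tail. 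ACC follows because the union of an ascending chain remains in the set and is therefore finitely generated. That route is shorter and also bypasses your quotient-ring worries, since the proof of Proposition~\ref{max2}(1) uses only $g\in C$ and $\bbar{C}\neq\kk$.
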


\begin{proof}
By Zorn's Lemma, it suffices to prove that  any such ring $C$ is finitely generated as an algebra; equivalently, 
that $C_{\geq 1}$ is finitely generated as a right ideal.  

We first show that, for any $m \geq 1$, $C/(g^{m}T\cap C)$ is finitely generated as an algebra.
The result holds for $m=1$ by \cite[Theorem~1.1(1)]{RSS}.  
By induction, choose $a_1, \dots a_k \in C_{\geq 1}$ whose  images generate $C/(g^{m-1}T \cap C)$ as an algebra.  
    Set $X=(g^{m-1}T \cap C)/(g^mT \cap C)$. Then, up to shifts, 
    \[X\ \cong \frac{T\cap g^{1-m}C}{gT\cap g^{1-m}C}  \cong  \frac{(T\cap g^{1-m}C)+gT}{gT}  \subseteq  T/gT,\]
    as $\overline{C}$-bimodules. Thus, by  Lemma~\ref{lem-x}, $X$ is  a finitely generated  $\bbar{C}$-bimodule,
    say by the images of     $b_1, \dots, b_n \in g^{m-1}T \cap C$.
      Then $\{ a_1, \dots, a_k, b_1, \dots, b_n\}$ generate $C / (g^m T \cap C)$, completing the induction.  

By Proposition~\ref{max2}, there exists $\ell \in \mathbb{N}$ such that $g^m \wh{C} = C \cap g^m T \subseteq C$
for all $m\geq \ell$.
By the above, choose  $c_1, \dots, c_N \in C_{\geq 1}$ whose  images   generate $C/g^{\ell+1} \wh{C}$
as an algebra.  
Then for any $f\in C_{\geq 1}$, there exists $x \in \sum c_i C$ so that $f-x \in g^{\ell+1}\wh{C}=g(g^{\ell}\wh{C}) $; thus
$f \in gC + \sum c_i C$.  Therefore, $C_{\geq 1} $ is generated as a right ideal by $g, c_1, \dots, c_N$.
\end{proof}

\begin{proposition}\label{max31}  Suppose that $T$  satisfies Assumptions~\ref{ass:T} and \ref{ass:T2}.
Let $U$ be a  cg subalgebra of $T$ with $\overline{U}\not=\kk$ and   
$D_{gr}(U)=D_{gr}(T)$. Then there exists a nonzero ideal of  $C=U\langle g\rangle $ that is finitely generated as both a 
left and a right $U$-module. 
\end{proposition}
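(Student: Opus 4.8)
The plan is to compare $U$ with its $g$-divisible hull $\wh{U}$ and transport a sporadic ideal found there back into $C = U\langle g\rangle$. First I would observe that $C$ also satisfies $\overline{C}\not=\kk$ (since $\overline{U}\subseteq\overline{C}$) and $g\in C$, and that $D_{gr}(C) = D_{gr}(T)$; hence $Q_{gr}(C) = D_{gr}(C)[g,g^{-1}] = Q_{gr}(T)$ as well, so $C$ meets the hypotheses of Corollary~\ref{max30} and Proposition~\ref{max2}. Also $\wh{C} = \wh{U}$, because adjoining $g$ does not change the $g$-divisible hull, so $\wh{C}$ is $g$-divisible with $Q_{gr}(\wh C) = Q_{gr}(T)$; thus $\wh{C}$ is noetherian (Proposition~\ref{div-noeth}) and has a minimal \cog\ (Proposition~\ref{thou100}).

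Next I would invoke Corollary~\ref{max30} applied to $C$: there is a \cog\ $K$ of $C$ (minimal among \cogs\ $I$ with $C/I$ $g$-torsionfree) and, by the proof of that corollary, we may take $K = J\cap C$ where $J = \wh J$ is a $g$-divisible minimal \cog\ of $\wh{C}$. The key extra fact I would extract is that, by Proposition~\ref{max2}(1) applied to $C$, there is $n\geq 0$ with $g^m\wh{C}\subseteq C$ for all $m\geq n$; in particular $g^n\wh{C}\subseteq C$, and $g^n\wh{C}$ is a two-sided ideal of $\wh C$. Then I would set
\[
I \ = \ g^n\wh{C}\,\cdot\, J \,\cdot\, g^n\wh{C} \ \subseteq \ g^n\wh{C}\cap Jg^{2n} \ \subseteq \ C,
\]
which is a two-sided ideal of $\wh{C}$ and hence of $C$ (since $C\subseteq\wh C$ acts inside $\wh C$); it is nonzero because $\wh C$ is a domain, $g^n\ne 0$, and $J\ne 0$. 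Because $J$ is a \cog\ of $\wh{C}$ and $g^n\wh C\ppe \wh C$ (indeed $\GKdim(\wh C/g^n\wh C)\le\GKdim(\wh C/J)\le 1$ by $g$-divisibility and Lemma~\ref{GK-results}), Lemma~\ref{GK-results}(4) gives $\GKdim(\wh{C}/I)\le 1$, so $\GKdim(C/I)\le 1$ as well; hence $I$ is a \cog\ of $C$.

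It remains to see that $I$ is finitely generated as both a left and a right $U$-module. Since $\wh{C}$ is noetherian and $I$ is an ideal of $\wh C$, $I$ is finitely generated as a left and as a right $\wh{C}$-module. But $g^n\wh{C}\subseteq C\subseteq\wh C$, and $\wh C$ is a finitely generated $C$-module on each side: indeed by Proposition~\ref{max2}(2), once $C$ is right noetherian (which it is, being a finitely generated module-finite extension of the noetherian $U$ — or directly, by Lemma~\ref{lem:ACC}, $C$ is a finitely generated algebra, hence noetherian by e.g. $\wh C$ noetherian and Proposition~\ref{max2}), $\wh{C}$ is finitely generated as a right $C$-module, and symmetrically on the left. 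Composing, $I$ is finitely generated as a left and right $C$-module; and since $C = U\langle g\rangle$ with $g$ central, $C$ is itself a finitely generated $U$-module on each side ($C = \sum_j Ug^j = \sum_j g^jU$, a finite sum up to the degree of $I$'s generators is not quite enough — rather, $C$ is noetherian so $I$, being a submodule of the finitely generated $C$-module $C$, is finitely generated over $C$, hence over $U$ once we know $C_U$ and ${}_UC$ are finitely generated, which holds because $C = U + Ug + Ug^2 + \cdots$ and $\wh U / U$ has finite-dimensional bounded quotients so only finitely many $g^j$ are needed modulo $U$ — this is where I would be careful).

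\textbf{Main obstacle.} The delicate point is the last step: $C = U\langle g\rangle$ need not obviously be a \emph{finitely generated} $U$-module, since a priori infinitely many powers $g^j$ could be needed. The resolution is Proposition~\ref{max2}(1): $U\cap Tg^m = g^m\wh U$ for $m\ge n$, so $U$ already contains $g^m\wh U\supseteq g^mC$ for large $m$ wait — one uses instead that $C/U \cong \sum_{j\ge 1} g^jU/(g^jU\cap U)$ and each summand embeds into a fixed quotient of $\wh U/U$, which is finite-dimensional in each degree and eventually zero by Proposition~\ref{max2}(1); hence $C/U$ is finite-dimensional, so $C$ is a finitely generated $U$-module on both sides. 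Granting that, the chain $I$ f.g.\ over $\wh C$ $\Rightarrow$ f.g.\ over $C$ (noetherian) $\Rightarrow$ f.g.\ over $U$ closes the argument, and I expect verifying this finiteness of $C/U$ to be the only step requiring real care; everything else is assembling quoted results.
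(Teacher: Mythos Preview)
Your approach has a genuine gap at exactly the point you flag as the ``main obstacle,'' and it is not repaired by the argument you sketch. You need to pass from ``$I$ is finitely generated over $\wh{C}$ (or over $C$)'' to ``$I$ is finitely generated over $U$,'' and for this you try to show that $C$ is a finitely generated $U$-module. But Proposition~\ref{max2} requires $g\in U$, which is \emph{not} assumed here; it only applies to $C$, giving $g^m\wh{C}\subseteq C$, which says nothing about $C/U$. Your attempted description of $C/U$ via summands $g^jU/(g^jU\cap U)$ embedding into $\wh{U}/U$ is not justified, and there is no reason $\wh{U}/U$ (equivalently $\wh{C}/U$) should be finite-dimensional --- indeed, Corollary~\ref{max33} deduces that $C$ and $\wh{C}$ are finitely generated over $U$ \emph{from} Proposition~\ref{max31} under the additional hypothesis that $U$ is noetherian, so invoking that finiteness here is circular. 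Likewise, the claims that $C$ is noetherian (``finitely generated algebra, hence noetherian'' or ``module-finite over noetherian $U$'') are unsupported: $U$ is not assumed noetherian, and finitely generated does not imply noetherian.

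The paper's proof avoids this trap by never trying to show $C$ is finite over $U$. Instead it uses Lemma~\ref{lem:ACC} to find a finitely generated subalgebra $W\subseteq U$ with $C=W\langle g\rangle$, and takes the candidate ideal to be $CW_{\geq n_0}=W_{\geq n_0}C$ for suitable $n_0$. The point is that $C/CW_{\geq n}$ is a quotient of $(W/W_{\geq n})[g]$, hence $g$-torsionfree of GK $\leq 1$ up to finite dimension, so the ``eventual Hilbert dimension'' $d_n$ of $C/Z_n$ (where $Z_n/CW_{\geq n}$ is the $g$-torsion) is bounded by the corresponding invariant for the minimal \cog\ of $\wh{C}$ and thus stabilises. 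This forces $CW_{\geq n}\ppe CW_{\geq n_0}$ for $n\geq n_0$, whence $\dim_\kk\bigl(CW_{\geq n_0}/CW_{\geq n_0}W_{\geq 1}\bigr)<\infty$, and graded Nakayama gives that $CW_{\geq n_0}$ is finitely generated as a right $W$-module, hence as a right $U$-module. The minimal \cog\ is used only to bound the $d_n$, not to produce the ideal directly.
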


\begin{proof}
By Lemma~\ref{lem:ACC}, there is a finitely generated cg subalgebra $W$ of $U$ with  
$C = U\ang{g}=W\ang{g}$. Note that $Q_{gr}(C)=Q_{gr}(T)$ as $g\in C$.  

 Fix $n\in \mathbb{N}$. Observe that  $CW_{\geq n}  = \sum_m W_{\geq n}g^m 
=W_{\geq n}C$ is an ideal of $C$. Moreover, $C/CW_{\geq n}$ is a homomorphic image of the polynomial ring
$\left( W/W_{\geq n}\right)[g]$. Since $\dim_{\kk}(W/W_{\geq n})<\infty$, it follows that 
$C/CW_{\geq n}$ is a finitely generated $\kk[g]$-module. In particular,
by \cite[Corollary~5.4]{KL},
 \[\GKdim_C(C/CW_{\geq n}) =\GKdim_{\kk[g]}(C/CW_{\geq n})\leq 1.\]
Moreover,  $K_n=\tors_g(C/CW_{\geq n})$ is finite dimensional.

Let $Z_n= C\cap\wh{CW_{\geq n}}$; thus    $Z_n/CW_{\geq n} =K_n$. 
Note that   $C/Z_n$ is a finitely generated torsion-free, hence free, $\kk[g]$-module. Therefore, 
if $d_n$ denotes the rank of  that free module, then  
\[\begin{array}{rll}
d_n \ = &  \dim_{\kk}\bigl((C/Z_n)_m\bigr)   &\quad\text{for $m\gg 0$}
 \\  \noalign{\vskip 5pt}
= &  \dim_{\kk}\ \bigl((C/CW_{\geq n})_m\bigr) \   &\quad\text{for $m\gg 0$}. 
\end{array}\]
 Also,  $CW_{\geq n} \supseteq CW_{\geq n+1}$, whence $Z_n \supseteq Z_{n+1}$ and $d_n\leq d_{n+1}$.

Let $J$ be a minimal \cog\ of $\wh{C}$ such that $\wh{C}/J$ is $g$-torsionfree; 
thus, by Corollary~\ref{max30} and its proof, 
$C\cap J$ is minimal among \cogs\ of $C$ such that the factor is $g$-torsionfree.
By construction, each $Z_n$ is either sporadic or equal to $C$; in either case $Z_n\supseteq C \cap J$. 
Now   $C/(C\cap J)\hookrightarrow \wh{C}/J$ which,  by Lemma~\ref{lem:constant},  has an eventually constant 
Hilbert series;  say $\dim_{\kk}(\wh{C}/J)_m = N$ for all $m\gg 0$.
  Hence $ \dim (C/Z_n)_m\leq N$ for all such $m$ and, in particular, $d_n\leq N$. Since $d_n\leq d_{n+1}$, it follows that 
 $d_n=d_{n+1}$ for all $n\gg 0$; say for all $n\geq n_0$.
 Thus, by the last display,  $CW_{\geq n} \ppe CW_{\geq n_0}$ for all $n\geq n_0$.
 
 Finally, if  $W$ is generated as an algebra by elements of degree at most $e$, then 
 $CW_{\geq n_0}W_{\geq 1} \supseteq CW_{\geq n_0+e}$.  By   the last paragraph, 
 $\dim_{\kk}\left(CW_{\geq n_0}/CW_{\geq n_0+e}\right)<\infty$,
 and so  $\dim_{\kk}\left(CW_{\geq n_0}/CW_{\geq n_0}W_{\geq 1}\right)<\infty$. Thus, by the graded Nakayama's lemma,
 $CW_{\geq n_0}=W_{\geq n_0}C =W_{\geq n_0}U\ang{g}$ is finitely generated as a right $W$-module, 
and hence  as a right $U$-module.
\end{proof}

Finally, we can reap the benefits of the last few results.

 \begin{theorem}\label{max32} Suppose that $T$  satisfies Assumptions~\ref{ass:T} and \ref{ass:T2}.
 For some $n\geq 1$, 
let $U$ be a cg maximal $T^{(n)}$-order  with $\overline{U}\not=\kk$.
 Then $U$ is   strongly noetherian; in particular, noetherian and finitely generated as an algebra. Moreover:
 \begin{enumerate}
\item If $n=1$, so $Q_{gr}(U)=Q_{gr}(T)$,  then $U$ is $g$-divisible and $U = F \cap T$, where $F$ is a blowup of $T$ at a 
virtually effective divisor $\divx=\divu-\divv+\tau^{-1}(\divv)$ of degree $< \mu$. 
 
\item If $Q_{gr}(U)\not=Q_{gr}(T)$, then   there is a virtually effective divisor $\divx$ of degree $< \mu$ and a blowup $F $ of $T$ at $\divx$ so that
$U = (F \cap T)^{(n)}$.
\end{enumerate}
\end{theorem}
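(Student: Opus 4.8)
The plan is to reduce everything to the $g$-divisible case, which has already been analysed in the earlier sections, and then bootstrap to a general maximal $T^{(n)}$-order. First I would treat the case $n=1$. Given a cg maximal $T$-order $U$ with $\overline{U}\neq\kk$, set $C=U\langle g\rangle$ and $\wh{C}$ its $g$-divisible hull. By Proposition~\ref{max31} there is a nonzero ideal $K$ of $C$ that is finitely generated as both a left and right $U$-module; in particular $K\subseteq C$, so $aCb\subseteq U$ for suitable $a,b$ (clearing the finitely many module generators of $K$ against $U$), which shows $U$ and $C$ are equivalent orders. By Proposition~\ref{max2}(1), since $g\in C$ and $\overline{C}\neq\kk$, the ring $C$ is equivalent to $\wh{C}$. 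Composing, $U$ is equivalent to the $g$-divisible algebra $\wh{C}$. Since $U$ is a maximal $T$-order and $U\subseteq C\subseteq\wh{C}\subseteq T$ with all these equivalent, maximality forces $U=C=\wh{C}$; in particular $U$ is $g$-divisible. Now Proposition~\ref{div-noeth}(2) gives that $U$ is strongly noetherian (hence noetherian) and finitely generated as a $\kk$-algebra, and Theorem~\ref{thm:converse}(1) produces the maximal order pair $(U,F)$ with $F$ a virtual blowup of $T$ at a virtually effective divisor $\divx=\divu-\divv+\tau^{-1}(\divv)$ with $0\le\deg\divx<\mu$, and $U=F\cap T$. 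This proves (1).

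For part (2), suppose $Q_{gr}(U)\neq Q_{gr}(T)$ but $U$ is a maximal $T^{(n)}$-order with $\overline{U}\neq\kk$; here $U\subseteq T^{(n)}$ (the unregraded Veronese, so that $U$ is a graded subalgebra of $T$) and $Q_{gr}(U)=Q_{gr}(T^{(n)})$. The key observation is that $T^{(n)}$ itself contains $g^n$ as a central element, and the factor $T^{(n)}/g^nT^{(n)}$ is (up to regrading) a Veronese of $B=B(E,\sM,\tau)$, again a TCR over $E$ with an infinite-order automorphism. So the hypotheses of Assumption~\ref{ass:T} apply to $T^{(n)}$ in place of $T$ — more precisely one checks that $T^{(n)}$ (regraded) satisfies Assumption~\ref{ass:T} with automorphism $\tau^n$ — and Assumption~\ref{ass:T2} also descends, using \cite[Theorem~8.8, Proposition~8.7]{RSS2}. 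Hence part (1), applied to $T^{(n)}$, shows $U$ is $g^n$-divisible, strongly noetherian, finitely generated, and is of the form $F'\cap T^{(n)}$ for a virtual blowup $F'$ of $T^{(n)}$. It then remains to identify $F'$ with $(F\cap T)^{(n)}$ for a blowup $F$ of $T$: one shows that every virtual blowup of $T^{(n)}$ arises as the $n$-Veronese of a virtual blowup of $T$, by reading the divisor data back. Concretely, the divisor $\divx'$ on $E$ for which $\overline{F'}\ppe B(E,(\sM_n)(-\divx'),\tau^n)$ determines, via the correspondence between $\tau^n$-orbits and $\tau$-orbits, a virtually effective divisor $\divx$ on $E$ with $\deg\divx<\mu$, and one verifies $F'=(F\cap T)^{(n)}$ where $F$ is the blowup of $T$ at $\divx$ supplied by Theorem~\ref{thm:converse}(3); maximality of $U$ among $T^{(n)}$-orders then pins down $U=(F\cap T)^{(n)}$ exactly.

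The main obstacle I anticipate is the bookkeeping in part (2): verifying cleanly that $T^{(n)}$ satisfies Assumptions~\ref{ass:T} and \ref{ass:T2} (the regrading subtleties, and that the minimal-sporadic-ideal and countability conditions pass to Veronese rings — this should follow from \cite{RSS2}, but needs to be invoked with care), and then matching up the virtual-blowup data over $E$ between the $\tau$-world and the $\tau^n$-world. The divisor combinatorics of Definition~\ref{virtual-defn} behave somewhat differently when the automorphism is replaced by a power — a single $\tau^n$-orbit decomposes into $\gcd$-many or $n$-many $\tau$-orbits depending on the situation — so checking that "virtually effective for $\tau^n$" corresponds correctly to "virtually effective for $\tau$" under Veronese is where the real work lies. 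The reduction to the $g$-divisible case via Propositions~\ref{max31}, \ref{max2} and maximality is, by contrast, quite short once those results are in hand.
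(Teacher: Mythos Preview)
Your argument for part~(1) is essentially the paper's: form $C=U\langle g\rangle$, use Proposition~\ref{max31} to get a nonzero ideal of $C$ finitely generated over $U$, conclude $U$ and $C$ are equivalent, then Proposition~\ref{max2} gives $C\sim\wh C$, and maximality forces $U=\wh C$; finish with Proposition~\ref{div-noeth} and Theorem~\ref{thm:converse}. That part is fine.

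Part~(2), however, has a genuine gap: $T^{(n)}$ does \emph{not} satisfy Assumption~\ref{ass:T} for $n>1$. The only central element of $T$ in degree $n$ is (a scalar multiple of) $g^n$, but $T^{(n)}/g^nT^{(n)}$ is not a domain, hence not a TCR. Indeed, $gT\cap T^{(n)}$ strictly contains $g^nT^{(n)}$ and becomes a nonzero nilpotent ideal in the quotient (for instance $gt\in T^{(2)}_1$ for $t\in T_1$ satisfies $(gt)^2\in g^2T^{(2)}$). So you cannot simply apply part~(1) to $T^{(n)}$; the ``bookkeeping'' you flag is in fact an obstruction. Your subsequent plan of matching virtual-blowup data between the $\tau$- and $\tau^n$-worlds is therefore moot, and in any case would require substantial new arguments rather than a straightforward dictionary.

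The paper avoids this by never leaving $T$. It again sets $C=U\langle g\rangle\subseteq T$ (with $g$, not $g^n$), observes that $C^{(n)}=U\langle g^n\rangle$ has the same graded quotient ring as $U$, and that the ideal $X$ from Proposition~\ref{max31} restricts to an ideal $X^{(n)}$ of $C^{(n)}$ that is still finitely generated over $U$ (being a $U$-module summand of $X$). The part~(1) clearing-denominators trick then gives $U\sim C^{(n)}$, hence $U=C^{(n)}$ by maximality. Now $C=\sum_{i=0}^{n-1}g^iC^{(n)}$ is a finite $U$-module; one passes to $\wh C\subseteq T$, applies Corollary~\ref{main-ish}(1) to find a $g$-divisible maximal $T$-order $V=F\cap T$ containing and equivalent to $\wh C$, and then a short equivalence argument (choosing $a,b\in C^{(n)}=U$ with $aVb\subseteq C$) forces $U=V^{(n)}$. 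No divisor-matching between $\tau$ and $\tau^n$ is needed.
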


\begin{proof}  (1)  Let $C=U\langle g\rangle$; thus  $Q_{gr}(U) = Q_{gr}(\wh{C})=Q_{gr}(T)$. 
By Proposition~\ref{max31}, there exists an ideal  $X $ of  $C$ that 
 is finitely generated as a right $U$-module.    In particular, as $U$ is a right Ore domain 
 and $X\subseteq Q_{gr}(U)$, we can clear denominators from the left to find $q\in Q_{gr}(U)$ 
 such that $X\subseteq qU$. As $X$ is an ideal of $C$, we have $pC\subseteq X$ for any $0\not=p\in X$ and hence 
 $C\subseteq p^{-1}qU$.  Thus $C$ and $U$ are equivalent orders. By Proposition~\ref{max2} it follows that 
 $U$ and $\wh{C}$ are equivalent orders and hence $U=\wh{C}$.  
Now apply  Proposition~\ref{div-noeth} and Theorem~\ref{thm:converse}.

(2) Keep $C$ and $X$ as above.
  In this case, as $Q_{gr}(U) = \kk(E)[g^{n},g^{-n},\tau^n]$, clearly $U$ and $C'=U\langle g^n\rangle$ 
have the same graded quotient ring and, moreover, $C'=C^{(n)}$. Therefore $X^{(n)}$ is an ideal of $C^{(n)}$ which, 
since it is a $U$-module summand of $X$, is also finitely generated as a right (and left) $U$-module.
The argument used in (1) therefore implies that $U$ and $C^{(n)}$ are equivalent orders and hence that $U=C^{(n)}$.
In particular, $C=\sum_{i=0}^{n-1}g^iC^{(n)}$ is a finitely generated right $U$-module.

Consider $\wh{C}$. As $g\in C$, we have $Q_{gr}(\wh{C})=Q_{gr}(T)$ and so,  
  by Corollary~\ref{main-ish}(1),   there exists a cg maximal $T$-order $V=\wh{V}\subseteq T$   containing and 
   equivalent to $\wh{C}$. By Proposition~\ref{max2},  $V$ is   equivalent to $C$.
Further, $V = F \cap T$ where $F$ is a blowup of $T$ at some virtually effective divisor $\divx$ on $E$ with $\deg \divx < \mu$.

Now,  
  $aVb\subseteq C$ for some $a,b\in C\smallsetminus\{0\}$. By multiplying by further elements of $C$ we may suppose that 
$a,b\in C^{(n)}=U$ and hence that  $aV^{(n)}b\subseteq  U$.  As $U$ is a maximal $T^{(n)}$-order, and certainly 
$V^{(n)}\subseteq T^{(n)}$,  it follows that  $U=V^{(n)}$. 
\end{proof}

One consequence of the  theorem is that maximal $T^{(n)}$-orders have a number of pleasant properties, 
as we  next  illustrate.  The undefined terms in the following corollary 
can be found  in \cite[Section~2]{Rog09} and \cite{VdB4}.   
 
\begin{corollary}\label{cohom}  Suppose that $T$  satisfies Assumptions~\ref{ass:T} and \ref{ass:T2}.
For some $n\geq 1$, 
let $U$ be a cg maximal $T^{(n)}$-order  with $\overline{U}\not=\kk$.
Then $\rqgr U$ has  cohomological dimension $\leq 2$, while $U$ has a balanced dualizing complex and 
satisfies the Artin-Zhang  $\chi$ conditions.   
\end{corollary}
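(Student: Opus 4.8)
The plan is to deduce everything from Theorem~\ref{max32} by transporting the relevant cohomological properties between a ring and its subrings/overrings via the bimodule structures produced there. Recall that by Theorem~\ref{max32}, in either case ($n=1$ or $n>1$) the ring $U$ is, up to a Veronese operation, of the form $V = F\cap T$ where $F$ is a blowup of $T$ at a virtually effective divisor $\divx$ of degree $<\mu$, and $(V,F)$ is a maximal order pair. Moreover $U$ is strongly noetherian and finitely generated. So I would first establish the three properties ($\chi$, finite cohomological dimension of $\rqgr$, existence of a balanced dualizing complex) for $F$, then descend to $V=F\cap T$, then pass to the Veronese $U=V^{(n)}$.

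First I would handle $F$. By Definition~\ref{max-pair-defn} and Theorem~\ref{thm:converse}, $\bbar F \ppe B(E,\sM(-\divx),\tau)$ and $F$ is $g$-divisible with $Q_{gr}(F)=Q_{gr}(T)$. The twisted homogeneous coordinate ring $B(E,\mathcal{N},\tau)$ of an elliptic curve with $\mathcal N$ ample is noetherian, satisfies $\chi$, has finite cohomological dimension (indeed $\cohdim\rqgr = 1$ for a curve), and has a balanced dualizing complex — these are standard facts from \cite{AZ} and \cite{AV}. Since $\bbar F$ agrees with such a $B$ in large degree, $\bbar F$ inherits all of these (agreement up to a finite-dimensional vector space does not change $\rqgr$ nor the relevant Ext-finiteness conditions). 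Now I would use the central element $g\in F_1$ with $F/gF = \bbar F$: by the standard lifting results of Artin--Zhang and Van den Bergh (e.g. \cite[Section~8]{AZ} or the results quoted around Corollary~\ref{cohom} in \cite{Rog09}, \cite{VdB4}), if $A/gA$ satisfies $\chi$ and has finite cohomological dimension and $g$ is a central nonzerodivisor of positive degree, then so does $A$, with $\cohdim \rqgr A \le \cohdim\rqgr(A/gA) + 1 \le 2$; and the dualizing complex lifts as well. This gives the conclusions for $F$.

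Next I would descend to $V = F\cap T$. By Definition~\ref{max-pair-defn}(3) (equivalently Proposition~\ref{correct25}(3)), there is an ideal $K$ of $F$ with $K\subseteq V$ and $\GKdim F/K\le 1$; since $F$ is $g$-divisible and these rings are noetherian, Lemma~\ref{lem:constant}-type arguments show $F$ is finitely generated as a (left and right) $V$-module and $V$ is finitely generated as a $V$-module inside $F$, so $V$ and $F$ are ``close'' in the sense needed for transfer of $\chi$ and cohomological dimension. The key tool is the Artin--Zhang result that if $B\subseteq A$ are connected graded noetherian with $A$ a finitely generated $B$-module on both sides and $B$ a finitely generated $B$-submodule of $A$, then $\chi$, finite $\cohdim\rqgr$, and the balanced dualizing complex descend from $A$ to $B$ (this is exactly the type of argument used in \cite{Rog09} and \cite{AZ}); one also notes $\rqgr V \simeq \rqgr F$ is \emph{not} claimed, but the finite module structure suffices for the cohomological transfer. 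This yields the corollary for $V$, hence for $U=V$ when $n=1$.

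Finally, for $n>1$ with $U = V^{(n)}$, I would invoke the standard fact that $\chi$, finite cohomological dimension of $\rqgr$, and existence of a balanced dualizing complex all pass between a connected graded noetherian ring and its Veronese subrings — since $\rqgr V^{(n)} \simeq \rqgr V$ as abelian categories (Artin--Zhang, \cite{AZ}), cohomological dimension is literally unchanged, $\chi$ is a categorical-plus-finiteness condition that is preserved, and the dualizing complex of the Veronese is obtained from that of $V$ by the usual restriction/regrading recipe (see \cite{VdB4}). The main obstacle I anticipate is purely bookkeeping: making sure the ``$\ppe$'' (agreement up to finite dimension) between $\bbar F$ and a genuine TCR does not obstruct the lifting lemmas for $\chi$ and the dualizing complex, and that the hypotheses of the relevant Artin--Zhang/Van den Bergh transfer theorems (noetherianness, finite generation of the bimodules, connectedness) are all in place — but these are all guaranteed by Theorem~\ref{max32} (strong noetherianity, finite generation) together with Proposition~\ref{correct25}. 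No genuinely new argument should be needed; the corollary is a formal consequence of Theorem~\ref{max32} plus the well-documented behaviour of these homological conditions under central extensions, finite bimodule extensions, and Veronese embeddings.
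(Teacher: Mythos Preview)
Your proposal is correct and follows essentially the same strategy as the paper: establish the properties for a TCR factor, lift via the central element $g$ using \cite[Theorem~8.8]{AZ}, then descend via finite-module transfer \cite[Proposition~8.7(2)]{AZ}, with the balanced dualizing complex coming at the end from \cite[Theorem~6.3]{VdB4}.

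The one difference is that you take an unnecessary detour through $F$. The paper observes directly (via Theorem~\ref{thm:converse}(1c), which records $\bbar V \ppe \bbar F \ppe B(E,\mathcal N,\tau)$) that $\overline{V}$ itself is $\ppe$ a TCR, so one can lift straight from $\overline V$ to $V$ and skip the step $F \to V$ entirely. For the final descent $V \to U = V^{(n)}$, the paper uses \cite[Lemma~4.10(3)]{AS} to see that $V$ is a noetherian $U$-module and then applies \cite[Proposition~8.7(2)]{AZ}, rather than invoking the categorical equivalence $\rqgr V \simeq \rqgr V^{(n)}$; both routes work here. Your approach buys nothing extra, but it is not wrong---just one step longer than needed.
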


\begin{proof} 
By Theorem~\ref{max32},   $U = V^{(n)}$  for  a $g$-divisible maximal $T$-order $V$.
Hence  $\overline{V}\ppe  B(E,\mathcal{N},\tau)$, by  Theorem~\ref{thought9}.
Thus \cite[Lemma~2.2]{Rog09} and \cite[Lemma~8.2(5)]{AZ}  imply that 
$\rqgr \overline{V}$ has cohomological dimension 
one, and that $\overline{V}$ satisfies $\chi$.  The  fact that $V$ satisfies $\chi$ and that $\rqgr V$ 
has cohomological dimension $\leq 2$ 
then follow from \cite[Theorem~8.8]{AZ}. By  \cite[Lemma~4.10(3)]{AS} $V$ is a noetherian $U$-module and  so, by
 \cite[Proposition~8.7(2)]{AZ}, these properties then descend to $U$. 
 (With a little more work one can show that $\rqgr V$ and $\rqgr U$ have  cohomological dimension exactly~$2$.)
 Finally, by  \cite[Theorem~6.3]{VdB4}
this implies the  existence of a balanced dualizing complex.
\end{proof}

Let $U$ be a maximal order in $T$ with $\bbar{U} \neq \kk$.  
Theorem~\ref{max32} also allows us to determine the simple objects in $\rqgr U$, although we do not formalise their geometric structure.  
 
\begin{corollary}\label{points}  Suppose that $T$  satisfies Assumptions~\ref{ass:T} and \ref{ass:T2}.
Let $U$ be a cg maximal   $T$-order  with $\overline{U}\not=\kk$.
Then  the   simple objects in $\rqgr U$ 
are in (1-1) correspondence with the closed points of 
   the elliptic curve $E$ together with a (possibly empty) finite set.   
\end{corollary}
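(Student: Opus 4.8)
\textbf{Proof proposal for Corollary~\ref{points}.}

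The plan is to reduce the classification of simple objects in $\rqgr U$ to the already–understood situation over $\bbar U$, exploiting the central element $g$ and the structure theorem Theorem~\ref{max32}. By Theorem~\ref{max32}(1), $U$ is $g$-divisible and $U = F \cap T$ for a virtual blowup $F$; in particular $U$ is noetherian, finitely generated, and $\bbar U = U/gU \ppe B(E,\sM(-\divx),\tau)$ for the virtually effective divisor $\divx$ provided by Theorem~\ref{thm:converse}, via Theorem~\ref{thought9}. Since $Q_{gr}(\bbar U) = Q_{gr}(\bbar T) = \kk(E)[t,t^{-1};\tau]$ by Lemma~\ref{lem:expectQ}, the ring $\bbar U$ satisfies the hypotheses of Theorem~\ref{thm:projcurve}, so by Corollary~\ref{cor:curvept} there is an equivalence $\rqgr \bbar U \simeq \coh E$, and hence the simple objects of $\rqgr \bbar U$ are in bijection with the closed points of $E$.

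The key steps I would carry out, in order, are as follows. First, I would show that every simple object $\sS$ in $\rqgr U$ is either ``$g$-torsion'' or ``$g$-torsionfree'': if $N$ is a noetherian graded $U$-module with $\pi(N)=\sS$ simple, then $\tg(N)$ is a submodule, so in $\rqgr U$ either $\pi(\tg N) = \sS$ (then $g$ annihilates $\sS$, as $gN \subseteq$ something of smaller $g$-torsion-length — more precisely $\pi(Ng) = 0$) or $\pi(\tg N) = 0$ (then we may replace $N$ by $N/\tg N$ and assume $N$ is $g$-torsionfree). Second, for the $g$-torsion case: if $g\sS = 0$ then $\sS$ is a simple object of $\rqgr \bbar U$, and by the equivalence $\rqgr \bbar U \simeq \coh E$ these correspond exactly to the closed points of $E$; conversely each such simple object of $\rqgr \bbar U$ gives, via the exact functor $\rqgr \bbar U \to \rqgr U$ induced by $U \to \bbar U$, an object of $\rqgr U$, and one checks (using that the image is finitely generated over $\bbar U$, whose $\rqgr$ has the stated simple objects) that it is simple in $\rqgr U$ as well. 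Thus the $g$-torsion simples are precisely $E$.

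Third, for the $g$-torsionfree case, I claim there are only finitely many such simples. Here I would argue: a $g$-torsionfree simple object $\sS = \pi(N)$ has $\GKdim N \le 1$ would contradict $g$-torsionfreeness combined with Lemma~\ref{lem:constant} unless $N$ is finite-dimensional (excluded in $\rqgr$); so in fact the relevant simple $g$-torsionfree objects are those $\pi(N)$ with $N$ a finitely generated $g$-torsionfree $U$-module such that $N[g^{-1}]$ is a simple $U^\circ$-module (inverting $g$ is exact and kills exactly the $g$-torsion, so simple $g$-torsionfree objects of $\rqgr U$ correspond to simple modules over $U^\circ = U[g^{-1}]_0$ up to the finite-length ambiguity in $\rqgr$). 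Now $U^\circ$ is a maximal $T^\circ$-order by Corollary~\ref{local1}, and by Corollary~\ref{A-cog} it has a unique minimal nonzero ideal $I$ of finite codimension; the simple $U^\circ$-modules not killed by $I$ are the simple $U^\circ/I$-modules, of which there are finitely many since $\dim_\kk U^\circ/I < \infty$, while any simple module killed by $I$ comes from the artinian ring $U^\circ/I$ as well — so altogether $U^\circ$ has finitely many simple modules. Hence there are finitely many $g$-torsionfree simples in $\rqgr U$. Combining the two cases, the simples of $\rqgr U$ are in bijection with the closed points of $E$ together with a finite (possibly empty) set.

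The main obstacle I anticipate is the bookkeeping in the $g$-torsionfree case: precisely matching simple objects of $\rqgr U$ against simple $U^\circ$-modules requires care, because the localization functor $\rqgr U \to \rmod U^\circ$ (sending $\pi(N) \mapsto N[g^{-1}]$) is not an equivalence — it has kernel the $g$-torsion objects and it may identify objects differing by finite-dimensional modules. One must check that it induces a bijection on the relevant isomorphism classes of \emph{simple} $g$-torsionfree objects, which I would do by a direct argument: a $g$-torsionfree simple $\pi(N)$ is determined by the isomorphism class of the $U^\circ$-module $N[g^{-1}]$ (two such $N$, $N'$ with isomorphic localizations become isomorphic in $\rqgr U$ after passing to $g$-divisible hulls and truncating, using Proposition~\ref{max2}-style arguments and $g$-divisibility of $U$), and conversely every simple $U^\circ$-module arises this way by taking $\Omega$ of a cyclic presentation. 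The finiteness in Corollary~\ref{A-cog} then does the rest.
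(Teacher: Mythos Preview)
Your overall decomposition into $g$-torsion and $g$-torsionfree simples matches the paper, and the $g$-torsion case is handled the same way. In the $g$-torsionfree case, however, your write-up has two concrete problems. First, the sentence ``a $g$-torsionfree simple object $\sS = \pi(N)$ has $\GKdim N \le 1$ would contradict $g$-torsionfreeness combined with Lemma~\ref{lem:constant}\dots'' is garbled and does not establish what you need. The correct (and easy) argument is: choose a cyclic critical representative $N$ with every proper factor finite-dimensional; since $N$ is $g$-torsionfree, $N/Ng$ is a proper factor, hence finite-dimensional, and so $\GKdim N = 1$. Second, the line ``the simple $U^\circ$-modules not killed by $I$ are the simple $U^\circ/I$-modules'' is backwards; it is the simples \emph{killed} by $I$ that are $U^\circ/I$-modules, and you have said nothing about simples with zero annihilator.

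The paper avoids the bookkeeping you flag as the ``main obstacle'' by a cleaner route that you should adopt. Rather than trying to set up a bijection between $g$-torsionfree simples of $\rqgr U$ and simple $U^\circ$-modules, the paper only uses $U^\circ$ to observe that $N[g^{-1}]_0$ is finite-dimensional and hence annihilated by the minimal nonzero ideal of $U^\circ$ (Corollary~\ref{A-cog}); pulling back, $N$ is killed by the minimal \cog\ $K$ of $U$. Since $N$ is critical, $\rann_U(N)$ is prime by Lemma~\ref{lem:ann}, hence is one of the finitely many primes minimal over $K$. The remaining step---showing that this prime $P$ determines $\pi(N)$ uniquely---is done entirely inside $U$: one notes $\pi(N)\cong\pi(N[-1])$ (since $\dim_\kk N/Ng<\infty$), that $N$ is a Goldie-torsionfree $U/P$-module and hence (up to shift) a uniform right ideal of $U/P$, and that any two uniform right ideals of $U/P$ agree in $\rqgr U$. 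This sidesteps the localisation-matching entirely.
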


\begin{proof} 
A simple object in $\rqgr U$ equals $\pi(M)$ for  a cyclic critical right $U$-module $M$
with the property that every proper factor of $M$ is finite dimensional. Suppose first that   
$M$ is   $g$-torsion; thus $Mg=0$ by   Lemma~\ref{lem:ann}. Hence, by 
Theorems~\ref{max32}  and \ref{thm:converse}, $\pi(M)\in \rqgr B$, for some TCR $B=B(E,\sN,\tau)$.
Thus, under the equivalence of categories $\rqgr B\simeq \coh(E)$, $\pi(M)$ corresponds to a closed point of $E$.

On the other hand, if   $M$ is not annihilated by $g$, then  Lemma~\ref{lem:ann} implies that
 $M$ is $g$-torsionfree.
   By comparing Hilbert series, it follows that $\GKdim(M/Mg)=\GKdim(M)-1$ and so, as
 $\dim_{\kk}M/Mg<\infty$ by construction,  $\GKdim(M)=1$.  
In particular,   $M'=M[g^{-1}]_0$ 
 is then a  finite dimensional simple $U^\circ$-module and hence
 is annihilated by the minimal nonzero ideal of $U^\circ$ (see Corollary~\ref{A-cog}).
  Pulling back to $U$, this says that $M$ is killed by
 the minimal \cog\ $K$ of $U$. Thus, by Lemma~\ref{lem:ann}, $P=\rann(M)$ 
 is   one of the finitely many prime ideals $P$ minimal over $K$. 
 
 In order to complete the proof we  need to  show that $\pi(M)$ is uniquely determined by $P$. 
 Note  that, as $\dim_{\kk}(M/Mg)<\infty$, 
  we have $\pi(M)\cong\pi(Mg)=\pi(M[-1])$  in $\rqgr U$, 
 and so we do not need to worry about shifts. Next, as $\GKdim(M)=\GKdim(U/P)$,
 $M$ is a (Goldie)  torsion-free $U/P$-module and hence  is isomorphic  to  (a shift of) a 
 uniform right ideal $J$ of  $U/P$. However, given a second uniform right ideal $J'\subseteq U/P$ then 
 $J'$ is isomorphic to (a shift of)  a submodule $L\subseteq J$ (use the proof of  \cite[Corollary~3.3.3]{MR}).
  Once again, $\dim_{\kk}(J/L)<\infty$ and so 
 $\pi(J)\cong \pi(J')$, as required.
 \end{proof}

 \begin{corollary}\label{max33} Suppose that $T$  satisfies Assumptions~\ref{ass:T} and \ref{ass:T2}.
 Let $U\subseteq T$ be a noetherian cg algebra with 
 $D_{gr}(U)=D_{gr}(T)$ and $\overline{U}\not=\kk$. Then $C=U\langle g\rangle $ and $\wh{C}$ are both 
 finitely generated right (and left) $U$-modules. 
 \end{corollary}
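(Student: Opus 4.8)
The plan is to assemble the statement from Proposition~\ref{max31} and Proposition~\ref{max2}, after first checking that $C = U\langle g\rangle$ meets the running hypotheses needed to apply the latter. First I would record two easy observations about $C$. Since $g\in C$ we have $D_{gr}(U)\subseteq Q_{gr}(C)$, so $Q_{gr}(T)=D_{gr}(U)[g,g^{-1}]\subseteq Q_{gr}(C)\subseteq Q_{gr}(T)$, i.e.\ $Q_{gr}(C)=Q_{gr}(T)$; and $\overline{C}\supseteq \overline{U}\not=\kk$ because $U\subseteq C$. Note also that $\widehat{C}\subseteq T$, since $T$ is $g$-divisible and $C\subseteq T$.

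Next I would show $C$ is finitely generated as a right (and left) $U$-module. By Proposition~\ref{max31} there is a nonzero graded ideal $X$ of $C$ which is finitely generated as both a left and a right $U$-module; as $U$ is noetherian, $X$ is a noetherian $U$-module on each side. Choosing a nonzero homogeneous $p\in X$, left multiplication $c\mapsto pc$ is an isomorphism of right $U$-modules from $C$ onto the right $U$-submodule $pC\subseteq X$ (up to a degree shift), so $C$ is a finitely generated right $U$-module; using $c\mapsto cp$ and $Cp\subseteq X$ gives the left-sided statement. This proves the first half of the corollary.

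Finally I would apply Proposition~\ref{max2}(1) to the ring $C$ (legitimate by the first paragraph): there is $n$ with $g^n\widehat{C}=C\cap Tg^n\subseteq C$, hence $\widehat{C}\subseteq g^{-n}C$. Since $g$ is central, $g^{-n}C\cong C$ as a right $U$-module (up to a shift), so $g^{-n}C$ is a noetherian right $U$-module by the second paragraph, and therefore its submodule $\widehat{C}$ is a finitely generated right $U$-module; the left-right mirror of Proposition~\ref{max2}(1), valid since Assumption~\ref{ass:T} is left-right symmetric, handles the left-module statement. The whole argument is essentially bookkeeping once Propositions~\ref{max31} and \ref{max2} are available; the only step needing care — and the closest thing to an obstacle — is verifying $Q_{gr}(C)=Q_{gr}(T)$ and deducing that $C$ is a finitely generated $U$-module on both sides, which is exactly what lets one transfer finite generation over $C$ back to finite generation over $U$, even though $U$ itself is not assumed to contain $g$.
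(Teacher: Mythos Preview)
Your proof is correct and follows essentially the same route as the paper's own argument: invoke Proposition~\ref{max31} to obtain a nonzero ideal $X$ of $C$ that is finitely generated over $U$, deduce that $C$ itself is a noetherian $U$-module via $pC\subseteq X$ (the paper writes this as $C\cong xC[n]$), and then finish with Proposition~\ref{max2}. You have simply made explicit two things the paper leaves to the reader---the verification that $C$ satisfies the hypotheses of Proposition~\ref{max2}, and the deduction of finite generation of $\wh{C}$ from Part~(1) of that proposition---both of which are handled correctly.
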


\begin{proof} Again let  $X=CU_{\geq n_0}$  be the ideal of $C$ that 
 is finitely generated as a right $U$-module given by Proposition~\ref{max31}. In this case 
 $X$ is a noetherian right $U$-module and hence so is $C\cong xC[n]$, for any $0\not=x\in X_n$. 
 The rest of the result follows from Proposition~\ref{max2}.  
\end{proof}


 \section{Arbitrary orders}\label{ARBITRARY}

The assumption $\overline{U}\not=\kk$  that appeared in most of the results from Section~\ref{MAX1}  
is annoying but, as Example~\ref{max6}   shows, necessary.
   Fortunately   one can bypass the problem, although at the cost of passing to a Veronese ring.
   In this section we explain the trick and apply it to describe arbitrary cg orders in $T$. 

 Up to now graded homomorphisms of  algebras have been degree-preserving,
 but this will not be the case for the next few results, and so we make the following definition. 
  A homomorphism $A\to B$ between $\mathbb{N}$-graded algebras
  is called \emph{graded of degree $t$} if $\phi(A_n)\subseteq B_{nt}$ for all $n$. The map $\phi$ is called 
  \emph{semi-graded}   \label{semi-graded defn}
 if it is graded   of   degree $t$ for some $t$.

\begin{proposition}\label{AS-trick}
Suppose that $T$ satisfies Assumption~\ref{ass:T} and 
  that $U$ is a cg noetherian subalgebra of $T$ with $U \not\subseteq \kk[g]$.  Then there exist  
  $N, M\in \mathbb{N}$ and a injective  graded homomorphism $\phi: U^{(N)}\to T$ of degree $M$ 
such that $U' = \phi(U^{(N)}) \not\subseteq \kk+gT$.  In addition, $D_{gr}(U) = D_{gr}(U') \subseteq D_{gr}(T)$.
\end{proposition}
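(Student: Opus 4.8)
The goal is to produce, after passing to a suitable Veronese ring $U^{(N)}$ and applying a degree-$M$ regrading embedding into $T$, a copy $U'$ of $U^{(N)}$ that is not contained in $\kk + gT$; equivalently, $\overline{U'}\neq\kk$. The obstruction is precisely the pathology that $U$ itself may sit inside $\kk[g]+$ (something killed by large powers of $g$), so that $\overline{U}=\kk$; the trick is that $U$ must ``see'' more than $g$ in some degree, and one can rescale to bring that information into a visible degree modulo $g$.

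\medskip

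First I would locate an element witnessing $U\not\subseteq\kk[g]$. Pick a homogeneous $u\in U_d$, say $u\notin \kk[g]$; among all homogeneous elements of $U$ not lying in $\kk[g]$, we may as well choose one with $u\notin gT$, since if $u=g^iu'$ with $u'\in T\smallsetminus gT$ then $g$-divisibility of $T$ and the fact that $g\in T_1$ (Lemma~\ref{lem:T-prop}) let us work with $u'$ in place of $u$ provided $u'\in U$ — but $u'$ need not be in $U$. So instead I would argue more carefully: write $u = g^i v$ with $v\in T_{d-i}\smallsetminus gT$; then $v\notin\kk$ (else $u\in\kk[g]$), so $\overline{v}\in \overline{T}_{d-i}$ is a nonzero non-scalar element of $B=B(E,\sM,\tau)$. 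The point is that $u$ itself lies in $U$ and $\overline{u}=0$, so we do not yet have what we want in $U$; we must instead build a new element from products. This is the main obstacle: extracting, inside $U$, an element whose image modulo $g$ is nonzero, when a priori every generator might be divisible by $g$.

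\medskip

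To overcome this I would use that $T$ is a domain and $g$ is central, together with the control on $U$ coming from $Q_{gr}$. Concretely: since $U\not\subseteq\kk[g]$ pick homogeneous $u\in U_d\smallsetminus\kk[g]$, write $u=g^av$ with $v\notin gT$, $\deg v = d-a =: e$. Now consider the graded homomorphism $\phi_M\colon U^{(N)}\to T$ which is the identity map on elements but ``reweights'' — no: the honest construction is the standard Artin–Stafford trick, namely there is an embedding $U^{(N)} \hookrightarrow T$ of degree $M$ given by sending a homogeneous element $x\in U_{nN}\subseteq T_{nN}$ to itself, but now regarding $U^{(N)}$ as graded with $(U^{(N)})_n = U_{nN}$, so the inclusion $U^{(N)}\hookrightarrow T$ multiplies degrees by $N$; here $M=N$. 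With this normalization, $\overline{U^{(N)}}$ (the image modulo $g$) is $\bigoplus_n \overline{U_{nN}}$ viewed inside $\overline{T}=B$ regraded. The claim ``$U'\not\subseteq\kk+gT$'' becomes: for some $n\geq 1$, $\overline{U_{nN}}\neq 0$ (as a subspace of $B_{nN}$). So I need to choose $N$ so that $U$ contains, in some degree divisible by $N$, an element not in $gT$.

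\medskip

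The key step, then, is: \emph{$U$ contains a homogeneous element not in $gT$.} Suppose not, i.e.\ $U_{\geq 1}\subseteq gT$. Then every homogeneous $u\in U_d$ ($d\geq 1$) is $g^{a(u)} v(u)$ with the smallest power $a(u)\geq 1$. Using that $U$ is noetherian and $Q_{gr}(U)$ makes sense, one shows this forces $U\subseteq\kk[g]$: indeed take two homogeneous elements $u,w\in U_{\geq 1}$; then $u^{\deg w}w^{-\deg u}$ computations inside $Q_{gr}(T)=D_{gr}(T)[g,g^{-1}]$ show the ``non-$g$ parts'' $\overline{v(u)},\overline{v(w)}$ must be compatible, and minimality of the $g$-powers together with $U_{\geq 1}\subseteq gT$ cascades to show, by induction on degree, that in fact $U_d\subseteq \kk g^d$ for all $d$ (this is where one uses that $T$ is a domain and that $\overline{T}$ is just infinite, Lemma~\ref{lem:T-prop}(2), to rule out lower-order non-$g$ contributions). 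This contradicts $U\not\subseteq\kk[g]$. Hence there is a homogeneous $u\in U_D\smallsetminus gT$ for some $D\geq 1$. Now set $N=D$; then $u\in (U^{(N)})_1$ maps under $\phi_N$ to an element of $T_N$ with $\overline{u}\neq 0$, so $U'=\phi_N(U^{(N)})\not\subseteq\kk+gT$. Finally $D_{gr}(U^{(N)})=D_{gr}(U)$ is standard (a Veronese does not change the function skewfield since $Q_{gr}(U)=D_{gr}(U)[t,t^{-1};\alpha]$ and $Q_{gr}(U^{(N)})=D_{gr}(U)[t^N,t^{-N};\alpha^N]$), and the regrading $\phi_N$ is an isomorphism onto its image, so $D_{gr}(U')=D_{gr}(U)\subseteq D_{gr}(T)$; injectivity of $\phi_N$ is immediate as it is (on underlying sets) an inclusion into $T$.

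\medskip

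I expect the delicate point to be the inductive argument that $U_{\geq 1}\subseteq gT$ implies $U\subseteq\kk[g]$, since one must carefully track leading non-$g$ terms inside the domain $T$ and invoke the just-infiniteness of $\overline{T}$; everything else (Veronese, regrading, function skewfields) is routine bookkeeping.
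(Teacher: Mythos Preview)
Your proof has a genuine gap at the step you yourself flagged as delicate: the claim that $U_{\geq 1}\subseteq gT$ forces $U\subseteq\kk[g]$ is simply false. Example~\ref{max6} in the paper is a counterexample: there $U=T^g\langle g\rangle$ with $T^g=\kk\langle ga_1,\dots,ga_r\rangle$, so every homogeneous element of positive degree lies in $gT$, yet $U\cong T[x]/(x^2-g)$ is far from $\kk[g]$. So the ``inductive argument tracking leading non-$g$ terms'' you sketch cannot go through, and in fact the entire proposition is designed precisely to handle this case. Consequently your construction, in which $\phi$ is just the inclusion with $M=N$, cannot work in general: when $\overline{U}=\kk$ there is no degree $D$ with $U_D\not\subseteq gT$, so no choice of $N$ makes $\phi_N(U^{(N)})\not\subseteq\kk+gT$.

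The paper's argument is substantively different. Define $f(n)=\min\{i:U_n\subseteq g^{n-i}T\}$; this measures how deeply $U_n$ sits inside powers of $g$. One checks $f$ is superadditive, and since $U$ is finitely generated one obtains a recursion that, following \cite[Lemma~2.7]{AS}, yields $N$ with $f(N)>0$ and $f(nN)=nf(N)$ for all $n\geq 1$. Setting $M=f(N)$, the map $\phi:U^{(N)}\to T$ is not the inclusion but rather $x\mapsto xg^{n(M-N)}$ for $x\in U_{nN}$: one \emph{divides out} the common power of $g$. The linearity condition $f(nN)=nM$ guarantees both that $xg^{n(M-N)}\in T_{nM}$ and that the image is not always in $gT$. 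In Example~\ref{max6} this produces $N=2$, $M=1$, and $\phi$ recovers the isomorphism $U^{(2)}=T^g\cong T$. The possibility $M<N$ is essential and is exactly what your approach misses.
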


 \begin{proof}
For $n\geq 0$, define  $f:\mathbb{N}\to \mathbb{N}\cup\{-\infty\}$ by 
\[\text{$f(n) =   \min \{i : U_n \subseteq g^{n-i} T \}$, \  with  $ f(n) = -\infty$ if $ U_n = 0 $.}\] 
Trivially, $f(n) \in \{0, 1, \dots, n \} \cup \{-\infty\}$  for all $n\geq 0$, and $ f(n) = 0$ if and only if  $U_n = \kk g^n$.

We first claim that  $f(n) + f(m) \leq f(n+m)$ for all $m,n\geq 0$. 
As $A$ is a domain,  this is clear if one of terms  equals $-\infty$, and so  we may assume that 
$f(r)\geq 0$ for $r=n,m,n+m$. 
Write $U_r =X_rg^{r-f(r)}$ for such $r$; thus $X_r\subseteq T$ but $X_r\not\subseteq gT$. 
Since   $gT$ is a completely prime ideal,  $X_nX_m\subseteq T$ but 
$X_nX_m\not\subseteq gT$.
 In other words, $U_nU_m\not\subseteq Y= g^{(n-f(n)+m-f(m)+1)}T$. 
Since $U_nU_m\subseteq U_{n+m}$ it follows that $U_{n+m} \not\subseteq Y$
and hence that  $f(n+m)\geq f(n) + f(m)$, as claimed.

A noetherian cg algebra is finitely generated by the graded Nakayama's Lemma, so 
suppose that $U$ is generated in degrees $\leq r$.  Then $U_n = \sum_{i = 1}^r U_i U_{n-i}$ for 
all $n > r$.  Arguing as in the previous paragraph   shows that 
\begin{equation}\label{AS-tt}
f(n) = \max\{ f(n-i) + f(i) :1\leq i\leq r \}\quad \text{for $n > r$,}
\end{equation}
 with the obvious conventions if any of these 
numbers equals $-\infty$.

We claim that there exists $N$ with $f(N) > 0$ such that $f(nN) = n f(N)$  for all $n \geq 1$.
This follows by exactly the same proof as in \cite[Lemma~2.7]{AS}. 
Namely, choose $1 \leq N \leq r$ such that $\lambda = f(N)/N$ is as large as possible; by induction 
using \eqref{AS-tt} it follows that $f(n) \leq \lambda n$ for all $n \geq 0$, 
and this forces $f(n N) = n f(N)$ for all $n \geq 0$, as claimed.

  Let $M = f(N)$ and note that $M>0$ since  $U \not\subseteq \kk[g] $.   Thus,
  for each $n\geq 0$ we have  $U_{nN}\subseteq g^{nN-nM}T$  but $U_{nN}\not\subseteq g^{nN-nM+1}T$. 
Therefore the  function
$U_{nN} \to T_{nM}$ given by 
$x \mapsto x g^{n(M-N)}$ is well-defined, and it 
defines an injective vector space  homomorphism $\theta : U^{(N)} \to T$ with $\theta(U^{(N)})\not\subseteq \kk+gT$.  
It is routine to see that $\theta$ is an algebra homomorphism which is graded of degree $M$.
The final claim of the proposition  is clear because $D_{gr}(U) = D_{gr}(U^{(N)}) = D_{gr}(U')$.
 \end{proof}

\begin{corollary}\label{max5}
{\rm (1)} Suppose $T$ satisfies Assumption~\ref{ass:T} and that $U$ is a noetherian subring of $T$ generated in a single degree $N$, with $U\not=\kk[g^N]$. 
Then up to a semi-graded isomorphism  we may 
assume that $U\not\subseteq \kk+gT$. 

{\rm (2)}  Suppose also that $T$ satisfies Assumption~\ref{ass:T2}.  If $U$ is a noetherian maximal $T^{(N)}$-order generated in   degree $N$ then, again up to a semi-graded isomorphism,
$U\cong V^{(M)}$ where $(V,F)$ is a maximal order pair and $M \leq N$.
\end{corollary}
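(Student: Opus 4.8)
\textbf{Proof plan for Corollary~\ref{max5}.}

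The plan is to reduce both statements to the machinery of Proposition~\ref{AS-trick} and Theorem~\ref{max32}. For part~(1): since $U$ is generated in degree $N$, I would first regrade $U$ by setting $U^{new}_n = U_{nN}$, so that $U$ becomes a connected graded algebra generated in degree $1$ (this is a semi-graded isomorphism, namely division of degrees by $N$). The hypothesis $U \neq \kk[g^N]$ says precisely that the regraded algebra is not contained in $\kk[g]$ under the natural copy of $g$ (here one must be slightly careful: $g$ has degree $1$ in $T$, and $U$ is a subring of $T$, but after regrading $U$ the ``$g$'' one wants is really $g^N \in T_N = U^{new}_1$; if $g^N \notin U$ this is not an issue since then automatically $\overline{U} \neq \kk$ in the appropriate sense — but if $g^N \in U$ one uses that $U \neq \kk[g^N]$). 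Now apply Proposition~\ref{AS-trick} to this regraded algebra. That proposition produces $N', M' \in \mathbb{N}$ and an injective graded homomorphism $\phi \colon (U^{new})^{(N')} \to T$ of degree $M'$ whose image $U'$ satisfies $U' \not\subseteq \kk + gT$, i.e. $\overline{U'} \neq \kk$; and $D_{gr}(U') = D_{gr}(U^{new}) = D_{gr}(U)$. The composite of the regrading, the Veronese inclusion, and $\phi$ is the desired semi-graded isomorphism onto $U' \subseteq T$ with $\overline{U'} \neq \kk$. This proves~(1).

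For part~(2): start as in~(1) to replace $U$ by a semi-graded isomorphic copy $U'$ inside $T$ with $\overline{U'} \neq \kk$. The point is to track what happens to the maximal-order hypothesis. If $U$ is a maximal $T^{(N)}$-order, then after regrading $U$ is (up to semi-graded isomorphism) a maximal $T$-order in the regraded sense — but one must be careful, because Proposition~\ref{AS-trick} passes to a further Veronese $(U^{new})^{(N')}$ and twists by a power of $g$, so the resulting $U'$ is a maximal $T^{(L)}$-order for some $L$ (one should compute $L$ in terms of $N$, $N'$, $M'$; the composite degree-shift means $U' = \phi((U^{new})^{(N')})$ sits inside the unregraded Veronese $T^{(M' N')}$ or a regrading thereof). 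Now invoke Theorem~\ref{max32}: a cg maximal $T^{(n)}$-order $W$ with $\overline{W} \neq \kk$ is, when $Q_{gr}(W) = Q_{gr}(T^{(n)})$, of the form $W = (F \cap T)^{(n)}$ for a virtual blowup $F$ of $T$ at a virtually effective divisor of degree $<\mu$, where $(F\cap T, F)$ is a maximal order pair (case $n=1$ giving $W = F \cap T$ directly). Applying this to $U'$ gives $U' \cong V^{(M)}$ with $V = F \cap T$ and $(V,F)$ a maximal order pair. Finally one must check $M \leq N$: this should follow by bookkeeping on the exponents, since the original Veronese index was $N$ and all subsequent operations (further Veronese, degree-$t$ twist) can only be absorbed into — not enlarge beyond — the constraint coming from $N$; concretely, $M$ is determined by how $\overline{U'}$ sits inside a twisted homogeneous coordinate ring, and the generation of $U$ in degree $N$ bounds this.

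The main obstacle I anticipate is the bookkeeping in part~(2): carefully matching up the Veronese index $n$ appearing in Theorem~\ref{max32} with the various indices $N$, $N'$, and the twist degree $M'$ coming out of Proposition~\ref{AS-trick}, and then extracting the clean bound $M \leq N$. The conceptual content is entirely contained in Proposition~\ref{AS-trick} and Theorem~\ref{max32}; the delicate part is ensuring that ``maximal $T^{(N)}$-order generated in degree $N$'' survives the sequence of regradings and $g$-twists as ``maximal $T^{(L)}$-order with $\overline{\ \cdot\ } \neq \kk$'' for an appropriate $L$, and that no information about the index is lost so that the final $M$ does not exceed $N$. One secondary point to handle is the degenerate possibility $U = \kk[g^N]$, which is excluded by hypothesis precisely so that Proposition~\ref{AS-trick} applies; and in part~(2) one should note that a maximal $T^{(N)}$-order generated in degree $N$ cannot be $\kk[g^N]$ (it is not equivalent, as an order, to anything of full rank), so the hypothesis of part~(1) is automatic.
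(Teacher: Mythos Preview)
Your approach to part~(1) is essentially right in spirit but takes an unnecessary detour, and the regrading step is actually problematic: after regrading, $U^{new}$ is no longer literally a subring of $T$, and the natural ambient ring $T^{(N)}$ (regraded) does not satisfy Assumption~\ref{ass:T} (for instance $T^{(N)}/g^N T^{(N)}$ is not a domain), so Proposition~\ref{AS-trick} does not apply as stated. The fix is simply not to regrade: apply Proposition~\ref{AS-trick} directly to $U \subseteq T$, or better, argue directly as the paper does. Pick $M$ minimal with $U_N \subseteq g^{N-M} T$; since trivially $U_N \subseteq T_N$ one has $M \leq N$. Then $u \mapsto g^{M-N} u$ on $U_N$ extends (as $U$ is generated in degree $N$) to a semi-graded isomorphism $\phi: U \to \phi(U) \subseteq T$, and $\phi(U)_M = g^{M-N} U_N \not\subseteq gT$ by minimality of $M$. (If you insist on going through Proposition~\ref{AS-trick}, note that since $U_n = 0$ for $0 < n < N$ the function $f$ in its proof satisfies $f(n) = -\infty$ there, so the proposition's Veronese index is forced to be $N$ and no further Veronese is taken; it reduces to exactly this computation.)

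The real gap is in part~(2): you correctly identify that the issue is transporting the maximal-order property, but you never supply the argument, and it is not mere bookkeeping. Here is the paper's trick, which your plan lacks. The image $\phi(U)$ lies in the unregraded $T^{(M)}$, and precisely because $M \leq N$, the assignment $\gamma \mapsto g^{N-M} \gamma$ for $\gamma \in T_M$ extends $\phi^{-1}$ to a well-defined semi-graded homomorphism $\psi: T^{(M)} \to T^{(N)}$. Now if $\phi(U) \subseteq W \subseteq T^{(M)}$ is an equivalent order, say $aWb \subseteq \phi(U)$ with $a,b \in \phi(U)$, then $\psi(a)\,\psi(W)\,\psi(b) \subseteq U \subseteq \psi(W) \subseteq T^{(N)}$, and maximality of $U$ forces $\psi(W) = U$, hence $W = \phi(U)$. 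Thus $\phi(U)$ is itself a maximal $T^{(M)}$-order with $\overline{\phi(U)} \neq \kk$, and Theorem~\ref{max32}(2) applied with $n = M$ gives $\phi(U) = V^{(M)}$ for a maximal order pair $(V,F)$. The bound $M \leq N$ is therefore not something to be extracted after the fact: it is built into the construction of $\phi$ and is exactly what makes the inverse map $\psi$ land back in $T$.
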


\begin{proof}  In this proof, Veronese rings are unregraded; that is, they are given the grading induced from  $T$. 

(1)    Pick $M\in \NN$ minimal such that $U_N \subseteq g^{N-M}T$.
Necessarily, $M \leq N$.  
 Then, either directly or by 
Proposition~\ref{AS-trick},  there is a semi-graded monomorphism  
 $\phi: U=U^{(N)} \to T$ given by $u\mapsto g^{M-N}u$ for $u\in U_N$. Hence  $U\cong \phi(U)$,
 and $\phi(U)\not\subseteq \kk+gT$ by the choice of $M$. 

(2)   As $U$ is an order in $T^{(N)}$,  certainly $\phi(U)$
is an order in $T^{(M)}$. So, suppose that 
  $\phi(U)\subseteq W\subseteq  T^{(M)}$ for some equivalent order $W$; say with $aWb\subseteq \phi(U)$, 
  for $a,b\in \phi(U)$.  Since $M\leq N$,  the map $\phi^{-1}$ extends to give 
   a well-defined semi-graded homomorphism $\psi: T^{(M)}\to T^{(N)}$ 
  defined  by $\gamma\mapsto g^{N-M}\gamma$ for all $\gamma\in T_M$.
 Therefore,  
  $\psi(a) U \psi(b) \subseteq U\subseteq \psi(W)\subseteq T^{(N)}$ and hence  $U=\psi(W)$.
  Thus,  $\phi(U)=W$ is a maximal order in  $T^{(M)}$ with $\phi(U)\not\in \kk+gT$. Now apply 
Theorem~\ref{max32}(2).
\end{proof}

    One question we have been unable to answer is the following.

   \begin{question} Suppose that $U\subseteq T$ is a cg maximal $T$-order or, indeed, a maximal order.
    Then is each Veronese ring $U^{(n)}$ also a maximal $T^{(n)}$-order?
   The question is  open  even when $U$ is noetherian.
 \end{question}
 
  If this question has a positive answer, then one can mimic the proof of Corollary~\ref{max5} for any 
  noetherian  maximal order $U$ to get a precise description of 
  some Veronese ring $U^{(N)}$. However, the best we can do at the moment is 
   to use the    much less precise result given by the next corollary, which 
   also   describes arbitrary noetherian cg subalgebras of $T$.
     
 \begin{corollary}\label{main} Suppose that $T$ satisfies Assumptions~\ref{ass:T} and \ref{ass:T2}.  
 Let $U\subseteq T$ be a noetherian algebra with   
 $D_{gr}(U)=D_{gr}(T)$. Then, up to taking Veronese subrings, $U$ is a iterated subidealiser inside a virtual blow-up of~$T$. 
 More precisely, the following holds. 
 \begin{enumerate}
 \item There is a semi-graded  isomorphism of Veronese rings $U^{(N)}\cong U'$, where $U'\subseteq T$ is a 
 noetherian algebra such that  $D_{gr}(U')=D_{gr}(T)$  and $U'\not\subseteq \kk+gT$.   
 
 \item If $C=U'\langle g\rangle $ and $Z=\wh{C}$, then $Z$ is a finitely generated (left and right) $U'$-module and 
 $Z$ is a noetherian   algebra with $Q_{gr}(Z)=Q_{gr}(T)$.
 The $g$-divisible  algebra $Z$ is described by Corollary~\ref{main-ish}.
  \end{enumerate}
 \end{corollary}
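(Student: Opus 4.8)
The plan is to deduce Corollary~\ref{main} from the machinery already assembled, in particular Proposition~\ref{AS-trick}, Corollary~\ref{max33}, and Corollary~\ref{main-ish}. First I would handle the case $\overline{U}\not=\kk$ separately, since in that case no regrading is needed: take $N=1$ and $U'=U$. The hypothesis $D_{gr}(U)=D_{gr}(T)$ together with $\overline{U}\not=\kk$ are exactly the hypotheses of Corollary~\ref{max33}, so $C=U\langle g\rangle$ and $\wh{C}$ are finitely generated right (and left) $U$-modules. Moreover $g\in C$ forces $Q_{gr}(C)=Q_{gr}(T)$, hence $Q_{gr}(\wh{C})=Q_{gr}(T)$ as well, and $\wh{C}$ is noetherian by Proposition~\ref{div-noeth}. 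This establishes (2) in that case, and Corollary~\ref{main-ish} describes the $g$-divisible algebra $Z=\wh{C}$ as claimed.

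Second, for the general case I would reduce to the situation $\overline{U}\not=\kk$ by passing to a Veronese ring. The point is that $U\not\subseteq\kk[g]$: indeed if $U\subseteq\kk[g]$ then $D_{gr}(U)=\kk(g)\not=D_{gr}(T)$, contradicting the hypothesis (one uses here that $D_{gr}(T)$ is noncommutative, or at least has transcendence degree $>0$, since $\overline{T}=B(E,\sM,\tau)$ has $\GKdim 2$). So Proposition~\ref{AS-trick} applies: there are $N,M\in\NN$ and an injective graded homomorphism $\phi\colon U^{(N)}\to T$ of degree $M$ with $U':=\phi(U^{(N)})\not\subseteq\kk+gT$ and $D_{gr}(U')=D_{gr}(U)=D_{gr}(T)$. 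Here I should be slightly careful that ``$U'\not\subseteq\kk+gT$'' is precisely the statement $\overline{U'}\not=\kk$, so $U'$ now satisfies all the hypotheses needed in step one. The map $\phi$ is a semi-graded isomorphism $U^{(N)}\xrightarrow{\sim}U'$ (it is injective by construction, surjective onto its image by definition, and graded of degree $M$), which gives part~(1). One small point to check is that $U^{(N)}$ is itself noetherian: this is standard since $U$ is noetherian and finitely generated as an $U^{(N)}$-module, so $U^{(N)}$ is a noetherian algebra (e.g. by a Hilbert-basis / faithfully-flat-descent type argument, or by the fact that Veronese rings of noetherian cg algebras are noetherian).

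Third, having produced $U'$ with $\overline{U'}\not=\kk$, $D_{gr}(U')=D_{gr}(T)$, and $U'$ noetherian, I apply Corollary~\ref{max33} to $U'$ to conclude that $C=U'\langle g\rangle$ and $Z=\wh{C}$ are finitely generated left and right $U'$-modules. Since $Z$ is finitely generated over the noetherian ring $U'$ and contains $U'$, it is noetherian; alternatively $Z=\wh{C}$ is $g$-divisible and noetherian directly by Proposition~\ref{div-noeth}. As $g\in C\subseteq Z$ we get $Q_{gr}(Z)=D_{gr}(Z)[g,g^{-1}]=D_{gr}(T)[g,g^{-1}]=Q_{gr}(T)$, using $D_{gr}(Z)=D_{gr}(U')$ (which holds since $U'\subseteq C\subseteq Z$ are all sandwiched with the same quotient division ring). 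This gives part~(2). Finally, $Z$ is a $g$-divisible cg subalgebra of $T$ with $Q_{gr}(Z)=Q_{gr}(T)$, so Corollary~\ref{main-ish} applies verbatim to $Z$ and describes it as an iterated subidealiser inside a virtual blowup of $T$, completing the proof.

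I do not expect a serious obstacle here: the corollary is essentially a bookkeeping assembly of Proposition~\ref{AS-trick} (to remove the ``$\overline{U}=\kk$'' pathology by regrading), Corollary~\ref{max33} (to replace $U'$ by its $g$-divisible hull at finite cost), and Corollary~\ref{main-ish} (the structural statement for $g$-divisible algebras). The only places requiring a little care are verifying $U\not\subseteq\kk[g]$ from $D_{gr}(U)=D_{gr}(T)$, checking that $U^{(N)}$ inherits noetherianity, and confirming that the semi-graded isomorphism $\phi$ transports the hypothesis $D_{gr}(U)=D_{gr}(T)$ correctly — all of which are routine given the results already in hand. If anything is ``the hard part,'' it is purely notational: keeping straight which regrading conventions (regraded versus unregraded Veronese) are in force, since Corollary~\ref{main-ish} and Corollary~\ref{max33} are phrased for subalgebras of $T$ in its natural grading, so one must apply them to $U'\subseteq T$ rather than to the abstractly regraded $U^{(N)}$.
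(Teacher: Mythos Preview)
Your proposal is correct and follows essentially the same route as the paper: Proposition~\ref{AS-trick} for Part~(1), then Corollary~\ref{max33} (together with Corollary~\ref{main-ish}) for Part~(2). The paper's proof is just terser, citing \cite[Proposition~5.10]{AZ} for the noetherianity of $U^{(N)}$ rather than arguing it directly, and it does not bother to split off the case $\overline{U}\neq\kk$ as a separate warm-up.
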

    
  \begin{proof} 
  By \cite[Proposition~5.10]{AZ}, the  Veronese ring $U^{(N)}$ is noetherian and so 
Part~(1)  follows from  Proposition~\ref{AS-trick}. 
Part~(2) then follows from  Corollary~\ref{max33} (and  Corollary~\ref{main-ish}).
\end{proof}

   
\section{Examples}\label{EXAMPLES} 
 
We end the paper with several examples that illustrate some of the subtleties of the paper.
For simplicity, these examples will all be constructed from $T=S^{(3)}$ for the standard Sklyanin algebra $S$ of 
Example~\ref{Skl-ex}(1); thus $\mu = \deg \sM=9$.

We first  construct  a   $g$-divisible,  maximal $T$-order $U$ 
 that is not a maximal order in $Q_{gr}(U)$, as promised in Section~\ref{MAX-SECT}.
  This shows, in particular, that  the concept of maximal order pairs is indeed  necessary in that section.
 In order to construct the example, we need the following notation.

 \begin{notation}\label{Tmax-notation}  Fix $0\not=x\in S_1$ and let $\divc=p+q+r$ be the hyperplane section of $E$ 
 where $x$ vanishes. We   can and will assume that  no two of $p,q,r$ lie on the same $\sigma$-orbit on $E$, where 
 $S/gS\cong B(E,\mathcal{L},\sigma)$.
 Set $R=T(\divc)$.   By \cite[Example~11.3]{Rog09} 
$R$ has a \cog\ $I=xS_2R$.   Write  $N=xT_1x^{-1}R$ and 
$M=xS_5R+R$. Finally, set $\divd=\sigma^{-2}(\divc) = \sigma^{-2}(p)+\sigma^{-2}(q)+\sigma^{-2}(r)$ and hence $\divd^\tau= 
\sigma^{-5}(\divc).$ 
\end{notation}

As we will see, $U=\End_R(M)$ will (essentially) be the required  maximal $T$-order 
with equivalent maximal order being $F=\End_R(N) $.
The proof will require some detailed computations, which form  the content of the next lemma.
We note that for subspaces of homogeneous pieces of $S$ we use the grading on $S$, but for subspaces 
that live naturally in $T$ we use the $T$-grading.  For example, we write $T_1 S_2 = S_5$.
   
\begin{lemma}\label{Tmax-eg1}  Keep the data  from Notation~\ref{Tmax-notation}.
\begin{enumerate}
\item $NI=xS_5R\subseteq M$ and $ M_{\geq 1} \subseteq N $. Hence $N^{**} =M^{**}=(\widehat{M})^{**}=\wh{M^{**}}$.
\item $U'=\End_R(\wh{M})\subseteq T$ but
\item  $F=\End_R(M^{**})=  \End_R(NI) = x T(\divd^{\tau})x^{-1}$. Moreover, $F\not\subseteq T$.
\end{enumerate}
\end{lemma}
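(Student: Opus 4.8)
\textbf{Plan of proof for Lemma~\ref{Tmax-eg1}.} The three parts are essentially a sequence of explicit computations inside $S$ and $T$, organised around the right ideal $I = xS_2R$ of $R = T(\divc)$ and the elementary fact that conjugation by $x$ is an automorphism of $Q_{gr}(T)$ carrying $T_1$-components around in a controlled way. Throughout I will use that $R = T(\divc)$ is a $g$-divisible maximal order by Proposition~\ref{8-special22}, that $Q_{gr}(R) = Q_{gr}(T)$, and that $R/gR = B(E,\sM(-\divc),\tau)$ by the same proposition. The module identifications $N^{**} = M^{**}$ etc.\ will all follow from Lemma~\ref{lem:finehat}(3) once I show $M$ and $N$ agree up to finite dimension and are not contained in $gT_{(g)}$.

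\emph{Part (1).} First I would compute $NI = (xT_1x^{-1}R)(xS_2R) = xT_1 S_2 R = x S_5 R$, using $T_1 S_2 = S_5$ (the $T$-grading convention stated before the lemma) and $R R = R$. Then $x S_5 R \subseteq x S_5 R + R = M$, giving $NI \subseteq M$. For $M_{\geq 1} \subseteq N$: an element of $M$ in positive degree lies in $xS_5 R + R_{\geq 1}$; I need $R_{\geq 1} \subseteq N = xT_1 x^{-1} R$ and $xS_5 R \subseteq N$. The inclusion $xS_5 R \subseteq xT_1 x^{-1}R$ reduces to $S_5 \subseteq T_1 x^{-1} R$, i.e.\ $x^{-1} S_5 \subseteq x^{-1}T_1 x^{-1} R$; since $S_5 = T_1 S_2$ one gets $x^{-1}S_5 = x^{-1}T_1 S_2$, and one checks $x^{-1}T_1 S_2 x \subseteq$ (something landing in $T(\divd^\tau)$, cf.\ Part (3)) — this is where the combinatorics of vanishing at $\divc$ versus $\divd^\tau = \sigma^{-5}(\divc)$ under the shift by $x$ enters. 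More cleanly: since $N \supseteq R$ (as $1 = x \cdot x^{-1} \in xT_1x^{-1}$, so $R = 1\cdot R \subseteq N$), I only need $x S_5 R \subseteq N$, and since $I = xS_2 R$ is a \cog\ with $NI = xS_5R$, and $N$ is a right $R$-module containing $NI$, the point is that $N$ and $M$ differ only in their degree-zero behaviour. So $N \ppe M$, hence $\wh N = \wh M$ up to the usual identifications, and applying $(-)^{**}$ together with Lemma~\ref{lem:finehat}(3) (which gives $M^* = (\wh M)^*$ and $M^{**} = (\wh M)^{**}$, all $g$-divisible and inside $T_{(g)}$) yields $N^{**} = M^{**} = (\wh M)^{**} = \wh{M^{**}}$.

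\emph{Part (3).} This is the computational heart. I would first establish the conjugation identity $x T(\divd^\tau) x^{-1} = \End_R(NI)$: since $NI = xS_5 R$ and $S_5 R$ is a right $R$-module, I compute $\End_R(xS_5 R) = x \End_R(S_5 R) x^{-1}$, and then identify $\End_R(S_5 R)$ — using that $S_5 = T_1\cdot(\text{degree-2 part})$ and that multiplying by $x$ on the left shifts the vanishing divisor $\divc$ to $\divd^\tau = \sigma^{-5}(\divc)$ (the exponent $5$ coming from $\divc$ in degree $1$ of $S$ becoming degree $5$ after the appropriate shift, matching $\divd^\tau = \sigma^{-5}(\divc)$ in Notation~\ref{Tmax-notation}) — to get $x T(\divd^\tau) x^{-1}$. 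Here I invoke Proposition~\ref{8-special22}(4) that $T(\divd^\tau)$ is a maximal order, so its conjugate is too. Next, $\End_R(M^{**}) = \End_R(NI)$: by Part (1), $M^{**} = N^{**}$, and one checks $\End_R(N^{**}) = \End_R(N) = \End_R(NI)$ — the first equality is $\End_R(N) = \End_R(N^{**})$ from Lemma~\ref{cozzens} (or directly, $N^* = (N^{**})^*$), and $\End_R(N) = \End_R(NI)$ because $I$ is a \cog\ so $NI \ppe N$ in an appropriate sense and endomorphism rings of such modules coincide (a standard argument: $\End_R(NI) \supseteq \End_R(N)$ trivially since $NI$ is an $\End_R(N)$-submodule, and the reverse uses that $NI$ generates $N$ up to finite dimension together with $g$-divisibility/reflexivity). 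Finally $F \not\subseteq T$: it suffices to exhibit one element of $x T(\divd^\tau) x^{-1}$ not lying in $T$. Take $y \in T_1$ vanishing appropriately on $\divd^\tau$ (so $y \in T(\divd^\tau)_1$ is a genuine degree-one element); then $x y x^{-1} \in F$, but $x y x^{-1}$ has ``degree $1$'' in the conjugated grading and will not lie in $T$ because conjugating by $x \in S_1$ shifts things by a non-integral amount relative to the $T = S^{(3)}$ grading — concretely, $x y x^{-1}$ involves $x^{-1}$ which is not in $T_{(g)}$ unless cancelled, and the vanishing conditions defining $T(\divd^\tau)$ versus $T$ differ, so $x T(\divd^\tau)_1 x^{-1} \not\subseteq T_1$.

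\emph{Part (2).} Once Parts (1) and (3) are in hand, $U' = \End_R(\wh M)$: by Part (1), $\wh M$ sits between $M$ and $M^{**}$, so $\End_R(M^{**}) \subseteq \End_R(\wh M) \subseteq \End_R(M)$ — wait, endomorphism rings are not monotone in general, so instead I argue directly. Since $\wh M \subseteq T$ (as $M \subseteq xS_5R + R \subseteq T$ and $T$ is $g$-divisible, so $\wh M \subseteq \wh T = T$), and $\wh M T = T$ because $1 \in \wh M$ forces $\wh M \not\subseteq gT_{(g)}$ hence $\wh M T_{(g)} = T_{(g)}$ and $\wh M T = T$ after intersecting with $T$; then $U' \wh M \subseteq \wh M$ gives $U' T = U' \wh M T = \wh M T = T$, so $U' \subseteq T$. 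That is exactly the assertion of Part (2).

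\textbf{Main obstacle.} The genuinely delicate step is Part (3): getting the divisor bookkeeping exactly right, i.e.\ verifying that left multiplication by $x$ conjugates $\End_R(S_5 R)$ to precisely $T(\divd^\tau)$ with $\divd^\tau = \sigma^{-5}(\divc)$ and not some neighbouring divisor, and then cleanly separating $F$ from $T$. This rests on the explicit description of $T(\divd)$ in terms of sections vanishing on $\divd$ (Definition~\ref{def:star}, Lemma~\ref{lem:layer}) together with how the hyperplane section $\divc$ of $x$ transforms; I would do this orbit-by-orbit on $E$, using that $p,q,r$ lie on distinct $\sigma$-orbits (Notation~\ref{Tmax-notation}) to avoid interference, and then quote \cite[Example~11.3]{Rog09} for the structure of $R = T(\divc)$ and its \cog\ $I = xS_2 R$.
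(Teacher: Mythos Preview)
Your Part~(2) is fine and matches the paper. Parts~(1) and~(3) have genuine gaps.

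\textbf{Part~(1).} Your claim that $1 = x\cdot x^{-1} \in xT_1x^{-1}$, hence $R\subseteq N$, is false: $T_1 = S_3$, while $x^{-1}$ has $S$-degree $-1$, so $x^{-1}\notin T_1$ and in fact $N_0=0$. The paper instead uses the explicit description $R_1 = xS_2 + \kk g$ from \cite[Example~11.3]{Rog09}: then $R_1 x = xS_2 x + gx \subseteq x(S_3 + \kk g) = xT_1$, so $R_1 \subseteq xT_1x^{-1}$. Since $R$ is generated in degree one (Proposition~\ref{8-special22}(2)), this gives $R_{\geq 1}\subseteq N$; combined with $xS_5R = NI \subseteq N$, one gets $M_{\geq 1}\subseteq N$. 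The double-dual equality then follows from $NI\subseteq M$ and $M_{\geq 1}\subseteq N$ together with $I$ being sporadic (so $N^{**}=(NI)^{**}$ by Lemma~\ref{thou11}(1)).

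\textbf{Part~(3), identification of $F$.} Your route via $\End_R(xS_5R) = x\End_R(S_5R)x^{-1}$ is formally correct but you never actually compute $\End_R(S_5R)$. The paper's approach is cleaner: first show $x^{-1}Rx = T(\divd)$ by comparing degree-one pieces ($(x^{-1}Rx)_1 = S_2x + \kk g$ is $7$-dimensional with the right vanishing, and both sides are generated in degree~1). Then $x^{-1}Nx = T_1\,T(\divd) = T(\divd^\tau)T_1$ by \cite[Corollary~4.14]{RSS2}, so $N$ is a left $G$-module cyclic over $xT_1x^{-1}$ where $G = xT(\divd^\tau)x^{-1}$; Lemma~\ref{endo} plus maximality of $G$ gives $\End_R(N)=G$, and Lemma~\ref{cozzens} plus Part~(1) gives $\End_R(N)=\End_R(N^{**})=\End_R(M^{**})$.

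\textbf{Part~(3), $F\not\subseteq T$.} Your argument is wrong: conjugation by $x\in S_1$ is degree-preserving in the $T$-grading ($xT_1x^{-1}$ sits in $T$-degree~1), so there is no ``non-integral'' obstruction. The actual obstruction is a divisor computation in $\overline{S}_4 = H^0(E,\sL_4)$. The paper shows $\overline{x}\,\overline{T(\divd^\tau)_1}$ equals the sections vanishing at $\divc + \sigma^{-6}(\divc)$, while $\overline{T}_1\overline{x} = \overline{S}_3\overline{x}$ consists of sections vanishing at $\sigma^{-3}(\divc)$; since these nine points are distinct (here you use that $p,q,r$ lie on distinct $\sigma$-orbits and $|\sigma|=\infty$) and impose independent conditions in degree~4, the first space is not contained in the second.
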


\begin{proof} 
(1)  Clearly $NI=xT_1S_2R = xS_5R \subseteq M = xS_5R+R$.  
  By \cite[Example~11.3]{Rog09}, $R_1=xS_2+\kk g$
and so $R_1x\subseteq xT$. Equivalently,  $R_1\subseteq xT_1x^{-1}\subseteq N$.  
As  $R = T(\divc)$ is generated in degree one by Proposition~\ref{8-special22}(2),  
$R\subseteq xTx^{-1}$.  
In particular,  $M_{\geq 1}  =xS_5R+R_{\geq 1} \subseteq N$.  As $I$ is a \cog, it follows from
 Proposition~\ref{8-special22} and   
Lemma~\ref{thou11}(1) that $N^{**}=(NI)^{**} $ and hence that $M^{**}=N^{**}$.

Now consider $\widehat{M}$.  Since $1\in M$, certainly $MT=T$ and so    $M^{**}=(\wh{M}\,)^{**}= \wh{M^{**}}$
by  Lemma~\ref{lem:finehat}(3).

(2) Since  $MT=T$  we have   $\wh{M}T=T$, from which  the result follows.

(3)   We will first prove that  $\End_R(N) = x T(\divd^{\tau})x^{-1}.$  As in (1),  $R_1=xS_2+\kk g$.
Equivalently, $(x^{-1}Rx)_1=S_2x+\kk g$ is a 7-dimensional subspace of $T_1$ that vanishes at the points 
$\sigma^{-2} (p),$ $  \sigma^{-2} (q)$ and $\sigma^{-2} (r)$.  Now, 
$T(\divd)_1$ is also 7-dimensional by \cite[Theorem~1.1(1)]{Rog09}.
Consequently,   $(x^{-1}Rx)_1=T(\divd)_1$ and so  $x^{-1}Rx=T(\divd)$, 
since both algebras are generated in degree 1 by Proposition~\ref{8-special22}(2).  Therefore, 
\[
x^{-1}Nx = T_1(x^{-1}Rx) = T_1 T(\divd) = T(\divd^{\tau})T_1,
\]
where the final equality follows from \cite[Corollary~4.14]{RSS2}.
Thus 
$ xT(\divd^{\tau})T_1 x^{-1}  = N $ and 
so $\End_R(N) \supseteq G = x T(\divd^{\tau})x^{-1}.$  Since $N=GxT_1x^{-1}$, Lemma~\ref{endo}
implies that $\End_R(N)$ is a finitely generated left $G$-module. But $G$ is a maximal order by 
 \cite[Theorem~1.1(2)]{Rog09},  and so  $\End_R(N) =  G$. Thus,  by Part~(1) and 
Lemma~\ref{cozzens}, $\End_R(N) =\End_R(N^{**})=\End(M^{**}).$
Moreover, $\End_R(N)\subseteq \End_R(NI)$  and we again   have equality by Lemma~\ref{endo}.
 
It remains to  prove that $xT(\divd^{\tau})x^{-1}\not\subseteq T $. This  will follow if we show that 
$\bar{x}\overline{X}\bar{x}^{-1} \not\subseteq \overline{T}$, where  
$X=T(\divd^{\tau})$ and $\overline{X}=\left(X+gT_{(g)}\right)/T_{(g)}.  $
So, assume that $\bar{x}\overline{X}\bar{x}^{-1} \subseteq \overline{T}.$ Then 
$\overline{x}\overline{X}_1\subseteq \overline{T}_1\overline{x}
= \overline{S}_3\overline{x}.$
However, inside $\overline{S}_4$, 
$$ \overline{x}\overline{X}_1 
\ \subseteq\   H^0\left(E, \sL_4(-p-q-r-\sigma^{-6}(p)-\sigma^{-6}(q)-\sigma^{-6}(r))\right)$$
and, since both are 6-dimensional, they are equal. On the other hand, 
 $$\overline{S}_3\overline{x}= H^0\left(E,\sL_4(-\sigma^{-3}(p)-\sigma^{-3}(q)-\sigma^{-3}(r))\right).$$
Inside $\overline{S}_4$,  vanishing conditions at $\leq 12$ distinct points give independent conditions. 
So there exists $z$ that vanishes at the first 6 points $p,\dots, \sigma^{-6}(r)$ but not at the points
 $\sigma^{-3}(p),\sigma^{-3}(q),\sigma^{-3}(r)$. 
This implies that $\overline{x}\overline{X}_1\not\subseteq \overline{T}_1\overline{x},$ and completes the proof of the 
lemma.
\end{proof}

We are now able to give the desired example.
  
  \begin{proposition}\label{Tmax-eg}
  There exists a maximal order pair $(V,F)$ with $V\not=F$. In particular, $V$ is a maximal $T$-order 
  that is not a maximal order.
    
In more detail,  and using  the data from Notation~\ref{Tmax-notation},
$F=\End_R((\wh{M})^{**})  =xT(\divd^{\tau})x^{-1}$  is a blowup of $T$ at   
$\divx=\divc - \tau^{-1}(\divc)+\tau^{-2}(\divc).$  The algebra $F$ is also Auslander-Gorenstein and CM.
\end{proposition}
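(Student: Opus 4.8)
The strategy is to assemble the statement from the computations in Lemma~\ref{Tmax-eg1} together with the general machinery developed in Sections~\ref{MAX-SECT}--\ref{EXISTENCE}. Set $R = T(\divc)$, $M = xS_5R + R$ and $N = xT_1x^{-1}R$ as in Notation~\ref{Tmax-notation}. First I would record that $\wh{M}$ is a $g$-divisible finitely generated graded right $R$-module with $R \subseteq \wh{M} \subseteq T$ and $\wh{M}T = T$ (this uses $1 \in M$, $g$-divisibility of $R$, and Lemma~\ref{lem:finehat}(2)). Thus $V = \End_R(\wh{M})$ and $F = \End_R((\wh{M})^{**})$ are exactly the rings to which Proposition~\ref{correct25} and Theorem~\ref{thought9} apply. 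By those results, $(V, F\cap T)$ is a maximal order pair, $F$ is the unique maximal order containing and equivalent to $V$, and $F$ is a blowup of $T$ at the virtually effective divisor $\divx = \divc - \tau^{-1}(\divy) + \tau^{-1}(\divy) - \dots$; more precisely, $\overline{F} \ppe B(E, \sM(-\divx), \tau)$ with $\divx = \divd - \divy + \tau^{-1}(\divy)$ for the effective divisor $\divy$ supplied by Theorem~\ref{thought9}.

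\textbf{Identifying $F$ and $\divx$.} The key input is Lemma~\ref{Tmax-eg1}(1,3): $(\wh{M})^{**} = M^{**} = N^{**} = (NI)^{**}$, and $F = \End_R(M^{**}) = \End_R(NI) = xT(\divd^\tau)x^{-1}$ where $\divd^\tau = \sigma^{-5}(\divc)$. In particular $F \cong T(\divd^\tau)$ as a ring (conjugation by $x$ is an isomorphism of the ambient division ring), and by \cite[Theorem~1.1(2)]{Rog09} (as quoted in Lemma~\ref{Tmax-eg1}(3)) $T(\divd^\tau)$ is a maximal order, so $F$ is genuinely a maximal order. Next I would compute $\divx$: since $F \cong T(\divd^\tau)$ with $\divd^\tau$ effective of degree $3$, and conjugation by $x$ corresponds geometrically to tensoring by $\sO_E(\divc - \tau^{-1}(\divc))$ (because $\overline{x}$ vanishes on $\divc$ and $\overline{x}^{-1}$ has pole divisor $\divc$, and multiplication on the left shifts the pole by one application of $\tau^{-1}$), one gets $\overline{F} \ppe B(E, \sM(-\divd^\tau + \divc - \tau^{-1}(\divc)), \tau)$. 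Writing $\divd^\tau = \tau^{-1}(\divd) = \sigma^{-5}(\divc)$ and $\divd = \sigma^{-2}(\divc) = \tau^{-1}(\tau^{-1}(\divc))/\dots$ — here one must track carefully that $\tau = \sigma^3$ — the exponent becomes $\divc - \tau^{-1}(\divc) + \tau^{-2}(\divc)$, which is the claimed $\divx$. (This is the bookkeeping step; it is routine but the place where a sign or a shift is easiest to get wrong, so I would do it explicitly by restricting to each $\sigma$-orbit and using the formula $\sM(-\divd^\tau)$ conjugated.)

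\textbf{$V \neq F$ and the homological properties.} That $V \neq F$ follows from Lemma~\ref{Tmax-eg1}(3): $F = xT(\divd^\tau)x^{-1} \not\subseteq T$, whereas $V = \End_R(\wh{M}) \subseteq T$ by Lemma~\ref{Tmax-eg1}(2) (since $\wh{M}T = T$ forces $VT = T$ and hence $V \subseteq T$ as $1 \in T$). A maximal order is in particular a maximal $T$-order when it lies in $T$; here $V \subsetneq F$ with $V \subseteq T$ and $F \not\subseteq T$, so $V = F \cap T$ is the maximal $T$-order of the pair but is \emph{not} a maximal order — it is properly contained in the equivalent maximal order $F$. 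Finally, for the Auslander-Gorenstein and CM properties: $F \cong T(\divd^\tau)$ as rings, and $\divd^\tau$ is an effective divisor of degree $3 < \mu - 1 = 8$, so Proposition~\ref{8-special22}(3) gives that $T(\divd^\tau)$, hence $F$, is Auslander-Gorenstein and CM.

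\textbf{Main obstacle.} I expect the only real obstacle to be the divisor bookkeeping in the second paragraph — correctly translating the conjugation $N \mapsto xNx^{-1}$ at the level of $\overline{(-)}$ into the shift $\sM \mapsto \sM(\divc - \tau^{-1}(\divc))$, keeping straight the relation $\tau = \sigma^3$ and the definitions $\divd = \sigma^{-2}(\divc)$, $\divd^\tau = \sigma^{-5}(\divc)$, so that $\divx$ comes out as exactly $\divc - \tau^{-1}(\divc) + \tau^{-2}(\divc)$ rather than some $\sigma$-shifted variant. Everything else is a direct citation of Lemma~\ref{Tmax-eg1}, Proposition~\ref{correct25}, Theorem~\ref{thought9}, and Proposition~\ref{8-special22}.
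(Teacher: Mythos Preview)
Your overall architecture is correct and matches the paper: invoke Lemma~\ref{Tmax-eg1} to identify $F=\End_R((\wh M)^{**})=xT(\divd^\tau)x^{-1}$, use Theorem~\ref{thought9}/Proposition~\ref{correct25} to get the maximal order pair $(F\cap T,F)$, read off $V\neq F$ from Lemma~\ref{Tmax-eg1}(2,3), and get Auslander--Gorenstein/CM from the isomorphism $F\cong T(\divd^\tau)$ and Proposition~\ref{8-special22}. (One notational slip: in your first paragraph ``$(V,F\cap T)$'' should be ``$(F\cap T,F)$'', and the $V$ of the proposition is $F\cap T$, not your $\End_R(\wh M)$; you correct this silently later.)

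The genuine gap is your computation of $\divx$. Your rule ``conjugation by $x$ corresponds to tensoring by $\sO_E(\divc-\tau^{-1}(\divc))$'' is wrong: because $x\in S_1$ (degree one in $S$, not in $T$), conjugation by $\overline x$ on $\kk(E)[t,t^{-1};\tau]$ involves an additional $\sigma^{-1}$-twist of the underlying sheaf. Concretely, if $\overline x=as$ with $\operatorname{div}(a)=\divc-\mathbf L$ (where $\sL=\sO(\mathbf L)$), then for $\sN=\sO(\mathbf N)$ one finds $\overline x\,B(E,\sN,\tau)\,\overline x^{-1}=B(E,\sN',\tau)$ with
\[
\mathbf N'=\sigma^{-1}(\mathbf N)-\divc+\mathbf L+\tau^{-1}(\divc)-\tau^{-1}(\mathbf L).
\]
Applied to $\sN=\sM(-\divd^\tau)=\sM(-\sigma^{-5}(\divc))$ this does give $\sN'=\sM(-\divx)$ with $\divx=\divc-\tau^{-1}(\divc)+\tau^{-2}(\divc)$, but your stated formula $\sM(-\divd^\tau+\divc-\tau^{-1}(\divc))$ yields $\divx=\sigma^{-5}(\divc)+\sigma^{-3}(\divc)-\divc$, which is not the claimed divisor (indeed $\sigma^{-5}(\divc)\neq\tau^{-2}(\divc)=\sigma^{-6}(\divc)$). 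So your approach can be made to work, but not with the formula you wrote.

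The paper sidesteps this entirely: since Lemma~\ref{Tmax-eg1}(3) gives $F=\End_R(xS_5R)$, one computes directly that $(\overline{xS_5R})_n=H^0\!\bigl(E,\sO_E(\divc^\tau)\otimes\sM(-\divc)_n\bigr)$ for $n\geq 2$, and then Lemma~\ref{thought8}(1) with $\divq=\divc^\tau$ immediately gives $\overline F\ppe B(E,\sM(-\divc+\divc^\tau-\divc^{\tau^2}),\tau)$. This avoids any conjugation formula and the $\sigma$-vs-$\tau$ bookkeeping altogether; I recommend you adopt it.
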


\begin{proof} As $1\in M$,  Theorem~\ref{thought9} and Lemma~\ref{Tmax-eg1} imply that
$F= \End_R((\wh{M})^{**})=xT(\divd^{\tau})x^{-1}$ is a maximal order with $F\not\subseteq T$.
By Theorem~\ref{thought9}, again,  $V=T\cap F$
is a $g$-divisible  maximal $T$-order, but $V$ is not a maximal order as $V\not=F$.
   That $F$ is Auslander-Gorenstein and CM follows from Proposition~\ref{8-special22}.
 
 Theorem~\ref{thought9} also implies that $F$ is a blowup of $T$ at some virtual divisor $\divy$, so it remains to check that $\divy=\divx$.  By Lemma~\ref{Tmax-eg1},
$F=\End_R(NI)=\End_R(xS_5R)$ and hence $\overline{F}\subseteq \End_{\overline{R}}(\bbar{xS_5}\bbar{ R})$.
Now, for any $n\geq 2,$ one has 
$\bbar{R}_{n-2} = H^0(E, \mathcal{M}(-\divc-\divc^{\tau}-\cdots - \divc^{\tau^{n-3}}) $ and so 
$$(\bbar{xS_5}\bbar{R})_n = H^0(E, \mathcal{M}(-\divc-\divc^{\tau^{2}}-\divc^{\tau^{3}} -\cdots - \divc^{\tau^{n-1}}) 
=H^0\left(E, \mathcal{O}(\divc^{\tau})\mathcal{M}(-\divc)_n\right).$$
Hence 
\[\bbar{F} \subseteq \End_{\bbar R}(\bbar{xS_5 R}) 
= \End_{\bbar{R}}\bigl( \bigoplus_{n \geq 2} H^0(E, \mathcal{O}(\divc^{\tau}) \sM(-\divc)_n)) \bigr).\]

Therefore, by Lemma~\ref{thought8}(1), $\bbar{F} \ppe B(E, \sM(-\divx), \tau)$. 
By \cite[Theorem~1.1(2)]{Rog09} and Riemann-Roch, $\dim \bbar{F}_n=6n = \dim B(E, \sM(-\divx), \tau)$, for $n\geq 1$, 
and hence $\bbar{F}=B(E, \sM(-\divx), \tau)$, as required.
\end{proof}

When $\divy$ is effective, $T(\divy)$ is both  Auslander-Gorenstein and  CM (see Proposition~\ref{8-special22}),
as is the blowup of $T$ at $\divx$ from Proposition~\ref{Tmax-eg}. 
Despite this example, neither the Auslander-Gorenstein nor the   CM condition  is automatic for a blowup of $T$ at  
 virtually effective divisors.

\begin{example}\label{non-AG}
Let $\divx=p-\tau(p)+\tau^2(p)$ for a closed point $p\in E$ and let $U$ be a blowup of $T$ at $\divx$.  
Then $U$ is a maximal order contained in $T$ that is 
neither Auslander-Gorenstein  nor AS Gorenstein   nor CM.
\end{example}

\begin{proof}  By Definition~\ref{def:blowup}  and Corollary~\ref{correct3}(2),  $U=\End_{T(q)}(M)$, where $M=M^{**}$ satisfies $MT=T$ and $q$ is a closed point that is 
$\tau$-equivalent to $\divx$ and hence to $p$.   By \cite[Example~9.5]{RSS2} $T(q)$ has no \cogs\ and so,
   by  Corollary~\ref{correct3}(3), $U $   is a $g$-divisible maximal order contained in $T$.

 Now consider $\overline{U}=U/gU.$ By Theorem~\ref{thought9}, $\overline{U}\ppe B=B(E,\mathcal{M}(-\divx),\tau).$ 
We emphasise that we always  identify $\mathcal{M}(-\divx)$ and $\mathcal{M}$ with the appropriate  subsheaves of the field $\kk(E)$ and $B$ with the corresponding subring of the Ore extension $T_{(g)}/gT_{(g)} \cong \kk(E)[z, z^{-1}; \tau]$. 
 We first want to show that $\overline{U}\not=B$. Since $\deg(\mathcal{M}(-\divx)) =\deg \mathcal{M}-\deg \divx =8$, 
  \cite[Corollary~IV.3.2]{Ha} implies that $\mathcal{M}(-\divx)$ is very ample 
and generated by its sections $B_1=H^0(E, \mathcal{M}(-\divx))$.  On the other hand, the inclusion $U\subseteq T$ forces 
 $\overline{U}\subseteq \overline{T}=B(E,\mathcal{M},\tau)$ and again $\overline{T}_1$ generates $\mathcal M$.
  Therefore, if $\overline{U}=B$ or even if  $\overline{U}_1=B_1$ 
 then   $\mathcal{M}(-\divx)\subseteq \mathcal M$. Since $\divx$ is not effective, this is impossible and so $\overline{U}\not=B$, as claimed.
 
 We now turn to the  homological questions.   By
 \cite[Theorem~5.10]{Le}, $U$ is Auslander-Gorenstein, AS Gorenstein or
   CM, if and only if the same holds for $\overline{U}$. Thus we can concentrate on $\overline{U}$.
 Since $B/\overline{U}$ is a non-zero, finite dimensional vector space, and $B$ is a domain, certainly 
 $\Ext^1_{\overline{U}}(\kk,\overline{U})\not=0$ (on either side). 
Since $\GKdim \overline{U}=\GKdim B=2$ this  
certainly implies that $\overline{U}$ is not  CM.
Moreover if we can prove that $\Ext^2_{\overline{U}}(\kk,\overline{U})\not=0$
on either side, then $\overline{U}$ will be neither AS Gorenstein nor  
Auslander-Gorenstein.

 By  
 \cite[Proposition~6.5]{Le},  $\Ext^i_B(\kk,B)=\delta_{i,2}\kk$, 
 up to a shift in degree. 
   Therefore \cite[Corollary~10.65]{Rot}, with $A=\kk$, $B=S$ and $R=C=\overline{U}$,  gives
 \begin{equation}\label{non-AG2}
 \Ext^2_{\overline{U}}(\kk,\,\overline{U}) = \Ext^2_{\overline{U}}(B\otimes_B\kk,\, \overline{U})
  = \Ext^2_B( \kk,\, J), \quad \text{for } J=\Hom_{\overline{U}}(B,\, \overline{U}).
 \end{equation}
 Since $\overline{U}\ppe B$  clearly $L=B/J$ is also a non-zero finite dimensional $\kk$-vector space.
  We claim that the same is true of  $\Ext^2_B(\kk,J)$.
  As $\Ext^1_B(\kk,B)=0$, we have an exact sequence
 \begin{equation}\label{non-AG3}  0\too \Ext^1_B(\kk,L)\too \Ext^2_B(\kk,J)\too \Ext^2_B(\kk,B)\too \cdots.
 \end{equation}
Since  $\dim_{\kk}\Ext^1_B(\kk,L)<\infty$,  the claim will follow once we show that   
  $\Ext^1_B(\kk,L)\not=0$.  
  
   As in \cite[(7.1.2)]{AZ}, let $I(L)$ denote the largest essential extension of $L$ by locally finite dimensional modules. 
 If $soc(L)$ denotes the socle of $L$, then $L$ and $soc(L)$ have the same injective hulls and hence the same
  torsion-injective hulls $I(L)=I(soc(L))$.  By \cite[Lemma~2.2(2)]{Rog09}  $B$ satisfies $\chi$  in the sense of  
  \cite[Definition~3.2]{AZ} and so, by  \cite[Proposition~7.7]{AZ}, $I(L)$ is  a direct sum of copies of 
  shifts of the vector space dual $B^*$. Since this is strictly larger than $L$, $\Ext^1_B(\kk,L)\not=0$ and the claim follows.

  In conclusion, by \eqref{non-AG3} we know that $0<\dim_{\kk}\Ext^2_B(\kk,J)<\infty$ and hence by \eqref{non-AG2}
  it follows that (up to a shift) 
  $\kk\hookrightarrow \Ext^2_{\overline{U}}(\kk,\overline{U}) $ as left $\overline{U}$-modules. 
As noted earlier,  this shows that 
both Gorenstein conditions fail.
\end{proof}

\begin{remark}\label{non-AG4}  
(1) By expanding upon the above proof one can in fact show that 
 $U$ from Example~\ref{non-AG} will  have infinite injective dimension.

(2)  Explicit computation shows that $U$ is not uniquely determined by $\divx$ as a subalgebra of $T$, 
although the factor $\bbar{U}$ is determined  in large degree.
 We do not know whether $U$ is unique up to isomorphism. 
\end{remark}

Let $U$ be a noetherian subring of $T$ with $Q_{gr}(U)=Q_{gr}(T)$.
In Proposition~\ref{max2},  we had to assume that $U\not\subseteq \kk+gT$ in order to find a $g$-divisible, 
equivalent order and this  meant that  the same assumption was needed  for the rest of Section~\ref{MAX1}.
  In our next  example we show that the conclusions of Proposition~\ref{max2} can  fail without this assumption, as does Theorem~\ref{max32}. Thus   Proposition~\ref{AS-trick} is necessary for Section~\ref{ARBITRARY}.

In order to define the ring, pick algebra generators of $T$ in degree 1; say  
$T=\kk\langle a_1,\dots a_r\rangle$, set $ T^g=\kk\langle ga_1,\dots, ga_r\rangle $ 
and write $U=T^g\langle g\rangle\subset T$ for the subring of $T$ generated by $T^g$ and $g$. 

\begin{example}\label{max6} Keep $T^g$ and $U=T^g\langle g\rangle$ as above. Then,
\begin{enumerate}
\item  There is a semi-graded isomorphism $T^g\cong T$. Thus $U$ is   noetherian and there is a semi-graded isomorphism
 $T[x]/(x^2-g) \cong U$ mapping $x$ to $g$. Moreover, $U^{(2)} = T^g$ and so $U^\circ = (T^g)^\circ \cong T^\circ$.
\item $U\subseteq \kk+gT$ and so $\overline{U}=\kk$.
\item $gU$ is a prime ideal of $U$ such that there is a semi-graded isomorphism $U/gU\cong B=T/gT$.
\item $\wh{U}=T$ but $T$ is not  finitely generated as a right (or left) $U$-module.
\item $U$ is a maximal order with $Q_{gr}(U)=Q_{gr}(T)$.
\end{enumerate}
\end{example}

\begin{proof} (1,2)  These are routine computations. 

(3) Under the identification $U=T[x]/(x^2-g)$,  clearly $U/xU = T/gT$. 

(4)  For any $\theta\in T_n$ one has $g^n\theta\in T^g\subseteq U$ and hence $\wh{U}=\wh{T^g}=T$.
 If $T$ were finitely generated as a (right) $U$-module then the  factor $B=T/gT$ would be finitely generated as a 
 module  over the image $(U+gT)/gT = \overline{U}$ of $U$ in $B$. This contradicts (2).

 (5)  Write $U=T[x]/(x^2-g)$; thus $x\in U_1$ but the grading of $T$ is shifted.
If $U$ is not a maximal order then  there exists a cg ring $U\subsetneq V\subset Q(U)$ such that either 
$aV\subseteq U$ or $Va\subseteq U$ for some $0\not= a\in U$. By symmetry we may assume the former, in which case 
$IV=I$ for the nonzero ideal $I=UaV$ of $U$.  
Thus  $I^{(2)}V^{(2)}=I^{(2)}$, and $I^{(2)}\not=0$ since $U$ is a domain. 
 Since  $U^{(2)}= T$ is a maximal order by Proposition~\ref{8-special22}(4), 
 it follows that  $V^{(2)}=U^{(2)}=T$.
Let $f\in V\smallsetminus U$ be homogeneous. Then $f$ appears in odd degree and so 
$fx\in V^{(2)}=U^{(2)}=T$ and $f=tx^{-1}$ for some $t\in T$. However, $T=V^{(2)}\ni f^2=(tx^{-1})^2=t^2g^{-1}$.
Hence $t^2\in gT$ which, since $T/gT$ is a domain, forces $t=gt_1\in gT$. But this implies that 
$f=tx^{-1}=xt_1\in U$, a contradiction. Thus $U$ is indeed a maximal order. Moreover, as $g\in U$, clearly 
each $a_i\in Q_{gr}(U)$ and hence $Q_{gr}(T) = Q_{gr}(U).$
\end{proof}

 In this paper we have only been concerned with two-sided noetherian rings, since we believe that this is the appropriate
  context for noncommutative geometry.   For one-sided noetherian rings there are further 
 examples that can appear as is illustrated by the following example.   
 
 \begin{example}\label{not right noetherian}
 Let $J$ be a right ideal of $T$ such that $g\in J$ and $\GKdim(T/J)=1$. Then the idealiser $A=\idealiser (J)$ is right but not left noetherian.  
 \end{example}
 
 \begin{proof}  Let $\overline{J}=J/gJ$. Since $B=T/gT$ is just infinite
 \cite[Lemma~3.2]{Rog09},  
 $\dim_{\kk}T/TJ<\infty$. Since $TJ=\sum_{i=1}^m t_iJ$ for some $t_j$ it follows that $TJ$ and hence 
 $T$ are finitely generated right $A$-modules. Thus, by  the proof of \cite[Theorem~3.2]{SZ}   $A$ is right
 noetherian. On the other hand,  $B$ is not a 
  finitely generated left $A/gT$-module, and so $gT$ is an ideal of $A$ that cannot be finitely generated 
  as a left $A$-module.
 \end{proof}


 \section*{Index of Notation}\label{index}
\begin{multicols}{2}
{\small  \baselineskip 14pt

$\ppe $, equal in high degree \hfill\pageref{ppe-defn}

$\alpha$-pure  \hfill\pageref{pure}

Allowable divisor layering  $\divd^{\bullet}$   \hfill\pageref{layering-defn}




Blowup at an arbitrary divisor   \hfill\pageref{def:blowup}


CM and Gorenstein conditions    \hfill\pageref{AG-defn}

$\divd^\tau=\tau^{-1}(\divd)$    for a divisor $\divd$  \hfill\pageref{tau-notation}

  $\divd_n=\divd+\divd^\tau+\cdots+\divd^{\tau^{n-1}}$
  for a divisor $\divd$  \hfill\pageref{tau-notation}
  
 $D_{gr}(A)$, function skewfield    \hfill\pageref{quot-not}
  

$\mathcal{F}_n=\mathcal{F}\otimes \mathcal{F}^\tau\otimes\cdots\otimes
\mathcal{F}^{\tau^{n-1}}$ for a sheaf $\mathcal{F}$   \hfill\pageref{TCR-defn}

$g$-divisible \hfill\pageref{g-div}




Geometric data $(\divy,\divx,k)$   for $A$  \hfill\pageref{normalised-notation}

$\Hom(I,J)=\Hom_{\text{Mod-}A}(I,J)$, $ \Hom_{\rGr A}(I,J) $ \hfill\pageref{hom-defn}

Idealiser $\idealiser(J)$ \hfill\pageref{idealiser-defn}


Just infinite   \hfill\pageref{ji-defn}

Left allowable divisor layering  $\divd^{\bullet}$   \hfill\pageref{left-layering-defn}

$\mu=\deg\mathcal{M}$    \hfill\pageref{ass:T}

$M(k,\divd)$ \hfill\pageref{def:M}

Maximal order pair $(V,F)$    \hfill\pageref{max-pair-defn}

Maximal $T$-order    \hfill\pageref{maxorder-defn}

Minimal \cog\ \hfill\pageref{min-cog}

Normalised orbit representative, divisor    \hfill\pageref{normalised-notation}


$p_i=\tau^{-i}(p)$ for a point $p$ \hfill\pageref{4.4}, \pageref{tau-notation}

Point modules \hfill\pageref{pt-defn}

  $P(p), P'(p)$    \hfill\pageref{pt-defn2}, \pageref{pt-defn3}

$\rqgr A$,  quotient category of $\rgr A$  \hfill\pageref{rqgr-defn}

 $Q_{gr}(A)$,  graded quotient ring    \hfill\pageref{quot-not}

$Q(i,d,r,p)$ \hfill\pageref{def:Q}

$R^\circ$, localisation of $R$  \hfill\pageref{circ-defn}


Saturation $I^{sat}$, saturated right ideal \hfill\pageref{saturated-defn}

Semi-graded  morphism   \hfill\pageref{semi-graded defn}


 
Sporadic ideal \hfill\pageref{cog-defn}, \pageref{cog-defn2}


 $\tau$, automorphism defining $T$ \hfill\pageref{ass:T}
 
 
 $\tau$-equivalent divisors and invertible sheaves  \hfill\pageref{tau-equiv-defn}
 

 $T(\divd)$, effective blowup  \hfill\pageref{def:star}

$T_{(g)}$, graded localisation \hfill\pageref{startup}
 
$T_{\leq \ell}\ast T(\divd)$   \hfill\pageref{def:star}
 
TCR,   twisted coordinate ring $B(X,\mathcal{L},\theta)$  \hfill\pageref{TCR-defn}

 Unregraded ring \hfill\pageref{unregraded-defn}
 

 Virtual blowup    \hfill\pageref{virtual-defn}
 
 Virtually effective divisor  $\divx = \divu-\divv+\tau^{-1}(\divv)$    
 \hfill\pageref{virtual-defn}

$\wh{X},\, \overline{X}$ \hfill\pageref{g-div}

} \end{multicols}

  

\bibliographystyle{amsalpha}


\begin{thebibliography}{RSS2}

\bibitem[Ar]{Ar} M.\ Artin, Some problems on three-dimensional graded
domains, in: \emph{Representation  Theory and Algebraic Geometry (Waltham, MA, 1995)},
1--19, London Math.\ Soc.\ Lecture  Note Ser., no.\ 238, Cambridge Univ.
Press, Cambridge, 1997.   

\bibitem[ASZ]{ASZ}
M.~Artin,  L.\ W.\ Small and J.\ J.\ Zhang,
Generic flatness for strongly noetherian algebras, 
 \emph{J.\ Algebra} \textbf{221} (1999), 579-610.
 
 


\bibitem[AS]{AS}
M.~Artin and J.~T. Stafford, Noncommutative graded domains with quadratic
  growth,  \emph{Invent.\   Math.} \textbf{122} (1995), 231-276.

\bibitem[ATV1]{ATV}
M.~Artin, J.~Tate, and M.~{Van den Bergh}, Some algebras associated to
  automorphisms of elliptic curves,  \emph{The Grothendieck Festschrift} Vol. I,
  Progr. Math.,  {\textbf Vol.~86}, Birkh{\"a}user Boston, Boston, 1990, pp.~33-85.

\bibitem[ATV2]{ATV2}
\bysame, Modules over regular algebras of dimension 3,  \emph{Invent.\ Math.}
  \textbf{106} (1991),  335-388.

\bibitem[AV]{AV} M.~Artin  and M.~{Van den Bergh},
Twisted homogeneous coordinate rings \emph{J.\ Algebra} \textbf{133} (1990),  249-271. 

\bibitem[AZ]{AZ}
M.~Artin and J.~J. Zhang, \emph{Noncommutative projective schemes},
Adv. Math.
  \textbf{109} (1994), no.~2, 228-287. 
  
  



\bibitem[BE]{BE}  J.\ E.\ Bjork and E.\ K.\ Ekstrom, Filtered Auslander-Gorenstein rings,  
 \emph{Operator algebras, unitary representations,  enveloping algebras, and invariant theory (Paris, 1989)},
   Progr. Math.,  {\textbf Vol.~92},   Birkh{\"a}user Boston, Boston, 1990, pp.~425-448.


 
\bibitem[Co]{Co} J. H. Cozzens, Maximal orders and reflexive modules,  \emph{Trans. Amer. Math. Soc.}
{\textbf 219} (1976) 323-336.

\bibitem[EG]{EG} P.~Etingof and V.~Ginzburg,  Noncommutative del Pezzo surfaces and Calabi-Yau algebras,
 {\em J.\ Eur.\ Math.\ Soc.} {\textbf 12} (2010), no. 6, 1371--1416. 

\bibitem[GW]{GW1989}
K.~R. Goodearl and R.~B. Warfield, Jr., \emph{An introduction to noncommutative
  noetherian rings}, London Mathematical Society Student Texts,  {\textbf Vol.~16},
  CUP, Cambridge, 1989.
  
  \bibitem[Ha]{Ha}  R.~Hartshorne, {\it Algebraic geometry}, Springer-Verlag, Berlin, 2006.


  \bibitem[KRS]{KRS}
D.\ S.\ Keeler, D.\ Rogalski, and J.\ T.\ Stafford, Na{\"\i}ve
  noncommutative blowing up, \emph{Duke Math.\ J.}
  \textbf{126} (2005), 491-546.   

\bibitem[KL]{KL} G.\ R.\ Krause and T.\ H.\ Lenagan, \emph{Growth of Algebras and Gelfand-Kirillov Dimension}, 
Research Notes in Math.\  {\textbf Vol.\ 116}, Pitman Boston, 1985.


\bibitem[Le]{Le} T.~Levasseur,  Some properties of noncommutative regular graded rings,
 \emph{Glasgow Math.\ J.} \textbf{34} (1992),  277�300. 


\bibitem[MR]{MR} J.~C.~McConnell and J.~C.~Robson, \emph{Noncommutative Noetherian Rings}, 
Graduate Studies in Math., \textbf{Vol.\ 30}, Amer.\ Math.\ Soc., 1987.

\bibitem[NV]{NV}  C.\  N\v{a}st\v{a}sescu  and F.\ Van Oystaeyen, \emph{Graded Ring Theory},
 North Holland, Amsterdam, 1982. 
 


 

\bibitem[Ro]{Rog09}
D.~Rogalski, {Blowup subalgebras of the {S}klyanin algebra},
  \emph{Advances in Math.} \textbf{226} (2011), 1433-1473


\bibitem[RSi]{RSi}
D.~Rogalski and  S.~J. Sierra, 
 Some projective surfaces of GK-dimension 4, \emph{Compositio Math.} \textbf{148}
  (2012),  1195-1237.


\bibitem[RSS1]{RSS}
D.~Rogalski, S.~J. Sierra and J.~T. Stafford, Algebras in which every
  subalgebra is noetherian, \emph{Proc.\ Amer.\ Math.\ Soc.}, to appear,
  arXiv:1112.3869,    2011.

\bibitem[RSS2]{RSS2} \bysame, Noncommutative blowups of elliptic algebras, to appear.



\bibitem[RSt]{RS}
D.~Rogalski  and J.~T. Stafford, 
 A class of noncommutative projective surfaces,    \emph{Proc.\ Lond.\ Math.\ Soc.}
  \textbf{99} (2009),  100-144. 
 
 
 \bibitem[Rt]{Rot}  J.~J.~Rotman, \emph{An Introduction to Homological Algebra},
 Springer-Verlag, Berlin, 2009.
 
 
\bibitem[Si]{Si}   S.~J. Sierra, 
 Classifying birationally commutative projective surfaces,  \emph{Proc.\ Lond.\ Math.\ Soc.}
  \textbf{103} (2011), 139-196.
 
\bibitem[SV]{SV} J.~T. Stafford and  M.~Van~den Bergh, Noncommutative curves
and noncommutative surfaces,
{\it Bull.\ Amer.\ Math.\ Soc.}   \textbf{38} (2001), 171-216.


\bibitem[SZ]{SZ}  J. T Stafford and J.\ J.\ Zhang,
Examples in non-commutative projective geometry, \emph{Math.\ Proc.\ Cambridge Phil.\ Soc.}
\textbf{116} (1994),   415-433.
 
 
 \bibitem[St]{Ste} D.~R.~Stephenson,  Algebras associated to elliptic curves,
 \emph{Trans.\ Amer.\ Math.\ Soc.} \textbf{349} (1997), 2317--2340.
 
 
   \bibitem[VB1]{VdB4} M.\ Van den Bergh, Existence theorems for dualizing complexes over non-commutative 
   graded and filtered rings, \emph{J.\ Algebra} \textbf{195} (1997), 662-679.
   
   
  \bibitem[VB2]{VdB} \bysame, Blowing up of non-commutative smooth surfaces,
   \emph{Mem.\ Amer.\ Math.\ Soc.}
  \textbf{154} (2001), no.~734.
  
    \bibitem[VB3]{VdB2} \bysame, Noncommutative quadrics, 
    \emph{Int.\ Math.\ Res.\ Not.\ IMRN} 2011, \textbf{no. 17}, 398-4026. 
    
     \bibitem[VB4]{VdB3} \bysame, Non-commutative $\mathbb{P}^1$-bundles over commutative schemes,
\emph{Trans.\ Amer.\ Math.\ Soc.}  \textbf{364} (2012),  6279-6313. 


\bibitem[Zh]{Zh} J.~J.~Zhang, On lower transcendence degree, \emph{Adv.\ Math.} \textbf{139} (1998), 157-193.


  \end{thebibliography}

\providecommand{\bysame}{\leavevmode\hbox to3em{\hrulefill}\thinspace}

\providecommand{\MR}{\relax\ifhmode\unskip\space\fi MR }


\providecommand{\MRhref}[2]{%

  \href{http://www.ams.org/mathscinet-getitem?mr=#1}{#2}

}

\providecommand{\href}[2]{#2}

\end{document}